\numberwithin{equation}{section}
\newcommand{\cT}{\mathcal{T}}
\newcommand{\cS}{\mathcal{S}}
\newcommand{\tr}{\mathrm{Tr}}
\newcommand{\vect}{\mathbf{vec}}
\newcommand{\cI}{\mathcal{I}}
\newcommand{\cD}{\mathcal{D}}
\newcommand{\lr}{\kappa_1}
\newcommand{\ur}{\kappa_2}
\newcommand{\ir}{\kappa_3}
\newcommand{\muo}{\mu_0}
\newcommand{\po}{p_0}
\newcommand{\co}{\alpha}
\newcommand{\cX}{\mathcal{X}}
\newcommand{\norm}[1] {\left \| #1 \right \|}
\newcommand{\inclu}[0] {\ar@{^{(}->}}
\newcommand{\rank}{\text{Rank }\,}
\newcommand{\proj}{\mathrm{proj}}
\newcommand{\dist}{{\rm dist}}
\newcommand{\R}{{\bf R}}
\newcommand{\cA}{\mathcal{A}}
\newcommand{\cE}{\mathcal{E}}
\newcommand{\cN}{\mathcal{N}}
\newcommand{\EE}{\mathbb{E}}
\newcommand{\EEE}{{\bf E}}
\newcommand{\YY}{\bf{Y}}
\newcommand{\trace}{\mathrm{Tr}}
\newcommand{\sign}{\mathrm{sign}}
\newcommand{\RR}{{\bf R}}
\newcommand{\PP}{\mathbb{P}}
\renewcommand{\SS}{\mathbb{S}}
\newcommand{\pfail}{p_{\mathrm{fail}}}
\newcommand{\lv}{p}
\newcommand{\rv}{q}
\newcommand{\lM}{P}
\newcommand{\op}{\mathrm{op}}
\newcommand{\abs}[1]{\left| #1 \right|}
\newcommand{\argmin}{\operatornamewithlimits{argmin}}
\newcommand\eqd{\stackrel{\mathit{d}}{=}} 
\newtheorem{thm}{Theorem}[section]
\newtheorem{proposition}[thm]{Proposition}
\newtheorem{lem}[thm]{Lemma}
\newtheorem{lemma}[thm]{Lemma}
\newtheorem{corollary}[thm]{Corollary}
\newtheorem{conjecture}[thm]{Conjecture}
\newtheorem{claim}{Claim}
\theoremstyle{definition}
\newtheorem{definition}[thm]{Definition}
\newtheorem{assumption}{Assumption}
\DeclarePairedDelimiter{\dotp}{\langle}{\rangle}
\newcommand{\opnorm}{\@ifstar\@opnorms\@opnorm}
\newcommand{\@opnorms}[1]{%
	\left|\mkern-1.5mu\left|\mkern-1.5mu\left|
	#1
	\right|\mkern-1.5mu\right|\mkern-1.5mu\right|
}
\newcommand{\@opnorm}[2][]{%
	\mathopen{#1|\mkern-1.5mu#1|\mkern-1.5mu#1|}
	#2
	\mathclose{#1|\mkern-1.5mu#1|\mkern-1.5mu#1|}
}
\begin{document}

	\title{Low-rank matrix recovery with composite optimization: \\ good conditioning and rapid convergence}

	\author{Vasileios Charisopoulos\thanks{School of ORIE, Cornell University, Ithaca, NY 14850,
			USA; \texttt{people.orie.cornell.edu/vc333/}}\qquad Yudong Chen\thanks{School of ORIE, Cornell University,
			Ithaca, NY 14850, USA;
			\texttt{people.orie.cornell.edu/yudong.chen/}}\qquad Damek Davis\thanks{School of ORIE, Cornell University,
Ithaca, NY 14850, USA;
\texttt{people.orie.cornell.edu/dsd95/}.} \\ Mateo D\'iaz\thanks{CAM, Cornell University. Ithaca, NY 14850, USA;
	\texttt{people.cam.cornell.edu/md825/}} \qquad Lijun Ding\thanks{School of ORIE, Cornell University,
	Ithaca, NY 14850, USA;
	\texttt{people.orie.cornell.edu/ld446/}.}\qquad Dmitriy Drusvyatskiy\thanks{Department of Mathematics, U. Washington,
Seattle, WA 98195; \texttt{www.math.washington.edu/{\raise.17ex\hbox{$\scriptstyle\sim$}}ddrusv}. Research of Drusvyatskiy was supported by the NSF DMS   1651851 and CCF 1740551 awards.}}

\date{}
\maketitle

\begin{abstract}
	The task of recovering a low-rank matrix from its noisy linear measurements plays a central role in computational science. Smooth formulations of the problem often exhibit an undesirable phenomenon: the condition number, classically defined, scales poorly with the dimension of the ambient space. In contrast, we here show that in a variety of concrete circumstances,  nonsmooth penalty formulations do not suffer from the same type of ill-conditioning.
	 Consequently, standard algorithms for nonsmooth optimization, such as  subgradient and prox-linear methods, converge at a rapid dimension-independent rate when initialized within constant relative error of the solution. Moreover, nonsmooth formulations are naturally robust against outliers. Our framework subsumes such important computational tasks as phase retrieval, blind deconvolution, quadratic sensing,  matrix completion, and robust PCA. Numerical experiments on these problems illustrate the benefits of the  proposed approach.
\end{abstract}

\newpage
\tableofcontents
\newpage

\section{Introduction}
\label{sec:intro}
Recovering a low-rank matrix from noisy linear measurements has become an increasingly central task in data science. Important and well-studied examples include phase retrieval \cite{7078985,wirt_flow,ma2017implicit}, blind deconvolution \cite{ahmed2014blind,li2016rapid,MR3424852,proc_flow}, matrix completion \cite{rec_exa,davenport2016overview,MR3565131}, covariance matrix estimation \cite{MR3367819,li2018nonconvex}, and robust principal component analysis \cite{chand,rob_cand}. Optimization-based approaches for low-rank matrix recovery naturally lead to nonconvex formulations, which are NP hard in general. To overcome this issue, in the last two decades researchers have developed convex relaxations that succeed with high probability under appropriate statistical assumptions.  Convex techniques, however, have a well-documented limitation: the parameter space describing the relaxations is usually much larger than that of the target problem.  Consequently, standard algorithms applied on convex relaxations may not scale well to the large problems. Consequently, there has been a renewed interest in directly optimizing nonconvex formulations with iterative methods within the original parameter space of the problem. Aside from a few notable exceptions on specific problems \cite{mat_comp_min,bhojanapalli2016global,ge2017unified},  most algorithms of this type proceed in  two-stages. The first stage---{\em initialization}---yields a rough estimate of an optimal solution, often using spectral techniques.
The second stage---{\em local refinement}---uses a local search algorithm that
rapidly converges to an optimal solution,  when initialized at the output of
the initialization stage.

This work focuses on developing provable low-rank matrix recovery algorithms
based on nonconvex problem formulations. We focus primarily on local refinement
and describe a set of unifying sufficient conditions leading to rapid local
convergence of iterative methods. In contrast to the current literature on the
topic, which typically relies on smooth problem formulations and gradient-based
methods, our primary focus is on \emph{nonsmooth formulations} that exhibit
sharp growth away from the solution set. Such formulations are well-known in
the nonlinear programming community to be amenable to rapidly convergent
local-search algorithms. Along the way, we will observe an apparent benefit of
nonsmooth formulations over their smooth counterparts. All nonsmooth
formulations analyzed in this paper are ``well-conditioned," resulting in fast
``out-of-the-box" convergence guarantees. In contrast, standard smooth
formulations for the same recovery tasks can be poorly conditioned, in the
sense that classical convergence guarantees of nonlinear programming are overly
pessimistic.
Overcoming the poor conditioning typically requires nuanced problem and algorithmic specific analysis (e.g. \cite{proc_flow,ma2017implicit,MR3025133,rand_quad}), which nonsmooth formulations manage to avoid for the problems considered here.


Setting the stage, consider a rank $r$ matrix $M_{\sharp}\in \R^{d_1\times d_2}$ and a linear  map $\cA\colon \R^{d_1\times d_2} \to \R^m$ from the space of matrices to the space of measurements.
The goal of  low-rank matrix recovery  is to recover $M_{\sharp}$
from the image vector $b=\mathcal{A}(M_{\sharp})$, possibly corrupted by noise.
Typical nonconvex approaches
proceed by choosing some penalty function $h(\cdot)$ with which to measure the residual $\cA(M)-b$ for a trial solution $M$. Then, in the case that $M_{\sharp}$ is symmetric and positive semidefinite, one may focus on the
formulation
\begin{equation}\label{eqn:target_problem}
\min_{X\in\R^{d\times r}}~ f(X):=h\left(\mathcal{A}(XX^\top)-b\right)\qquad \textrm{subject to }X\in \cD,
\end{equation}
or when  $M_{\sharp}$ is rectangular, one may instead use the formulation
\begin{equation}\label{eqn:target_problem_asymm}
\min_{X\in \R^{d_1\times r}, ~Y\in \R^{r\times d_2}}~ f(X,Y):=h\left(\mathcal{A}(XY)-b\right)\qquad\textrm{subject to }(X,Y)\in \cD.
\end{equation}
Here, $\cD$ is a convex set that incorporates prior knowledge about
$M_{\sharp}$ and is often used to enforce favorable structure on the decision
variables. The penalty $h$ is chosen specifically to penalize measurement
misfit and/or enforce structure on the residual errors.

\subsection*{Algorithms and conditioning for smooth formulations} Most
widely-used penalties $h(\cdot)$ are smooth and convex. Indeed,  the
\emph{squared} $\ell_2$-norm $h(z)=\tfrac{1}{2}\|z\|^2_2$ is ubiquitous in this
context. With such penalties, problems \eqref{eqn:target_problem} and
\eqref{eqn:target_problem_asymm} are smooth and thus are amenable to
gradient-based methods. The linear rate of convergence of gradient descent is
governed by the ``local condition number'' of  $f$. Indeed, if the estimate,
$
\mu I \preceq \nabla^2 f(X) \preceq L I,
$ holds
for all $X$ in a neighborhood of the solution set, then gradient descent converges to the solution set at the linear rate $1 - \mu/L$. It is known that for several widely-studied problems including phase retrieval, blind deconvolution, and matrix completion, the ratio $\mu/L$ scales inversely with the problem dimension.  Consequently, generic nonlinear programming guarantees yield efficiency estimates that are far too pessimistic.  Instead, near-dimension independent guarantees can be obtained by arguing that $\nabla^2 f$ is well conditioned along  the ``relevant'' directions or that $\nabla^2 f$ is well-conditioned within a restricted region of space that the iterates never escape (e.g. \cite{proc_flow,ma2017implicit,MR3025133}). Techniques of this type have been elegantly and successfully used over the past few years to obtain algorithms with near-optimal sample complexity. One byproduct of such techniques, however, is that the underlying arguments are finely tailored to each particular problem and algorithm at hand. We refer the reader to the recent surveys \cite{chi2018nonconvex} for details.

\subsection*{Algorithms and conditioning for nonsmooth formulations}

The goal of our work is to justify the following principle:
\begin{quote}  Statistical assumptions for common  recovery problems guarantee that  \eqref{eqn:target_problem} and \eqref{eqn:target_problem_asymm} {\em are well-conditioned} when $h$ is an appropriate {\em nonsmooth convex penalty}.
\end{quote}

To explain what we mean by ``good conditioning," let  us treat  \eqref{eqn:target_problem}
and \eqref{eqn:target_problem_asymm} within the broader \emph{convex composite} problem class:
\begin{equation}\label{eqn:comp_prob}
\min_{x\in\cX}~ f(x):=h(F(x)),
\end{equation}
where $F(\cdot)$ is a smooth map on the space of matrices and $\cX$ is a closed
convex set.
Indeed, in the symmetric and positive semidefinite case, we identify $x$ with
matrices $X$ and  define $F(X)=\cA(XX^\top)-b$, while in the asymmetric case,
we identify $x$ with pairs  of matrices $(X,Y)$ and define $F(X,Y)=\cA(XY)-b$.
Though compositional problems \eqref{eqn:comp_prob} have been well-studied in
nonlinear programming \cite{burke_desc,burke_gauss,fletcher_model},  their
computational promise in data science has only begun recently to emerge. For
example, the papers \cite{duc_ruan_stoch_comp,davis2019stochastic,eff_paquette}
discuss stochastic and inexact algorithms on composite problems,
while the papers \cite{duchi_ruan_PR,davis2017nonsmooth},
\cite{charisopoulos2019composite}, and \cite{li2018nonconvex_robust}
investigate applications to phase retrieval, blind deconvolution, and matrix
sensing, respectively.

A number of algorithms are available for problems of the form
\eqref{eqn:comp_prob}, and hence for \eqref{eqn:target_problem}
and \eqref{eqn:target_problem_asymm}. Two most notable ones are the projected subgradient\footnote{Here, the subdifferential is
	formally obtained through the chain rule $\partial f(x)=\nabla F(x)^*\partial
	h(F(x))$, where $\partial h(\cdot)$ is the subdifferential in the sense of
	convex analysis.}
method~\cite{davis2018subgradient,goffin}
$$x_{t+1}=\proj_{\cX}(x_t-\alpha_t v_t)\qquad\textrm{with}\qquad v_t\in \partial f(x_t),$$
and the prox-linear algorithm~\cite{burke_desc,prox,drusvyatskiy2018error}
$$x_{t+1}=\argmin_{x\in\cX}~ h\Big(F(x_t)+\nabla F(x_t)(x-x_t)\Big)+\frac{\beta}{2}\|x-x_t\|^2_2.$$
Notice that each iteration of the subgradient method is relatively cheap,
requiring access only to the subgradients of $f$ and the nearest-point
projection onto $\cX$.  The prox-linear method in contrast requires solving a
strongly convex problem in each iteration. That being said, the prox-linear
method has much stronger convergence guarantees than the subgradient method, as
we will review shortly.

The local convergence guarantees of both methods are straightforward to describe, and underlie what we mean by ``good conditioning''. Define $\cX^*:=\argmin_{\cX} f$, and for any $x\in\cX$ define the convex model $f_x(y)=h(F(x)+\nabla F(x)(y-x))$.
Suppose there exist constants $\rho,\mu>0$ satisfying the two properties:
\begin{itemize}
	\item {\bf (approximation)} $\left|f(y)-f_x(y)\right|\leq \frac{\rho}{2}\|y-x\|^2_2$ for all $x,y\in \cX$,
	\item {\bf (sharpness)}  $f(x)-\inf f\geq \mu\cdot \dist(x,\cX^*)$  for all $x\in \cX$.
\end{itemize}
The approximation and sharpness properties have intuitive meanings.
The former says that the nonconvex function $f(y)$ is well approximated by the
convex model $f_x(y)$, with quality that degrades quadratically as $y$ deviates
from $x$. In particular, this property guarantees that the quadratically
perturbed function $x\mapsto f(x)+\frac{\rho}{2}\|x\|^2_2$ is convex on $\cX$. Yet another consequence of the approximation property is that the epigraph of $f$ admits a supporting concave quadratic with amplitute $\rho$ at each of its points. Sharpness, in turn, asserts that $f$ must grow at least linearly as $x$
moves away from the solution set. In other words, the function values should
robustly distinguish between optimal and suboptimal solutions. In statistical
contexts, one can interpret sharpness as  strong identifiability of
the statistical model. The three figures below  illustrate the approximation and sharpness properties for idealized objectives in phase retrieval, blind deconvolution, and robust PCA problems.

\begin{figure}[!h]
	\centering
	\begin{subfigure}[b]{0.32\textwidth}
		\centering\includegraphics[width=\textwidth]{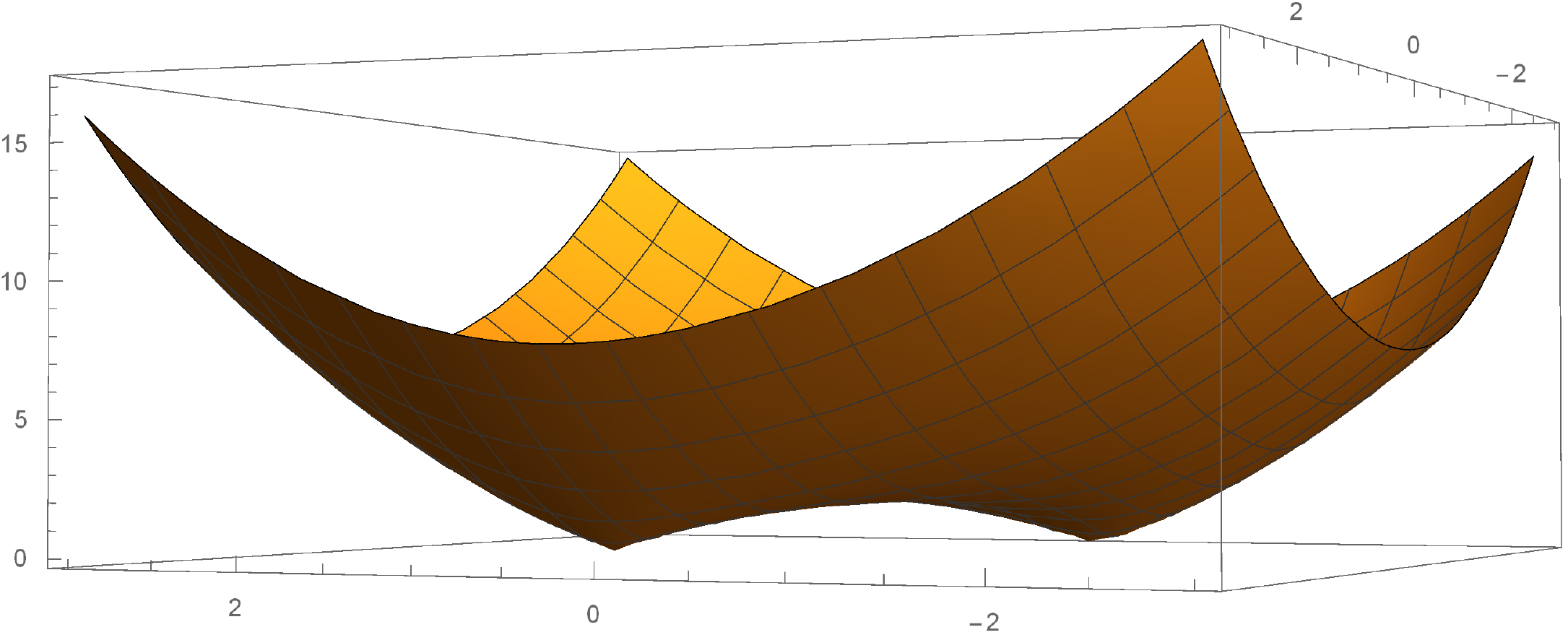}
	\end{subfigure}~
	\begin{subfigure}[b]{0.32\textwidth}
		\centering\includegraphics[width=\textwidth]{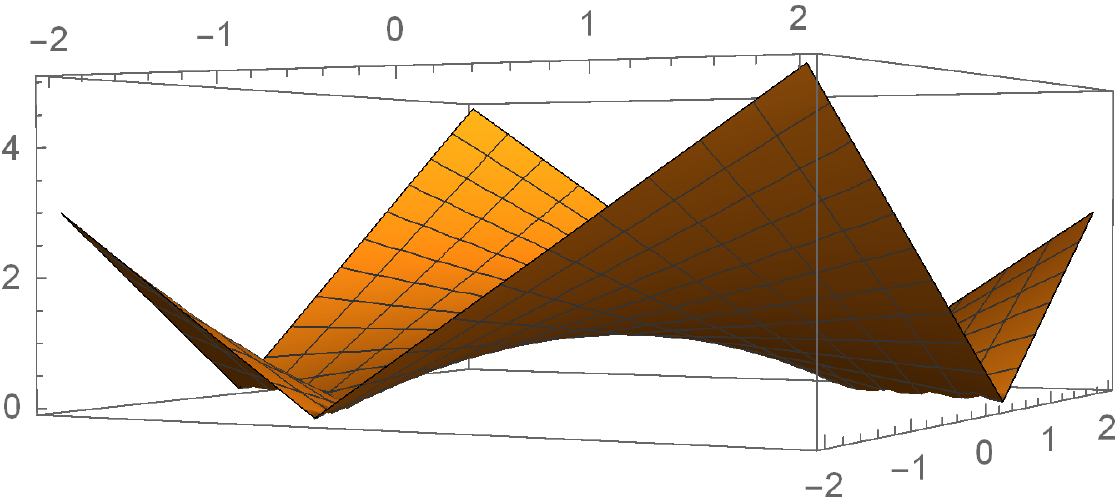}
	\end{subfigure}
	\begin{subfigure}[b]{0.32\textwidth}
		\centering\includegraphics[width=\textwidth]{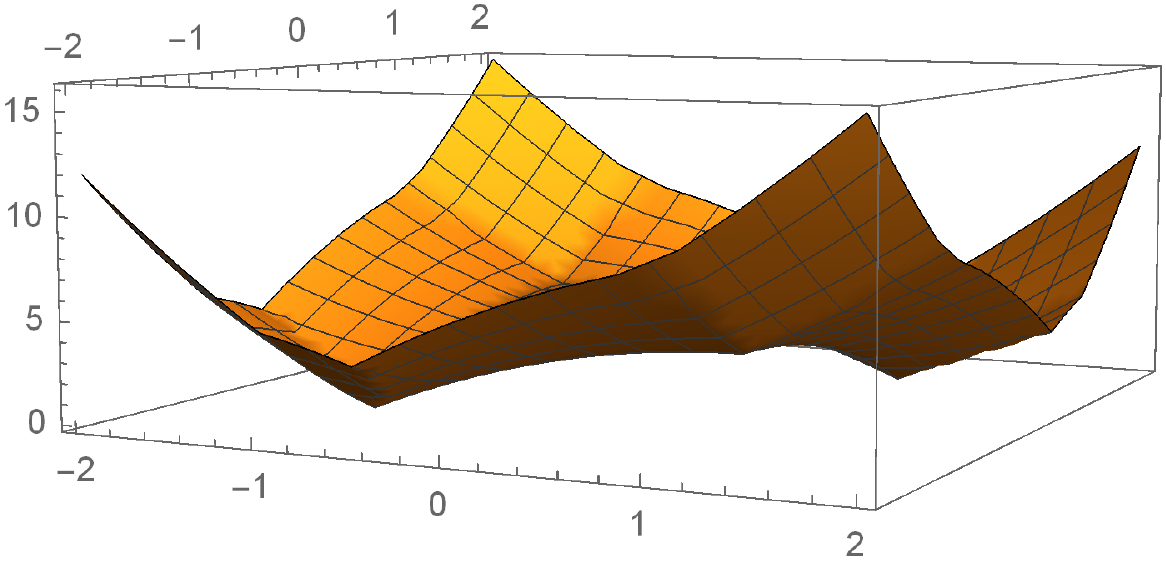}
	\end{subfigure}
	\begin{subfigure}[b]{0.32\textwidth}
		\centering\includegraphics[width=\textwidth]{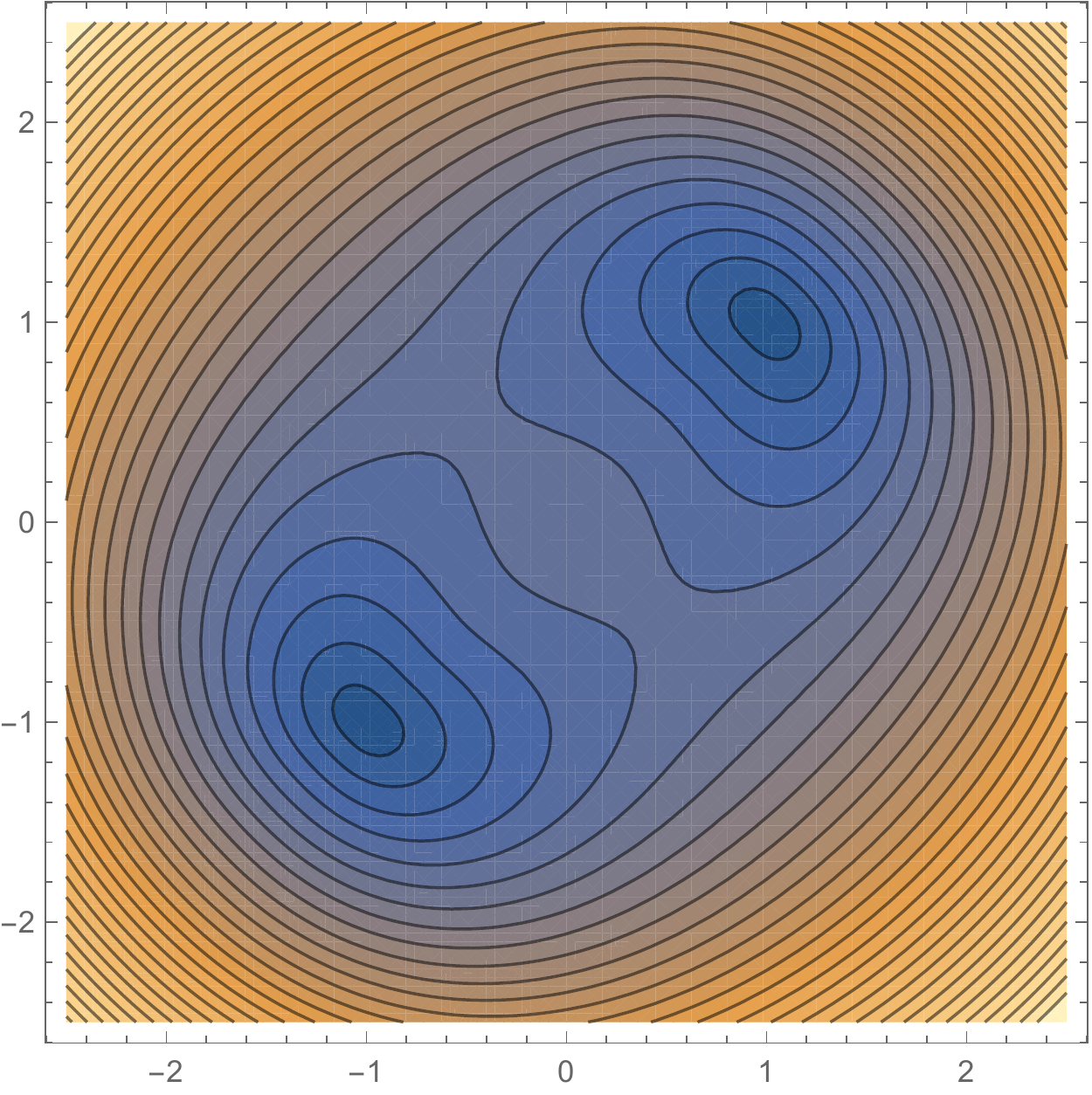}
		\caption{$f(x)=\EE |(a^\top x)^2-(a^\top {\bf 1})^2|$\\
			\centering(phase retrieval)}
		\label{fig:contour1}

	\end{subfigure}~
	\begin{subfigure}[b]{0.32\textwidth}
		\centering\includegraphics[width=\textwidth]{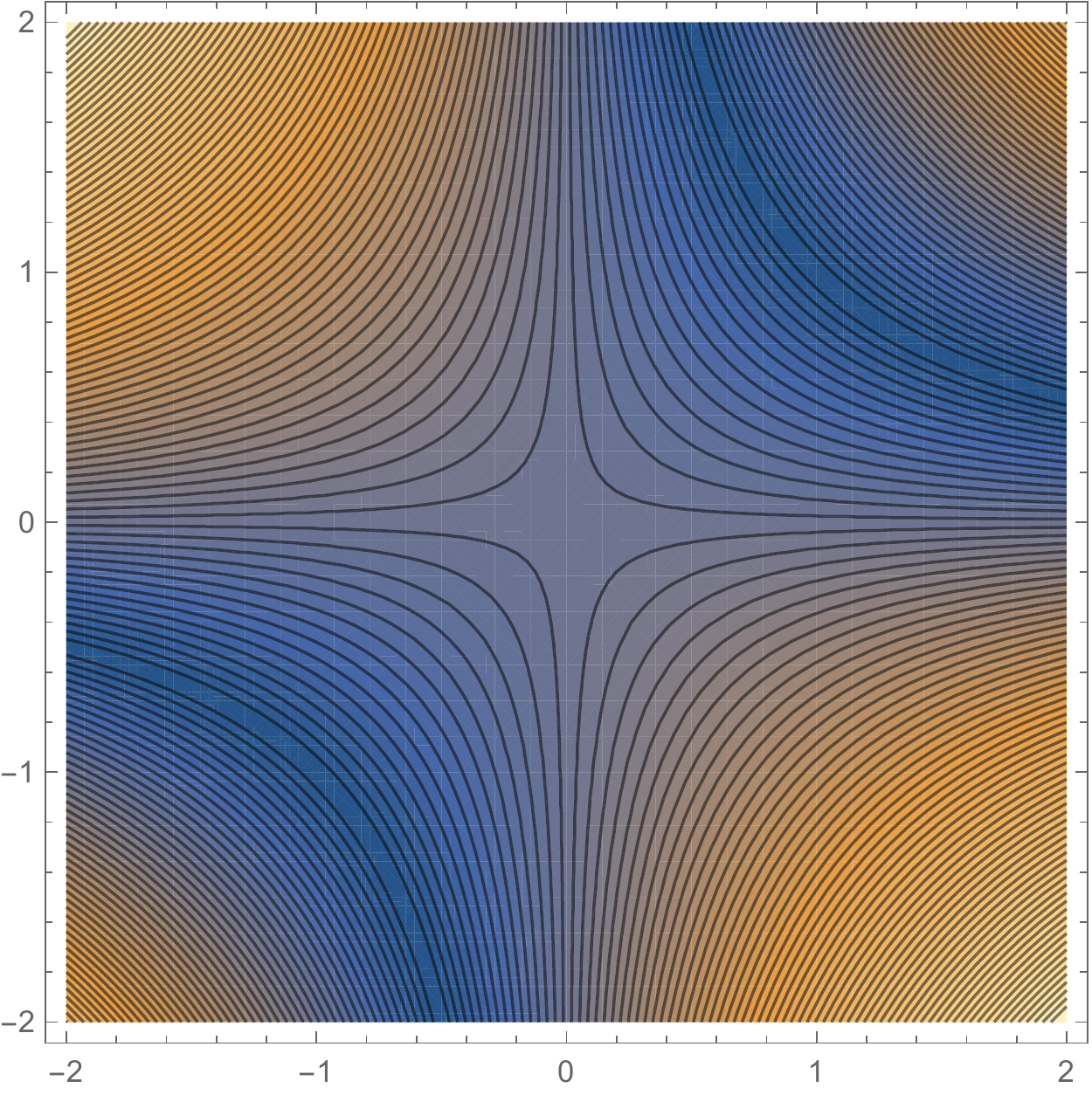}
		\caption{$f(x,y)=|xy-1|$\\ \centering(blind deconvolution)}
		\label{fig:contour2}
	\end{subfigure}
	\begin{subfigure}[b]{0.32\textwidth}
		\centering\includegraphics[width=\textwidth]{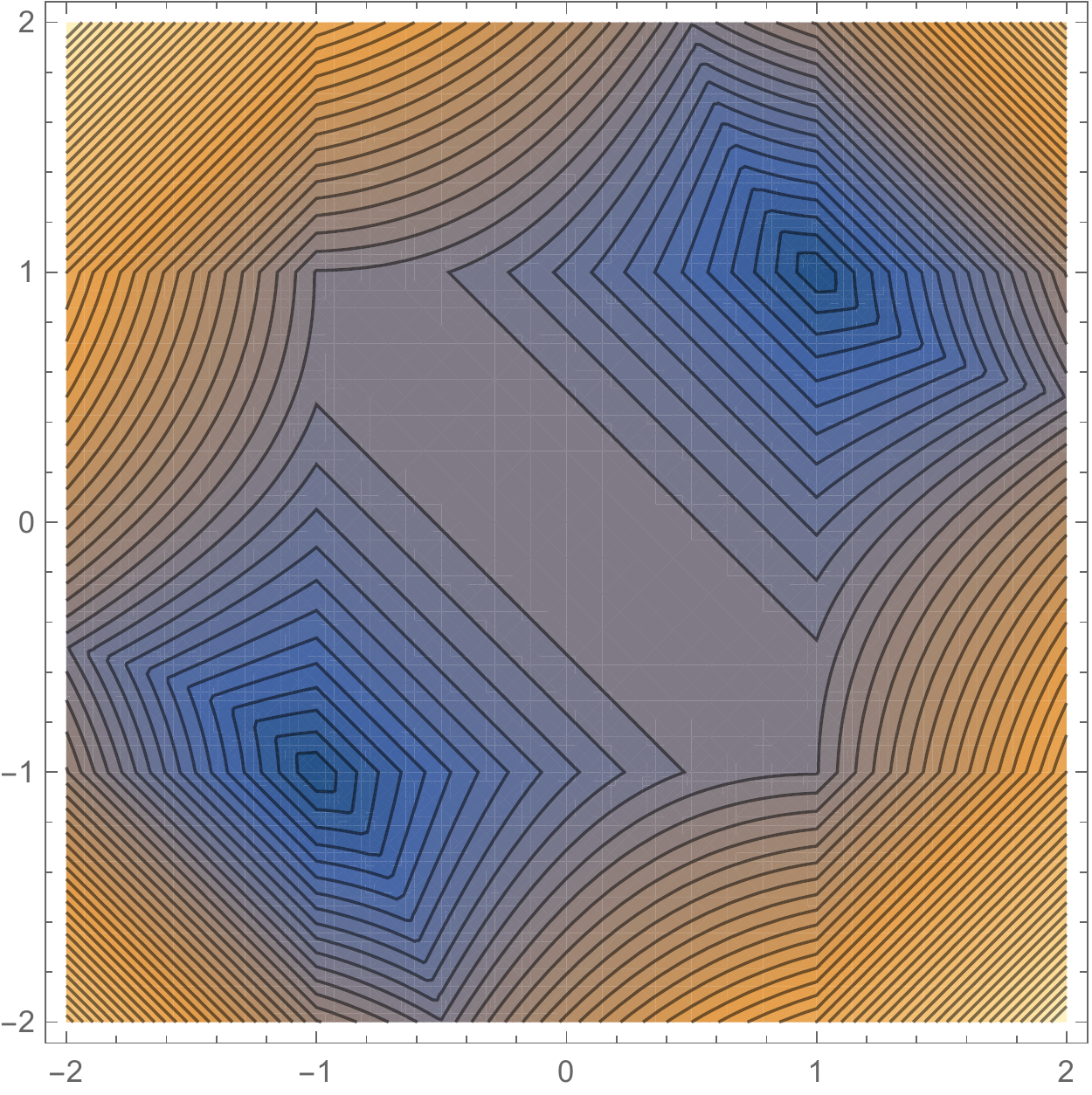}
		\caption{$f(x)=\|xx^\top-{\bf 1}{\bf 1}^\top\|_1$\\
			\centering(robust PCA)}
		\label{fig:contour3}
	\end{subfigure}
\end{figure}

Approximation and sharpness, taken together,  guarantee rapid convergence of numerical methods when initialized within the tube:
$$\mathcal{T}= \Big\{x\in \cX:\dist(x,\cX^*)\leq \frac{\mu}{\rho} \Big\}.$$
For common low-rank  recovery problems,  $\mathcal{T}$ has an intuitive interpretation: it consists of those matrices that are within constant relative error of the solution. We note that standard spectral initialization techniques, in turn, can generate such matrices with nearly optimal sample complexity. We refer the reader to the survey \cite{chi2018nonconvex}, and references therein, for details.

\paragraph{Guiding strategy.} The following is the guiding algorithmic  principle of this work:
\begin{quote}
	When initialized at $x_0\in\mathcal{T}$, the prox-linear algorithm converges quadratically to the solution set $ \cX^* $;  the subgradient method, in turn, converges linearly with a rate governed by ratio $\frac{\mu}{L}\in (0,1)$, where $L$ is the Lipschitz constant of $f$ on $\mathcal{T}$.\footnote{Both the parameters $\alpha_t$ and $\beta$ must be properly chosen for these guarantees to take hold.}
\end{quote}
In light of this observation, our strategy can be succinctly summarized as follows. We will show that for a variety of low-rank recovery problems, the parameters $\mu,L,\rho>0$ (or variants) are dimension independent under standard statistical assumptions. Consequently, the  formulations \eqref{eqn:target_problem} and \eqref{eqn:target_problem_asymm} are ``well-conditioned'', and subgradient and prox-linear methods converge rapidly when initialized within constant relative error of the optimal solution.

\subsection*{Approximation and sharpness via the Restricted Isometry Property}
We begin verifying our thesis by showing that the composite problems, \eqref{eqn:target_problem} and \eqref{eqn:target_problem_asymm}, are well-conditioned under the following Restricted Isometry Property (RIP): there exists a norm $\opnorm{\cdot}$ and numerical constants $\lr,\ur>0$ so that
\begin{equation}\label{eqn:RIP_intro}
\lr\|W\|_F\leq \opnorm{\cA(W)}\leq \ur\|W\|_F,
\end{equation}
for all matrices $W\in \R^{d_1\times d_2}$ of rank at most $2r$. We argue that under RIP,
\begin{quote} the \emph{nonsmooth} norm $h = \opnorm{\cdot }$ is a natural penalty function to use.
\end{quote}
Indeed, as we will show, the composite loss $h(F(x))$ in the symmetric setting
admits
constants $\mu,\rho,L$ that depend only on the RIP parameters and the extremal singular values of $M_{\sharp}$:
$$\mu=0.9\kappa_1 \sqrt{\sigma_r(M_{\sharp})}, \qquad \rho=\kappa_2, \qquad L=0.9\kappa_1\sqrt{\sigma_r(M_{\sharp})}+2\ur\sqrt{\sigma_1(M_{\sharp})}.$$
In particular, the initialization ratio scales as
$\frac{\mu}{\rho}\asymp\frac{\lr}{\ur}\sqrt{\sigma_r(M_{\sharp})}$ and the
condition number scales as $\frac{L}{\mu}\asymp
1+\frac{\ur}{\lr}\sqrt{\frac{\sigma_1(M_{\sharp})}{\sigma_r(M_{\sharp})}}$.
Consequently,
the rapid local convergence guarantees previously described immediately
take-hold. The asymmetric setting is slightly more nuanced since the objective
function is sharp only on bounded sets. Nonetheless, it can be analyzed in a
similar way leading to analogous rapid convergence guarantees. Incidentally,
we show that the prox-linear method converges rapidly without any
modification; this is in contrast to smooth methods, which typically require
incorporating an auxiliary regularization term into the objective (e.g.
\cite{proc_flow}). We note that similar results in the symmetric setting were
independently obtained in the complimentary work
\cite{li2018nonconvex_robust}, albeit with a looser estimate of
$L$; the two
treatments of the asymmetric setting are distinct, however.\footnote{The authors of \cite{li2018nonconvex_robust} provide a bound on
	$L$ that scales with the Frobenius norm $\sqrt{\|M_{\sharp}}\|_F$. We instead derive a sharper
	bound that scales as $\sqrt{\|M_{\sharp}\|_{\rm op}}$. As a byproduct, the
	linear rate of convergence for the subgradient method  scales only with the
	condition number $\sigma_1(M_{\sharp})/\sigma_r(M_{\sharp})$ instead of $\|M_{\sharp}\|_F/\sigma_r(M_{\sharp})$.}

After establishing basic properties of the composite loss, we turn our attention to verifying RIP in several concrete scenarios. We note that the seminal works \cite{guaran,candes2011tight} showed that if $\cA(\cdot)$ arises from a Gaussian ensemble, then in the regime $m\gtrsim r(d_1+d_2)$ RIP holds with high probability for the scaled $\ell_2$ norm $\opnorm{z}=m^{-1/2}\|z\|_2$. 
More generally when $\cA$ is highly structured, RIP may be most naturally measured in a non-Euclidean norm. For example,  RIP with respect to the scaled $\ell_1$ norm $\opnorm{z}=m^{-1}\|z\|_1$ holds for phase retrieval \cite{eM, duchi_ruan_PR}, blind deconvolution \cite{charisopoulos2019composite}, and quadratic sensing \cite{MR3367819}; in contrast, RIP relative to the scaled $\ell_2$ norm fails for all three problems.
In particular, specializing our results to the aforementioned recovery tasks yields solution methodologies with best known sample and computational complexity guarantees. Notice that while one may ``smooth-out" the $\ell_2$ norm by squaring it, we argue that it may be more natural to optimize the $\ell_1$ norm directly as a nonsmooth penalty. Moreover, we show that $\ell_1$ penalization enables exact recovery even if a constant fraction of measurements is corrupted by outliers.

\subsection*{Beyond RIP: matrix completion and robust PCA}

The RIP assumption provides a nice vantage point for analyzing the problem parameters $\mu,\rho, L>0$. There are, however, a number of important problems, which do not satisfy RIP. Nonetheless, the general paradigm based on the interplay of sharpness and approximation is still powerful. We consider two such settings, matrix completion and robust principal component analysis (PCA), leveraging some intermediate results from \cite{chen2015fast}.

The goal of the matrix completion problem \cite{rec_exa} is to recover a low rank matrix $M_{\sharp}$ from its partially observed entries. We focus on the formulation
$$\argmin_{X\in\cX}~f(X)=\|\Pi_{\Omega}(XX^\top)-\Pi_{\Omega}(M_{\sharp})\|_2,$$
where $\Pi_\Omega$ is the projection onto the index set of observed entries
$\Omega$ and
$$\mathcal{X} = \left\{X\in \mathbb{R}^{d\times r}: \|X\|_{2,\infty} \leq \sqrt{\frac{\nu r\|M_{\sharp}\|_{\rm op}}{d}}\,\right\}$$
is the set of incoherent matrices. To analyze the conditioning of this formulation, we assume that the indices in $\Omega$ are chosen as i.i.d.\ Bernoulli with parameter $p\in (0,1)$ and that all nonzero singular values of $M_{\sharp}$ are equal to one.
Using results of \cite{chen2015fast}, we quickly deduce sharpness  with high probability. The error in approximation, however, takes the following nonstandard form. In the regime $p\geq \frac{c}{\epsilon^2}(\frac{\nu^2 r^2}{d}+ \frac{\log d}{d})$ for some constants $c>0$ and $\epsilon \in (0,1)$, the estimate holds with high probability:
$$|f(Y)-f_X(Y)|\leq \sqrt{1+\epsilon}\|Y-X\|^2_2+\sqrt{\epsilon}\|X-Y\|_F\qquad \textrm{for all }X,Y\in \cX.$$ The following modification of the prox-linear method therefore arises naturally:
$$X_{k+1}=\argmin_{X\in \cX}~ f_{X_k}(X)+\sqrt{1+\epsilon}\|X-X_k\|^2_F+\sqrt{\epsilon}\|X-X_k\|_F.$$
We show that subgradient methods and the prox-linear method, thus modified,
both converge at a dimension independent linear rate when initialized near the
solution. Namely, as long as $\epsilon$ and $\dist(X_0,\cX^*)$ are below some constant thresholds, both
the subgradient and the modified
prox-linear methods converge linearly with high probability:
$$\dist(X_k,\cX^*)\lesssim
\begin{cases}
\left(1-\frac{c}{\nu r}\right)^{k/2} & \textrm{subgradient} \\
2^{-k} &  \textrm{prox-linear}
\end{cases}.$$
Here $c>0$ is a numerical constant.
Notice that the prox-linear method enjoys a much faster rate of convergence that is independent of any unknown constants or problem parameters---an observation fully supported by our numerical experiments.

As the final example, we consider the problem of robust PCA \cite{rob_cand,chand}, which aims to decompose a given matrix $W$ into a sum of a low-rank and a sparse matrix. We consider two different problem formulations:
\begin{equation}\label{eqn:l2_formul_intro}
\min_{(X,S)\in \cD_1}~ F((X,S))=\|XX^{\top} +S -W\|_F,
\end{equation}
and
\begin{equation}\label{eqn:intro_nonsmooth}
\min_{X\in \cD_2} f(X)=\|XX^\top-W\|_1,
\end{equation}
where $\cD_1$ and $\cD_2$ are appropriately defined convex regions. Under standard
incoherence assumptions, we show that the formulation
\eqref{eqn:l2_formul_intro} is well-conditioned, and therefore subgradient and
prox-linear methods are applicable. Still,
formulation~\eqref{eqn:l2_formul_intro} has a major drawback in that one must
know properties of the optimal sparse matrix $S_\sharp$ in order to define the
constraint set $\cD_1$, in order to ensure good conditioning. Consequently, we analyze
formulation~\eqref{eqn:intro_nonsmooth} as a more practical alternative.

The analysis of \eqref{eqn:intro_nonsmooth} is more challenging than that of~\eqref{eqn:l2_formul_intro}. Indeed, it appears that we must replace the Frobenius norm $\|X\|_F$ in the approximation/sharpness conditions with the sum of the row norms $\|X\|_{2,1}$. With this set-up, we verify the convex approximation property in general:
$$|f(Y)-f_X(Y)|\leq \|Y-X\|_{2,1}^2\qquad \textrm{for all }X,Y$$
and sharpness only when $r=1$. We conjecture, however, that an analogous sharpness bound holds for all $r$. It is easy to see that the quadratic convergence guarantees for the prox-linear method do not rely on the Euclidean nature of the norm, and the algorithm becomes applicable. To the best of our knowledge, it is not yet known how to adapt linearly convergent subgradient methods to the non-Euclidean setting.

\subsection*{Robust recovery with sparse outliers and dense noise}

The aforementioned guarantees lead to exact recovery of $M_\sharp$ under
noiseless or sparsely corrupted measurements $b$. A more realistic noise model
allows for further corruption by a dense noise vector $e$ of small norm. Exact
recovery is no longer possible with such errors. Instead, we should only expect
to recover $M_\sharp$ up to a tolerance proportional to the size of $e$.
Indeed, we show that appropriately modified subgradient and prox-linear
algorithms converge linearly and quadratically, respectively, up to the
tolerance $\delta = O( \opnorm{e} /\mu)$ for an appropriate norm
$\opnorm{\cdot}$. Finally, we discuss in detail the case of recovering a low
rank PSD matrix $M_\sharp$ from the corrupted measurements $\cA(M_\sharp) +
\Delta + e$, where $\Delta$ represents sparse outliers and $e$ represents small
dense noise. To the best of our knowledge, theoretical guarantees for this error model have not been previously established
in the nonconvex low-rank recovery literature. Surprisingly, we show it is
possible to recover the matrix $M_\sharp$ up to a tolerance \emph{independent}
of the norm or location of the outliers $\Delta$.

\subsection*{Numerical experiments}
We conclude with an experimental evaluation of our theoretical findings on quadratic and bilinear matrix sensing, matrix completion, and robust PCA problems.
In the first set of experiments, we test the robustness of the proposed methods against varying combinations
of rank/corruption level by
reporting the empirical recovery rate across independent runs of synthetic
problem instances. All the aforementioned model problems exhibit sharp phase
transitions, yet our methods succeed for more than moderate levels of
corruption (or unobserved entries in the case of matrix completion). For
example, in the case of matrix sensing, we can corrupt almost half of the
measurements $\mathcal{A}_i(M)$ and still retain perfect recovery rates.
Interestingly, our experimental findings indicate that the prox-linear method
can tolerate slightly higher levels of corruption compared to the subgradient
method, making it the method of choice for small-to-moderate dimensions.

We then demonstrate that the convergence rate analysis is fully supported by empirical evidence. In particular, we test the subgradient and prox-linear methods for different
rank/corruption configurations. In the case of quadratic/bilinear sensing and robust PCA, we observe
that the subgradient method converges linearly and the prox-linear method
converges quadratically, as expected. In particular, our numerical experiments appear to support our sharpness conjecture for the robust PCA problem.
In the case of matrix completion, both  algorithms converge linearly. The prox-linear method in particular, converges extremely quickly, reaching high accuracy solutions in under $25$ iterations for reasonable values of $p$.

In the noiseless setting, we compare against  gradient descent with constant step-size on smooth formulations of each problem (except for robust PCA). We
notice that the Polyak subgradient method outperforms gradient descent in all cases. That being said, one can heuristically equip gradient descent with the Polyak step-size  as well. To the best of our knowledge, the gradient method with Polyak step-size has has not been investigated on smooth problem formulations we consider here. Experimentally, we see that the Polyak (sub)gradient methods on smooth and nonsmooth formulations perform comparably in the noiseless setting.

\subsection*{Outline of the paper} The outline of the paper is as follows.
Section~\ref{sec:prelim} records some basic notation  we will use.
Section~\ref{sec:all_algos_reg_cond} informally discusses the sharpness and
approximation properties, and their impact on convergence of the subgradient
and prox-linear methods. Section~\ref{sec:ass_model} analyzes the parameters
$\mu,\rho, L$ under RIP. Section~\ref{sec:conv_guarant} rigorously discusses
convergence guarantees of numerical methods under regularity conditions.
Section~\ref{sec:all_examples} reviews examples of problems satisfying RIP and
deduces convergence guarantees for subgradient and prox-linear algorithms.
Sections~\ref{sec:mat_comp} and \ref{sec:robust_pca} discuss the matrix completion and
robust PCA problems, respectively. Section~\ref{sec:recovery_tol}
discusses robust recovery up to a noise tolerance. The final
Section~\ref{sec:num_exp} illustrates the developed theory and algorithms with
numerical experiments on quadratic/bi-linear sensing, matrix completion, and
robust PCA problems.

\section{Preliminaries}\label{sec:prelim}
In this section, we summarize the basic notation we will use throughout the paper.
Henceforth, the symbol $\EEE$ will denote a Euclidean space with inner product $\langle\cdot,\cdot \rangle$ and the induced norm $\|x\|_2=\sqrt{\langle x,x\rangle}$.  The closed unit ball in $\EEE$ will be denoted by $\mathbb{B}$, while a closed ball of radius $\epsilon>0$ around a point $x$ will be written as $B_{\epsilon}(x)$.
For any point $x\in \EEE$ and a set  $Q\subset\EEE$,  the distance and the nearest-point projection in $ \ell_2 $-norm are defined by
\begin{align*}
\dist(x;Q)=\inf_{y\in Q} \|x-y\|_2\qquad \textrm{and}\qquad\proj_Q(x)=\argmin_{y\in Q} \|x-y\|_2,
\end{align*}
respectively. For any pair of functions $f$ and $g$ on $\EEE$, the notation $f\lesssim g$ will mean that there exists a numerical constant $C$ such that $f(x)\leq C g(x)$ for all $x\in \EEE$.
Given a linear map between Euclidean spaces, $\cA\colon\EEE\to{\bf Y}$, the adjoint map will be written as $\cA^*\colon {\bf Y}\to \EEE$. We will use $I_d$ for the $d$-dimensional identity matrix and $\mathbf{0}$ for the zero matrix with variable sizes. The symbol $[m]$ will be shorthand for the set $\{1, \dots, m\}.$

We will always endow the Euclidean space of vectors $\R^d$ with the  usual dot-product $\langle x,y \rangle=x^{\top}y$ and the induced $\ell_2$-norm. More generally, the $\ell_p$ norm of a vector $x$ will be denoted by $\|x\|_p=(\sum_{i}|x_i|^p)^{1/p}$. Similarly, we will equip the space of rectangular matrices $\R^{d_1\times d_2}$ with the trace product $\langle X,Y\rangle=\tr(X^\top Y)$ and the induced Frobenius norm $\|X\|_F=\sqrt{\tr(X^\top X)}$.
The operator norm of a matrix $X\in \R^{d_1\times d_2}$ will be written as $\|X\|_{\textrm{op}}$. The symbol $\sigma(X)$ will denote the vector of singular values of a matrix $X$ in nonincreasing order. 
We also define the row-wise matrix norms $\|X\|_{b,a}=\|(\|X_{1\cdot}\|_b,\|X_{2\cdot}\|_b\,\ldots,\|X_{d_1\cdot}\|_b)\|_a$.
The symbols $\mathcal{S}^{d}$, $\mathcal{S}^{d}_+$, $O(d)$, and $GL(d)$ will denote the sets of symmetric, positive semidefinite, orthogonal, and invertible matrices, respectively.

Nonsmooth functions will play a central role in this work. Consequently, we will require some basic constructions of generalized differentiation, as described for example in the monographs \cite{RW98,Mord_1,cov_lift}.
Consider a function $f\colon\EEE\to\R\cup\{+\infty\}$ and a point $x$, with $f(x)$ finite. The {\em subdifferential} of $f$ at $x$, denoted by $\partial f(x)$, is the set of all vectors $\xi\in\EEE$ satisfying
\begin{equation}\label{eqn:subgrad_defn}
f(y)\geq f(x)+\langle \xi,y-x\rangle +o(\|y-x\|_2)\quad \textrm{as }y\to x.
\end{equation}
Here $o(r)$ denotes any function satisfying $o(r)/r\to 0$ as $r\to 0$.
Thus, a vector $\xi$ lies in the subdifferential $\partial f(x)$ precisely when the linear function $y\mapsto f(x) + \dotp{\xi, y- x}$  lower-bounds $f$ up to first-order around $x$. Standard results show that for a convex function $f$ the subdifferential $\partial f(x)$ reduces to the subdifferential in the sense of convex analysis, while for a differentiable function it consists only of the gradient: $\partial f(x)=\{\nabla f(x)\}$. For any closed convex functions $h\colon\YY\to\R$ and $g\colon\EEE\to\R\cup\{+\infty\}$ and $C^1$-smooth map $F\colon\EEE\to\YY$, the chain rule holds \cite[Theorem 10.6]{RW98}:
$$\partial (h\circ F+g)(x)=\nabla F(x)^*\partial h(F(x))+\partial g(x).$$ We say that a point $x$ is
\emph{stationary} for  $f$ whenever the inclusion $0 \in \partial f(x)$ holds. Equivalently, stationary points are precisely those that satisfy first-order necessary conditions for minimality: the directional derivative is nonnegative in every direction.

We say a that a random vector $X$ in $\R^d$ is {\em $\eta$-sub-gaussian} whenever $\EE \exp\left( \frac{\dotp{u, X}^2}{\eta^2} \right) \leq 2$ for all unit vectors $u\in \R^d$. The {\em sub-gaussian norm} of a real-valued random variable $X$ is defined to be $\|X\|_{\psi_2}=\inf\{t>0:\EE\exp\left( \frac{X^2}{t^2} \right) \leq 2\}$, while the {\em sub-exponential norm} is defined by $\|X\|_{\psi_1}=\inf\{t>0:\EE\exp\left( \frac{|X|}{t} \right) \leq 2\}$.

\section{Regularity conditions and algorithms (informal)}\label{sec:all_algos_reg_cond}

As outlined in Section~\ref{sec:intro}, we consider the low-rank matrix recovery problem within the framework of compositional optimization:
\begin{equation}
	\label{eqn:target_comp}
	\min_{x\in \cX} f(x):=h(F(x)),
\end{equation}
where $\cX\subset\EEE$ is a closed convex set, $h\colon\YY\to\R$ is a finite
convex function and $F\colon\EEE\to\YY$ is a $C^1$-smooth map. We depart from
previous work on low-rank matrix recovery by allowing $h$ to be nonsmooth. We primary focus on those algorithms
for~\eqref{eqn:target_comp} that converge rapidly (linearly or faster) when
initialized sufficiently close to the solution set.

Such rapid convergence guarantees rely on some regularity of the optimization problem. In the compositional setting, regularity conditions take the following appealing form.
\begin{assumption}\label{ass:bb}
	Suppose that the following properties hold for  the composite optimization problem~\eqref{eqn:target_comp} for some real numbers $\mu,\rho,L>0$.
	\begin{enumerate}
		\item {\bf (Approximation accuracy)} The convex models $f_x(y):=h(F(x)+\nabla F(x)(y-x))$ satisfy the estimate
		\[ |f(y)-f_x(y)|\leq \frac{\rho}{2}\|y-x\|^2_2\qquad \forall x,y\in \cX. \]
		\item {\bf (Sharpness)} The set of minimizers $\displaystyle \cX^*:=\argmin_{x\in\cX} f(x)$ is nonempty and we have
		\[ f(x) - \inf_{\cX} f \geq \mu \cdot  \dist\left(x,\cX^\ast \right) \qquad \forall  x \in \cX. \]
		\item {\bf (Subgradient bound)} The bound, $\sup_{\zeta\in \partial f(x)} \|\zeta\|_2 \leq L$, holds for any $x$ in the tube \[\mathcal{T}:=\left\{x\in \cX:\dist(x,\cX)\leq \frac{\mu}{\rho}\right\}. \]
	\end{enumerate}
\end{assumption}

As pointed out in the introduction, these three properties are quite intuitive: The approximation accuracy guarantees that the objective function $f$ is well approximated by the convex model $f_x$, up to a quadratic error relative to the basepoint $x$. Sharpness stipulates that the objective function should grow at least linearly as one moves away from the solution set. The subgradient bound, in turn, asserts that the subgradients of $f$ are bounded in norm by $L$ on the tube $\mathcal{T}$. In particular, this property is implied by Lipschitz continuity on $\mathcal{T}$.

\begin{lem}[Subgradient bound and Lipschitz continuity {\cite[Theorem 9.13]{RW98}}]\label{lem:Lipschitz_subgradient} {\hfill \\ }
Suppose a function $f\colon\EEE\to\R$ is $L$-Lipschitz on an open set
$U\subset\EEE$. Then the estimate $\sup_{\zeta\in \partial f(x)} \|\zeta\|_2
\leq L$ holds for all $x\in U$.
\end{lem}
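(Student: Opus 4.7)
The plan is to derive the bound by combining the local first-order definition of the subdifferential with the Lipschitz estimate; no deep machinery is needed. Fix $x \in U$ and any $\xi \in \partial f(x)$; if $\xi = 0$ the bound $\|\xi\|_2 \leq L$ is trivial, so assume $\xi \neq 0$.

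By definition of the subdifferential, as $y\to x$ we have
\[ f(y) \geq f(x) + \langle \xi, y-x\rangle + o(\|y-x\|_2). \]
Since $U$ is open and $f$ is $L$-Lipschitz on $U$, for $y$ sufficiently close to $x$ the point $y$ lies in $U$ as well, and thus
\[ f(y) - f(x) \leq L\|y-x\|_2. \]
Combining the two inequalities yields
\[ \langle \xi, y-x\rangle \leq L\|y-x\|_2 + o(\|y-x\|_2) \qquad \text{as } y\to x. \]

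The last step is to choose a direction that extracts the norm of $\xi$. Setting $y = x + t\,\xi/\|\xi\|_2$ for small $t>0$, the inequality becomes $t\|\xi\|_2 \leq L t + o(t)$. Dividing by $t$ and letting $t\downarrow 0$ gives $\|\xi\|_2 \leq L$. Since $\xi \in \partial f(x)$ was arbitrary, taking the supremum completes the argument. There is no genuine obstacle here; the only point to be careful about is ensuring that $y = x + t\xi/\|\xi\|_2 \in U$ for all sufficiently small $t$, which is immediate from openness of $U$.
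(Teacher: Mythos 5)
Your proof is correct, and it is more self-contained than what the paper does: the paper supplies no argument at all, simply citing \cite[Theorem 9.13]{RW98}, whereas you derive the bound directly from the paper's definition \eqref{eqn:subgrad_defn} of the subdifferential. Your argument is exactly the right elementary one for that definition: combine $f(y)\geq f(x)+\langle\xi,y-x\rangle+o(\|y-x\|_2)$ with $f(y)-f(x)\leq L\|y-x\|_2$ for $y\in U$ near $x$, test along $y=x+t\xi/\|\xi\|_2$, and let $t\downarrow 0$. The one point worth flagging is a mild mismatch of conventions: the cited result in \cite{RW98} concerns the limiting (Mordukhovich) subdifferential, whereas \eqref{eqn:subgrad_defn} as written defines the regular (Fr\'echet) subdifferential, and your argument bounds the latter. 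This costs nothing here: since $f$ is $L$-Lipschitz on all of the open set $U$, your bound holds at every point of $U$, and any limiting subgradient at $x\in U$ is a limit of regular subgradients at points of $U$, hence inherits the same bound $\|\zeta\|_2\leq L$. So your proof covers the statement as the paper uses it, with one additional (trivial) closure step if the stronger subdifferential is intended.
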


 The definition of the tube $\mathcal{T}$ might look unintuitive at first. Some thought, however, shows that it arises naturally since it provably contains no extraneous stationary points of the problem. In particular, $\mathcal{T}$ will serve as a basin of attraction of numerical methods; see the forthcoming Section~\ref{sec:conv_guarant} for details.
The following general principle has recently emerged
\cite{davis2018subgradient,
duchi_ruan_PR,davis2017nonsmooth,charisopoulos2019composite}.
 Under Assumption~\ref{ass:bb}, basic numerical methods converge rapidly when
initialized within the tube $\mathcal{T}$. Let us consider three such
procedures and briefly describe their convergence properties.
Detailed convergence guarantees are deferred to Section~\ref{sec:conv_guarant}.

\bigskip
\setcounter{algocf}{1}

	\begin{algorithm}[H]
		\KwData{$x_0 \in \RR^d$}

		{\bf Step $k$:} ($k\geq 0$)\\
		$\qquad$ Choose $\zeta_k \in \partial f(x_k)$. {\bf If} $\zeta_k=0$, then exit algorithm.\\
		$\qquad$ Set $\displaystyle x_{k+1}=\proj_{\cX}\left(x_{k} - \frac{f(x_k)-\min_{\mathcal{X}} f}{\|\zeta_k\|^2_2}\zeta_k\right)$.
		\caption{Polyak Subgradient Method}
		\label{alg:polyak}
	\end{algorithm}

		\begin{algorithm}[H]
		\KwData{Real $\lambda>0$ and $q\in (0,1)$.}

		{\bf Step $k$:}  $(k\geq 0)$ \\
		\qquad Choose $\zeta_k \in \partial g(x_k)$. {\bf If} $\zeta_k=0$, then exit algorithm.

		\qquad Set stepsize $\alpha_{k} = \lambda\cdot q^{k}$.\\
		\qquad Update iterate  $x_{k+1}=\proj_{\cX}\left(x_{k} - \alpha_k \frac{\zeta_k}{\norm{\zeta_k}_2}\right)$.\\

		\caption{Subgradient method with geometrically decreasing stepsize }
		\label{alg:geometrically_step}
	\end{algorithm}

		\begin{algorithm}[H]
			\KwData{Initial point $x_0 \in \RR^{d}$, proximal parameter $\beta>0$.}

			{\bf Step $k$:}  $(k\geq 0)$ \\
			\qquad Set  $\displaystyle x_{k+1} \leftarrow \argmin_{x \in \cX} \left\{h\left(F(x_k)+\nabla F(x_k)(x-x_k)\right) + \frac{\beta}{2} \|x-x_k\|^2_2\right\}.$

			\caption{Prox-linear algorithm}
			\label{alg:prox_lin}
		\end{algorithm}

\bigskip

Algorithm~\ref{alg:polyak} is the so-called Polyak subgradient method. In each iteration $k$, the method travels in the negative direction of a subgradient $\zeta_k\in \partial f(x_k)$, followed by a nearest-point projection onto $\cX$. The step-length is governed by the current functional gap $f(x_k)-\min_{\cX} f$. In particular, one must have the value $\min_{\cX} f$ explicitly available to implement the procedure. This value is sometimes known; case in point, the minimal value of the penalty formulations~\eqref{eqn:target_problem} and~\eqref{eqn:target_problem_asymm} for low-rank recovery is zero when the linear measurements are exact.  When the minimal value $\min_{\cX} f$ is not known, one can instead use Algorithm~\ref{alg:geometrically_step}, which replaces the step-length $( f(x_k)-\min_{\cX} f ) / \|\zeta_k\|_2 $ with a preset geometrically decaying sequence. Notice that the per iteration cost of both subgradient methods is dominated by a single subgradient evaluation and a projection onto $\cX$. Under appropriate parameter settings, Assumption~\ref{ass:bb} guarantees that both methods converge at a linear rate governed by the ratio $\frac{\mu}{L}$, when initialized within $\mathcal{T}$. The prox-linear algorithm (Algorithm~\ref{alg:geometrically_step}), in contrast, converges quadratically to the optimal solution, when initialized within $\mathcal{T}$. The caveat is that each iteration of the prox-linear method requires solving a strongly convex subproblem. Note that for low-rank recovery problems \eqref{eqn:target_problem} and \eqref{eqn:target_problem_asymm}, the size of the subproblems is proportional to the size of the factors and not the size of the matrices.

In the subsequent sections, we  show that Assumption~\ref{ass:bb} (or a close variant) holds with favorable parameters $\rho,\mu, L>0$ for common low-rank matrix recovery problems.

\section{Regularity under RIP}\label{sec:ass_model}

In this section, we consider the low-rank recovery
problems~\eqref{eqn:target_problem} and \eqref{eqn:target_problem_asymm}, and
show that restricted isometry properties of the  map $\cA(\cdot)$
naturally yield well-conditioned compositional formulations.\footnote{The guarantees we develop in the symmetric setting are similar to those in the recent preprint \cite{li2018nonconvex_robust}, albeit we obtain a sharper bound on $L$; the two sets of results were obtained independently.
	The guarantees for the asymmetric setting are different and are complementary to each other: we analyze the conditioning of the basic problem formulation \eqref{eqn:target_problem_asymm}, while \cite{li2018nonconvex_robust} introduces a regularization term  $ \| X^\top X - YY^\top \|_F$ that improves the basin of attraction for the subgradient method by a factor of the condition number of $M_{\sharp}$.}
The arguments are short and elementary, and yet apply to such important problems as
phase retrieval, blind deconvolution, and covariance matrix estimation.

Setting the stage, consider a linear map $\cA\colon\R^{d_1\times d_2}\to \R^m$, an arbitrary rank $r$ matrix $M_{\sharp}\in \R^{d_1\times d_2}$, and a vector $b\in\R^m$ modeling a corrupted estimate of the measurements $\cA(M_{\sharp})$. Recall that the goal of low-rank matrix recovery is to determine $M_{\sharp}$ given $ \cA $ and $b$. By the term {\em symmetric setting}, we mean that $M_{\sharp}$ is symmetric and positive semidefinite, whereas by {\em asymmetric setting} we mean that $M_{\sharp}$ is an arbitrary rank $r$ matrix. We will treat the two settings in parallel. In the symmetric setting, we use  $X_{\sharp}$ to denote any fixed $d\times r$ matrix for which the factorization $M_{\sharp}=X_{\sharp}X_{\sharp}^\top$ holds. Similarly, in the asymmetric case, $X_{\sharp}$ and $Y_{\sharp}$ denote any fixed $d_1\times r$ and $ r\times d_2$ matrices, respectively, satisfying $M_{\sharp}=X_{\sharp}Y_{\sharp}$.

We are interested in the set of all possible factorization of $M_{\sharp}$. Consequently, we will often appeal to the following representations:
\begin{align}
\{X\in \R^{d_1\times r}: XX^\top=M_{\sharp}\}&=\{X_{\sharp}R: R\in O(r)\},\label{eqn:opt_soln_set1}\\
\{(X,Y)\in \R^{d_1\times r}\times \R^{r\times d_2}: XY=M_{\sharp}\}&=\{(X_{\sharp}A,A^{-1}Y_{\sharp}): A\in GL(r)\}.\label{eqn:opt_soln_set2}
\end{align}
Throughout, we will let $\cD^*(M_{\sharp})$ refer to the set \eqref{eqn:opt_soln_set1} in the symmetric case and to \eqref{eqn:opt_soln_set2} in the asymmetric setting.

Henceforth, fix an arbitrary norm $\opnorm{\cdot}$ on $\R^{m}$. The following property, widely used in the literature on low-rank recovery, will play a central role in this section.
\begin{assumption}[Restricted Isometry Property (RIP)]\label{assump:RIP}
	There exist constants $\lr, \ur > 0$ such that for all matrices $W \in \RR^{d_1 \times d_2}$ of rank at most $2r$ the following bound holds:
	\begin{equation*}
	\lr \|W\|_F \leq \opnorm{\cA(W)} \leq \ur \|W\|_F.
	\end{equation*}
\end{assumption}

Assumption~\ref{assump:RIP} is classical and is satisfied in various important problems with the rescaled $\ell_2$-norm $\opnorm{\cdot}=\frac{1}{\sqrt{m}}\|\cdot\|_2$ and $\ell_1$-norm $\opnorm{\cdot}=\frac{1}{m}\|\cdot\|_1$.\footnote{In the latter case, RIP also goes by the name of Restricted Uniform Boundedness (RUB) \cite{ROP_matrix_recovery_rank_one}.} In Section~\ref{sec:all_examples} we discuss a number of such examples including matrix sensing under (sub-)Gaussian design, phase retrieval, blind deconvolution, and quadratic/bilinear sensing. We summarize the RIP properties for these examples in Table~\ref{table:nonlin} and refer the reader to Section~\ref{sec:all_examples} for the precise statements.

\begin{table}[ht]
	\centering
	\begin{tabular}{c c c c }
		\hline\hline
		Problem  & Measurement $\cA(M)_i$ & $(\lr,\ur)$ & Regime \\ [0.5ex]
		\hline
		(sub-)Gaussian sensing   & $\langle P_i,M\rangle$ & $(c,C)$ & $m \succsim \frac{r d}{(1-2\pfail)^2} \ln(1+\frac{1}{1-2\pfail})$ \\
		Quadratic sensing I &  $p_i^\top M p_i$ & $(c,C\sqrt{r})$ & $m \succsim \frac{ r^2 d}{(1 - 2 \pfail)^2}\ln(1+\frac{\sqrt{r}}{1-2\pfail})$\\
		Quadratic sensing II & $p_{i}^\top M p_{i}-\tilde{p}_{i}^\top M \tilde{p}_{i}$ & $(c,C)$ & $m \succsim \frac{ r d}{(1 - 2 \pfail)^2}\ln \left( 1 + \frac{1 }{1 - 2\pfail}\right)$\\ [1ex] 
		Bilinear sensing  &  $p_i^\top M q_i$ & $(c, C)$ & $m \succsim \frac{ r d}{(1 - 2 \pfail)^2}\ln \left( 1 + \frac{1 }{1 - 2 \pfail}\right)$\\
		\hline \hline
	\end{tabular}
	\caption{
		Common problems satisfying $\ell_1/\ell_2$ RIP in Assumption~\ref{assump:RIP}. The table summarizes the $\ell_1/\ell_2$ RIP for (sub-)Gaussian sensing, quadratic sensing (e.g., phase retrieval), and bilinear sensing (e.g., blind deconvolution) under standard (sub-)Gaussian assumptions on the data generating mechanism. In all cases, we set $ \protect\opnorm{\cdot}=\frac{1}{m}\|\cdot\|_1$ and assume for simplicity $d_1=d_2=d$. The symbols $c$ and $C$ refer to numerical constants, $\pfail$ refers to the proportion of corrupted measurements, $\ir$ is a constant multiple of $\left(1-2\pfail\right)$. See Section~\ref{sec:all_examples} for details. }
	\label{table:nonlin}
\end{table}

In light of Assumption~\ref{assump:RIP}, it it natural to take the norm $\opnorm{\cdot}$ as the penalty $h(\cdot)$ in \eqref{eqn:target_problem} and \eqref{eqn:target_problem_asymm} .
Then the symmetric problem \eqref{eqn:target_problem} becomes \begin{equation}\label{eqn:comp_prob_RIP}
\min_{X\in\R^{d\times r}} f(X):=\opnorm{\cA(XX^\top)-b},
\end{equation}
while the asymmetric formulation \eqref{eqn:target_problem_asymm} becomes
\begin{equation}\label{eqn:comp_prob_RIP_asym}
\min_{X\in \R^{d_1\times r},~ Y\in \R^{r\times d_2}}~ f(X,Y):=\opnorm{\mathcal{A}(XY)-b}.
\end{equation}

Our immediate goal is to show that under Assumption~\ref{assump:RIP}, the problems~\eqref{eqn:comp_prob_RIP}  and \eqref{eqn:comp_prob_RIP_asym} are well-conditioned in the sense of~Assumption~\ref{ass:bb}.
We note that the asymmetric setting is more nuanced that its symmetric counterpart because Assumption~\ref{ass:bb} can only
be guaranteed to hold on bounded sets. Nonetheless, as we discuss in
Section~\ref{sec:conv_guarant}, a localized version of Assumption~\ref{ass:bb}
suffices to guarantee rapid local convergence of subgradient and prox-linear
methods. In particular, our analysis of the local sharpness in the asymmetric
setting is new and illuminating; it shows that the regularization technique
suggested in \cite{li2018nonconvex_robust}  is not needed at all for the prox-linear method.
This conclusion contrasts with known techniques in the smooth setting, where regularization is often used.

\subsection{Approximation and Lipschitz continuity}
We begin with the following elementary proposition, which estimates the subgradient bound $L$ and the approximation modulus $\rho$ in the symmetric setting.   In what follows, we will use the expressions
\begin{align*}
f_X(Z)&=\opnorm{\cA(XX^\top+X(Z-X)^\top+(Z-X)X^\top)-b},\\
f_{(X, Y)}(\widehat X, \widehat Y)&=\opnorm{\cA(XY+X(\widehat Y-Y)+(\widehat X-X)Y) -
	b}.
\end{align*}

\begin{proposition}[Approximation accuracy and Lipschitz continuity (symmetric)]\label{prop:approx_acc_sym} {\hfill \\ }
	Suppose Assumption~\ref{assump:RIP} holds.
	Then for all $X,Z\in\R^{d\times r}$ the following estimates hold:
	\begin{align*}
	|f(Z) - f_X(Z)| &\leq \ur \|Z-X\|^2_F,\\
	|f(X)-f(Z)|&\leq  \ur\|X+Z\|_{op}\|X-Z\|_F.
	\end{align*}
\end{proposition}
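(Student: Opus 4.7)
The plan is to reduce both inequalities to the RIP bound in Assumption~\ref{assump:RIP} via the reverse triangle inequality for the norm $\opnorm{\cdot}$, after identifying the relevant residual as the image under $\cA$ of a low-rank matrix. The two necessary low-rank matrix identities are the ``second-order remainder''
\[
ZZ^\top - \bigl(XX^\top + X(Z-X)^\top + (Z-X)X^\top\bigr) = (Z-X)(Z-X)^\top
\]
(rank at most $r$) and the ``difference of squares''
\[
XX^\top - ZZ^\top = \tfrac12(X-Z)(X+Z)^\top + \tfrac12(X+Z)(X-Z)^\top
\]
(rank at most $2r$). Both identities are immediate expansions.

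For the first bound, I would apply the reverse triangle inequality to the definitions of $f(Z)$ and $f_X(Z)$, yielding
\[
|f(Z)-f_X(Z)| \;\leq\; \opnorm{\cA\bigl((Z-X)(Z-X)^\top\bigr)}.
\]
Since $(Z-X)(Z-X)^\top$ has rank at most $r\leq 2r$, Assumption~\ref{assump:RIP} gives the upper bound $\ur \|(Z-X)(Z-X)^\top\|_F$, and the submultiplicative estimate $\|AA^\top\|_F = \|A^\top A\|_F \leq \|A\|_{\op}\|A\|_F \leq \|A\|_F^2$ with $A = Z-X$ closes the first inequality.

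For the second bound, the reverse triangle inequality gives $|f(X)-f(Z)|\leq \opnorm{\cA(XX^\top - ZZ^\top)}$. Using the difference-of-squares identity above, $XX^\top - ZZ^\top$ is a sum of two rank-$r$ matrices, hence has rank at most $2r$, so Assumption~\ref{assump:RIP} yields
\[
\opnorm{\cA(XX^\top - ZZ^\top)} \;\leq\; \ur\,\|XX^\top - ZZ^\top\|_F.
\]
Applying the triangle inequality to the symmetrized expression and the bound $\|AB^\top\|_F \leq \|A\|_F \|B\|_{\op}$ (in both orderings) yields $\|XX^\top - ZZ^\top\|_F \leq \|X-Z\|_F\,\|X+Z\|_{\op}$, which completes the proof.

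There is no real obstacle: the argument is a two-line chain of reverse triangle inequality, RIP, and a standard submultiplicative bound. The only step that requires a moment of care is verifying that the relevant matrices have rank at most $2r$ so that Assumption~\ref{assump:RIP} applies, and choosing the symmetric splitting $\tfrac12(X-Z)(X+Z)^\top + \tfrac12(X+Z)(X-Z)^\top$ to obtain the sharp constant $\ur\|X+Z\|_{\op}\|X-Z\|_F$ rather than a looser bound.
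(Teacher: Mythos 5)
Your proposal is correct and follows essentially the same route as the paper's proof: reverse triangle inequality, the identity $ZZ^\top - XX^\top - X(Z-X)^\top-(Z-X)X^\top=(Z-X)(Z-X)^\top$, the symmetric splitting $XX^\top-ZZ^\top=\tfrac12\bigl((X+Z)(X-Z)^\top+(X-Z)(X+Z)^\top\bigr)$, Assumption~\ref{assump:RIP}, and the same submultiplicative Frobenius bounds. No gaps.
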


\begin{proof}
	To see the first estimate, observe that
	\begin{align}
	|f(Z) - f_X(Z)|&=\opnorm{\cA(ZZ^\top)-b}-\opnorm{\cA(XX^\top+X(Z-X)^\top+(Z-X)X^\top)-b} \notag\\
	&\leq \opnorm{\cA(ZZ^\top- XX^\top-X(Z-X)^\top-(Z-X)X^\top}  \label{eqn:rev_triangle_rip}\\
	&=  \opnorm{ \cA\big((Z-X)(Z-X)^\top \big) }\notag\\
	&\leq  \ur \left\| (Z-X)(Z-X)^\top \right\|_F \label{eqn:randorip} \\
	&\leq \ur  \|Z-X\|_F^2,\notag
	\end{align}
	where \eqref{eqn:rev_triangle_rip} follows from the reverse triangle inequality and \eqref{eqn:randorip} uses Assumption~\ref{assump:RIP}.
	Next, for any $X,Z\in \cX$ we successively compute:
	\begin{align}
	|f(X)-f(Z)|&=\left|\opnorm{\cA(XX^\top)-b} -\opnorm{\cA(ZZ^\top)-b}\right|\notag\\
	&\leq \opnorm[\big]{\cA(XX^\top-ZZ^\top)}\label{eqn:rev_tri2}\\
	&\leq \ur\|XX^\top-ZZ^\top\|_F\label{eqn:useassrip}\\
	&=\frac{\ur}{2} \|(X + Z)(X-Z)^\top + (X-Z)(X+Z)^\top\|_F\notag\\
	&\leq \ur\|(X+Z)(X-Z)\|_F\notag \\
	&\leq \ur\|X+Z\|_{op}\|X-Z\|_F,\notag
	\end{align}
	where \eqref{eqn:rev_tri2} follows from the reverse triangle inequality and \eqref{eqn:useassrip} uses  Assumption~\ref{assump:RIP}.
	The proof is complete.
\end{proof}

The estimates of $L$ and $\rho$ in the asymmetric setting are completely analogous; we  record them in the following proposition.

\begin{proposition}[Approximation accuracy and Lipschitz continuity (asymmetric)]\label{prop:approx_acc_asym}  {\hfill \\
	}
	Suppose Assumption~\ref{assump:RIP} holds. Then for all $X,\widehat X \in
	\RR^{d_1\times r}$ and $Y,\widehat Y \in \RR^{r\times d_2}$ the following estimates hold:
	\begin{align*}
	|f(\widehat X, \widehat Y) - f_{(X, Y)}(\widehat X, \widehat Y)| &\leq \frac{\ur}{2}\cdot
	\|(X,Y) - (\widehat X, \widehat Y)\|^2_F,\\
	|f(X,Y)-f(\widehat X,\widehat Y)|&\leq \tfrac{\ur\max\{\|X+\widehat{X}\|_{\rm op},\|Y+\widehat{Y}\|_{\rm op}\}}{\sqrt{2}}\cdot \|(X,Y)-(\widehat X,\widehat
	Y)\|_F.
	\end{align*}
\end{proposition}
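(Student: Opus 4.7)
The plan is to mirror the symmetric proof in Proposition~\ref{prop:approx_acc_sym}, replacing the key algebraic identity $ZZ^\top - XX^\top - X(Z-X)^\top - (Z-X)X^\top = (Z-X)(Z-X)^\top$ with its asymmetric analog, and then invoking the RIP upper bound $\opnorm{\cA(W)}\leq \ur \|W\|_F$ together with submultiplicativity of the Frobenius norm.

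For the first (approximation) estimate, I would begin by applying the reverse triangle inequality to the difference of the two $\opnorm{\cdot}$ quantities defining $f(\widehat X,\widehat Y)$ and $f_{(X,Y)}(\widehat X,\widehat Y)$. The argument inside $\cA(\cdot)$ becomes
\[
\widehat X\widehat Y - XY - X(\widehat Y - Y) - (\widehat X - X)Y = (\widehat X - X)(\widehat Y - Y),
\]
which is a rank-at-most-$r$ matrix, so RIP yields $\opnorm{\cA((\widehat X - X)(\widehat Y - Y))}\leq \ur\,\|(\widehat X - X)(\widehat Y - Y)\|_F$. I would then use $\|AB\|_F\leq \|A\|_F\|B\|_F$ followed by the AM-GM inequality $ab\leq \tfrac{1}{2}(a^2+b^2)$ to conclude
\[
\|(\widehat X-X)(\widehat Y-Y)\|_F\leq \tfrac{1}{2}\bigl(\|\widehat X-X\|_F^2+\|\widehat Y-Y\|_F^2\bigr)=\tfrac{1}{2}\|(X,Y)-(\widehat X,\widehat Y)\|_F^2,
\]
which gives the desired bound with constant $\tfrac{\ur}{2}$.

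For the second (Lipschitz) estimate, after the reverse triangle inequality and RIP one reduces to bounding $\|XY-\widehat X\widehat Y\|_F$. The key identity I would use is the symmetric-style splitting
\[
XY-\widehat X\widehat Y = \tfrac{1}{2}\bigl((X+\widehat X)(Y-\widehat Y)+(X-\widehat X)(Y+\widehat Y)\bigr),
\]
which one verifies by direct expansion. Applying the triangle inequality and the mixed Frobenius/operator bound $\|AB\|_F\leq \|A\|_{\rm op}\|B\|_F$ to each term, then factoring out $\max\{\|X+\widehat X\|_{\rm op},\|Y+\widehat Y\|_{\rm op}\}$, yields
\[
\|XY-\widehat X\widehat Y\|_F\leq \tfrac{1}{2}\max\{\|X+\widehat X\|_{\rm op},\|Y+\widehat Y\|_{\rm op}\}\bigl(\|X-\widehat X\|_F+\|Y-\widehat Y\|_F\bigr).
\]
The final step is the elementary inequality $a+b\leq \sqrt{2(a^2+b^2)}$, which converts the sum of norms into $\sqrt{2}\,\|(X,Y)-(\widehat X,\widehat Y)\|_F$; combining with RIP produces the claimed constant $\tfrac{\ur}{\sqrt{2}}$.

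There is no real obstacle here: both bounds are routine consequences of RIP plus the right algebraic identity. The only point worth isolating is the splitting identity for $XY-\widehat X\widehat Y$, since a naive application of the triangle inequality to $XY-\widehat X Y + \widehat X Y - \widehat X\widehat Y$ would involve $\|X\|_{\rm op}$ and $\|\widehat Y\|_{\rm op}$ individually rather than the symmetric quantities $\|X+\widehat X\|_{\rm op}$ and $\|Y+\widehat Y\|_{\rm op}$; this symmetrization is what ultimately gives the sharper constant quoted in the footnote comparing the bound to that of \cite{li2018nonconvex_robust}.
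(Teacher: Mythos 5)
Your proposal is correct and follows essentially the same route as the paper's proof: reverse triangle inequality plus the identity $\widehat X\widehat Y - XY - X(\widehat Y-Y)-(\widehat X-X)Y=(\widehat X-X)(\widehat Y-Y)$ and Young's inequality for the first bound, and the symmetrized splitting of $XY-\widehat X\widehat Y$ followed by $a+b\leq\sqrt{2(a^2+b^2)}$ for the second. No gaps.
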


\begin{proof}
	To see the first estimate, observe that
	\begin{align*}
	|f(\widehat X, \widehat Y) - f_{(X,Y)}(\widehat X, \widehat Y)|&=\left|\opnorm{\cA(\widehat X\widehat Y) - b } -
	\opnorm{\cA(XY+X(\widehat Y-Y)+(\widehat X-X)Y) -
		b}\right| \notag \\
	&\leq \opnorm{ \cA(\widehat X\widehat Y -XY-X(\widehat Y-Y)-(\widehat X-X)Y)}  \notag\\
	&= \opnorm{ \cA\big((X-\widehat X)(Y-\widehat Y) \big)
	}\notag\\
	&\leq  \ur \left\| (X-\widehat X)(Y-\widehat Y) \right\|_F  \\
	&\leq \frac{\ur}{2} \left(\|X-\widehat X\|_F^2 + \|Y - \widehat Y\|_F^2\right),
	\end{align*}
	where the last estimate follows from Young's inequality $2ab \leq a^2+b^2.$
	Next, we successively compute:
	\begin{align*}
	|f(X,Y)-f(\widehat X,\widehat Y)|
	&\leq \opnorm{\cA(XY-\widehat X\widehat Y)}
	\leq \ur\|XY-\widehat X\widehat Y\|_F\\
	&=\frac{\ur}{2} \|(X + \widehat X)(Y-\widehat Y)^\top + (X - \widehat X)(Y+\widehat Y)^\top\|_F\notag\\
	&\leq \frac{\ur\max\{\|X+\widehat{X}\|_{\rm op},\|Y+\widehat{Y}\|_{\rm
			op}\}}{2} (\|Y-\widehat Y\|_F+\|X-\widehat X\|_F).
	\end{align*}
	The result follows by noting that $ a+b \le \sqrt{2(a^2 + b^2)}$ for all $a,b\in \RR $.

\end{proof}

\subsection{Sharpness}
We next move on to estimates of the sharpness constant $\mu$. We first deal with the noiseless setting $b=\cA(M_{\sharp})$ in Section~\ref{sec:sharp_noiseless}, and then move on to the general case when the measurements are corrupted by outliers in Section~\ref{subsec:sharpwithnoise}.

\subsubsection{Sharpness in the noiseless regime}\label{sec:sharp_noiseless}
We begin with with the symmetric setting in the noiseless case  $b=\cA(M_{\sharp})$. By Assumption~\ref{assump:RIP}, we have the estimate
\begin{equation}\label{eqn:lower-bound_est}
f(X)=\opnorm{\cA(XX^\top)-b}=\opnorm{\cA(XX^\top-X_{\sharp}X_{\sharp}^{\top})}\geq \lr\|XX^\top-X_{\sharp}X_{\sharp}^{\top}\|_F.
\end{equation}
It follows that the set of minimizers $\argmin_{X\in\R^{d\times r}}f(X)$
coincides with the set of minimizers of the function $X\mapsto \|XX^\top-X_\sharp X^\top_\sharp\|_F$, namely
$$  \cD^*(M_{\sharp}) := \{X_{\sharp}R: R\in O(r)\}.$$
Thus to argue sharpness of $f$ it suffices to estimate the sharpness constant of the function $X\mapsto \|XX^\top-X_{\sharp} X^\top_{\sharp}\|_F$. Fortunately, this calculation was already done in \cite[Lemma 5.4]{proc_flow}.

\begin{proposition}[{\hspace{1sp}\cite[Lemma 5.4]{proc_flow}}]\label{prop:l2sharpness}
	For any matrices $X, Z\in \R^{d\times r}$, we have the  bound
	\begin{align*}
	\|X X^\top - Z Z^\top\|_F  \geq \sqrt{2(\sqrt{2}-1)}\sigma_r(Z)\cdot\min_{R\in O(r)} \|X-ZR\|_F.
	\end{align*}
	Consequently if Assumption~\ref{assump:RIP} holds in the noiseless setting $b=\cA(M_{\sharp})$, then the bound holds:
	\begin{align*}
	f(X)  \geq  \lr\sqrt{2(\sqrt{2}-1)\sigma_r(M_{\sharp})}\cdot\dist(X,\cD^*(M_\sharp)) \qquad \text{for all $X\in \R^{d\times r}$}.
	\end{align*}
\end{proposition}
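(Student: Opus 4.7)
The plan is to prove the first (deterministic) Procrustes-type inequality and then deduce the sharpness of $f$ by a direct application of RIP. First I would fix a minimizer $R^\ast \in O(r)$ of the orthogonal Procrustes problem $\min_{R\in O(r)}\|X-ZR\|_F$, and set $H := X-ZR^\ast$. Since replacing $Z$ by $ZR^\ast$ preserves $ZZ^\top$ and every singular value of $Z$, I may assume without loss of generality that $R^\ast = I$ and $X=Z+H$. The first-order optimality condition for $R^\ast$ then forces $Z^\top X$ to be symmetric positive semidefinite: if $Z^\top X = U\Sigma V^\top$ is its SVD, then the maximizer of $\tr(R^\top Z^\top X)$ over $R\in O(r)$ is $R^\ast = UV^\top$, so $R^\ast = I$ yields $Z^\top X = V\Sigma V^\top \succeq 0$. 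In particular, $Z^\top H = Z^\top X - Z^\top Z$ is symmetric.

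Next I would set $A := ZH^\top + HZ^\top$ and $B := HH^\top$, so that $XX^\top - ZZ^\top = A+B$, and write $\|A+B\|_F^2 = \|A\|_F^2 + 2\langle A,B\rangle + \|B\|_F^2$. Using the symmetry of $Z^\top H$ together with cyclicity of the trace, one verifies the identities
\[
\|A\|_F^2 = 2\|Z^\top H\|_F^2 + 2\tr(Z^\top Z \cdot H^\top H),\qquad \langle A,B\rangle = 2\tr\bigl((Z^\top H)(H^\top H)\bigr),
\]
together with $\|B\|_F^2 = \|H^\top H\|_F^2$. Since $Z^\top Z \succeq \sigma_r(Z)^2 I_r$, this also gives the key estimate $\tr(Z^\top Z\cdot H^\top H)\geq \sigma_r(Z)^2\|H\|_F^2$.

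The heart of the argument is controlling the cross term to extract the sharp constant $\sqrt{2(\sqrt 2 -1)}$. Bounding the cross term by Cauchy--Schwarz, $|\langle Z^\top H, H^\top H\rangle|\leq \|Z^\top H\|_F\|H^\top H\|_F$, and completing squares in the variable $\|Z^\top H\|_F$ yields
\[
\|A+B\|_F^2 \;\geq\; 2\sigma_r(Z)^2\|H\|_F^2 - \|H^\top H\|_F^2.
\]
This already delivers the claimed bound in the ``small $H$'' regime $\|H^\top H\|_F^2 \leq (4-2\sqrt 2)\sigma_r(Z)^2\|H\|_F^2$. In the complementary regime, $\|H\|_{\rm op}$ is necessarily at least a constant multiple of $\sigma_r(Z)$; here I would exploit the identity $A+B = \tfrac12 A + \tfrac12(A+2B)$, noting that $A+2B = XH^\top + HX^\top$ and that $X^\top H$ is symmetric (since $X^\top Z$ is). This produces a companion bound with $\sigma_r(X)$ in place of $\sigma_r(Z)$, controlled via Weyl's inequality. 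Balancing the two regimes yields the advertised constant; this tight balancing is the main technical obstacle.

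Finally, for the consequence under Assumption~\ref{assump:RIP}: the matrix $XX^\top - X_\sharp X_\sharp^\top$ has rank at most $2r$, so RIP gives $f(X) = \opnorm{\cA(XX^\top - X_\sharp X_\sharp^\top)} \geq \lr \|XX^\top - X_\sharp X_\sharp^\top\|_F$. Applying the Procrustes inequality with $Z = X_\sharp$, and using $\sigma_r(X_\sharp)^2 = \sigma_r(M_\sharp)$ together with $\dist(X, \cD^*(M_\sharp)) = \min_{R\in O(r)}\|X - X_\sharp R\|_F$, then yields the stated sharpness bound.
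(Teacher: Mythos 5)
The paper itself does not prove the first (Procrustes) inequality: it imports it verbatim from \cite[Lemma 5.4]{proc_flow}, and only the ``consequently'' clause is argued, via the display \eqref{eqn:lower-bound_est}. Your derivation of that second clause --- $XX^\top - X_\sharp X_\sharp^\top$ has rank at most $2r$, apply RIP, then invoke the Procrustes bound with $Z = X_\sharp$ and $\sigma_r(X_\sharp)^2 = \sigma_r(M_\sharp)$ --- is exactly the paper's reasoning and is correct.

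Your attempt to reprove the Procrustes inequality itself, however, has a genuine gap in the large-$H$ regime, and that regime is where the entire difficulty of the lemma lives. Everything through the bound $\|A+B\|_F^2 \ge 2\sigma_r(Z)^2\|H\|_F^2 - \|H^\top H\|_F^2$ checks out: the optimality condition does force $Z^\top X \succeq 0$, the three trace identities are correct, and the completed square is right, so the case $\|H^\top H\|_F^2 \le (4-2\sqrt{2})\,\sigma_r(Z)^2\|H\|_F^2$ is settled. But the complementary case cannot be closed as you propose. The companion expansion of $ZZ^\top - XX^\top$ around $X$ does yield $\|A+B\|_F^2 \ge 2\sigma_r(X)^2\|H\|_F^2 - \|H^\top H\|_F^2$, but Weyl's inequality only gives $\sigma_r(X) \ge \sigma_r(Z) - \|H\|_{\rm op}$, which is vacuous precisely in this regime since there $\|H\|_{\rm op} \ge \sqrt{4-2\sqrt{2}}\,\sigma_r(Z) > \sigma_r(Z)$ (and $\sigma_r(X)$ can even vanish). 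Worse, the constant $2(\sqrt{2}-1)$ is \emph{attained} inside this regime: take $d=2$, $r=1$, $Z = e_1$, $X = b\,e_2$ with $b^2 = \sqrt{2}-1$. Then $Z^\top X = 0 \succeq 0$, $\|H\|_F^2 = \sqrt{2}$, $\|H^\top H\|_F^2 = 2 > (4-2\sqrt{2})\sqrt{2}$, and $\|XX^\top - ZZ^\top\|_F^2 = 4-2\sqrt{2} = 2(\sqrt{2}-1)\,\sigma_r(Z)^2\|H\|_F^2$ with equality, while your companion bound evaluates to $2(\sqrt{2}-1)\sqrt{2} - 2 = 2-2\sqrt{2} < 0$. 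Any correct treatment of this regime must therefore be exactly tight at that configuration; balancing two crude bounds cannot achieve this, and you would need the finer argument of \cite[Lemma 5.4]{proc_flow} (or simply cite it, as the paper does).
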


We next consider the asymmetric case.
By exactly the same reasoning as before, the set of minimizers of $f(X,Y)$
coincides with the set of minimizers of the function
$(X,Y)\mapsto \|XY-X_{\sharp} Y_{\sharp}\|_F$, namely
$$  \cD^*(M_{\sharp}) := \{(X_{\sharp}A,A^{-1}Y_{\sharp}): A\in GL(r)\}.$$
Thus to argue sharpness of $f$ it suffices to estimate the sharpness constant of the function $(X,Y)\mapsto \|XY-X_{\sharp} Y_{\sharp}\|_F$.
Such a sharpness guarantee in the rank one case was recently shown in \cite[Proposition 4.2]{charisopoulos2019composite}.

\begin{proposition}[\hspace{1sp}{\cite[Proposition 4.2]{charisopoulos2019composite}}]
	\label{prop:l2sharpnessrank1as}
	Fix a rank $1$ matrix $M_{\sharp}\in\R^{d_1\times d_2}$ and a constant $\nu\geq 1$. Then for any $x\in \R^{d_1}$ and $w\in \R^{d_{2}}$ satisfying
	$$\|w\|_2,\|x\|_2\leq \nu\sqrt{\sigma_1(M_{\sharp})},$$
	the following estimate holds:
	\begin{align*}
	\|xw^\top - M_{\sharp}\|_F  \geq \frac{\sqrt{\sigma_1(M_{\sharp})}}{2\sqrt{2}(\nu+1)} \cdot
	\dist \big((x,w), \cD^*(M_{\sharp})\big).
	\end{align*}
\end{proposition}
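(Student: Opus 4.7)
My plan is to reduce the inequality to a concrete computation after a change of coordinates, and then to carry out a short case analysis. Since both $\|xw^\top - x_\sharp w_\sharp^\top\|_F$ and $\dist\bigl((x,w), \cD^*(M_\sharp)\bigr)$ are invariant under orthogonal changes of basis in $\R^{d_1}$ and $\R^{d_2}$, and since the solution set
\[
\cD^*(M_\sharp) = \{(\alpha x_\sharp, \alpha^{-1}w_\sharp^\top) : \alpha \in \R\setminus\{0\}\}
\]
is also invariant under the rescaling $(x_\sharp, w_\sharp) \leftrightarrow (\lambda x_\sharp, \lambda^{-1} w_\sharp)$, I would without loss of generality take $x_\sharp = \sigma e_1 \in \R^{d_1}$ and $w_\sharp = \sigma e_1 \in \R^{d_2}$, where $\sigma := \sqrt{\sigma_1(M_\sharp)}$. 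Decompose $x = a e_1 + \bar x$ and $w = b e_1 + \bar w$ with $\bar x, \bar w$ orthogonal to $e_1$.

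Next I would compute the objective explicitly. The difference $xw^\top - x_\sharp w_\sharp^\top$ splits as a sum of four rank-one pieces whose supports lie in mutually Frobenius-orthogonal blocks of $\R^{d_1 \times d_2}$ (the entry $(1,1)$, the rest of the first row, the rest of the first column, and the complementary block), yielding
\[
\|xw^\top - x_\sharp w_\sharp^\top\|_F^2 = (ab - \sigma^2)^2 + a^2 \|\bar w\|_2^2 + b^2 \|\bar x\|_2^2 + \|\bar x\|_2^2 \|\bar w\|_2^2.
\]
For the distance, every scalar $\alpha \neq 0$ supplies a competitor $(\alpha\sigma e_1,\ \sigma\alpha^{-1} e_1) \in \cD^*(M_\sharp)$ and therefore
\[
\dist^2\bigl((x,w), \cD^*(M_\sharp)\bigr) \leq (a - \alpha\sigma)^2 + (b - \sigma\alpha^{-1})^2 + \|\bar x\|_2^2 + \|\bar w\|_2^2.
\]
The crux of the argument is to select $\alpha$ adaptively and bound the right-hand side piecewise by the explicit formula for $f^2$.

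I would then split on whether $\max(|a|,|b|) \geq \sigma/2$ or not. In the \emph{nondegenerate} regime, take (WLOG) $\alpha = a/\sigma$, so that the $a$-term in the distance bound vanishes and $(b - \sigma\alpha^{-1})^2 = (ab - \sigma^2)^2/a^2 \leq 4(ab - \sigma^2)^2/\sigma^2$; the lower bounds $a^2\|\bar w\|_2^2 \geq (\sigma^2/4)\|\bar w\|_2^2$ and $\|\bar x\|_2^2(b^2 + \|\bar w\|_2^2) = \|\bar x\|_2^2 \|w\|_2^2$ in $f^2$ then control the $\|\bar w\|_2^2$ and $\|\bar x\|_2^2$ contributions of $\dist^2$, the latter once one makes a subsidiary dichotomy on whether $\|w\|_2 \gtrsim \sigma/(\nu+1)$; in the subcase when $\|w\|_2$ is tiny, the reverse triangle inequality $f(x,w) \geq \sigma^2 - \|x\|_2\|w\|_2 \gtrsim \sigma^2$ combined with the trivial bound $\dist \lesssim (\nu+1)\sigma$ closes the argument. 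In the \emph{degenerate} regime $\max(|a|,|b|) < \sigma/2$, one has $|ab| < \sigma^2/4$, so $(ab - \sigma^2)^2 \geq (3\sigma^2/4)^2$ and hence $f(x,w) \geq 3\sigma^2/4$; combined with the crude bound $\dist \leq O\bigl((\nu+1)\sigma\bigr)$ coming from $\alpha \in \{+1,-1\}$ and the norm constraints $\|x\|_2, \|w\|_2 \leq \nu\sigma$, this yields the ratio $f/\dist \gtrsim \sigma/(\nu+1)$.

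The principal obstacle, I expect, is twofold. Because the maps $\alpha \mapsto \alpha\sigma$ and $\alpha \mapsto \sigma/\alpha$ always share a sign, one cannot simultaneously align both coordinates of the competitor with $(x,w)$ when $ab < 0$; the sign of $\alpha$ must be chosen judiciously, and one must appeal to the fact that $|ab - \sigma^2| \geq \sigma^2$ in that regime to make up for the loss. Second, matching the precise constant $\mu = \sqrt{\sigma_1(M_\sharp)}/(2\sqrt{2}(\nu+1))$ requires careful bookkeeping through the transitional subregime where $\max(|a|,|b|)$ is near the case boundary $\sigma/2$ and where $\|w\|_2$ is small, combining the piecewise bounds without slack.
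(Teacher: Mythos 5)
Your argument is correct, and all the constants do close: with the threshold $\|w\|_2\geq \tfrac{\sigma}{2(\nu+1)}$ in your subsidiary dichotomy, the nondegenerate regime yields $\dist^2\leq \tfrac{4(\nu+1)^2}{\sigma^2}\,f^2$ (even better than needed), the small-$\|w\|_2$ subcase gives $f\geq \sigma^2/2$ against the crude bound $\dist\leq\sqrt{2}(\nu+1)\sigma$, which is exactly the stated constant, and the degenerate regime gives $f\geq 3\sigma^2/4$; moreover the sign obstruction you flag is in fact a non-issue, since choosing $\alpha=a/\sigma$ matches the first coordinate identically and leaves a second-coordinate error of exactly $(ab-\sigma^2)/a$, whose square is already a term-by-term piece of $f^2$ up to the factor $4/\sigma^2$. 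Note, however, that the paper does not prove this proposition at all: it is imported verbatim from \cite[Proposition 4.2]{charisopoulos2019composite}, so there is no in-paper proof to compare against. The closest in-paper analogue is the higher-rank sharpness theorem proved immediately afterwards, and that argument is genuinely different from yours: it picks the optimal alignment matrix $A\in GL(r)$, splits $XY-X_\sharp Y_\sharp$ into linear and quadratic parts, uses the first-order optimality condition for $A$ to show the cross term is nonnegative, and lower-bounds $\min\{\sigma_r(A),\sigma_r(A^{-1})\}$ from the locality hypothesis. That route generalizes to all ranks but only gives a local statement (it requires $\dist((X,Y),\cD^*(M_\sharp))$ itself to be small), whereas your explicit four-block coordinate computation is tied to the rank-one structure but delivers the global-on-the-norm-ball estimate with the sharp constant, which is precisely what this proposition asserts.
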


Notice that in contrast to the symmetric setting, the sharpness estimate is only valid on bounded sets. Indeed, this is unavoidable even in the setting $d_1=d_2=2$. To see this, define $M_{\sharp}=e_2e_2^\top$ and for any $\alpha>0$ set $x=\alpha e_1$ and $w=\tfrac{1}{\alpha} e_1$. It is routine to compute
$$\frac{\|xw^\top-M_{\sharp}\|_F}{\dist((x,w),\cD^*(M_{\sharp}))} =\sqrt{\frac{2}{2+\alpha^2+\frac{1}{\alpha^2}}}.$$
Therefore letting $\alpha$ tend to zero (or infinity) the quotient tends to zero.

The following corollary is a higher rank extension of Proposition~\ref{prop:l2sharpnessrank1as}.

\begin{thm}[Sharpness (asymmetric and noiseless)] Fix a constant $\nu>0$ and define $X_{\sharp}:=U\sqrt{\Lambda}$ and $Y_{\sharp}=\sqrt{\Lambda}V^\top$, where $M_{\sharp}=U\Lambda V^\top$ is any compact singular value decomposition of $M_{\sharp}$.
	Then for all $X\in\R^{d_1\times r}$ and $Y\in \R^{r\times d_2}$ satisfying
	\begin{equation}\label{eqn:local_neighborhood}
	\begin{aligned}
	\max\{\|X-X_{\sharp}\|_F,\|Y-Y_{\sharp}\|_F\}&\leq \nu\sqrt{\sigma_r(M_{\sharp})}\\
	\dist((X,Y),\cD^*(M_{\sharp}))&\leq \frac{\sqrt{\sigma_r(M_{\sharp})}}{1+2(1+\sqrt{2})\nu}
	\end{aligned},
	\end{equation}
	the estimate holds:
	$$\|XY-M_{\sharp}\|_F\geq \frac{\sqrt{\sigma_r(M_{\sharp})}}{2+4(1+\sqrt{2})\nu}\cdot\dist((X,Y),\cD^*(M_{\sharp})).$$
\end{thm}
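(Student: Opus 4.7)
I would extend the rank-one argument used in Proposition~\ref{prop:l2sharpnessrank1as} to general $r$ by combining a first-order optimality condition at the closest point of $\cD^*(M_\sharp)$ with an elementary algebraic decomposition of $XY - M_\sharp$; a balance identity arising from stationarity will make a crucial cross term non-negative, and the remaining quadratic remainder is controlled by the distance hypothesis.

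First, parametrize $\cD^*(M_\sharp) = \{(X_\sharp A, A^{-1}Y_\sharp) : A \in GL(r)\}$ and let $A^* \in GL(r)$ attain the minimum defining $\dist((X,Y), \cD^*(M_\sharp))$. Set $X^* := X_\sharp A^*$, $Y^* := (A^*)^{-1} Y_\sharp$, $\Delta_X := X - X^*$, and $\Delta_Y := Y - Y^*$. Differentiating $A \mapsto \tfrac12\|X - X_\sharp A\|_F^2 + \tfrac12\|Y - A^{-1}Y_\sharp\|_F^2$ at $A^*$ (using $d(A^{-1}) = -A^{-1}\,dA\,A^{-1}$) and setting the gradient to zero yields the balance identity
\[
(X^*)^\top \Delta_X = \Delta_Y (Y^*)^\top.
\]

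Next, expand $XY - M_\sharp = X^*\Delta_Y + \Delta_X Y^* + \Delta_X \Delta_Y$. Computing $\|X^*\Delta_Y + \Delta_X Y^*\|_F^2$ via the inner-product expansion yields the cross term $2\tr(\Delta_Y^\top (X^*)^\top \Delta_X Y^*)$; substituting the balance identity into the middle factor rewrites it as $2\|\Delta_Y (Y^*)^\top\|_F^2 \geq 0$. Dropping this non-negative cross term and applying $\|X^*\Delta_Y\|_F \geq \sigma_r(X^*)\|\Delta_Y\|_F$, $\|\Delta_X Y^*\|_F \geq \sigma_r(Y^*)\|\Delta_X\|_F$, along with the AM-GM bound $\|\Delta_X\Delta_Y\|_F \leq \tfrac12\dist^2$, produces
\[
\|XY - M_\sharp\|_F \geq \min\bigl(\sigma_r(X^*), \sigma_r(Y^*)\bigr)\cdot \dist - \tfrac12 \dist^2.
\]

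Finally, I would convert the singular-value factors into explicit $\nu$-dependent constants. Since $(X^*)^\top X^* = A^{*\top}\Lambda A^*$ and $Y^*(Y^*)^\top = (A^*)^{-1}\Lambda(A^*)^{-\top}$, elementary PSD comparisons give $\sigma_r(X^*) \geq \sigma_{\min}(A^*)\sqrt{\sigma_r(M_\sharp)}$ and $\sigma_r(Y^*) \geq \sqrt{\sigma_r(M_\sharp)}/\sigma_{\max}(A^*)$. To control the singular values of $A^*$, the key estimate is
\[
\sqrt{\sigma_r(M_\sharp)}\,\|A^* - I\|_{\op} \leq \|X_\sharp(A^* - I)\|_F = \|X^* - X_\sharp\|_F \leq \|X^* - X\|_F + \|X - X_\sharp\|_F \leq \dist + \nu\sqrt{\sigma_r(M_\sharp)},
\]
together with the symmetric bound for $(A^*)^{-1} - I$ obtained from the $Y$-side, followed by Weyl's inequality. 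The hypothesis $\dist \leq \sqrt{\sigma_r(M_\sharp)}/(1 + 2(1+\sqrt 2)\nu)$ is exactly what makes $\tfrac12\dist^2$ absorbable into the linear term $\min(\sigma_r(X^*), \sigma_r(Y^*))\cdot\dist$. The main obstacle is not conceptual but arithmetical: tracking these inequalities with sufficient precision to recover the exact constants $1 + 2(1+\sqrt 2)\nu$ in the hypothesis and $2 + 4(1+\sqrt 2)\nu$ in the sharpness constant. A minor subtlety is verifying that $A^*$ lies in the open manifold $GL(r)$—so that the first-order condition of the second paragraph is valid—which follows from the assumptions preventing $A^*$ from degenerating to a singular matrix.
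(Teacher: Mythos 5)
Your plan reproduces the paper's proof essentially step for step: the minimizing $A\in GL(r)$, the first-order balance identity $A^\top X_\sharp^\top(X-X_\sharp A)=(Y-A^{-1}Y_\sharp)Y_\sharp^\top A^{-\top}$, the three-term expansion of $XY-M_\sharp$ with the cross term made non-negative by that identity, the singular-value lower bounds $\sigma_r(X_\sharp A)\geq \sigma_{\min}(A)\sqrt{\sigma_r(M_\sharp)}$ and its counterpart for $A^{-1}Y_\sharp$, and the control of $\min\{\sigma_r(A),\sigma_r(A^{-1})\}$ via $\|X_\sharp(A-I)\|_F$ and the neighborhood hypotheses are exactly the ingredients used there. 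The only (minor) difference is that you explicitly flag attainment of the minimizer inside the open set $GL(r)$, a point the paper passes over silently.
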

\begin{proof}
	Define $\delta:=\frac{1}{1+2(1+\sqrt{2})\nu}$ and consider a pair of matrices $X$ and $Y$ satisfying \eqref{eqn:local_neighborhood}. Let $A \in GL(r)$ be an invertible matrix satisfying
	\begin{equation}\label{eqn:defineA}
	A \in \argmin_{A\in GL(r)} \left\{\|X - X_{\sharp} A\|_F^2 + \|Y -  A^{-1}Y_{\sharp}\|_F^2 \right\}.
	\end{equation}
	As a first step, we successively compute
	\begin{equation}\label{eqn:first_ineqs_est}
	\begin{aligned}
	&\|XY - X_{\sharp}Y_{\sharp}\|_F \\
	&= \| (X -  X_{\sharp}A) (A^{-1} Y_{\sharp}) +  X_{\sharp}A(Y - A^{-1}Y_{\sharp}) + (X - X_{\sharp}A)(Y - A^{-1} Y_{\sharp})\|_F\\
	&\geq \| (X -  X_{\sharp}A) (A^{-1} Y_{\sharp}) +  X_{\sharp}A(Y - A^{-1}Y_{\sharp})\|_F -\| (X - X_{\sharp}A)(Y - A^{-1} Y_{\sharp})\|_F\\
	&\geq \| (X -  X_{\sharp}A) (A^{-1} Y_{\sharp}) +  X_{\sharp}A(Y - A^{-1}Y_{\sharp})\|_F -\|X - X_{\sharp}A\|_F\cdot\|Y - A^{-1} Y_{\sharp}\|_F\\
	&\geq \| (X -  X_{\sharp}A) (A^{-1} Y_{\sharp}) +  X_{\sharp}A(Y - A^{-1}Y_{\sharp})\|_F -\frac{1}{2}(\|X - X_{\sharp}A\|_F^2+\|Y - A^{-1} Y_{\sharp}\|_F^2)\\
	&= \| (X -  X_{\sharp}A) (A^{-1} Y_{\sharp}) +  X_{\sharp}A(Y - A^{-1}Y_{\sharp})\|_F -\frac{1}{2}\dist^2((X,Y),\cD^*(M_{\sharp})) \\
	&\geq  \| (X -  X_{\sharp}A) (A^{-1} Y_{\sharp}) +  X_{\sharp}A(Y - A^{-1}Y_{\sharp})\|_F -\frac{\delta \sqrt{\sigma_r(M_{\sharp})}}{2}\cdot \dist((X,Y),\cD^*(M_{\sharp})).
	\end{aligned}
	\end{equation}
	We next aim to lower bound the first term on the right. To this end,  observe
	\begin{equation}\label{eqn:gettingthembounds}
	\begin{aligned}
	\| (X -  X_{\sharp}A) (A^{-1} Y_{\sharp}) &+  X_{\sharp}A(Y - A^{-1}Y_{\sharp})\|_F^2\\
	&= \|(X -  X_{\sharp}A) (A^{-1} Y_{\sharp})\|_F^2+ \|X_{\sharp}A(Y - A^{-1}Y_{\sharp})\|_F^2\\
	&\quad+2\tr((X -  X_{\sharp}A) (A^{-1} Y_{\sharp})(Y - A^{-1}Y_{\sharp})^\top(X_{\sharp}A)^\top).
	\end{aligned}
	\end{equation}
	We claim that the cross-term is non-negative. To see this, observe that first order optimality conditions in \eqref{eqn:defineA} directly imply that $A$ satisfies the equality
	$$
	A^\top X_{\sharp}^\top(X - X_{\sharp}A ) = (Y - A^{-1}Y_{\sharp})Y_{\sharp}^\top A^{-\top}.
	$$
	Thus we obtain
	\begin{align*}
	\trace( (X -  X_{\sharp}A) (A^{-1} Y_{\sharp}) (Y - A^{-1}Y_{\sharp})^\top(X_{\sharp}A)^\top)
	&=\trace( A^\top X_{\sharp}^\top(X -  X_{\sharp}A) (A^{-1} Y_{\sharp}) (Y - A^{-1}Y_{\sharp})^\top)\\
	&=\trace( (Y - A^{-1}Y_{\sharp} )Y_{\sharp}^\top A^{-T} (A^{-1} Y_{\sharp}) (Y - A^{-1}Y_{\sharp})^\top)\\
	&= \|(A^{-1}Y_{\sharp})(Y - A^{-1}Y_{\sharp} )\|_F^2.
	\end{align*}
	Therefore, returning to \eqref{eqn:gettingthembounds} we conclude that
	\begin{equation}\label{eqn:weird_ineqs_key}
	\begin{aligned}
	& \| (X -  X_{\sharp}A) (A^{-1} Y_{\sharp}) +  X_{\sharp}A(Y - A^{-1}Y_{\sharp})\|_F \\
	&\geq \sqrt{\|(X -  X_{\sharp}A) (A^{-1} Y_{\sharp})\|_F^2 +  \|X_{\sharp}A(Y - A^{-1}Y_{\sharp})\|_F^2}\\
	&\geq   \sqrt{\sigma_r(M_{\sharp})}\cdot\min\{\sigma_r(A^{-1}), \sigma_r(A)\}\cdot\dist((X,Y),\cD^*(M_{\sharp})).
	\end{aligned}
	\end{equation}
	Combining \eqref{eqn:first_ineqs_est} and \eqref{eqn:weird_ineqs_key}, we obtain
	\begin{equation}\label{eqn:prefinal_est}
	\|XY - M_{\sharp}\|_F\geq  \sqrt{\sigma_r(M_{\sharp})}\cdot\left(\min\{\sigma_r(A^{-1}), \sigma_r(A)\}-\frac{\delta}{2}\right)\cdot\dist((X,Y),\cD^*(M_{\sharp}))
	\end{equation}
	Finally, we estimate $\min\{\sigma_r(A^{-1}), \sigma_r(A)\}$. To this end, first note that
	\begin{equation}\label{eqn:distance_upper}
	\begin{aligned}
	\|X_{\sharp}-X_{\sharp}A\|_F+\|Y_{\sharp}-A^{-1}Y_{\sharp}\|_F&\leq \|X_{\sharp}-X\|_F+\|Y_{\sharp}-Y\|_F+\sqrt{2}\cdot\dist((X,Y),\cD^*(M_{\sharp}))\\
	&\leq 2\nu\sqrt{\sigma_r(M_{\sharp})}\cdot(1+\sqrt{2}).
	\end{aligned}
	\end{equation}
	We now aim to lower bound the left-hand-side in terms of $\min\{\sigma_r(A^{-1}), \sigma_r(A)\}$.
	Observe $$\|X_{\sharp}-X_{\sharp}A\|_F\geq\|X_{\sharp}-X_{\sharp}A\|_{\rm op}\geq   \sqrt{\sigma_r(M_{\sharp})}\cdot\|I-A\|_{\rm op}\geq \sqrt{\sigma_r(M_{\sharp})} \cdot(\sigma_{1}(A) -1).$$
	Similarly, we have
	\begin{align*}
	\|Y_{\sharp}-A^{-1}Y_{\sharp}\|_F\geq \|Y_{\sharp}-A^{-1}Y_{\sharp}\|_{\rm op}&\geq \sqrt{\sigma_r(M_{\sharp})}\cdot \|I-A^{-1}\|_{\rm op}\geq \sqrt{\sigma_r(M_{\sharp})}\cdot (\sigma_{1}(A^{-1}) -1).
	\end{align*}
	Hence using \eqref{eqn:distance_upper}, we obtain the estimate
	$$\min\{\sigma_r(A^{-1}),\sigma_r(A)\}\geq \left(1+2\nu\cdot(1+\sqrt{2})\right)^{-1}=\delta.$$
	Using this estimate in \eqref{eqn:prefinal_est} completes the proof.
\end{proof}

\subsubsection{Sharpness in presence of outliers}\label{subsec:sharpwithnoise}

The most important example of the norm $\opnorm{\cdot}$ for us is the scaled $\ell_1$-norm $\opnorm{\cdot}=\frac{1}{m}\|\cdot\|_1$. Indeed, all the examples in the forthcoming Section~\ref{sec:all_examples} will satisfy RIP relative to this norm. In this section, we will show that the $\ell_1$-norm has an added advantage. Under reasonable RIP-type conditions, sharpness will hold even if up to a half of the measurements are grossly corrupted.

Henceforth, for any set $\mathcal{I}$, define the restricted map $\cA_{\cI}:=\left(\cA(X)\right)_{i\in \cI}$. We interpret the set $\cI$ as corresponding to (arbitrarily) outlying measurements, while its complement corresponds to exact measurements.  Motivated by the work \cite{duchi_ruan_PR} on robust phase retrieval, we make the following assumption.

\begin{assumption}[$\cI$-outlier bounds]\label{assump:outlier}
	There exists a set $\cI \subset\{1, \ldots, m\}$ and a constant $\ir > 0$ such that the following hold.
	\begin{enumerate}[label = $\mathrm{(C\arabic*)}$]
		\item Equality holds $b_i=\cA(M_{\sharp})_i$ for all $i\notin \cI$.
		\item \label{item:assump:rip_outliers}  For all matrices $W$ of rank at most $2r$, we have
		\begin{equation}\label{eq:Upper_Bound}
		\ir \|W\|_F \leq  \frac{1}{m}\|\cA_{\mathcal{I}^c}(W)\|_1 -  \frac{1}{m}\|\cA_{\cI}(W)\|_1.
		\end{equation}
	\end{enumerate}
\end{assumption}
The assumption is simple to interpret. To elucidate the bound \eqref{eq:Upper_Bound}, let us suppose that the restricted maps $\cA_{\cI}$ and $\cA_{\cI^c}$ satisfy Assumption~\ref{assump:RIP} (RIP) with constants $\hat \lr$, $\hat \ur$ and $ \lr$, $ \ur$, respectively. Then for any rank $2r$ matrix $X$ we immediately deduce the estimate
$$\frac{1}{m}\|\cA_{\mathcal{I}^c}(W)\|_1 -  \frac{1}{m}\|\cA_{\cI}(W)\|_1\geq \left((1-\pfail)\lr-\pfail \hat \ur\right)\|W\|_F,$$
where $\pfail=\frac{|\cI|}{m}$ denotes the corruption frequency. In particular, the right-hand side is positive as long as the corruption frequency is below the threshold $\pfail <\frac{\lr}{\lr+\hat \ur}$.

Combining Assumption~\ref{assump:outlier}  with
Proposition~\ref{prop:l2sharpness} quickly yields sharpness of the objective
even in the noisy setting.

\begin{proposition}[Sharpness with outliers (symmetric)]\label{prop:noisy_sharp}
	Suppose that Assumption \ref{assump:outlier} holds.
	Then
	\begin{align*}
	f(X) - f(X_{\sharp})  \geq  \ir\left(\sqrt{2(\sqrt{2}-1)}\sigma_r(X_{\sharp})\right) \dist \big(X, \cD^\ast(M_{\sharp})\big) \qquad \text{for all $X\in\R^{d\times r}$}.
	\end{align*}
\end{proposition}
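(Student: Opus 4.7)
The plan is to reduce the claim to the noiseless sharpness bound of Proposition~\ref{prop:l2sharpness} by peeling off the outlier-supported part of the residual with the reverse triangle inequality, and then invoking the RIP-with-outliers bound \eqref{eq:Upper_Bound} to absorb it.

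First, set $W := XX^\top - M_\sharp$; note $W$ has rank at most $2r$ so condition \ref{item:assump:rip_outliers} of Assumption~\ref{assump:outlier} applies. Write $\Delta := b - \cA(M_\sharp)$, which is supported on $\cI$ by condition (C1). Then $f(X_\sharp) = \tfrac{1}{m}\|b-\cA(M_\sharp)\|_1 = \tfrac{1}{m}\|\Delta_\cI\|_1$, while splitting the norm defining $f(X)$ across $\cI$ and $\cI^c$ and using that $\Delta$ vanishes off $\cI$ gives
\[
f(X) \;=\; \frac{1}{m}\|\cA_{\cI^c}(W)\|_1 \;+\; \frac{1}{m}\|\cA_{\cI}(W)-\Delta_\cI\|_1.
\]

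Second, apply the reverse triangle inequality to the second summand:
\[
\|\cA_{\cI}(W)-\Delta_\cI\|_1 \;\geq\; \|\Delta_\cI\|_1 - \|\cA_\cI(W)\|_1.
\]
Plugging this back in and canceling $\tfrac{1}{m}\|\Delta_\cI\|_1 = f(X_\sharp)$ yields
\[
f(X) - f(X_\sharp) \;\geq\; \frac{1}{m}\|\cA_{\cI^c}(W)\|_1 - \frac{1}{m}\|\cA_\cI(W)\|_1 \;\geq\; \ir\,\|W\|_F,
\]
where the last inequality is exactly the $\cI$-outlier bound \eqref{eq:Upper_Bound} applied to $W$.

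Finally, invoke Proposition~\ref{prop:l2sharpness} with $Z = X_\sharp$ to bound
\[
\|XX^\top - X_\sharp X_\sharp^\top\|_F \;\geq\; \sqrt{2(\sqrt{2}-1)}\,\sigma_r(X_\sharp)\cdot \dist\!\big(X,\cD^\ast(M_\sharp)\big),
\]
and chain the two inequalities to conclude. There is no real obstacle here: the only delicate step is the bookkeeping for $\Delta$, and once we recognize that the outlier term in $f(X_\sharp)$ exactly cancels the slack produced by the reverse triangle inequality, the sharpness bound of Assumption~\ref{assump:outlier} and the deterministic factorization bound of Proposition~\ref{prop:l2sharpness} combine in a single line.
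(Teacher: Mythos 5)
Your proof is correct and follows essentially the same route as the paper: split the residual over $\cI$ and $\cI^c$, apply the reverse triangle inequality on the outlier-supported part so that $f(X_\sharp)$ cancels, invoke the $\cI$-outlier bound \eqref{eq:Upper_Bound}, and finish with Proposition~\ref{prop:l2sharpness}. The only differences are cosmetic (a sign convention on $\Delta$ and the order in which the cancellation is displayed).
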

\begin{proof}
	Defining $\Delta := \cA(X_{\sharp}X_{\sharp}^\top) - b$, we have the following bound:
	\begin{align*}
	m\cdot(f(X) - f(X_{\sharp}))  &=
	\|\cA\left(XX^\top - X_{\sharp} X_{\sharp}^\top \right)  + \Delta \|_1
	-
	\|\Delta\|_1  \\
	&=  \|\cA_{\cI^c}( XX^\top - X_{\sharp} X_{\sharp}^\top)\|_1 +
	\sum_{i \in \mathcal{I}} \left(|\left(\cA(XX^\top - X_{\sharp} X_{\sharp}^\top)\right)_i
	+\Delta_i|
	- |\Delta_i|\right)\\
	&\geq  \|\cA_{\cI^c}(XX^\top - X_{\sharp}
	X_{\sharp}^\top)\|_1 - \|\cA_{\cI}(XX^\top - X_{\sharp}
	X_{\sharp}^\top)\|_1\\
	&\geq \ir m\|XX^\top - X_{\sharp}X_{\sharp}^\top\|_F \geq \ir m\left(\sqrt{2(\sqrt{2}-1)}\sigma_r(X_{\sharp})\right)\dist \big(X, \cD^*(M_{\sharp})\big),
	\end{align*}
	where the first inequality follows by the reverse triangle inequality, the second inequality follows by Assumption~\ref{item:assump:rip_outliers}, and the final inequality follows from Proposition~\ref{prop:l2sharpness}. The proof is complete.
\end{proof}

The argument in the asymmetric setting is completely analogous.
\begin{proposition}[Sharpness with outliers (asymmetric)]
	Suppose that Assumption  \ref{assump:outlier} holds.
	Fix a constant $\nu> 0$ and define $X_{\sharp}:=U\sqrt{\Lambda}$ and $Y_{\sharp}=\sqrt{\Lambda}V^\top$, where $M_{\sharp}=U\Lambda V^\top$ is any compact singular value decomposition of $M_{\sharp}$. Then for all $X\in\R^{d_1\times r}$ and $Y\in \R^{r\times d_2}$ satisfying
	\begin{align*}
	\max\{\|X-X_{\sharp}\|_F,\|Y-Y_{\sharp}\|_F\}&\leq \nu\sqrt{\sigma_r(M_{\sharp})}\\
	\dist((X,Y),\cD^*(M_{\sharp}))&\leq \frac{\sqrt{\sigma_r(M_{\sharp})}}{1+2(1+\sqrt{2})\nu}
	\end{align*}
	The estimate holds:
	$$f(X,Y) - f(X_{\sharp},Y_{\sharp})\geq \frac{\ir \sqrt{\sigma_r(M_{\sharp})}}{2+4(1+\sqrt{2})\nu}\cdot\dist((X,Y),\cD^*(M_{\sharp})).$$
\end{proposition}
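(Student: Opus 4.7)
The plan is to mirror the argument of Proposition \ref{prop:noisy_sharp} (the symmetric outlier case), but apply the noiseless asymmetric sharpness result (the preceding theorem) in place of Proposition \ref{prop:l2sharpness}. The starting observation is that the error vector $\Delta := \cA(M_\sharp) - b$ is supported on $\cI$, by condition (C1). Writing $f(X,Y) - f(X_\sharp,Y_\sharp) = \tfrac{1}{m}\|\cA(XY-M_\sharp) + \Delta\|_1 - \tfrac{1}{m}\|\Delta\|_1$, I would split the first $\ell_1$ norm according to whether indices lie in $\cI$ or $\cI^c$; on $\cI^c$ the perturbation vanishes, and on $\cI$ the reverse triangle inequality yields a $-\|\cA_\cI(XY-M_\sharp)\|_1$ contribution. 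This reduces the question to lower-bounding
\[
\tfrac{1}{m}\|\cA_{\cI^c}(XY-M_\sharp)\|_1 - \tfrac{1}{m}\|\cA_{\cI}(XY-M_\sharp)\|_1.
\]

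The matrix $XY - M_\sharp$ has rank at most $2r$, so condition (C2) of Assumption \ref{assump:outlier} applies and lower-bounds the display above by $\ir \|XY - M_\sharp\|_F$. At this point I invoke the noiseless asymmetric sharpness theorem proven just above: whenever $(X,Y)$ lies in the local neighborhood defined by the two conditions of \eqref{eqn:local_neighborhood}, we get
\[
\|XY - M_\sharp\|_F \;\geq\; \frac{\sqrt{\sigma_r(M_\sharp)}}{2 + 4(1+\sqrt 2)\nu}\cdot \dist((X,Y),\cD^*(M_\sharp)).
\]
Chaining the two inequalities yields exactly the desired sharpness constant $\frac{\ir \sqrt{\sigma_r(M_\sharp)}}{2+4(1+\sqrt 2)\nu}$.

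I do not anticipate serious obstacles, since every ingredient has already been assembled: (C1) converts the outlier-corrupted residual into the clean residual on $\cI^c$, (C2) is precisely designed to dominate the worst-case $\ell_1$ cross-term from $\cI$, and the noiseless asymmetric sharpness theorem supplies the final geometric bound. The only bookkeeping point worth double-checking is that $XY - M_\sharp$ indeed has rank at most $2r$, which is immediate since $\mathrm{rank}(XY)\le r$ and $\mathrm{rank}(M_\sharp)=r$, so the hypothesis of (C2) is met. No regularity issues arise at the boundary of the local neighborhood, because the bound is stated as an inequality valid on that explicit set. Thus the proof is a short, three-line concatenation of the reverse triangle inequality, the outlier RIP bound, and the noiseless asymmetric sharpness theorem.
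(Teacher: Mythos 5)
Your proposal is correct and is exactly the argument the paper intends: the paper proves the symmetric case (Proposition~\ref{prop:noisy_sharp}) by this same decomposition and states that the asymmetric case is ``completely analogous,'' which amounts to replacing Proposition~\ref{prop:l2sharpness} with the noiseless asymmetric sharpness theorem, precisely as you do. All the ingredients check out, including the rank bound $\mathrm{rank}(XY - M_\sharp)\le 2r$ and the matching of your hypotheses with the neighborhood conditions \eqref{eqn:local_neighborhood}.
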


\section{General convergence guarantees for subgradient \& prox-linear methods}\label{sec:conv_guarant}

In this section, we formally develop convergence guarantees for
Algorithms~\ref{alg:polyak}, \ref{alg:geometrically_step}, and
\ref{alg:prox_lin} under Assumption~\ref{ass:bb}, and deduce performance guarantees in the RIP setting.
To this end, it will be useful to first consider a broader class than the compositional problems \eqref{eqn:target_comp}. We say that  a function $f\colon\EEE \rightarrow \RR\cup\{+\infty\}$ is {\em $\rho$-weakly convex}\footnote{Weakly convex functions also go by other names such as lower-$C^2$, uniformly prox-regularity, paraconvex, and semiconvex. We refer the reader to the seminal works on the topic \cite{fav_C2,prox_reg,Nurminskii1973,paraconvex,semiconcave}.} if the perturbed function $x \mapsto f(x) + \frac{\rho}{2}\|x\|^2_2$ is convex. In particular, a composite function $f=h\circ F$ satisfying the approximation guarantee
$$|f_{x}(y)-f(y)|\leq \frac{\rho}{2}\|y-x\|_2^2 \qquad \forall x,y$$
is automatically $\rho$-weakly convex \cite[Lemma 4.2]{eff_paquette}.
Subgradients of weakly convex functions are very well-behaved. Indeed, notice that in general the little-o term in the expression  \eqref{eqn:subgrad_defn} may depend on the basepoint $x$, and may therefore be nonuniform. The subgradients of weakly convex functions, on the other hand,  automatically satisfy a uniform type of lower-approximation property. Indeed, a lower-semicontinuous function $f$ is $\rho$-weakly convex if and only if it satisfies:
\[f(y) \geq f(x) + \dotp{\xi, y-x} - \frac{\rho}{2} \|y-x\|^2_2 \qquad \forall x,y \in \EEE, \xi\in\partial f(x).\]

Setting the stage, we introduce the following assumption.
\begin{assumption}\label{ass:aa}
	Consider the optimization problem,
	\begin{equation}\label{eqn:target_weak_conv}
	\min_{x\in \cX}~ f(x).
	\end{equation}
	Suppose that the following properties hold for some real numbers $\mu,\rho>0$.
	\begin{enumerate}
		\item {\bf (Weak convexity)} The set $\cX$ is closed and convex, while the function $f\colon\EEE\to\R$ is $\rho$-weakly convex.
		\item {\bf (Sharpness)} The set of minimizers $\displaystyle \cX^*:=\argmin_{x\in\cX} f(x)$ is nonempty and the following inequality holds:
		\[f(x) - \inf_{\cX} f \geq \mu \cdot  \dist\left(x,\cX^\ast \right) \qquad \forall x \in \cX. \]
	\end{enumerate}
\end{assumption}

In particular, notice that Assumption~\ref{ass:bb} implies Assumption~\ref{ass:aa}. Taken together, weak convexity and sharpness provide an appealing framework for deriving local rapid convergence guarantees for numerical methods. In this section, we specifically focus on two such procedures: the subgradient and prox-linear algorithms. We aim to estimate both the radius of rapid converge around the solution set and the rate of convergence. Note that both of the algorithms, when initialized at a stationary point could stay there for all subsequent iterations. Since we are interested in finding global minima, we therefore estimate the neighborhood of the solution set that has no extraneous stationary points.
This is the content of the following simple lemma.

\begin{lem}[{\hspace{1sp}\cite[Lemma 3.1]{davis2018subgradient}}]\label{lem:no_extr_stat}
	Suppose that Assumption~\ref{ass:aa} holds.
	Then the problem \eqref{eqn:target_weak_conv} has no stationary points $x$ satisfying
	$$0<\dist(x;\cX^*)<\frac{2\mu}{\rho}.$$
\end{lem}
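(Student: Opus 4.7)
The plan is to exploit the uniform subgradient inequality enjoyed by weakly convex functions and then combine it with sharpness in a single line.

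First, I would encode the constraint by passing to $g := f + \delta_{\cX}$, where $\delta_{\cX}$ is the convex indicator of $\cX$. Since $\cX$ is convex, $\delta_{\cX}$ is convex, so $g$ is again $\rho$-weakly convex. A point $x \in \cX$ is stationary for the constrained problem \eqref{eqn:target_weak_conv} precisely when $0 \in \partial g(x)$. The advantage of working with $g$ is that I can apply the clean uniform lower-approximation property of weakly convex functions recalled in the excerpt: for every $y \in \EEE$ and every $\xi \in \partial g(x)$,
\[
g(y) \;\geq\; g(x) + \langle \xi, y - x\rangle - \tfrac{\rho}{2}\|y - x\|_2^2.
\]

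Next, I would apply this inequality with $\xi = 0$ and $y = x^\ast := \proj_{\cX^\ast}(x)$. Since $x^\ast \in \cX^\ast \subset \cX$, both sides equal their $f$-values, and the inequality becomes
\[
f(x) - \inf_{\cX} f \;\leq\; \tfrac{\rho}{2}\,\dist(x,\cX^\ast)^2.
\]
The sharpness half of Assumption~\ref{ass:aa} provides the matching linear lower bound
\[
f(x) - \inf_{\cX} f \;\geq\; \mu\cdot \dist(x,\cX^\ast).
\]
Chaining the two estimates gives $\mu\,\dist(x,\cX^\ast) \leq \tfrac{\rho}{2}\,\dist(x,\cX^\ast)^2$. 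Dividing by $\dist(x,\cX^\ast)$ (which is legal exactly when this quantity is nonzero) yields $\dist(x,\cX^\ast) \geq 2\mu/\rho$, proving the contrapositive of the claim.

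There is no real obstacle here; the only point that requires a moment of care is the interpretation of ``stationary'' in the constrained setting, which is why I would fold $\delta_{\cX}$ into the objective at the outset so that the weak-convexity subgradient inequality can be applied globally without worrying about feasible directions.
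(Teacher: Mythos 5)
Your argument is correct and is essentially the standard proof of this fact (the paper itself only cites \cite[Lemma 3.1]{davis2018subgradient}, whose proof is exactly this two-line combination of the weak-convexity subgradient inequality at a stationary point, tested against $y=\proj_{\cX^*}(x)$, with the sharpness lower bound). Folding the indicator $\delta_{\cX}$ into the objective so that stationarity reads $0\in\partial(f+\delta_{\cX})(x)$ and the uniform lower-approximation inequality applies globally is the right way to handle the constraint, and no step is missing.
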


It is worthwhile to note that the estimate $\frac{2\mu}{\rho}$ of the radius in Lemma~\ref{lem:no_extr_stat} is tight \cite[Section 3]{charisopoulos2019composite}.
Hence, let us define for any $\gamma>0$ the tube
\begin{align}\label{eq:tube_region}
\cT_{\gamma} := \left\{ z \in \cX \colon \dist(z, \cX^\ast) \leq \gamma \cdot \frac{\mu}{\rho}\right\}.
\end{align}
Thus we would like to search for algorithms whose basin of attraction is a tube $\cT_{\gamma}$ for some numerical constant $\gamma>0$. Such a basin of attraction is in essence optimal.

The rate of convergence of the subgradient methods (Algorithms~\ref{alg:polyak} and \ref{alg:geometrically_step})  relies on the subgradient bound and the condition measure:
\begin{equation*}
L:=\sup\{\|\zeta\|_2:\zeta\in \partial f(x),x\in \cT_1\}\qquad \textrm{and} \qquad \tau:=\frac{\mu}{L}.
\end{equation*}
A straightforward argument \cite[Lemma 3.2]{davis2018subgradient} shows
$\tau\in [0,1]$. The following theorem appears as \cite[Theorem
4.1]{davis2018subgradient}, while its application to phase retrieval was
investigated in \cite{davis2017nonsmooth}.

\begin{thm}[Polyak subgradient method]\label{thm:qlinear}
	Suppose that Assumption~\ref{ass:aa} holds and fix a real number $\gamma \in (0,1)$.
	Then  Algorithm~\ref{alg:polyak} initialized at any point $x_0\in \mathcal{T}_{\gamma}$ produces iterates that converge $Q$-linearly to $\cX^*$, that is
	\begin{equation*} 
	\dist^2(x_{k+1},\cX^*) \leq \left(1-(1-\gamma) \tau^2\right)\dist^2(x_{k},\cX^*)\qquad \forall k\geq 0.
	\end{equation*}
\end{thm}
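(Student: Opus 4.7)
The plan is to follow the classical Polyak step-size analysis, enhanced by the twin ingredients of $\rho$-weak convexity (implied by Assumption~\ref{ass:aa}) and sharpness, while using the tube condition to control the subgradient norm. I would set $d_k := \dist(x_k,\cX^*)$ and $\bar x_k \in \proj_{\cX^*}(x_k)$. Because both $x_k$ and $\bar x_k$ lie in $\cX$ and $\proj_{\cX}$ is nonexpansive, expanding the square gives the standard one-step estimate
\[
\dist^2(x_{k+1},\cX^*) \le \|x_{k+1}-\bar x_k\|_2^2 \le d_k^2 - 2\,\frac{f(x_k)-\min_\cX f}{\|\zeta_k\|_2^2}\langle \zeta_k,\, x_k-\bar x_k\rangle + \frac{(f(x_k)-\min_\cX f)^2}{\|\zeta_k\|_2^2}.
\]

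Next I would convert the inner product into a functional quantity via weak convexity. Since $f$ is $\rho$-weakly convex, any subgradient satisfies $\langle \zeta_k, x_k - \bar x_k\rangle \ge f(x_k)-f(\bar x_k) - \frac{\rho}{2}d_k^2$. Substituting and writing $a_k := f(x_k)-f(\bar x_k) = f(x_k)-\min_\cX f$ simplifies the bound to
\[
\dist^2(x_{k+1},\cX^*) \le d_k^2 \;-\; \frac{a_k^2}{\|\zeta_k\|_2^2} \;+\; \rho\,\frac{a_k}{\|\zeta_k\|_2^2}\,d_k^2 \;=\; d_k^2 \;-\; \frac{a_k\bigl(a_k - \rho d_k^2\bigr)}{\|\zeta_k\|_2^2}.
\]

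Now sharpness and the tube hypothesis take over. Sharpness yields $a_k \ge \mu d_k$, and $x_k \in \cT_\gamma$ gives $d_k \le \gamma \mu/\rho$, whence $\rho d_k^2 \le \gamma \mu d_k \le \gamma a_k$. Therefore $a_k - \rho d_k^2 \ge (1-\gamma)a_k$ and hence $a_k(a_k - \rho d_k^2) \ge (1-\gamma)a_k^2 \ge (1-\gamma)\mu^2 d_k^2$. Combining this with the subgradient bound $\|\zeta_k\|_2 \le L$, which holds on $\cT_1 \supseteq \cT_\gamma$ by definition of $L$, produces the desired contraction
\[
\dist^2(x_{k+1},\cX^*) \le d_k^2 - (1-\gamma)\frac{\mu^2}{L^2}d_k^2 = \bigl(1-(1-\gamma)\tau^2\bigr)d_k^2.
\]

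The main subtlety I would need to handle carefully is keeping the iterates inside the tube so that the estimate $\|\zeta_k\|_2 \le L$ remains valid at every iteration; this is what forces us to work with $\cT_\gamma$ and not just $\cT_1$. I would resolve it by a simple induction: given $x_k \in \cT_\gamma$, the one-step bound above shows $d_{k+1} \le d_k \le \gamma\mu/\rho$, so $x_{k+1} \in \cT_\gamma \subseteq \cT_1$ and the argument can be iterated. A minor matter to verify along the way is that the algorithm does not terminate prematurely: if $\zeta_k = 0$ for some $k > 0$ while $d_k > 0$, then Lemma~\ref{lem:no_extr_stat} is contradicted since $d_k \le \gamma\mu/\rho < 2\mu/\rho$, so the division by $\|\zeta_k\|_2^2$ is well-defined at every step where $x_k \notin \cX^*$.
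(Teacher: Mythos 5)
Your proof is correct and follows essentially the same route as the paper: the paper cites this result to an external reference, but its Appendix~A.1 proof of the localized variant (Theorem~\ref{thm:polyak_local}) uses exactly your chain of inequalities---nonexpansiveness of $\proj_{\cX}$, expanding the square, the weak-convexity subgradient inequality, then sharpness and the tube bound to absorb the $\rho d_k^2$ term. Your handling of the induction keeping iterates in $\cT_\gamma$ and of premature termination via Lemma~\ref{lem:no_extr_stat} is also consistent with the paper's treatment.
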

The following theorem appears as \cite[Theorem 6.1]{davis2018subgradient}. The convex version of the result dates back to Goffin \cite{goffin}.

\begin{thm}[Geometrically decaying subgradient method] \label{thm:geometric} Suppose that Assumption~\ref{ass:aa} holds, fix a real number  $\gamma \in (0,1)$,  and suppose  $\tau  \le \sqrt{ \frac{1}{2-\gamma} }$. Set $\lambda:=\frac{\gamma \mu^2}{\rho L} \textrm{ and } q:=\sqrt{1-(1-\gamma) \tau^2} $ in Algorithm~\ref{alg:geometrically_step}.
	Then the iterates $x_k$ generated by Algorithm~\ref{alg:geometrically_step}, initialized at any point $x_0 \in \mathcal{T}_{\gamma}$, satisfy:
	\begin{equation*} 
	\dist^2(x_k;\cX^*) \leq \frac{\gamma^2 \mu^2}{\rho^2}
	\left(1-(1-\gamma)\tau^2\right)^{k}\qquad \forall k\geq 0.
	\end{equation*}
\end{thm}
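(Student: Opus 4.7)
The plan is to proceed by induction on $k$, simultaneously maintaining the estimate $\dist(x_k, \cX^*) \leq (\gamma\mu/\rho)\, q^k$ and the membership $x_k \in \mathcal{T}_\gamma$ (hence in $\mathcal{T}_1$), so that the subgradient bound $\|\zeta_k\|_2 \leq L$ is available at every step. The base case $k = 0$ is the initialization hypothesis. In the degenerate case $\zeta_k = 0$, stationarity of $x_k$ together with $\dist(x_k, \cX^*) \leq \gamma \mu/\rho < 2\mu/\rho$ and Lemma~\ref{lem:no_extr_stat} forces $x_k \in \cX^*$, and the estimate holds trivially from that index on.

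For the inductive step, write $D_k := \dist(x_k, \cX^*)$, fix $\bar x_k \in \proj_{\cX^*}(x_k)$, and apply nonexpansivity of $\proj_{\cX}$ to obtain
\[
D_{k+1}^2 \leq D_k^2 - \frac{2\alpha_k}{\|\zeta_k\|_2}\,\langle \zeta_k, x_k - \bar x_k\rangle + \alpha_k^2.
\]
The inner product is bounded below by combining $\rho$-weak convexity with sharpness (both parts of Assumption~\ref{ass:aa}):
\[
\langle \zeta_k, x_k - \bar x_k\rangle \;\geq\; f(x_k) - f(\bar x_k) - \tfrac{\rho}{2}D_k^2 \;\geq\; \mu D_k - \tfrac{\rho}{2}D_k^2.
\]
The inductive hypothesis gives $D_k \leq \gamma\mu/\rho < \mu/\rho$, so this lower bound is nonnegative; coupling with $\|\zeta_k\|_2 \leq L$ then yields the one-step recursion
\[
D_{k+1}^2 \leq D_k^2\Bigl(1 + \tfrac{\alpha_k \rho}{L}\Bigr) - \tfrac{2\alpha_k \mu}{L}\, D_k + \alpha_k^2.
\]

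It remains to check that the chosen stepsize $\alpha_k = (\gamma\mu^2/(\rho L))\, q^k$ collapses this recursion into the claimed geometric envelope. Under the rescaling $D_k =: (\mu/\rho)\, u_k\, q^k$, the inductive hypothesis becomes $u_k \in [0,\gamma]$ and the desired conclusion reduces to
\[
u_k^2\bigl(1 + \gamma \tau^2 q^k\bigr) - 2\gamma\tau^2 u_k + \gamma^2\tau^2 \;\leq\; \gamma^2 q^2.
\]
Using $q^k \leq 1$ bounds the left-hand side above by a convex quadratic in $u_k$, whose maximum over $[0,\gamma]$ is attained at an endpoint. Direct evaluation gives $\gamma^2\tau^2$ at $u_k = 0$ and exactly $\gamma^2 q^2 = \gamma^2(1 - (1-\gamma)\tau^2)$ at $u_k = \gamma$. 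The hypothesis $\tau^2 \leq 1/(2-\gamma)$ is precisely the condition $\gamma^2\tau^2 \leq \gamma^2 q^2$, which closes the induction.

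I expect the most delicate point to be the bookkeeping around the ``bad'' curvature term $(\alpha_k\rho/L)\, D_k^2$ in the recursion, whose natural decay rate is faster than the target envelope and which competes with the linear decrease term. The rescaling to $u_k$ makes the resulting quadratic structure transparent and reveals precisely how the stepsize scale $\lambda = \gamma\mu^2/(\rho L)$ and the upper bound on $\tau$ conspire to deliver the geometric rate $q$.
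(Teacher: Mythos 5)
Your proof is correct. The paper does not actually prove this statement itself---it is quoted from \cite[Theorem 6.1]{davis2018subgradient}---and your argument (induction on the bound $\dist(x_k,\cX^*)\leq \tfrac{\gamma\mu}{\rho}q^k$, the one-step recursion from nonexpansiveness of $\proj_{\cX}$, weak convexity plus sharpness to lower-bound $\langle \zeta_k, x_k-\bar x_k\rangle$, and then maximizing the resulting convex quadratic in the rescaled variable $u_k$ over the endpoints of $[0,\gamma]$) is essentially the standard proof from that reference, with the hypothesis $\tau\le\sqrt{1/(2-\gamma)}$ entering exactly where it must, namely to guarantee $\gamma^2\tau^2\le\gamma^2q^2$ at the endpoint $u_k=0$.
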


Let us now specialize to the composite setting under Assumption~\ref{ass:bb}.
Since Assumption~\ref{ass:bb} implies Assumption~\ref{ass:aa}, both subgradient
Algorithms~\ref{alg:polyak} and \ref{alg:geometrically_step} will enjoy a
linear rate of convergence when initialized sufficiently close the solution
set.  The following theorem, on the other hand, shows that the prox-linear
method will enjoy a quadratic rate of convergence (at the price of a higher
per-iteration cost). Guarantees of this type have appeared, for example, in
\cite{duchi_ruan_PR,burke_gauss,drusvyatskiy2018error}.

\begin{thm}[Prox-linear algorithm]
	Suppose Assumption~\ref{ass:bb} holds. Choose any $\beta \geq \rho$ in Algorithm~\ref{alg:prox_lin} and set $\gamma:=\rho/\beta$. Then Algorithm~\ref{alg:prox_lin} initialized at any point $x_0 \in \cT_{\gamma}$ converges quadratically:
	$$\dist(x_{k+1},\cX^*)\leq  \tfrac{\beta}{\mu}\cdot\dist^2(x_{k},\cX^*)\qquad \forall k\geq 0.$$
\end{thm}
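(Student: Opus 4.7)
The plan is to combine the three ingredients of Assumption~\ref{ass:bb}—approximation accuracy, sharpness, and the algorithmic definition of $x_{k+1}$—in the standard Gauss–Newton style argument, and then use the choice $\gamma = \rho/\beta$ to verify that the tube $\cT_\gamma$ is forward-invariant.

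Fix an iteration $k$ with $x_k \in \cT_\gamma$, and let $x^\star \in \proj_{\cX^\ast}(x_k)$, so that $\|x_k - x^\star\|_2 = \dist(x_k,\cX^\ast)$. The first step is the one-sided approximation bound applied at the pair $(x_k,x_{k+1})$:
\[
f(x_{k+1}) \;\leq\; f_{x_k}(x_{k+1}) + \tfrac{\rho}{2}\|x_{k+1}-x_k\|_2^2.
\]
Since $x_{k+1}$ minimizes $f_{x_k}(\cdot) + \tfrac{\beta}{2}\|\cdot - x_k\|_2^2$ over $\cX$, comparing with $x^\star \in \cX$ yields
\[
f_{x_k}(x_{k+1}) + \tfrac{\beta}{2}\|x_{k+1}-x_k\|_2^2 \;\leq\; f_{x_k}(x^\star) + \tfrac{\beta}{2}\|x^\star - x_k\|_2^2.
\]
Applying approximation accuracy a second time, now at $(x_k,x^\star)$, gives $f_{x_k}(x^\star) \leq f(x^\star) + \tfrac{\rho}{2}\|x^\star - x_k\|_2^2$. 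Combining the three inequalities and dropping the nonpositive term $-\tfrac{\beta-\rho}{2}\|x_{k+1}-x_k\|_2^2$ (here we use $\beta \geq \rho$) produces the clean bound
\[
f(x_{k+1}) - f(x^\star) \;\leq\; \tfrac{\beta+\rho}{2}\,\dist^2(x_k,\cX^\ast) \;\leq\; \beta \,\dist^2(x_k,\cX^\ast).
\]

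Now invoke sharpness: since $f(x^\star) = \inf_{\cX} f$ and $x_{k+1}\in\cX$, we obtain $\mu\cdot \dist(x_{k+1},\cX^\ast) \leq f(x_{k+1}) - f(x^\star)$, which yields the quadratic contraction
\[
\dist(x_{k+1},\cX^\ast) \;\leq\; \tfrac{\beta}{\mu}\dist^2(x_k,\cX^\ast).
\]
This is exactly the claimed inequality.

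The remaining issue is to make sure the argument can be iterated, i.e.\ that $x_{k+1} \in \cT_\gamma$ whenever $x_k \in \cT_\gamma$. This is where the choice $\gamma = \rho/\beta$ comes in: with $\dist(x_k,\cX^\ast) \leq \gamma\mu/\rho = \mu/\beta$, the quadratic bound above gives
\[
\dist(x_{k+1},\cX^\ast) \;\leq\; \tfrac{\beta}{\mu}\cdot\tfrac{\mu}{\beta}\cdot \dist(x_k,\cX^\ast) \;=\; \dist(x_k,\cX^\ast) \;\leq\; \tfrac{\gamma\mu}{\rho},
\]
so $x_{k+1}\in\cT_\gamma$ and the induction closes. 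There is no genuinely hard step; the only subtlety worth flagging is matching the invariant radius to the constant $\beta$ appearing in the contraction so that the tube is self-preserving, and verifying that $f(x^\star)$ may indeed be identified with $\inf_\cX f$ when applying sharpness to $x_{k+1}$.
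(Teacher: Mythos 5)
Your argument is correct and follows essentially the same route as the paper's (which proves the localized version, Theorem~\ref{thm:prox_lin_loc}, in Appendix~\ref{sec:app_convloc}): two applications of the approximation bound, comparison of the prox-subproblem objective at $x_{k+1}$ and at $x^\star\in\proj_{\cX^\ast}(x_k)$, and then sharpness, yielding $f(x_{k+1})-f(x^\star)\leq \beta\,\dist^2(x_k,\cX^\ast)$. Your closing observation that $\gamma=\rho/\beta$ makes $\cT_\gamma$ forward-invariant is also the same bookkeeping the paper relies on, so there is nothing to add.
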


We now apply the results above to the low-rank matrix factorization problem
under RIP, whose regularity properties were verified in
Section~\ref{sec:ass_model}. In particular, we have the following efficiency
guarantees of the subgradient and prox-linear methods applied to this problem.

\begin{corollary}[Convergence guarantees  under RIP (symmetric)]\label{cor:generic_conv_iso_sym}
	Suppose Assumptions~\ref{assump:RIP} and \ref{assump:outlier} are valid with $\opnorm{\cdot}=\frac{1}{m}\|\cdot\|_1$ and
	consider the optimization problem
	$$\min_{X\in \R^{d\times r}}~ f(X)=\frac{1}{m}\|\cA(XX^\top) - b\|_1.$$
	Choose any matrix $X_0$  satisfying
	$$\frac{\dist(X_0,\cD^\ast(M_\sharp))}{\sqrt{\sigma_r(M_{\sharp})}}\leq 0.2\cdot\frac{\ir}{\ur}.$$
	Define the condition number $\chi:=\sigma_1(M_{\sharp})/\sigma_r(M_{\sharp})$.			Then the following are true.
	\begin{enumerate}
		\item {\bf (Polyak subgradient)} Algorithm~\ref{alg:polyak} initialized at $X_0$  produces iterates that
		converge linearly to $\cD^\ast(M_\sharp)$, that is
		\begin{equation*}
		\frac{\dist^2(X_k,\cD^\ast(M_\sharp))}{\sigma_r(M_{\sharp})}\leq \left(1-\frac{0.2}{1+\frac{4\ur^2\chi}{\ir^2}}\right)^{k}\cdot \frac{\ir^2}{100\ur^2}\qquad \forall k\geq 0.
		\end{equation*}
		\item {\bf (geometric subgradient)}
		Algorithm~\ref{alg:geometrically_step} with
		$\lambda=\frac{0.81\ir^2\sqrt{\sigma_r(M_{\sharp})}}{2\ur (\ir+2\ur\sqrt{\chi})}$, $q=\sqrt{1-\frac{0.2}{1+4\ur^2\chi/\ir^2}}$
		and initialized at $X_0$ converges linearly:
		\begin{equation*}
		\frac{\dist^2(X_k,\cD^\ast(M_\sharp))}{\sigma_r(M_{\sharp})}\leq \left(1-\frac{0.2}{1+\frac{4\ur^2\chi}{\ir^2}}\right)^{k}\cdot \frac{\ir^2}{100\ur^2}\qquad \forall k\geq 0.
		\end{equation*}
		\item {\bf (prox-linear)}  Algorithm~\ref{alg:prox_lin} with $\beta = \rho$ and initialized at $X_0$ converges quadratically:
		$$\frac{\dist(X_k,\cD^\ast(M_\sharp)))}{\sqrt{\sigma_r(M_{\sharp})}}\leq 2^{-2^{k}}\cdot \frac{0.45\ir}{\ur}\qquad \forall k\geq 0.$$
	\end{enumerate}
\end{corollary}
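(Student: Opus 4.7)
The plan is to verify Assumption~\ref{ass:bb} for this problem with explicit constants and then invoke each of the three convergence results of Section~\ref{sec:conv_guarant} in turn. The approximation accuracy with modulus $\rho = 2\kappa_2$ is immediate from Proposition~\ref{prop:approx_acc_sym}, and the sharpness with $\mu = \kappa_3\sqrt{2(\sqrt{2}-1)\,\sigma_r(M_\sharp)}$ over the solution set $\cD^\ast(M_\sharp)$ follows from Proposition~\ref{prop:noisy_sharp}. Computing the ratio gives $\mu/\rho \geq 0.45\,(\kappa_3/\kappa_2)\sqrt{\sigma_r(M_\sharp)}$, so the initialization hypothesis $\dist(X_0,\cD^\ast(M_\sharp))/\sqrt{\sigma_r(M_\sharp)} \leq 0.2\,\kappa_3/\kappa_2$ places $X_0$ inside $\cT_\gamma$ for some $\gamma \leq 0.45 < 1$.

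The only remaining piece of Assumption~\ref{ass:bb} is the subgradient bound on $\cT_1$. For any $X \in \cT_1$, choosing a nearest point $Z \in \cD^\ast(M_\sharp)$ and using the triangle inequality yields $\|X\|_{\rm op} \leq \|Z\|_{\rm op} + \dist(X,\cD^\ast(M_\sharp)) \leq \sqrt{\sigma_1(M_\sharp)} + \mu/\rho$. The Lipschitz estimate in Proposition~\ref{prop:approx_acc_sym} therefore shows that $f$ is locally Lipschitz on $\cT_1$ with constant at most $2\kappa_2(\sqrt{\sigma_1(M_\sharp)} + \mu/\rho)$, and Lemma~\ref{lem:Lipschitz_subgradient} combined with the identity $2\kappa_2 \cdot \mu/\rho = \mu$ yields
\[ L \;\leq\; \mu + 2\kappa_2\sqrt{\sigma_1(M_\sharp)}. \]
Squaring and applying $(a+b)^2 \leq 2(a^2+b^2)$ together with $\mu^2 = 2(\sqrt{2}-1)\kappa_3^2\,\sigma_r(M_\sharp)$ gives $1/\tau^2 \leq 2 + 4(\sqrt{2}+1)\kappa_2^2\chi/\kappa_3^2$. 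Combined with $1-\gamma \geq 0.55$, this produces a lower bound on $(1-\gamma)\tau^2$ of the form $0.2/(1 + 4\kappa_2^2\chi/\kappa_3^2)$ after absorbing the resulting numerical slack.

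Parts (1) and (2) are then direct applications of Theorem~\ref{thm:qlinear} and Theorem~\ref{thm:geometric}, using the above $\gamma$ with the computed $\mu,\rho,L$; the side condition $\tau \leq 1/\sqrt{2-\gamma}$ required by Theorem~\ref{thm:geometric} is automatic since $\kappa_2 \geq \kappa_3$ (which follows immediately by chaining Assumption~\ref{assump:outlier} against Assumption~\ref{assump:RIP}) and $\chi \geq 1$ together force $L/\mu$ to exceed $\sqrt{2-\gamma}$. For Part (3) I would take $\beta = \rho$ in the prox-linear convergence theorem, so that its basin $\cT_{\rho/\beta}$ equals $\cT_1$ and contains $X_0$. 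The theorem's guarantee $\dist(X_{k+1},\cD^\ast(M_\sharp)) \leq (\rho/\mu)\,\dist^2(X_k,\cD^\ast(M_\sharp))$ implies that the rescaled quantity $u_k := \dist(X_k,\cD^\ast(M_\sharp))/(\mu/\rho)$ satisfies $u_{k+1} \leq u_k^2$; since $u_0 \leq 0.45 < 1/2$, one has $u_k \leq 2^{-2^k}$, and multiplying through by $\mu/\rho$ recovers the claimed bound. The main obstacle throughout is purely bookkeeping: juggling the numerical constants $\sqrt{2(\sqrt{2}-1)}$, $(a+b)^2 \leq 2a^2+2b^2$, and $1/\sqrt{2-\gamma}$ so that the final rate appears in the clean form $1 - 0.2/(1 + 4\kappa_2^2\chi/\kappa_3^2)$ stated in the corollary, rather than with uglier dimensionless constants.
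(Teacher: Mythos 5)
Your proposal is correct and is exactly the assembly the paper intends (the corollary is stated without an explicit proof, as a direct consequence of Propositions~\ref{prop:approx_acc_sym} and \ref{prop:noisy_sharp} feeding $\rho=2\ur$, $\mu=\ir\sqrt{2(\sqrt{2}-1)\sigma_r(M_\sharp)}$ and $L\leq \mu+2\ur\sqrt{\sigma_1(M_\sharp)}$ into Theorems~\ref{thm:qlinear}, \ref{thm:geometric}, and the prox-linear theorem); your arithmetic reducing $(1-\gamma)\tau^2$ to the clean form $0.2/(1+4\ur^2\chi/\ir^2)$ and your handling of the side condition $\tau\leq 1/\sqrt{2-\gamma}$ via $\ur\geq\ir$ and $\chi\geq 1$ both check out. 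The only caveat is that your (correct) computation yields the prefactor $\gamma^2\mu^2/\rho^2=(0.2\,\ir/\ur)^2\sigma_r(M_\sharp)$, i.e.\ $\tfrac{4\ir^2}{100\ur^2}$ rather than the stated $\tfrac{\ir^2}{100\ur^2}$ (which cannot hold at $k=0$ under the stated initialization), and similarly the stated $\lambda$ carries a $0.81$ where $\gamma\mu^2/(\rho L)$ gives roughly $0.36$ --- these appear to be slips in the corollary's bookkeeping rather than gaps in your argument.
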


\addtocontents{toc}{\protect\setcounter{tocdepth}{1}}
\subsection{Guarantees under local regularity}\label{sec:asymetric_algorithm}
\addtocontents{toc}{\protect\setcounter{tocdepth}{2}}

As explained in Section~\ref{sec:ass_model}, Assumptions~\ref{ass:bb} and \ref{ass:aa} are reasonable in the symmetric setting under RIP.  The asymmetric setting is more nuanced. Indeed, the solution set is unbounded, while uniform bounds on the sharpness and subgradient norms are only valid on bounded sets. 
One remedy, discussed in \cite{li2018nonconvex_robust}, is to modify the optimization formulation by introducing a form of regularization:
$$\min_{X,Y}~ \opnorm{\cA(XY)-y}+\lambda\|X^\top X-YY^\top\|_F.$$
In this section, we take a different approach that requires no modification to
the optimization problem nor the algorithms. The key idea is to show that if
the problem is well-conditioned only on a neighborhood of a particular
solution, then the iterates will remain in the neighborhood provided the initial point is sufficiently close to the solution.  In fact, we will see that the iterates themselves must
converge.
The proofs of the results in this section (Theorems~\ref{thm:polyak_local}, \ref{thm:geo_desc_loc}, and \ref{thm:prox_lin_loc}) are deferred to Appendix~\ref{sec:app_convloc}.

We begin with the following localized version of Assumption~\ref{ass:aa}.

\begin{assumption}\label{ass:target_weak_conv_loc}
	Consider the optimization problem,
	\begin{equation}\label{eqn:target_weak_conv_loc}
	\min_{x\in \cX}~ f(x).
	\end{equation}
	Fix an arbitrary point $\bar x\in \cX^*$ and suppose that the following properties hold for some real numbers $\epsilon,\mu,\rho>0$.
	\begin{enumerate}
		\item {\bf (Local weak convexity)} The set $\cX$ is closed and convex, and the bound  holds:
		$$f(y)\geq f(x)+\langle \zeta,y-x\rangle -\frac{\rho}{2}\|y-x\|^2_2\qquad \forall x,y \in \cX\cap B_{\epsilon}(\bar x), \zeta\in \partial f(x).$$
		\item {\bf (Local sharpness)} The inequality holds:
		\[f(x) - \inf_{\cX} f \geq \mu \cdot  \dist\left(x,\cX^\ast \right) \qquad \forall x\in \cX \cap B_{\epsilon}(\bar x). \]
	\end{enumerate}
\end{assumption}

The following two theorems establish convergence guarantees of the two subgradient methods under Assumption~\ref{ass:target_weak_conv_loc}.	Abusing notation slightly, we define the local quantities:
$$L:=\sup_{\zeta\in \partial f(x)}\{\|\zeta\|_2: x\in \cX\cap B_{\epsilon}(\bar x)\}\quad \textrm{and}\quad\tau:=\frac{\mu}{L}.$$

\begin{thm}[Polyak subgradient method (local regularity)]\label{thm:polyak_local}
	Suppose Assumption~\ref{ass:target_weak_conv_loc} holds and
	fix  an arbitrary point $x_0\in B_{\epsilon/4}(\bar x)$ satisfying
	$$\dist(x_0,\cX^*)\leq \min\left\{\frac{3\epsilon\mu^2}{64 L^2},\frac{\mu}{2\rho}\right\}.$$ Then Algorithm~\ref{alg:polyak} initialized at $x_0$ produces iterates $x_k$ that always lie in $B_{\epsilon}(\bar x)$ and satisfy
	\begin{equation}\label{eqn:subgrad_lin_loca}
	\dist^2(x_{k+1},\cX^*)\leq \left(1-\tfrac{1}{2}\tau^2\right)\dist^2(x_{k},\cX^*), \qquad \textrm{for all } k\geq 0.
	\end{equation}
	Moreover the iterates converge to some point $x_{\infty}\in\cX^*$ at the R-linear rate
	$$\|x_{k}-x_{\infty}\|_2\leq \frac{16L^3 \cdot \dist(x_0,\cX^*)}{3\mu^3}\cdot \left(1-\tfrac{1}{2}\tau^2\right)^{\frac{k}{2}} \qquad \textrm{for all } k\geq 0.$$
\end{thm}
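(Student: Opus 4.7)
My plan is to adapt the standard Polyak subgradient analysis (e.g.\ \cite[Theorem 4.1]{davis2018subgradient}) to the local setting by maintaining two invariants inductively: that each iterate stays in $B_{\epsilon}(\bar x)$ so the local weak convexity and sharpness apply, and that $\dist(x_k,\cX^*)$ shrinks at rate $q:=\sqrt{1-\tau^2/2}$. For the first step, let $r_k := \dist(x_k,\cX^*)$, pick $x_k^\ast\in \proj_{\cX^*}(x_k)$, and note that $\|x_k^\ast-\bar x\|\le r_k+\|x_k-\bar x\|\le 2\|x_k-\bar x\|$, so both $x_k$ and $x_k^*$ lie in $B_{\epsilon}(\bar x)$ whenever $x_k$ lies in the ball of radius $\epsilon/2$ about $\bar x$. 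Expanding the projected Polyak update and using local weak convexity $\langle \zeta_k, x_k-x_k^\ast\rangle \ge f(x_k)-f^\ast - \tfrac{\rho}{2}r_k^2$ together with sharpness $f(x_k)-f^\ast\ge \mu r_k$ and $\|\zeta_k\|\le L$, one obtains by a routine manipulation
\[\|x_{k+1}-x_k^\ast\|^2\le r_k^2 - \tfrac{(f(x_k)-f^\ast)(f(x_k)-f^\ast-\rho r_k^2)}{\|\zeta_k\|^2}\le (1-\tau^2/2)\,r_k^2,\]
provided $r_k\le \mu/(2\rho)$, which yields $r_{k+1}\le q\,r_k$.

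The main obstacle is ensuring that $x_k$ does not drift out of $B_{\epsilon}(\bar x)$, since the limit of $\{x_k\}$ need not be $\bar x$. To control drift I use the crude bound $\|x_{k+1}-x_k\|\le \|x_{k+1}-x_k^\ast\|+\|x_k^\ast-x_k\|\le (q+1)r_k\le 2r_k$, which avoids any lower bound on $\|\zeta_k\|$. Summing telescopically,
\[\|x_k-\bar x\|\le \|x_0-\bar x\|+2\sum_{j=0}^{k-1}r_j\le \|x_0-\bar x\|+\frac{2r_0}{1-q}\le \|x_0-\bar x\|+\frac{8 L^2 r_0}{\mu^2},\]
where I use $1-q\ge \tau^2/4$ (a consequence of $1-\sqrt{1-t}\ge t/2$). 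With the hypotheses $\|x_0-\bar x\|\le \epsilon/4$ and $r_0\le 3\epsilon\mu^2/(64L^2)$ this gives $\|x_k-\bar x\|\le \tfrac{\epsilon}{4}+\tfrac{3\epsilon}{8}<\tfrac{\epsilon}{2}$; together with $r_0\le \mu/(2\rho)$ this is enough to close the induction and in particular to guarantee $x_k,x_k^\ast\in B_{\epsilon}(\bar x)$ for all $k$, so the one-step contraction applies uniformly and gives \eqref{eqn:subgrad_lin_loca}.

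Finally, for R-linear convergence of the iterates I use the same estimate $\|x_{k+1}-x_k\|\le 2r_k\le 2r_0 q^k$. Since this is summable, $\{x_k\}$ is Cauchy and converges to some $x_\infty$; passing the contraction $r_{k+1}\le q r_k$ to the limit forces $r_\infty=0$, so $x_\infty\in \cX^*$. The tail estimate
\[\|x_k-x_\infty\|\le \sum_{j\ge k}\|x_{j+1}-x_j\|\le \frac{2r_0}{1-q}\,q^k\le \frac{8L^2 r_0}{\mu^2}\,q^k\]
essentially yields the stated R-linear rate; a sharper per-step bound $\|x_{k+1}-x_k\|\le (f(x_k)-f^*)/\|\zeta_k\|$ combined with the Lipschitz upper bound $f(x_k)-f^\ast\le L r_k + \tfrac{\rho}{2}r_k^2$ and the lower bound $\|\zeta_k\|\ge \mu/2$ (derived from $\langle \zeta_k,x_k-x_k^\ast\rangle\ge \mu r_k - \tfrac{\rho}{2}r_k^2$ when $r_k\le \mu/(2\rho)$) recovers the exact constant $16L^3/(3\mu^3)$ quoted in the statement.
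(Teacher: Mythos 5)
Your strategy is the same as the paper's: induct on the iterates remaining in a ball around $\bar x$ so that the local assumptions apply, run the standard Polyak one-step contraction, control drift by summing step lengths geometrically, and read off the R-linear rate from the tail of that sum. The one-step contraction and the tail argument are fine. However, there is an arithmetic error exactly where you close the induction: $\tfrac{\epsilon}{4}+\tfrac{3\epsilon}{8}=\tfrac{5\epsilon}{8}$, which is \emph{not} less than $\tfrac{\epsilon}{2}$. Since your invariant is $\|x_k-\bar x\|\le \epsilon/2$ (needed so that $\|x_k^\ast-\bar x\|\le 2\|x_k-\bar x\|\le\epsilon$ puts the projection inside $B_\epsilon(\bar x)$), the induction as written does not close. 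The culprit is the crude step bound $\|x_{k+1}-x_k\|\le 2r_k$: it loses a factor $3/2$ relative to what the hypothesis $r_0\le \tfrac{3\epsilon\mu^2}{64L^2}$ is calibrated for.

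There are two easy repairs. The paper's route is to use the sharper per-step bound $\|x_{k+1}-x_k\|\le \frac{f(x_k)-\min_{\cX} f}{\|\zeta_k\|}\le \langle \tfrac{\zeta_k}{\|\zeta_k\|},x_k-x_k^\ast\rangle+\tfrac{\rho}{2\|\zeta_k\|}r_k^2\le \tfrac{4}{3}r_k$, where the last step uses $r_k\le \mu/(2\rho)$ and the subgradient lower bound $\|\zeta_k\|\ge \tfrac{3}{4}\mu$ (Lemma~\ref{lem:lower_bound_subgrad} with $\gamma=1/2$); then $\sum_j\|x_{j+1}-x_j\|\le \tfrac{4}{3}\cdot\tfrac{4L^2}{\mu^2}r_0\le \tfrac{\epsilon}{4}$ and the invariant $\|x_k-\bar x\|\le\epsilon/2$ does close. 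Alternatively, you can keep your crude bound and weaken the invariant to $\|x_k-\bar x\|\le \tfrac{5\epsilon}{8}$, replacing $\|x_k^\ast-\bar x\|\le 2\|x_k-\bar x\|$ by $\|x_k^\ast-\bar x\|\le \|x_k-\bar x\|+r_k\le \tfrac{5\epsilon}{8}+\tfrac{3\epsilon}{64}<\epsilon$, which is all the local assumptions require. Finally, note that your sketch for recovering the constant $\tfrac{16L^3}{3\mu^3}$ via $\|\zeta_k\|\ge\mu/2$ and $f(x_k)-f^\ast\le Lr_k+\tfrac{\rho}{2}r_k^2$ gives a worse constant (roughly $\tfrac{10L^3}{\mu^3}$); the stated constant comes from the bound $\frac{f(x_k)-\min_{\cX}f}{\|\zeta_k\|}\le\tfrac{4}{3}r_k\le\tfrac{4L}{3\mu}r_k$ together with $\sum_{i\ge k}q^i\le \tfrac{4L^2}{\mu^2}q^k$.
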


\begin{thm}[Geometrically decaying subgradient method (local regularity)]\label{thm:geo_desc_loc}
	Suppose that Assumption~\ref{ass:target_weak_conv_loc} holds and that $\tau\leq \frac{1}{\sqrt{2}}$. Define $\gamma=\frac{\epsilon\rho}{4L+\epsilon\rho}$, $\lambda=\frac{\gamma\mu^2}{\rho L}$, and $q=\sqrt{1-(1-\gamma)\tau^2}$.  Then  Algorithm~\ref{alg:geometrically_step} initialized at any point $x_0\in B_{\epsilon/4}(\bar x)\cap \mathcal{T}_{\gamma}$ generates iterates $x_k$ that always lie in $B_{\epsilon}(\bar x)$ and satisfy
	\begin{equation} \label{eq:geometric_rate_loc}
	\dist^2(x_k;\cX^*) \leq \frac{\gamma^2 \mu^2}{\rho^2}
	\left(1-(1-\gamma)\tau^2\right)^{k}\qquad \textrm{for all } k\geq 0.
	\end{equation}
	Moreover, the iterates converge to some point $x_{\infty}\in \cX^*$ at the R-linear rate
	$$
	\|x_k-x_{\infty}\|_2\leq \tfrac{\lambda}{1-q}\cdot q^k \qquad \textrm{for all } k\geq 0.
	$$
\end{thm}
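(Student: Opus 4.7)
The plan is to adapt the global argument of Theorem~\ref{thm:geometric} (from \cite[Theorem 6.1]{davis2018subgradient}) by a simultaneous induction: I would show that at every step $k$ the iterate $x_k$ belongs to $B_\epsilon(\bar x)$ (so that the local weak convexity and local sharpness in Assumption~\ref{ass:target_weak_conv_loc} apply), \emph{and} the claimed rate \eqref{eq:geometric_rate_loc} holds. The parameter $\gamma=\frac{\epsilon\rho}{4L+\epsilon\rho}$ is precisely the value that makes both parts of the induction close.

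For the one-step descent, let $x^\ast=\proj_{\cX^\ast}(x_k)$ and suppose inductively that $x_k\in B_\epsilon(\bar x)$. Then the nonexpansiveness of $\proj_\cX$ and a direct expansion give
\[
\dist^2(x_{k+1},\cX^\ast)\leq \dist^2(x_k,\cX^\ast)-2\alpha_k\frac{\langle \zeta_k,x_k-x^\ast\rangle}{\|\zeta_k\|_2}+\alpha_k^2.
\]
Local weak convexity together with local sharpness (with $x^\ast$ potentially taken slightly differently so that $x^\ast \in B_\epsilon(\bar x)$, using that $\dist(x_k,\cX^*)\leq \epsilon/2$ say) yields
\[
\langle \zeta_k,x_k-x^\ast\rangle\geq \mu\,\dist(x_k,\cX^\ast)-\tfrac{\rho}{2}\dist^2(x_k,\cX^\ast),
\]
and dividing by $\|\zeta_k\|_2\leq L$ gives a quadratic-in-$\dist(x_k,\cX^\ast)$ upper bound on $\dist^2(x_{k+1},\cX^\ast)$. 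Optimizing (or checking) this bound at the specific choice $\lambda=\gamma\mu^2/(\rho L)$ reproduces the rate $\dist^2(x_k,\cX^\ast)\leq (\gamma\mu/\rho)^2 q^{2k}$ exactly as in the global version.

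The main obstacle is ensuring the containment $x_k\in B_\epsilon(\bar x)$ is preserved. Since each step moves by at most $\alpha_k=\lambda q^k$, a telescoping bound gives
\[
\|x_{k+1}-x_0\|_2\leq \sum_{j=0}^{k}\alpha_j\leq \frac{\lambda}{1-q}.
\]
Using $1-q\geq (1-q^2)/2=(1-\gamma)\tau^2/2$, one computes
\[
\frac{\lambda}{1-q}\leq \frac{2\gamma\mu^2}{\rho L(1-\gamma)\tau^2}=\frac{2\gamma L}{\rho(1-\gamma)}=\frac{2L}{\rho}\cdot\frac{\epsilon\rho}{4L}=\frac{\epsilon}{2},
\]
where the penultimate equality is exactly the identity that motivates the definition of $\gamma$. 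Combined with $x_0\in B_{\epsilon/4}(\bar x)$, this yields $\|x_{k+1}-\bar x\|_2\leq \tfrac{3\epsilon}{4}<\epsilon$, closing the induction. (One also needs to verify the initial distance hypothesis $x_0\in\cT_\gamma$ is compatible with the step-size bound, which is built into the assumption.)

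Finally, for the R-linear convergence of the iterates themselves, the identical telescoping argument shows $\{x_k\}$ is Cauchy: for any $k\leq \ell$,
\[
\|x_\ell-x_k\|_2\leq \sum_{j=k}^{\ell-1}\alpha_j\leq \frac{\lambda}{1-q}\,q^k.
\]
Thus $x_k\to x_\infty$ for some limit $x_\infty$, and passing to the limit in \eqref{eq:geometric_rate_loc} shows $\dist(x_\infty,\cX^\ast)=0$; since $\cX^\ast$ is closed (as $f$ is continuous on $\cX$ by weak convexity), $x_\infty\in\cX^\ast$. Letting $\ell\to\infty$ in the Cauchy estimate produces the advertised rate $\|x_k-x_\infty\|_2\leq \tfrac{\lambda}{1-q}\,q^k$.
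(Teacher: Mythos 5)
Your proposal is correct and follows essentially the same route as the paper's proof: both exploit the fact that the preset step sizes bound each displacement by $\alpha_k=\lambda q^k$ unconditionally, telescope to get $\sum_k\alpha_k=\lambda/(1-q)\leq\epsilon/2$ via exactly the identity $\gamma/(1-\gamma)=\epsilon\rho/(4L)$ built into the definition of $\gamma$, conclude containment in $B_{\epsilon}(\bar x)$ from $x_0\in B_{\epsilon/4}(\bar x)$, and then defer the one-step contraction to the global argument of Theorem~\ref{thm:geometric} (noting $\tau\leq 1/\sqrt{2}\leq\sqrt{1/(2-\gamma)}$ and that the nearest point $x^*$ also lies in $B_\epsilon(\bar x)$ since $\gamma\mu/\rho\leq\epsilon/4$). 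The Cauchy/limit argument for the R-linear rate is likewise identical.
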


We end the section by specializing to the composite setting and analyzing the prox-linear method. The following is the localized version of Assumption~\ref{ass:bb}.

\begin{assumption}\label{ass:bb_loc}
	{\rm
		Consider the optimization problem,
		\begin{equation*}
		\min_{x\in \cX} f(x):=h(F(x)),
		\end{equation*}
		where the function $h(\cdot)$ and the set $\cX$ are convex and $F(\cdot)$ is differentiable.
		Fix a point $\bar x\in \cX^*$ and suppose that the following properties holds for some real numbers $\epsilon, \mu,\rho>0$.
		\begin{enumerate}
			\item {\bf (Approximation accuracy)} The convex models $f_x(y):=h(F(x)+\nabla F(x)(y-x))$ satisfy the estimate:
			$$|f(y)-f_x(y)|\leq \frac{\rho}{2}\|y-x\|^2_2\qquad \forall x\in \cX\cap B_{\epsilon}(\bar x), y\in \cX.$$
			\item {\bf (Sharpness)} The inequality holds:
			\[f(x) - \inf_{\cX} f \geq \mu \cdot  \dist\left(x,\cX^\ast \right) \qquad \forall x\in \cX\cap B_{\epsilon}(\bar x). \]
	\end{enumerate}}
\end{assumption}

The following theorem provides convergence guarantees for the prox-linear method under Assumption~\ref{ass:bb_loc}.

\begin{thm}[Prox-linear (local)]\label{thm:prox_lin_loc}
	Suppose Assumption~\ref{ass:bb_loc} holds, choose any $\beta\geq \rho$, and
	fix  an arbitrary point $x_0\in B_{\epsilon/2}(\bar x)$ satisfying
	$$f(x_0)-\min_{\cX} f\leq \min\left\{\frac{\beta\epsilon^2}{25}, \frac{\mu^2}{2\beta}\right\}.$$
	Then Algorithm~\ref{alg:prox_lin} initialized at $x_0$ generates iterates $x_k$ that always lie in $B_{\epsilon}(\bar x)$ and satisfy
	\begin{align*}
	\dist(x_{k+1},\cX^*)&\leq \frac{\beta}{\mu}\cdot \dist^2(x_k,\cX^*),\\
	f(x_{k+1})-\min_{\cX} f&\leq \frac{\beta}{\mu^2}\left(f(x_k)-\min_{\cX} f\right)^2.
	\end{align*}
	Moreover the iterates converge to some point $x_{\infty}\in\cX^*$ at the quadratic rate
	$$\|x_{k}-x_{\infty}\|_2\leq \frac{2\sqrt{2}\mu}{\beta}\cdot \left(\frac{1}{2}\right)^{2^{k-1}}\qquad \textrm{for all } k\geq 0.$$
\end{thm}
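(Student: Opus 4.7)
I plan to run a single induction that maintains two invariants simultaneously: (i) the iterate $x_k$ lies in $B_\epsilon(\bar x)$, so that the local approximation and sharpness guarantees of Assumption~\ref{ass:bb_loc} apply at every point where they are invoked; and (ii) the distance $\delta_k := \dist(x_k, \cX^\ast)$ contracts quadratically. Two preparatory estimates need to be in place first.

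The first preparatory estimate is the one-step functional bound $f(x_{k+1}) - \min_{\cX} f \leq \beta \delta_k^2$, valid whenever $x_k \in B_\epsilon(\bar x)$. It comes from the usual three-line argument: combine optimality in the prox-linear subproblem evaluated at the feasible comparison point $x_k^\ast \in \proj_{\cX^\ast}(x_k)$ with the upper approximation $f_{x_k}(x_k^\ast) \leq \min f + (\rho/2)\delta_k^2$ and the lower approximation $f_{x_k}(x_{k+1}) \geq f(x_{k+1}) - (\rho/2)\|x_{k+1} - x_k\|_2^2$, then discard the resulting nonpositive $(\rho - \beta)\|x_{k+1}-x_k\|_2^2$ term using $\beta \geq \rho$. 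Once $x_{k+1} \in B_\epsilon(\bar x)$ is also verified, sharpness at $x_{k+1}$ upgrades this to the two quadratic recurrences asserted in the theorem.

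The second preparatory estimate controls step size. Applying $\beta$-strong convexity of the prox-linear subproblem at $y = x_k^\ast$ and substituting both approximation bounds yields
\[\tfrac{\beta - \rho}{2}\|x_{k+1} - x_k\|_2^2 + \tfrac{\beta}{2}\|x_{k+1} - x_k^\ast\|_2^2 \leq \tfrac{\beta + \rho}{2}\delta_k^2,\]
whence $\|x_{k+1} - x_k^\ast\|_2 \leq \sqrt{2}\,\delta_k$ and, by the triangle inequality, $\|x_{k+1} - x_k\|_2 \leq (1+\sqrt{2})\,\delta_k$. This is all that is needed to control Cauchy-type sums.

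With these tools in hand, the induction runs as follows. Sharpness at $x_0 \in B_{\epsilon/2}(\bar x)$ together with the hypothesis $f(x_0) - \min f \leq \mu^2/(2\beta)$ yields $\delta_0 \leq \mu/(2\beta)$, so the quadratic recurrence $\delta_{k+1} \leq (\beta/\mu)\delta_k^2$ implies the linear contraction $\delta_{k+1} \leq \tfrac{1}{2}\delta_k$, and hence $\sum_{j \geq 0} \delta_j \leq 2\delta_0$. The second hypothesis $f(x_0) - \min f \leq \beta\epsilon^2/25$ supplies (again via sharpness) a complementary bound on $\delta_0$ that forces the telescoping estimate
\[\|x_{k+1} - \bar x\|_2 \leq \|x_0 - \bar x\|_2 + (1+\sqrt{2})\sum_{j=0}^k \delta_j \leq \tfrac{\epsilon}{2} + 2(1+\sqrt{2})\delta_0\]
to remain at most $\epsilon$, which propagates invariant (i) and legitimizes the sharpness step at $x_{k+1}$. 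The tail version of the same computation, $\|x_k - x_\infty\|_2 \leq (1+\sqrt{2})\sum_{j \geq k}\delta_j \leq 2(1+\sqrt{2})\delta_k$, shows $\{x_k\}$ is Cauchy, identifies the limit $x_\infty \in \cX^\ast$, and combined with $\delta_k \leq (\mu/\beta)(1/2)^{2^k}$ recovers the stated quadratic rate up to an absolute constant. The principal obstacle is purely bookkeeping: calibrating the geometric-mean tradeoff between the two halves of the dual hypothesis $f(x_0) - \min f \leq \min\{\beta\epsilon^2/25, \mu^2/(2\beta)\}$ so that $\delta_0$ is simultaneously small relative to both $\mu/\beta$ (for rapid decay) and $\epsilon/(1+\sqrt{2})$ (to stay inside $B_\epsilon(\bar x)$); everything else is a direct adaptation of the standard prox-linear analysis.
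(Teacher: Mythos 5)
Your proposal is correct in structure and reaches all the stated conclusions, but it routes the key containment step differently from the paper. Both arguments start from the strong-convexity inequality for the prox-linear subproblem tested at the comparison point $x_k^\ast\in\proj_{\cX^\ast}(x_k)$, and both derive $f(x_{k+1})-\min_{\cX}f\le\beta\,\dist^2(x_k,\cX^*)$ and then the two quadratic recurrences via sharpness (with the same care about only invoking sharpness at $x_{k+1}$ after membership in $B_\epsilon(\bar x)$ is secured). The divergence is in how the iterates are kept inside $B_\epsilon(\bar x)$: the paper sets $x=x_k$ in the strong-convexity inequality to get the descent bound $\|x_{k+1}-x_k\|_2^2\le\tfrac{2}{\beta}(f(x_k)-f(x_{k+1}))$ and then sums square roots of geometrically decaying function gaps, whereas you retain the term $\tfrac{\beta}{2}\|x_{k+1}-x_k^\ast\|_2^2$ to conclude $\|x_{k+1}-x_k^\ast\|_2\le\sqrt{2}\,\delta_k$ and sum the distances $\delta_j$ themselves. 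Your route is arguably more transparent: it says each new iterate lands within $\sqrt{2}\,\delta_k$ of the solution set, which simultaneously yields containment, the Cauchy property, and the identification of the limit $x_\infty\in\cX^*$, without tracking function values along the path.

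The one place where your argument does not close as written is the numerical calibration you yourself flag as the "principal obstacle." Your containment needs $2(1+\sqrt{2})\,\delta_0\le\epsilon/2$, i.e.\ $\delta_0\le\epsilon/(4(1+\sqrt{2}))\approx 0.104\,\epsilon$, while the geometric mean of the two hypotheses together with sharpness at $x_0$ only gives $\mu\delta_0\le\sqrt{\tfrac{\beta\epsilon^2}{25}\cdot\tfrac{\mu^2}{2\beta}}$, i.e.\ $\delta_0\le\epsilon/(5\sqrt{2})\approx 0.141\,\epsilon$; even exploiting the faster-than-geometric decay of the $\delta_j$ leaves a shortfall. So you should either strengthen the hypothesis to $f(x_0)-\min_{\cX}f\le\beta\epsilon^2/50$ (say) or accept a slightly degraded constant in the conclusion. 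This is a calibration issue rather than a conceptual gap --- the theorem's constants were tuned to the paper's descent-based path bound, and the paper's own estimate $\sqrt{2/\beta}\sum_i\sqrt{f(x_i)-f(x^*)}\le\tfrac{1}{\sqrt{2}-1}\sqrt{(f(x_0)-f(x^*))/\beta}$ silently drops a factor of $2$ as well --- but the adjusted constant should be stated explicitly rather than deferred to bookkeeping.
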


With the above generic results in hand, we can now derive the convergence guarantees for the subgradient and prox-linear methods for asymmetric low-rank matrix recovery problems. To summarize, the prox-linear method converges quadratically, as long as it is initialized within constant relative error of the solution. The guarantees for the subgradient methods are less satisfactory: the size of the region of the linear convergence scales with the condition number of $M_{\sharp}$.  The reason is that the proof estimates the region of convergence using the length of the iterate path, which scales with the condition number.
The dependence on the condition number in general can be eliminated by introducing
regularization  $ \|X^\top X - YY^\top \|_F$, as suggested in the work
\cite{li2018nonconvex_robust}. Still the results we present here are notable even for the subgradient method. For example, we see that for rank $r=1$ instances satisfying RIP (e.g. blind deconvolution), the condition number of $M_{\sharp}$ is always one and therefore regularization is not required at all for subgradient and prox-linear methods.

\begin{corollary}[Convergence guarantees  under RIP (asymmetric)]\label{cor:generic_conv_iso_asym}
	Suppose Assumptions~\ref{assump:RIP} and \ref{assump:outlier} are valid\footnote{ with $\opnorm{\cdot}=\frac{1}{m}\|\cdot\|_1$} and
	consider the optimization problem
	$$\min_{X\in \R^{d_1\times r},~ Y\in \R^{r\times d_2}}~ f(X)=\frac{1}{m}\|\cA(XY) - b\|_1.$$
	Define $X_{\sharp}:=U\sqrt{\Lambda}$ and $Y_{\sharp}=\sqrt{\Lambda}V^\top$, where $M_{\sharp}=U\Lambda V^\top$ is any compact singular value decomposition of $M_{\sharp}$.
	Define also the condition number $\chi:=\sigma_1(M_{\sharp})/\sigma_r(M_{\sharp})$. Then there exists $\eta >0$ depending only on $\ur$, $\ir$, and $\sigma(M_{\sharp})$ such that  the following are true.

	\begin{enumerate}
		\item {\bf (Polyak subgradient)}
		Algorithm~\ref{alg:polyak} initialized at $(X_0,Y_0)$ satisfying
		$\frac{\|(X_0,Y_0)-(X_{\sharp},Y_{\sharp})\|_F}{\sqrt{\sigma_r(M_{\sharp})}}\lesssim \min\{1, \frac{\ir^2}{\ur^2\chi}, \frac{\ir}{\ur}\}$,
		will generate an iterate sequence that converges at the linear rate:
		$$\frac{\dist((X_k,Y_k),\cD^*(M_{\sharp}))}{\sqrt{\sigma_r(M_{\sharp})}}\leq \delta \qquad \textrm{after}\qquad k\gtrsim\frac{\ur^2\chi^2}{\ir^2}\cdot\ln\left(\frac{\eta}{\delta}\right)\qquad \textrm{iterations}.$$

		\item {\bf (geometric subgradient)}
		Algorithm~\ref{alg:geometrically_step} initialized at $(X_0,Y_0)$ satisfying
		$\frac{\|(X_0,Y_0)-(X_{\sharp},Y_{\sharp})\|_F}{\sqrt{\sigma_r(M_{\sharp})}}\lesssim \min\{1, \frac{\ir}{\ur\chi}\}$,
		will generate an iterate sequence that converges at the linear rate:
		$$\frac{\dist((X_k,Y_k),\cD^*(M_{\sharp}))}{\sqrt{\sigma_r(M_{\sharp})}}\leq \delta \qquad \textrm{after}\qquad k\gtrsim\frac{\ur^2\chi^2}{\ir^2}\cdot\ln\left(\frac{\eta}{\delta}\right)\qquad \textrm{iterations}.$$
		\item {\bf (prox-linear)}
		Algorithm~\ref{alg:prox_lin} initialized at $(X_0,Y_0)$ satisfying							  $\frac{f(x_0)-\min_{\cX} f}{\sigma_{r}(M_{\sharp})}\lesssim \min\{\ur ,\ir^2/\ur\}$
		and
		$\frac{\|(X_0,Y_0)-(X_{\sharp},Y_{\sharp})\|_F}{\sqrt{\sigma_r(M_{\sharp})}}\lesssim 1$,
		will generate an iterate sequence that converges at the quadratic rate:
		$$\frac{\dist((X_k,Y_k),\cD^*(M_{\sharp}))}{\sqrt{\sigma_r(M_{\sharp})}}\lesssim \frac{\ir}{\ur}\cdot 2^{-2^{k}}\qquad \textrm{for all } k\geq 0.$$
	\end{enumerate}
\end{corollary}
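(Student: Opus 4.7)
The strategy is to instantiate the three local convergence theorems of this section (Theorem~\ref{thm:polyak_local}, Theorem~\ref{thm:geo_desc_loc}, and Theorem~\ref{thm:prox_lin_loc}) for the asymmetric objective $f(X,Y)=\frac{1}{m}\|\cA(XY)-b\|_1$, anchored at the distinguished optimal pair $\bar x=(X_{\sharp},Y_{\sharp})$. The ingredients for Assumption~\ref{ass:bb_loc} (and hence Assumption~\ref{ass:target_weak_conv_loc}) have essentially already been established in Section~\ref{sec:ass_model}: Proposition~\ref{prop:approx_acc_asym} furnishes the approximation bound with $\rho=\ur$, while the sharpness-with-outliers theorem in the asymmetric setting supplies, for any $\nu\geq 1$, the sharpness modulus
\[
\mu(\nu)\;=\;\frac{\ir\sqrt{\sigma_r(M_{\sharp})}}{2+4(1+\sqrt{2})\nu}
\]
on the neighborhood of $(X_{\sharp},Y_{\sharp})$ specified there. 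The Lipschitz/subgradient bound on a Frobenius ball of radius $\epsilon\asymp\nu\sqrt{\sigma_r(M_{\sharp})}$ around $(X_{\sharp},Y_{\sharp})$ follows from the second estimate of Proposition~\ref{prop:approx_acc_asym}, by bounding $\|X+\widehat X\|_{\rm op}\leq 2\sqrt{\sigma_1(M_{\sharp})}+2\epsilon$ and analogously for $Y$, which yields $L(\nu)\lesssim \ur\sqrt{\sigma_1(M_{\sharp})}\,(1+\nu/\sqrt{\chi})$.

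First I would handle the prox-linear case, which is the cleanest. Taking $\nu$ equal to a fixed absolute constant, the sharpness radius is $\epsilon\asymp\sqrt{\sigma_r(M_{\sharp})}$ and the moduli become $\mu\asymp \ir\sqrt{\sigma_r(M_{\sharp})}$, $\rho=\ur$. Plugging these into the initialization condition $f(x_0)-\min f\leq \min\{\beta\epsilon^2/25,\,\mu^2/(2\beta)\}$ of Theorem~\ref{thm:prox_lin_loc} with $\beta=\rho=\ur$ gives exactly $(f(x_0)-\min f)/\sigma_r(M_{\sharp})\lesssim \min\{\ur,\ir^2/\ur\}$, matching the corollary. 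The quadratic rate $\dist(x_{k+1},\cX^*)\leq (\beta/\mu)\dist^2(x_k,\cX^*)$ from Theorem~\ref{thm:prox_lin_loc} then yields the claimed $2^{-2^k}$-decay after a single doubling argument to convert to the stated prefactor $\ir/\ur$.

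The subgradient cases are more delicate because the path of iterates must stay inside the sharpness region, and this forces $\epsilon$ to scale with $1/\tau^2$ rather than remain a fixed multiple of $\sqrt{\sigma_r(M_{\sharp})}$. Here I would treat $\nu$ as a parameter and choose it to balance all the constraints simultaneously. For the Polyak method, Theorem~\ref{thm:polyak_local} requires $\dist(x_0,\cX^*)\lesssim \epsilon\mu^2/L^2$ and $\dist(x_0,\cX^*)\lesssim \mu/\rho$; with $\epsilon\asymp\nu\sqrt{\sigma_r}$, $\mu\asymp\ir\sqrt{\sigma_r}/\nu$, and $L\asymp \ur\sqrt{\sigma_1}$ (for $\nu\lesssim \sqrt{\chi}$), the first bound becomes $\dist(x_0,\cX^*)/\sqrt{\sigma_r}\lesssim \ir^2/(\nu\,\ur^2\chi)$ and the second becomes $\lesssim \ir/(\nu\,\ur)$. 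Choosing $\nu$ as a small absolute constant then yields the initialization condition $\lesssim\min\{1,\ir^2/(\ur^2\chi),\ir/\ur\}$ stated in the corollary, and the contraction factor $1-\tfrac12\tau^2$ with $\tau^2\asymp \ir^2/(\ur^2\chi^2)$ (after absorbing the factor of $\nu^2$ into $\mu^2$) gives the iteration count $k\gtrsim (\ur^2\chi^2/\ir^2)\ln(\eta/\delta)$. The geometric-decay subgradient case (Theorem~\ref{thm:geo_desc_loc}) is handled in the same way, except that the tube condition $\dist(x_0,\cX^*)\leq \gamma\mu/\rho$ with $\gamma\asymp \epsilon\rho/L\asymp 1/\sqrt\chi$ produces the slightly stronger scaling $\ir/(\ur\chi)$ in the initialization.

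The main obstacle is the bookkeeping between $\nu$, $\mu(\nu)$, $L(\nu)$, and $\epsilon(\nu)$ for the subgradient methods: one must verify that a single choice of $\nu$ simultaneously fits the Polyak (resp.\ geometric) theorem's two constraints, reproduces the scaling $1/\tau^2\asymp\ur^2\chi^2/\ir^2$ in the linear rate, and gives an $\eta$ depending only on $\ur,\ir,\sigma(M_{\sharp})$. The prox-linear step, by contrast, is essentially immediate once Proposition~\ref{prop:approx_acc_asym} and the asymmetric sharpness-with-outliers theorem have been combined with Theorem~\ref{thm:prox_lin_loc}, and does not require regularization of the objective.
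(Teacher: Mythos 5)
Your proposal follows exactly the route the paper intends: the corollary is a direct instantiation of Theorems~\ref{thm:polyak_local}, \ref{thm:geo_desc_loc}, and \ref{thm:prox_lin_loc} at $\bar x=(X_{\sharp},Y_{\sharp})$, using $\rho=\ur$ and $L\lesssim \ur\sqrt{\sigma_1(M_{\sharp})}$ from Proposition~\ref{prop:approx_acc_asym} together with the asymmetric sharpness-with-outliers proposition, and your prox-linear case is carried out correctly and completely. Two pieces of bookkeeping in the subgradient cases are off, though neither breaks the argument. First, there is nothing to balance in $\nu$: the sharpness proposition requires \emph{both} $\max\{\|X-X_{\sharp}\|_F,\|Y-Y_{\sharp}\|_F\}\leq \nu\sqrt{\sigma_r(M_{\sharp})}$ and $\dist((X,Y),\cD^*(M_{\sharp}))\leq \sqrt{\sigma_r(M_{\sharp})}/(1+2(1+\sqrt{2})\nu)$, and since every point of $B_{\epsilon}(\bar x)$ has distance at most $\epsilon$ to $\cD^*(M_{\sharp})$, the ball on which local sharpness holds is capped at $\epsilon\asymp\sqrt{\sigma_r(M_{\sharp})}$ no matter how large $\nu$ is taken; the optimal choice is $\nu\asymp 1$, which you in fact adopt. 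Second, with $\nu\asymp 1$ one gets $\mu\asymp\ir\sqrt{\sigma_r(M_{\sharp})}$ and hence $\tau^2=\mu^2/L^2\asymp \ir^2/(\ur^2\chi)$ — \emph{not} $\ir^2/(\ur^2\chi^2)$; "absorbing $\nu^2$ into $\mu^2$" cannot manufacture an extra factor of $\chi$. This yields an iteration count $k\gtrsim(\ur^2\chi/\ir^2)\ln(\eta/\delta)$, which is stronger than, and therefore implies, the stated $\chi^2$ bound; the additional factor of $\chi$ in the corollary enters only through the initialization radius $3\epsilon\mu^2/(64L^2)\asymp\sqrt{\sigma_r(M_{\sharp})}\,\ir^2/(\ur^2\chi)$ coming from the path-length estimate, not through the per-iteration contraction. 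Similarly, in the geometric case $\gamma\mu/\rho\asymp \ir\sqrt{\sigma_r(M_{\sharp})}/(\ur\sqrt{\chi})$, so the corollary's more restrictive requirement $\lesssim\ir/(\ur\chi)$ is sufficient rather than forced. Since all of your deviations are in the conservative direction relative to the stated one-sided bounds, the proof goes through.
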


\section{Examples of $\ell_1/\ell_2$ RIP}\label{sec:all_examples}

In this section, we survey three matrix recovery problems from different fields, including physics, signal processing, control theory, wireless communications, and machine learning, among others. In all cases, the problems satisfy $\ell_1/\ell_2$ RIP and the $\cI$-outlier bounds and consequently, the convergence results in Corollaries~\ref{cor:generic_conv_iso_sym} and~\ref{cor:generic_conv_iso_asym} immediately apply. Most of the RIP results in this section were previously known (albeit under more restrictive assumptions); we provide self-contained arguments in the Appendix~\ref{appendix:RIP} for the sake of completeness.
On the other hand, using nonsmooth optimization in these problems and the corresponding convergence guarantees based on RIP are, for the most part, new. 

For the rest of this section we will assume the following data-generating mechanism.
\begin{definition}[\textbf{Data-generating mechanism}] \label{def:data_generation}
	A random linear map $\cA\colon\R^{d_1\times d_2}\to\R^m$ and a random index set $\cI\subset [m]$ are drawn independently of each other. We assume moreover that the outlier frequency  $\pfail:=|\cI|/m$ satisfies $\pfail\in [0,1/2)$ almost surely. We then observe the corrupted measurements
	\begin{equation}\label{eq:measurements}
	b_i = \begin{cases}
	\cA(M_\sharp) & \text{if }i \notin \cI, \text{ and} \\
	\eta_i & \text{if } i \in \cI,
	\end{cases}
	\end{equation}
	where $\eta$ is an arbitrary vector. In particular, $\eta$ could be correlated with $\cA$.
\end{definition}

Throughout this section, we consider four distinct linear operators $\cA$.

\paragraph{Matrix Sensing.} In this scenario, measurements are generated as follows:
\begin{equation} \label{eq:const_matrix_sens}
\cA(M_\sharp)_i := \dotp{\lM_i, M_\sharp} \qquad \text{for } i = 1, \ldots, m
\end{equation}
where $\lM_i \in \RR^{d_1 \times d_2}$ are fixed matrices.

\paragraph{Quadratic Sensing I .} In this scenario, $M_\sharp \in \RR^{d \times d}$ is assumed to be a PSD rank $r$ matrix with factorization $M_\sharp = X_\sharp X_\sharp ^\top$ and measurements are generated as follows:
\begin{equation} \label{eq:quadratic_measurements_I}
\cA(M_\sharp)_i = \lv_i^\top M_\sharp \lv_i = \|X_\sharp^\top \lv_i\|^2_2\qquad \text{for } i = 1, \ldots, m ,
\end{equation}
where $\lv_i \in \RR^d$ are fixed vectors.

\paragraph{Quadratic Sensing II .} In this scenario, $M_\sharp \in \RR^{d \times d}$ is assumed to be a PSD rank $r$ matrix with factorization $M_\sharp = X_\sharp X_\sharp ^\top$ and measurements are generated as follows:
\begin{equation} \label{eq:quadratic_measurements_II}
\cA(M_\sharp)_i = \lv_i^\top M_\sharp \lv_i - \tilde \lv_i^\top M_\sharp \tilde \lv_i = \|X_\sharp^\top \lv_i\|^2_2 - \|X_\sharp^\top \tilde \lv_i\|^2_2\qquad \text{for } i = 1, \ldots, m ,
\end{equation}
where $\lv_i,\tilde \lv_i \in \RR^d$ are fixed vectors.

\paragraph{Bilinear Sensing.} In this scenario, $M_\sharp \in \RR^{d_1 \times d_2}$ is assumed to be a $r$ matrix with factorization $M_\sharp = XY$ and measurements are generated as follows:\begin{equation}\label{eq:bilinear_measurements}
\cA(M_\sharp)_i = \lv_i^\top M_\sharp \rv_i \qquad \text{for } i = 1, \ldots, m ,
\end{equation}
where $\lv_i \in \RR^{d_1}$ and $\rv_i \in \RR^{d_2}$ are fixed vectors.

~\\
\indent The matrix, quadratic, and bilinear sensing problems have been considered in a number of papers and in a variety of applications. The first theoretical properties for matrix sensing were discussed in~ \cite{faz,guaran,candes2011tight}. Quadratic sensing in its full generality appeared in~\cite{MR3367819} and is a higher-rank generalization of the much older (real) phase retrieval problem~\cite{phase,candes2013phaselift,goldstein2018phasemax}. Besides phase retrieval, quadratic sensing has applications to covariance sketching, shallow neural networks, and quantum state tomography; see for example \cite{li2018nonconvex} for a discussion. Bilinear sensing is a natural modification of  quadratic sensing and is a higher-rank generalization of the blind deconvolution problem~\cite{ahmed2014blind}; it was first proposed and studied in \cite{ROP_matrix_recovery_rank_one}.

The reader is reminded that once $\ell_1/\ell_2$ RIP guarantees, in particular Assumptions~\ref{assump:RIP} and~\ref{assump:outlier}, are established for the above four operators, the guarantees of Corollaries~\ref{cor:generic_conv_iso_sym} and Corollary~\ref{cor:generic_conv_iso_asym} immediately take hold for the problems
$$\min_{X\in \R^{d\times r}}~ f(X)=\frac{1}{m}\|\cA(XX^\top) - b\|_1$$
and
$$\min_{X\in \R^{d_1\times r},~ Y\in \R^{r\times d_2}}~ f(X)=\frac{1}{m}\|\cA(XY) - b\|_1,$$
respectively. Thus, we turn our attention to establishing such guarantees.

\subsection{Warm-up: $\ell_2/\ell_2$ RIP for matrix sensing with Gaussian design}
In this section, we are primarily interested in the $\ell_1/\ell_2$ RIP for the above four linear operators. However, as a warm-up, we first consider the $\ell_2/\ell_2$-RIP property for matrix sensing with Gaussian $P_i$. The following result appears in~\cite{guaran,candes2011tight}.
\begin{thm}[\textbf{$\ell_2/\ell_2$-RIP for matrix sensing}]\label{thm:matrix_sensing_ell_2_rip}
	For any $\delta\in (0,1)$ there exist constants $c,C>0$ depending only on $\delta$ such that if the entries of $\lM_i$ are i.i.d.\ standard Gaussian and $m\geq c r(d_1+d_2)\log(d_1d_2)$, then with probability at least $1-\exp\left(-Cm\right)$, the estimate
	$$(1-\delta)\|M\|_F\leq \frac{1}{\sqrt{m}} \|\cA(M)\|_2 \leq (1+\delta)\|M\|_F,$$
	holds simultaneously for all $M\in\R^{d_1\times d_2}$ of rank at most $2r$.
	Consequently, Assumption~\ref{assump:RIP} is satisfied.
\end{thm}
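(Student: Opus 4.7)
The plan is to prove the theorem by combining (i) a pointwise chi-squared concentration bound for the squared measurements $\|\cA(M)\|_2^2$ at a fixed low-rank matrix, (ii) a covering number estimate for the set of low-rank matrices on the Frobenius unit sphere, and (iii) a union bound followed by an approximation argument that promotes the net-wise estimate to a uniform one. The natural object of study is the deviation $\Delta(M) := \frac{1}{m}\|\cA(M)\|_2^2 - \|M\|_F^2$, which I will show is small uniformly over the set $\cS_{2r} := \{M \in \R^{d_1\times d_2} : \rank(M) \leq 2r,\ \|M\|_F = 1\}$; squaring is convenient because this quantity is quadratic in $M$ and linear in Gaussian squares.

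For the pointwise step, fix any $M \in \cS_{2r}$. By Gaussianity and the identity $\EE \langle P_i, M\rangle^2 = \|M\|_F^2 = 1$, the scalars $\langle P_i, M\rangle$ are i.i.d.\ standard normal, so $\|\cA(M)\|_2^2 \sim \chi^2_m$. The Laurent--Massart tail bound yields
\[
 \Prob\Big[|\Delta(M)| > t\Big] \leq 2\exp(-c m t^2) \qquad \text{for } t \in (0, 1),
\]
for an absolute constant $c>0$. This is the only place probabilistic structure enters.

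For the covering step, I would parametrize every element of $\cS_{2r}$ through a compact SVD, $M = U\Sigma V^\top$ with $U\in \R^{d_1\times 2r}$, $V \in \R^{d_2\times 2r}$ having orthonormal columns and $\Sigma$ diagonal with $\|\Sigma\|_F = 1$. Covering each of the three factors to accuracy $\epsilon/3$ (using the standard volumetric bounds for Stiefel-like manifolds and the $(2r-1)$-sphere), multiplying the factors and recomposing produces an $\epsilon$-net $\cN_\epsilon \subset \cS_{2r}$ of cardinality
\[
 |\cN_\epsilon| \leq \left(\tfrac{C_0}{\epsilon}\right)^{(d_1+d_2+1)\cdot 2r}
\]
for a numerical constant $C_0$. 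A union bound over $\cN_\epsilon$ with deviation parameter $t = \delta/4$ shows that with probability at least
\[
 1 - 2|\cN_\epsilon|\exp(-c m\delta^2/16),
\]
one has $|\Delta(M)|\leq \delta/4$ for every $M\in \cN_\epsilon$. Choosing $\epsilon = 1/(d_1 d_2)$ (say), the exponent becomes $-cm\delta^2/16 + C_1\, r(d_1+d_2)\log(d_1 d_2)$, which is $\leq -Cm$ as soon as $m \geq c r(d_1+d_2)\log(d_1 d_2)$ with $c$ chosen large enough in terms of $\delta$.

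The main obstacle, and in my view the only subtle point, is passing from $\cN_\epsilon$ to all of $\cS_{2r}$: if $M\in \cS_{2r}$ and $M_0\in\cN_\epsilon$ is its closest neighbour, then $M - M_0$ has rank up to $4r$, so we cannot directly use the event on the net. The clean way to handle this is a self-bounding argument. Let $\delta_\ast := \sup_{M \in \cS_{2r}} |\Delta(M)|$. Any $M\in\cS_{2r}$ admits a decomposition $M - M_0 = W_1 + W_2$ where $W_1, W_2$ each have rank at most $2r$ and $\|W_1\|_F + \|W_2\|_F \lesssim \epsilon$ (obtained by splitting the SVD of $M - M_0$ into blocks of $2r$ singular vectors). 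Expanding $\|\cA(M)\|_2^2 = \|\cA(M_0) + \cA(W_1) + \cA(W_2)\|_2^2$, applying the polarization identity, and using the bound on the net together with $\|\cA(W_i)\|_2^2 \leq m(1+\delta_\ast)\|W_i\|_F^2$ yields an inequality of the form $\delta_\ast \leq \delta/4 + C_2 \epsilon (1 + \delta_\ast)$. Choosing $\epsilon$ as a small absolute multiple of $\delta$ absorbs the $\delta_\ast$ term and produces $\delta_\ast \leq \delta$, completing the proof. Finally, translating the bound on $\Delta$ to the two-sided Frobenius bound in the statement is immediate from $\sqrt{1-\delta}\geq 1 - \delta$ and $\sqrt{1+\delta}\leq 1+\delta$ after replacing $\delta$ by an appropriate constant multiple.
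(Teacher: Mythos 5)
Your argument is correct and is essentially the standard proof of this result: the paper does not prove Theorem~\ref{thm:matrix_sensing_ell_2_rip} itself but quotes it from \cite{guaran,candes2011tight}, whose proof is exactly your scheme of pointwise $\chi^2_m$ concentration, the $(9/\epsilon)^{(d_1+d_2+1)2r}$ net of Lemma~\ref{lemma:eps_net}, a union bound, and a self-bounding continuity argument that splits $M-M_0$ into two orthogonal rank-$\leq 2r$ pieces --- the same machinery the paper deploys in Proposition~\ref{prop:epsilon_covering} for the $\ell_1/\ell_2$ case. The only cosmetic wobble is that you first set $\epsilon = 1/(d_1d_2)$ and later need $\epsilon$ to be a small multiple of $\delta$; either choice alone suffices (the latter even removes the need for the $\log(d_1d_2)$ factor), so this does not affect correctness.
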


Following the general recipe of the paper, we see that the nonsmooth formulation
\begin{align}\label{eq:_ell2_matrix_sensing}
\min_{X\in \R^{d_1\times r},~Y\in \R^{r\times d_2}} \frac{1}{\sqrt{m}}\|\cA(XY)-b\|_2=\sqrt{\frac{1}{m}\sum_{i=1}^m \left(\tr( Y\lM_i^\top X)-b_i\right)^2}
\end{align}
is immediately amenable to subgradient and prox-linear algorithms in the noiseless setting $\cI = \emptyset$. In particular, a direct analogue of Corollary~\ref{cor:generic_conv_iso_asym}, which was stated for the penalty function $h = \frac{1}{m} \| \cdot\|_1$, holds; we omit the straightforward details.

\subsection{The $\ell_1/\ell_2$ RIP and $\cI$-outlier bounds: quadratic and bilinear sensing}
We now turn our attention to the $\ell_1/\ell_2$ RIP for more general classes of linear maps than the i.i.d.\ Gaussian matrices considered in Theorem~\ref{thm:matrix_sensing_ell_2_rip}. To establish such guarantees, one must ensure that the linear maps $\cA$ have light tails and are robustly injective on certain spaces of matrices. The first property leads to tight concentration results, while the second yields the existence of a lower RIP constant $\lr$.

\begin{assumption}[Matrix Sensing]\label{assumption:matrix_sensing}
	The matrices $\{\lM_i\}$ are i.i.d.\ realizations of an $\eta$-sub-Gaussian random matrix\footnote{By this we mean that the vectorized matrix $\vect(\lM)$ is a $\eta$-sub-gaussian random vector.} $\lM \in \RR^{d_1 \times d_2}.$ Furthermore, there exists a numerical constant $\co > 0$ such that
	\begin{equation} \label{assump:matrix_sensing} \inf_{\substack{M: \; \rank M \leq 2r \\ \|M\|_F = 1}} \EE |\dotp{\lM,M}|  \geq \co. \end{equation}
\end{assumption}

\begin{assumption}[Quadratic Sensing I]\label{assumption:cov_est_naive}
	The vectors $\{\lv_i\}$ are i.i.d.\ realizations of a $\eta$-sub-Gaussian random variable $\lv \in \RR^d.$ Furthermore, there exists a numerical constant $\co > 0$ such that
	\begin{equation} \label{assump:covariance_estimation1} \inf_{\substack{M\in \cS^d: \; \rank M \leq 2r \\ \|M\|_F = 1}} \EE|\lv^\top M \lv| \geq \co. \end{equation}
\end{assumption}

\begin{assumption}[Quadratic Sensing II]\label{assumption:cov_est_sym}
	The vectors $\{\lv_i\}, \{\tilde \lv_i\}$ are i.i.d.\ realizations of a $\eta$-sub-Gaussian random variable $\lv \in \RR^d.$ Furthermore, there exists a numerical constant $\co > 0$ such that
	\begin{equation} \label{assump:covariance_estimation2}
	\inf_{\substack{M\in\cS^d: \; \rank M \leq 2r \\ \|M\|_F = 1}} \EE |\lv^\top M \lv - \tilde \lv^\top M \tilde \lv| \geq \co .
	\end{equation}
\end{assumption}

\begin{assumption}[Bilinear Sensing]\label{assumption:bilinear}
	The vectors $\{\lv_i\}$ and $\{\rv_i\}$ are i.i.d.\ realizations of $\eta$-sub-Gaussian random vectors $\lv \in \RR^{d_1}$ and $\rv \in \RR^{d_2},$ respectively. Furthermore, there exists a numerical constant $\co > 0$ such that
	\begin{equation} \label{assump:covariance_estimation3} \inf_{\substack{M: \; \rank M \leq 2r \\ \|M\|_F = 1}} \EE|\lv^\top  M \rv |  \geq \co. \end{equation}
\end{assumption}

~\\
The Assumptions~\ref{assumption:matrix_sensing}-\ref{assumption:bilinear} are all valid for i.i.d.\ Gaussian realizations with independent identity covariance, as the following lemma shows. We defer its proof to Appendix~\ref{proof:lem_small_ball}.
\begin{lem}\label{lem:small_ball}
	Assumption~\ref{assumption:matrix_sensing} holds for matrices $P$ with i.i.d.\ standard Gaussian entries. Assumptions~\ref{assumption:cov_est_naive} and~\ref{assumption:cov_est_sym} hold for vectors $\lv, \tilde \lv$ with i.i.d.\ standard Gaussian entries. Assumption~\ref{assumption:bilinear} holds for vectors $\lv$ and $\rv$ with i.i.d.\ standard Gaussian entries.
\end{lem}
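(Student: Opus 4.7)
The plan is to verify each of the four small-ball inequalities separately; the sub-Gaussianity claim in each case is automatic since standard Gaussian vectors and matrices are $1$-sub-Gaussian. The unifying observation I would exploit is that in every case the random variable inside the absolute value is a polynomial of degree at most two in jointly Gaussian coordinates, so its moments are finite and explicitly computable. For each case I would first compute the $L^2$ norm exactly in terms of $\|M\|_F$ and then convert it into a lower bound on $\EE|\cdot|$ via the Paley--Zygmund inequality, fed by an explicit fourth-moment upper bound. The rank restriction $\mathrm{rank}(M)\le 2r$ plays no role in any of the arguments, and the bounds hold uniformly over all admissible $M$ with $\|M\|_F=1$.

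For matrix sensing, $\langle P, M\rangle$ is a centered Gaussian with variance $\|M\|_F^2$, so $\EE|\langle P, M\rangle| = \sqrt{2/\pi}\,\|M\|_F$ and the constant $\alpha = \sqrt{2/\pi}$ works out of the box. For bilinear sensing, I would condition on $q$ to write $p^\top M q \mid q \sim N(0, \|Mq\|^2)$, which gives $\EE|p^\top M q| = \sqrt{2/\pi}\,\EE\|Mq\|$. Since $\|Mq\|^2 = q^\top M^\top M q$ is a quadratic form in a standard Gaussian with PSD coefficient matrix $M^\top M$, a direct Isserlis computation yields $\EE\|Mq\|^2 = \|M\|_F^2$ and $\EE\|Mq\|^4 = 2\|M^\top M\|_F^2 + \|M\|_F^4 \leq 3\|M\|_F^4$. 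Applying Paley--Zygmund to $\|Mq\|^2$ at threshold $\tfrac12 \EE\|Mq\|^2$ then gives $\PP\{\|Mq\| \geq \|M\|_F/\sqrt{2}\} \geq 1/12$, hence $\EE\|Mq\| \geq \|M\|_F/(12\sqrt{2})$ and $\alpha$ can be taken proportional to $1/\sqrt{\pi}$.

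For quadratic sensing~I, I would start from the identity $\EE(p^\top M p)^2 = 2\|M\|_F^2 + (\tr M)^2 \geq 2\|M\|_F^2$, valid whenever $M$ is symmetric. A matching upper bound $\EE(p^\top M p)^4 \lesssim \|M\|_F^4$ follows either from Isserlis' theorem or, more conceptually, from Gaussian hypercontractivity applied to the degree-two polynomial $p^\top M p$ (which gives $\|p^\top M p\|_4 \leq 3\|p^\top M p\|_2$). Applying Paley--Zygmund to $(p^\top M p)^2$ then produces $\EE|p^\top M p| \geq c\,\|M\|_F$ for a universal $c>0$. Quadratic sensing~II is entirely analogous: $p^\top M p - \tilde p^\top M \tilde p$ is a mean-zero degree-two polynomial in the jointly Gaussian vector $(p, \tilde p)$ whose second moment equals $4\|M\|_F^2$ by independence, and the same Paley--Zygmund argument applies verbatim.

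The main obstacle I anticipate is bookkeeping rather than conceptual depth: one must carry through the Isserlis-type fourth-moment computations in Cases~2--4 cleanly so as to extract a single universal constant $\alpha$. This is exactly where Gaussian hypercontractivity provides a tidy shortcut, as it bounds $\|f\|_4$ by a constant multiple of $\|f\|_2$ for every degree-two Gaussian polynomial $f$ without any further calculation.
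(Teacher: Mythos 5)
Your route is genuinely different from the paper's. The paper establishes the stronger small-ball condition $\PP(|\cA(M)_i|\geq \mu_0)\geq p_0$ for each operator: for matrix sensing it just takes the median of $|g|$ for $g\sim N(0,1)$, and for the quadratic and bilinear cases it diagonalizes $M$ via rotational invariance to reduce to a weighted sum $\sum_k \lambda_k z_k$ of i.i.d.\ variables ($z_k=p_k^2$, $2p_k\tilde p_k$, or $p_kq_k$) and then invokes the Rudelson--Vershynin anti-concentration result (Proposition~\ref{prop:small_ball}) to bound $\sup_u\PP(|\sum_k\lambda_k z_k-u|\leq t)$; the expectation lower bound then follows from $\EE|X|\geq \mu_0\PP(|X|\geq\mu_0)$. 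You instead work purely with second and fourth moments and Paley--Zygmund. Your matrix-sensing and bilinear computations are correct and arguably cleaner (they produce explicit constants, e.g.\ $\EE|p^\top Mq|\geq \|M\|_F/(12\sqrt{\pi})$, whereas the paper's constant is only implicit through the unspecified constant $C$ in Proposition~\ref{prop:small_ball}). What the paper's anti-concentration route buys is uniformity over shifts: the bound $\sup_u\PP(|\sum\lambda_k z_k-u|\leq t)\leq C\phi(t)$ is insensitive to the mean of the quadratic form, which is exactly the delicate point in your approach.

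That delicate point is where your quadratic sensing~I argument contains a misstatement: the claimed bound $\EE(p^\top Mp)^4\lesssim \|M\|_F^4$ is false in general. Since $\EE(p^\top Mp)^2=2\|M\|_F^2+(\tr M)^2$ and $\tr M$ can be as large as $\sqrt{2r}\,\|M\|_F$ (take $M=(2r)^{-1/2}$ times a rank-$2r$ projection), the fourth moment can grow like $r^2\|M\|_F^4$; hypercontractivity gives $\|p^\top Mp\|_4\leq 3\|p^\top Mp\|_2$, not $\|p^\top Mp\|_4\lesssim\|M\|_F$. The argument is nonetheless repairable with no new ideas: Paley--Zygmund applied to $Z=(p^\top Mp)^2$ only needs the ratio bound $\EE Z^2\leq 81(\EE Z)^2$ (which is exactly what hypercontractivity supplies) together with $\EE Z\geq 2\|M\|_F^2$, yielding $\PP\big(|p^\top Mp|\geq\|M\|_F\big)\geq 1/324$ and hence $\EE|p^\top Mp|\geq\|M\|_F/324$. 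You should restate the step in this ratio form rather than as an absolute fourth-moment bound. Quadratic sensing~II does not suffer from this issue since $p^\top Mp-\tilde p^\top M\tilde p$ is centered.
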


We can now state the main RIP guarantees under the above assumptions. Throughout all the results, we fix the data generating mechanism as in Definition~\ref{def:data_generation}. Then, we wish to establish the inequalities
\begin{equation} \label{eqn:RIPM1_thm}
\lr \|M\|_F \leq \frac{1}{m} \|\cA(M)\|_1 \leq \ur \|M\|_F
\end{equation}
and
\begin{equation} \label{eqn:oulier_thm}
\ir \|M\|_F \leq \frac{1}{m}\big( \|\cA_{\cI^c} (M)\|_1 - \|\cA_{\cI} (M) \|_1 \big),
\end{equation}
and, hence, Assumptions~\ref{assump:RIP} and~\ref{assump:outlier}, respectively, for certain constants $\kappa_1, \kappa_2,$ and $\kappa_3$. We defer the proof of this theorem to Appendix~\ref{sec:proof_main_rip_theo}.

\begin{thm}[$\ell_1/\ell_2$ RIP and $\cI$-outlier bounds]\label{thm:main_rip_theorem}
	There exist numerical constants $c_1, \dots, c_6 > 0$ depending only on $\co, \eta$ such that the following hold for the corresponding measurement operators described in Equations~\eqref{eq:const_matrix_sens},~\eqref{eq:quadratic_measurements_I},~\eqref{eq:quadratic_measurements_II}, and~\eqref{eq:bilinear_measurements}, respectively
	\begin{enumerate}
		\item \label{thm:main_rip_theorem:item:matrix_sensing} {\bf (Matrix sensing)} Suppose Assumption~\ref{assumption:matrix_sensing} holds. Then provided $m \geq \frac{c_1}{(1 - 2 \pfail)^2} r(d_1+d_2+1)\ln \left( c_2 + \frac{c_2 }{1 - 2 \pfail}\right)$, we have with probability at least $1-4\exp\left( -c_3 (1-2\pfail)^2 m \right)$ that every matrix $M \in \RR^{d_1 \times d_2}$ of rank at most $2r$ satisfies~\eqref{eqn:RIPM1_thm} and~\eqref{eqn:oulier_thm} with constants $\kappa_1 = c_4, \kappa_2 = c_5$ and $\kappa_3 = c_6(1-2\pfail)$.
		\item\label{thm:main_rip_theorem:item:quadratic_sensing_I} {\bf (Quadratic sensing I)} Suppose Assumption~\ref{assumption:cov_est_naive} holds. Then provided  $m \geq \frac{c_1}{(1 - 2 \pfail)^2} r^2 (2d+1)\ln \left( c_2 + \frac{c_2 }{1 - 2 \pfail}\sqrt{r}\right)$, we have with probability at least $1-4\exp\left( -c_3 (1-2\pfail)^2 m/r \right)$ that every matrix $M \in \RR^{d \times d}$ of rank at most $2r$ satisfies~\eqref{eqn:RIPM1_thm} and~\eqref{eqn:oulier_thm} with constants $\kappa_1 = c_4, \kappa_2 = c_5 \cdot \sqrt{r} $ and $\kappa_3 = c_6(1-2\pfail)$.
		\item\label{thm:main_rip_theorem:item:quadratic_sensing_II} {\bf (Quadratic sensing II)} Suppose Assumption~\ref{assumption:cov_est_sym} holds. Then provided $m \geq \frac{c_1}{(1 - 2 \pfail)^2} r (2d+1)\ln \left( c_2 + \frac{c_2 }{1 - 2 \pfail}\right)$, we have with probability at least $1-4\exp\left( -c_3 (1-2\pfail)^2 m \right)$ that every matrix $M \in \RR^{d \times d}$ of rank at most $2r$ satisfies~\eqref{eqn:RIPM1_thm} and~\eqref{eqn:oulier_thm} with constants $\kappa_1 = c_4, \kappa_2 = c_5$ and $\kappa_3 = c_6(1-2\pfail)$.
		\item\label{thm:main_rip_theorem:item:bilinear_sensing} {\bf (Bilinear sensing)} Suppose Assumption~\ref{assumption:bilinear} holds. Then provided $m \geq \frac{c_1}{(1 - 2 \pfail)^2} r (d_1+d_2+1)\ln \left( c_2 + \frac{c_2 }{1 - 2 \pfail}\right)$, we have with probability at least $1-4\exp\left( -c_3 (1-2\pfail)^2 m \right)$ that every matrix $M \in \RR^{d_1 \times d_2}$ of rank at most $2r$ satisfies~\eqref{eqn:RIPM1_thm} and~\eqref{eqn:oulier_thm} with constants $\kappa_1 = c_4, \kappa_2 = c_5$ and $\kappa_3 = c_6(1-2\pfail)$.
	\end{enumerate}
\end{thm}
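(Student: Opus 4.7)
All four parts follow a common template: for each rank-$2r$ matrix $M$ of unit Frobenius norm, we simultaneously control the signed empirical sum
$$S_M(\cI) := \frac{1}{m}\sum_{i=1}^m \epsilon_i(\cI)\,|\cA(M)_i|,\qquad \epsilon_i(\cI)=2\cdot\mathbf{1}[i\notin\cI]-1,$$
whose conditional expectation given $\cI$ equals $\tfrac{m-2|\cI|}{m}\EE|\cA(M)_1|\geq(1-2\pfail)\co$ by the small-ball hypothesis, alongside its unsigned counterpart $\tfrac{1}{m}\|\cA(M)\|_1$. Taking $\cI=\emptyset$ in the signed sum recovers the lower RIP constant $\lr$ in \eqref{eqn:RIPM1_thm}; the same signed process with general $\cI$ of size $\leq \pfail m$ yields the outlier bound~\eqref{eqn:oulier_thm}; the unsigned sum combined with a matching upper Bernstein tail produces the upper RIP constant $\ur$.

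\textbf{Pointwise concentration.} For fixed $M$, the summands $\epsilon_i(\cI)|\cA(M)_i|$ are independent and sub-exponential with norm at most $K$. Here $K$ is a universal constant multiple of $\eta^2$ in parts~\ref{thm:main_rip_theorem:item:matrix_sensing}, \ref{thm:main_rip_theorem:item:quadratic_sensing_II}, and~\ref{thm:main_rip_theorem:item:bilinear_sensing}, while in part~\ref{thm:main_rip_theorem:item:quadratic_sensing_I} one has $K=O(\eta^2 \sqrt{r})$ because $\cA(M)_i=\lv_i^\top M \lv_i$ has mean $\tr M$ which, for rank-$2r$ $M$ with $\|M\|_F=1$, can be as large as $\sqrt{2r}$. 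Bernstein's inequality then gives, for $t \asymp (1-2\pfail)\co$,
$$\PP\!\bigl(S_M(\cI)\leq \tfrac{1}{2}(1-2\pfail)\co \bigr)+\PP\!\bigl(\tfrac{1}{m}\|\cA(M)\|_1\geq 3K\bigr)\leq 4\exp\!\bigl(-c\,m(1-2\pfail)^2/K^2\bigr).$$

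\textbf{Uniformization and Lipschitz extension.} A standard SVD parameterization gives an $\epsilon$-net $\mathcal{N}_\epsilon$ of the rank-$2r$ Frobenius unit sphere of cardinality at most $(9/\epsilon)^{r(d_1+d_2+1)}$. Union-bounding the pointwise estimate over $\mathcal{N}_\epsilon$ at a small universal $\epsilon$ requires precisely
$$m \gtrsim \frac{K^2\,r(d_1+d_2+1)}{(1-2\pfail)^2}\log\!\Bigl(1+\tfrac{K}{(1-2\pfail)\co}\Bigr),$$
reproducing the sample thresholds of parts \ref{thm:main_rip_theorem:item:matrix_sensing}--\ref{thm:main_rip_theorem:item:bilinear_sensing} once $K$ is substituted. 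To pass from the net to arbitrary rank-$2r$ $M$, decompose $M = M_0 + \Delta$ with $M_0\in\mathcal{N}_\epsilon$, $\|\Delta\|_F\leq \epsilon$, and $\rank\Delta\leq 4r$. The same net argument run at rank $4r$ (absorbed into constants) controls $|S_M(\cI)-S_{M_0}(\cI)|\leq \tfrac{1}{m}\|\cA(\Delta)\|_1\leq \ur\epsilon$. Choosing $\epsilon$ smaller than a fixed fraction of $(1-2\pfail)\co/\ur$ preserves both the lower RIP and outlier bounds on the entire rank-$2r$ variety.

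\textbf{Main obstacle.} The most delicate bookkeeping is in part~\ref{thm:main_rip_theorem:item:quadratic_sensing_I}: the $\sqrt{r}$ inflation of $K$ produced by the trace term must be tracked through Bernstein's tail to yield the $r^2 d$ (as opposed to $rd$) sample complexity, while the upper RIP constant inherits an honest $\sqrt{r}$ factor. The other subtle input is Assumption~\ref{assumption:cov_est_sym} in the symmetrized quadratic case: the mean $\tr M$ cancels in $\lv^\top M \lv - \tilde\lv^\top M \tilde\lv$, and a separate Wick-style computation (Lemma~\ref{lem:small_ball}) is needed to verify that the small-ball constant $\co$ remains dimension-free. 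Once these pointwise inputs are in place, the sub-exponential Bernstein plus $\epsilon$-net template is mechanical and uniform across the four parts.
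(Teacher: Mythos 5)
Your proposal follows essentially the same route as the paper: pointwise sub-gaussian/sub-exponential concentration of the signed sums via Hoeffding/Bernstein, an $\epsilon$-net of the rank-$2r$ unit sphere of cardinality $(9/\epsilon)^{(d_1+d_2+1)2r}$, and a Lipschitz extension to the full variety; your signed process $S_M(\cI)$ is exactly the paper's $\frac1m(\|\cA_{\cI^c}(M)\|_1-\|\cA_{\cI}(M)\|_1)$, and your accounting of the $\sqrt{r}$ inflation in Quadratic sensing I (via $\|\sigma\|_1\le\sqrt{2r}\|\sigma\|_2$, equivalently the trace term) matches the paper's $\psi_1$-norm computation and yields the same $r^2d$ threshold and $\kappa_2=c_5\sqrt{r}$.

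The one step you have not closed is the passage from the net to arbitrary rank-$2r$ matrices for the \emph{upper} bound. You write $|S_M(\cI)-S_{M_0}(\cI)|\le\frac1m\|\cA(\Delta)\|_1\le\ur\epsilon$ and justify it by ``the same net argument run at rank $4r$,'' but a net argument at rank $4r$ only controls $\frac1m\|\cA(\cdot)\|_1$ \emph{on that net}; bounding it for the off-net perturbation $\Delta$ is precisely the extension problem again, so as written the argument is circular. The standard resolution, which the paper uses in Proposition~\ref{prop:epsilon_covering}, is to define $c_2:=\sup_{M\in S_{2r}}\frac1m\|\cA(M)\|_1$, split $M-M_\star=M_1+M_2$ into two mutually orthogonal rank-$\le 2r$ pieces via the SVD (so that $\|M_1\|_F+\|M_2\|_F\le\sqrt2\,\|M-M_\star\|_F$), derive the self-referential inequality $c_2\le\frac1m\sup_M\EE\|\cA(M)\|_1+t+2c_2\epsilon$, and rearrange with $\epsilon\le1/4$ to get an a priori bound on $c_2$; only then can $\ur\epsilon$ be used to absorb the perturbation in both the lower RIP and outlier bounds. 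With that bootstrap inserted, your argument coincides with the paper's.
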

The guarantees of Theorem~\ref{thm:main_rip_theorem} were previously known under  stronger assumptions. In particular, item~\eqref{thm:main_rip_theorem:item:matrix_sensing} generalizes the results in \cite{li2018nonconvex_robust} for the pure Gaussian setting. The case $r = 1$  of item~\eqref{thm:main_rip_theorem:item:quadratic_sensing_I} can be found, in a sightly different form, in \cite{eM, duchi_ruan_PR}. Item~\eqref{thm:main_rip_theorem:item:quadratic_sensing_II} sharpens slightly the analogous guarantee in \cite{MR3367819} by weakening the assumptions on the moments of the measuring vectors to the uniform lower bound \eqref{assump:covariance_estimation2}. Special cases of item~\eqref{thm:main_rip_theorem:item:bilinear_sensing} were established in \cite{charisopoulos2019composite}, for the case $r=1$, and \cite{ROP_matrix_recovery_rank_one}, for Gaussian measurement vectors.

We note that all linear mappings require the same number of measurements in order to satisfy RIP and $\cI$ outlier bounds, except for quadratic sensing I operator, which incurs an extra $r$-factor. This reveals the utility of the quadratic sensing II operator, which achieves optimal sample complexity. For larger scale problems, a shortcoming of matrix sensing operator~\eqref{eq:const_matrix_sens} is that $md_1 d_2$ scalars are required to represent the map $\cA$. In contrast, all other measurement operators may be represented with only $m(d_1+d_2)$ scalars.

\section{Matrix Completion}\label{sec:mat_comp}
In the previous sections, we saw that low-rank recovery problems satisfying RIP lead to well-conditioned nonsmooth formulations. We claim, however, that the general framework of sharpness and approximation is applicable even for problems without RIP. We consider two such problems, namely matrix completion in this section and robust PCA in  Section~\ref{sec:robust_pca}, to follow. Both problems will be considered in the symmetric setting.

 The goal of matrix completion problem is to recover a PSD rank $r$ matrix $M_{\sharp} \in \mathcal{S}^d$ given access only to a subset of its entries. Henceforth, let $X_{\sharp}\in \R^{d\times r}$ be a matrix satisfying $M_{\sharp}=X_{\sharp}X_{\sharp}^\top$.
Throughout, we assume incoherence condition, $\|X_{\sharp}\|_{2,\infty}\leq
\sqrt{\frac{\nu r}{d}}$,  for some $\nu>0$. We also make the fairly strong assumption that the singular values of $X_{\sharp}$ are all equal $\sigma_1(X_{\sharp})=\sigma_2(X_{\sharp})=\ldots=\sigma_r(X_{\sharp}) =
1$. This assumption is needed for our theoretical results. 
We let  $\Omega\subseteq [d]\times [d]$ be an index set generated by the Bernoulli model, that is, $ \PP((i,j), (j,i) \in \Omega) = p $ independently for all $ 1\le i\le j \le d $. Let $\Pi_{\Omega}\colon\mathcal{S}^d\to \R^{|\Omega|}$ be the projection onto the entries indexed by $\Omega$. We consider the following optimization formulation of the problem
$$
\min_{X \in \cX}~f(X)=\|\Pi_{\Omega}(XX^\top)-\Pi_{\Omega}(M_{\sharp})\|_2 \quad \text{where $\cX = \left\{X\in \R^{d\times r} \colon ~ \|X\|_{2,\infty}\leq \sqrt{\frac{\nu r}{d}}\right\}$}.
$$
 We will show that both the Polyak subgradient method and an appropriately modified prox-linear algorithm converge linearly to the solution set under reasonable initialization. Moreover, we will see that the linear rate of convergence for the prox-linear method is much better than that for the subgradient method.

To simplify notation, we set
$$\cD^*:=\cD^*(M_{\sharp})=\{X\in \R^{d_1\times r}: XX^\top=M_{\sharp}\}.
$$

We begin by estimating the sharpness constant $\mu$ of the objective function. Fortunately, this estimate follows directly from inequalities (58) and (59a) in \cite{chen2015fast}.

		\begin{lem}[Sharpness \cite{chen2015fast}]\label{lem:sharp_mat_comp}
			There are numerical constant $c_1,c_2>0$ such that the following holds. If $p\geq c_2(\frac{\nu^2 r^2}{d}+ \frac{\log d}{d})$, then with probability $1-c_1d^{-2}$, the estimate
$$\frac{1}{p}	\|\Pi_{\Omega}(XX^\top - X_{\sharp}X_{\sharp}^\top)\|_F ^2 \geq c_1 \|XX^\top- X_{\sharp}X_{\sharp}^\top\|_F^2$$
holds uniformly for all $X\in \cX$ with $\dist(X,\mathcal{D}^{*})\leq c_1$.
			\end{lem}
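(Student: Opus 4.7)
The plan is to reduce the claim directly to a sampling inequality for incoherent low-rank matrices already established in \cite{chen2015fast}. Set $\Delta := XX^\top - X_\sharp X_\sharp^\top$, so that $\Delta$ is symmetric and of rank at most $2r$. The target inequality $\tfrac{1}{p}\|\Pi_\Omega(\Delta)\|_F^2 \geq c\|\Delta\|_F^2$ is exactly the conclusion of (58)/(59a) in \cite{chen2015fast}, \emph{provided} $\Delta$ satisfies the incoherence hypothesis used there (a bound on $\|\Delta\|_\infty$ or equivalently on $\|\Delta\|_{2,\infty}$, scaling like $\nu r/d$). So the whole proof reduces to (i) checking that the $\Delta$'s arising in our setting meet that incoherence hypothesis, and (ii) quoting the Chen--Wainwright-style uniform sampling bound.

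For step (i), both $X$ and $X_\sharp$ lie in $\cX$, hence $\max\{\|X\|_{2,\infty},\|X_\sharp\|_{2,\infty}\} \leq \sqrt{\nu r/d}$. Since the nonzero singular values of $X_\sharp$ are all equal to $1$, we have $\|X_\sharp\|_{\op}=1$; combined with $\dist(X,\cD^*)\leq c_1$ (for a small absolute constant $c_1$), this yields $\|X\|_{\op} \leq 1+c_1$. Therefore each row of $\Delta$ satisfies
\[
\|\Delta\|_{2,\infty} \;\leq\; \|X\|_{\op}\|X\|_{2,\infty} + \|X_\sharp\|_{\op}\|X_\sharp\|_{2,\infty} \;\lesssim\; \sqrt{\nu r/d},
\]
and consequently $\|\Delta\|_\infty \lesssim \nu r/d$. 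This is precisely the incoherence scaling under which the sampling lemma of \cite{chen2015fast} operates.

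For step (ii), I would invoke the uniform Bernoulli concentration bound from \cite{chen2015fast}: in the regime $p \geq c_2(\nu^2 r^2/d + \log d/d)$, with probability at least $1-c_1 d^{-2}$, the estimate $\tfrac{1}{p}\|\Pi_\Omega(\Delta)\|_F^2 \geq c\|\Delta\|_F^2$ holds simultaneously for all symmetric matrices $\Delta$ of rank at most $2r$ satisfying the incoherence bound derived above. Specializing this to $\Delta = XX^\top - X_\sharp X_\sharp^\top$ and taking the intersection over $X\in\cX$ with $\dist(X,\cD^*)\leq c_1$ finishes the proof -- uniformity over $X$ is already baked into the Chen--Wainwright statement since it is uniform over all suitably incoherent $\Delta$.

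The main obstacle is simply bookkeeping: aligning the exact form of incoherence used in (58)/(59a) of \cite{chen2015fast} with the row-norm bound derivable from $X\in\cX$ and $\dist(X,\cD^*)\leq c_1$. In particular, the constant $c_1$ must be calibrated small enough that $\|X\|_{\op}$ remains a small perturbation of $\|X_\sharp\|_{\op}=1$, so that the constants on $\|\Delta\|_{2,\infty}$ match the hypotheses of the cited lemma; no new probabilistic argument is needed beyond what already appears in \cite{chen2015fast}.
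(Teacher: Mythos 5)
Your proposal is correct and follows essentially the same route as the paper, which simply observes that the estimate ``follows directly from inequalities (58) and (59a) in \cite{chen2015fast}'' without further elaboration. Your additional step of verifying that $\Delta = XX^\top - X_\sharp X_\sharp^\top$ inherits the incoherence scaling $\|\Delta\|_{2,\infty}\lesssim \sqrt{\nu r/d}$ from $X,X_\sharp\in\cX$, $\|X_\sharp\|_{\rm op}=1$, and $\dist(X,\cD^*)\leq c_1$ is exactly the bookkeeping the paper leaves implicit, so the argument is sound.
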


Let us next estimate the approximation accuracy $|f(Z)-f_{X}(Z)|$, where
	$$
	f_X(Z) = \|\Pi_{\Omega}(XX-M_\sharp+X(Z-X)^{\top}+(Z-X)X^\top) \|_F.
	$$ To this end, we will require the following result.

	\begin{lem}[Lemma 5 in \cite{chen2015fast}]\label{lem:matrix_completion_quad_term}
		There is a numerical constant $c>0$ such that the following holds. If  $p\geq \frac{c}{\epsilon^2}(\frac{\nu^2 r^2}{d}+ \frac{\log d}{d})$ for some $\epsilon \in (0,1)$,  then with probability at least $1-2d^{-4}$, the estimates
\begin{enumerate}
\item $ \frac{1}{\sqrt{p}}\|\Pi_{\Omega}(H H^\top)\|_F \leq
\sqrt{(1+\epsilon)}\|H\|_F^2 + \sqrt{\epsilon} \|H\|_F$; and
\item \label{lem:matrix_completion_quad_term:item:lip} $\frac{1}{\sqrt{p}}\|\Pi_{\Omega}(G H^\top)\|_F \leq \sqrt{\nu r}\|G\|_F$
\end{enumerate}
		hold uniformly for all matrices $H$ with $\|H\|_{2,\infty} \leq 6\sqrt{\frac{\nu r}{d}}$ and $G \in \RR^{d \times r}$.
		\end{lem}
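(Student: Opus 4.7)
The plan for proving Lemma~\ref{lem:matrix_completion_quad_term} is to combine a pointwise Bernstein-type concentration bound for the partial Frobenius norm $\|\Pi_\Omega(\cdot)\|_F^2$ with a covering argument that upgrades the bound uniformly over the incoherent cone. Writing $\delta_{ij}:=\mathbf{1}_{(i,j)\in\Omega}$ for i.i.d.\ Bernoulli$(p)$ variables, both items reduce to controlling a sum of the form $\sum_{i,j}\delta_{ij} W_{ij}^2$. The stated sample complexity $p\gtrsim \epsilon^{-2}(\nu^2 r^2/d+\log d/d)$ is precisely what makes the usual Bernstein ``max vs.\ variance'' tradeoff close at level $\epsilon$.

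For item (ii), the argument is essentially deterministic once one controls the per-row degrees $N_i:=|\{j:(i,j)\in\Omega\}|$. Scalar Bernstein applied to each $N_i$ followed by a union bound over the $d$ rows yields $N_i\lesssim pd$ simultaneously for all $i$ with probability at least $1-d^{-5}$, using $pd\gtrsim \log d$. On this event, the incoherence bound on $H$ gives $|(GH^\top)_{ij}|^2\leq 36(\nu r/d)\|G_{i\cdot}\|_2^2$, so
\[
\|\Pi_\Omega(GH^\top)\|_F^2 \;\leq\; \tfrac{36\nu r}{d}\sum_i N_i \|G_{i\cdot}\|_2^2 \;\lesssim\; p\nu r\|G\|_F^2,
\]
which gives the claim after a constant adjustment.

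For item (i), consider a fixed incoherent $H$ and set $W:=HH^\top$. The entrywise bound $|W_{ij}|\leq 36\nu^2r^2/d^2$ together with the variance estimate $\sum_{i,j}W_{ij}^4 \leq (36\nu^2r^2/d^2)\|W\|_F^2$ allows scalar Bernstein to show
\[
\bigl|\|\Pi_\Omega(W)\|_F^2 - p\|W\|_F^2\bigr|\;\leq\; \epsilon p\|W\|_F^2 + C\epsilon p\|H\|_F^2
\]
with probability at least $1-d^{-10}$, under exactly the hypothesized $p$. Using $\|W\|_F\leq \|H\|_F^2$, rearranging, and taking square roots delivers the two-term upper bound $\sqrt{p}(\sqrt{1+\epsilon}\|H\|_F^2+\sqrt{\epsilon}\|H\|_F)$.

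The genuine obstacle is upgrading this \emph{pointwise} statement to a statement that is uniform over all admissible $H$. My plan is a dyadic peeling plus $\delta$-net argument: partition the incoherent cone $\{\|H\|_{2,\infty}\leq 6\sqrt{\nu r/d}\}$ into Frobenius shells $S_k:=\{H:2^k\leq \|H\|_F\leq 2^{k+1}\}$, net each shell at scale $\delta\sim 2^k/\mathrm{poly}(d,r)$ using $\exp(O(dr))$ points, apply the pointwise estimate at each net point, and union bound. The Lipschitz constant of $H\mapsto \|\Pi_\Omega(HH^\top)\|_F$ on $S_k$, controlled by item (ii) applied with $G=H-H'$, then extends the bound from net points to all of $S_k$. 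Because $\|H\|_F\leq 6\sqrt{\nu r d}$, only $O(\log d)$ shells are nonempty, so the total failure probability remains $O(d^{-4})$. The delicate point is ensuring the net-discretization error is absorbed into the slack term $\sqrt{\epsilon}\|H\|_F$ rather than into the leading $\|H\|_F^2$ term, which is precisely why peeling by Frobenius scale (rather than a single net over the whole cone) is essential.
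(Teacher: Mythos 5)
First, note that the paper does not actually prove this lemma: it is imported verbatim as Lemma~5 of \cite{chen2015fast}, so the comparison here is against the standard argument for that result. Your treatment of item (ii) is essentially sound (Bernstein on the row degrees $N_i$, then Cauchy--Schwarz with the incoherence of $H$), though it only recovers the bound up to a numerical constant rather than the stated $\sqrt{\nu r}\|G\|_F$; that is a cosmetic issue. Your pointwise Bernstein computation for item (i) also has slips ($|W_{ij}|=|\langle H_{i\cdot},H_{j\cdot}\rangle|\le 36\nu r/d$, not $36\nu^2r^2/d^2$), but the real problem is structural.

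The net-plus-peeling step for item (i) does not close. A $\delta$-net of a shell of the incoherent set at the resolution you need has cardinality $\exp(\Omega(dr\log(\cdot)))$, so a union bound over it requires each pointwise event to fail with probability $\exp(-\Omega(dr))$, not $d^{-10}$; as written, $\exp(O(dr))\cdot d^{-10}$ is not $O(d^{-4})$. If you instead run Bernstein at level $t\asymp dr$ to make the union bound work, the ``max'' term contributes an additive error of order $\|H\|_{2,\infty}^4 t/(3p)\asymp \nu^2r^3/(dp)\asymp \epsilon^2 r$ under the hypothesized $p\asymp \epsilon^{-2}\nu^2r^2/d$, which cannot be absorbed into $\sqrt{\epsilon}\|H\|_F$ for small $\|H\|_F$, and even the variance term then forces $p\gtrsim \epsilon^{-2}\nu^2 r^3/d$ --- an extra factor of $r$ over the stated sample size. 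This is exactly why such uniform bounds in matrix completion are not proved by nets. The standard route (and the one behind \cite{chen2015fast}) avoids any union bound over $H$: write $\sum_{i,j}\delta_{ij}\langle H_{i\cdot},H_{j\cdot}\rangle^2=\langle \delta, B\rangle$ where $B_{ij}=\langle H_{i\cdot}H_{i\cdot}^\top, H_{j\cdot}H_{j\cdot}^\top\rangle$ is itself a PSD Gram matrix with $\mathrm{tr}(B)=\sum_i\|H_{i\cdot}\|_2^4\le \|H\|_{2,\infty}^2\|H\|_F^2$, so that $|\langle \delta-p\mathbf{1}\mathbf{1}^\top,B\rangle|\le \|\delta-p\mathbf{1}\mathbf{1}^\top\|_{\rm op}\,\|H\|_{2,\infty}^2\|H\|_F^2$. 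A single high-probability spectral bound $\|\delta-p\mathbf{1}\mathbf{1}^\top\|_{\rm op}\lesssim\sqrt{pd}$ (valid once $pd\gtrsim\log d$) then yields the uniform estimate deterministically for every incoherent $H$, and with $p\gtrsim \epsilon^{-2}\nu^2r^2/d$ the error term is $\lesssim\epsilon\|H\|_F^2$, giving the claimed $\sqrt{1+\epsilon}\|H\|_F^2+\sqrt{\epsilon}\|H\|_F$ after taking square roots. You should replace the covering argument with this reduction to a spectral norm bound on the centered sampling matrix.
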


	An estimate of the approximation error $|f(Z)-f_{X}(Z)|$ is now immediate.
\begin{lemma}[Approximation accuracy and Lipschitz continuity]\label{lem:approx_acc_mat_comp}
There is a numerical constant $c>0$ such that the following holds. If  $p\geq \frac{c}{\epsilon^2}(\frac{\nu^2 r^2}{d}+ \frac{\log d}{d})$ for some $\epsilon \in (0,1)$, then with probability at least $1-2d^{-4}$, the estimates

\begin{align*}
\frac{1}{\sqrt{p}}|f(X) - f_Y(X)| &\leq \sqrt{(1+\epsilon)}\|X - Y\|_F^2 + \sqrt{\epsilon} \|X - Y\|_F,\\
|f(X) - f(Y)|&\leq \sqrt{p\nu r}\|X - Y\|_F,
\end{align*}
holds uniformly for all $X, Y\in \cX$.
\end{lemma}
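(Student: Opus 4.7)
The plan is to derive both inequalities as direct consequences of Lemma~\ref{lem:matrix_completion_quad_term}, after reducing each one to a quantity to which that lemma applies.

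For the approximation bound, I will start with the reverse triangle inequality on the Euclidean norm:
\[
|f(X) - f_Y(X)| \leq \|\Pi_{\Omega}\bigl(XX^\top - YY^\top - Y(X-Y)^\top - (X-Y)Y^\top\bigr)\|_F.
\]
The key algebraic observation is that the quantity inside telescopes to a perfect outer product, namely
\[
XX^\top - YY^\top - Y(X-Y)^\top - (X-Y)Y^\top = (X-Y)(X-Y)^\top,
\]
which one verifies by direct expansion. Setting $H := X - Y$, I then invoke the first conclusion of Lemma~\ref{lem:matrix_completion_quad_term}. The hypothesis $\|H\|_{2,\infty} \leq 6\sqrt{\nu r/d}$ holds because $X, Y \in \cX$ gives $\|X\|_{2,\infty}, \|Y\|_{2,\infty} \leq \sqrt{\nu r/d}$, so the triangle inequality yields $\|H\|_{2,\infty} \leq 2\sqrt{\nu r/d}$. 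This gives the first estimate directly.

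For the Lipschitz bound, I again begin with the reverse triangle inequality:
\[
|f(X) - f(Y)| \leq \|\Pi_{\Omega}(XX^\top - YY^\top)\|_F.
\]
I then split the difference via
\[
XX^\top - YY^\top = (X-Y)X^\top + Y(X-Y)^\top,
\]
and apply the triangle inequality. Each summand is now in the form $\Pi_{\Omega}(GH^\top)$ with $G = X-Y$ and $H$ equal to $X$ or $Y$; in both cases $\|H\|_{2,\infty} \leq \sqrt{\nu r/d} \leq 6\sqrt{\nu r/d}$, so the second conclusion of Lemma~\ref{lem:matrix_completion_quad_term} applies (using, if needed, the symmetry $\Pi_{\Omega}(GH^\top) = \Pi_{\Omega}(HG^\top)^\top$, which holds because $\Omega$ is symmetric under transposition by the Bernoulli sampling convention in the excerpt). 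Summing the two resulting bounds yields a constant multiple of $\sqrt{p\nu r}\,\|X - Y\|_F$, matching the stated estimate up to an absolute factor that is absorbed into $\sqrt{\nu r}$.

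Both events hold simultaneously with probability at least $1 - 2d^{-4}$ since they come from the \emph{same} event of Lemma~\ref{lem:matrix_completion_quad_term}. There is no real obstacle beyond choosing the correct algebraic split; the only mildly delicate point is verifying the $\|H\|_{2,\infty}$ hypothesis, which is why it is essential that the iterates are constrained to the incoherent set $\cX$. This is precisely the reason the paper enforces the box constraint $\|X\|_{2,\infty} \leq \sqrt{\nu r \|M_\sharp\|_{\rm op}/d}$ in the formulation.
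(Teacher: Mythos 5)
Your treatment of the first estimate is exactly the paper's: reverse triangle inequality, the telescoping identity reducing the error to $\Pi_{\Omega}((X-Y)(X-Y)^\top)$, and part (1) of Lemma~\ref{lem:matrix_completion_quad_term} applied with $H=X-Y$, together with the same $\|H\|_{2,\infty}\le 2\sqrt{\nu r/d}\le 6\sqrt{\nu r/d}$ check. Nothing to add there.

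For the Lipschitz estimate, your algebraic split costs you a constant that the statement does not allow. Writing $XX^\top-YY^\top=(X-Y)X^\top+Y(X-Y)^\top$ and bounding each summand by $\sqrt{p\nu r}\,\|X-Y\|_F$ via part (2) yields $2\sqrt{p\nu r}\,\|X-Y\|_F$, twice the claimed bound. Your remark that the factor of $2$ is ``absorbed into $\sqrt{\nu r}$'' does not work as written: $\nu$ and $r$ are fixed problem parameters, not adjustable constants, and the lemma asserts the constant $\sqrt{p\nu r}$ exactly (it is then used verbatim as the subgradient bound $L\le\sqrt{p\nu r}$ in the matrix-completion corollaries, where the rate depends on it). The paper avoids the extra factor by using the symmetric split $XX^\top-YY^\top=\tfrac{1}{2}\big((X-Y)(X+Y)^\top+(X+Y)(X-Y)^\top\big)$; since $\Omega$ is symmetric under transposition, the two halves have equal Frobenius norm after applying $\Pi_{\Omega}$, so the whole expression is bounded by $\|\Pi_{\Omega}((X-Y)(X+Y)^\top)\|_F\le\sqrt{p\nu r}\,\|X-Y\|_F$, invoking part (2) with $G=X-Y$ and $H=X+Y$ (which satisfies $\|X+Y\|_{2,\infty}\le 2\sqrt{\nu r/d}$). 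This is a one-line repair of your argument; everything else, including running both bounds on the single high-probability event of Lemma~\ref{lem:matrix_completion_quad_term}, is fine.
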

\begin{proof}
The first inequality follows immediately by observing the estimate
\begin{align*}
|f(X) - f_Y(X)| \leq \|\Pi_{\Omega}((X-Y)(X-Y)^\top)\|_F,
\end{align*}
and using Lemma~\ref{lem:matrix_completion_quad_term}. To see the second inequality, observe \begin{align*}
|f(X) - f(Y)| &\leq \|\Pi_{\Omega}(XX^\top - YY^\top)\|_F\\
&= \frac{1}{2}\|\Pi_{\Omega}((X- Y)(X + Y)^\top - (X+Y)(X-Y)^\top)\|_F\\
&\leq \|\Pi_{\Omega}((X- Y)(X + Y)^\top)\|_F\\
&\leq \sqrt{p\nu r}\|X - Y\|_F,
\end{align*}
where the last inequality follows by Part~\ref{lem:matrix_completion_quad_term:item:lip} of Lemma~\ref{lem:matrix_completion_quad_term}.
\end{proof}

Note that the approximation bound in
Lemma~\ref{lem:matrix_completion_quad_term} is not in terms of the square
Euclidean norm.
Therefore the results in Section~\ref{sec:conv_guarant} do not apply directly. Nonetheless, it is straightforward to modify the prox-linear method to take into account the new approximation bound. The proof of the following lemma appears in the appendix.

\begin{lem}\label{lem:basic_conv_guarantee}
Suppose that Assumption~\ref{ass:bb} holds with the approximation property replaced by
$$|f(y)-f_x(y)|\leq a\|y-x\|^2_2+b\|y-x\|_2\qquad \forall x,y\in \cX,$$
for some real $a,b\geq 0$. Consider the iterates generated by the process:
$$ x_{k+1}=\argmin_{x \in\cX}~ \left\{f_{x_k}(x)+a\|x-x_k\|^2_2+b\|x-x_k\|_2\right\}.
$$
Then as long as $x_0$ satisfies $\dist(x_0,\cX^*)\leq \frac{\mu - 2b}{2a}$, the iterates converge linearly:
$$\dist(x_{k+1},\cX^*)\leq \frac{2(b+a\dist(x,\cX^*))}{\mu} \cdot \dist(x_k,\cX^*)\qquad \forall k\geq 0.$$
\end{lem}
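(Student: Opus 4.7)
The plan is to mirror the standard prox-linear analysis, but carry the linear-in-distance term $b\|y-x\|_2$ through the inequalities alongside the quadratic term. First I would set up the basic one-step recursion. Fix $k\geq 0$, let $\bar x_k\in\proj_{\cX^*}(x_k)$, and invoke the defining optimality of $x_{k+1}$ as a minimizer of $f_{x_k}(\cdot)+a\|\cdot-x_k\|_2^2+b\|\cdot-x_k\|_2$ over $\cX$, evaluated against the feasible competitor $\bar x_k\in\cX\cap\cX^*$:
\[
f_{x_k}(x_{k+1})+a\|x_{k+1}-x_k\|_2^2+b\|x_{k+1}-x_k\|_2\;\leq\;f_{x_k}(\bar x_k)+a\|\bar x_k-x_k\|_2^2+b\|\bar x_k-x_k\|_2.
\]

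Next I would apply the modified approximation inequality twice. On the left, $f_{x_k}(x_{k+1})\geq f(x_{k+1})-a\|x_{k+1}-x_k\|_2^2-b\|x_{k+1}-x_k\|_2$, which cancels the added $a$- and $b$-terms exactly. On the right, $f_{x_k}(\bar x_k)\leq f(\bar x_k)+a\|\bar x_k-x_k\|_2^2+b\|\bar x_k-x_k\|_2=\inf_{\cX}f+a\,\dist^2(x_k,\cX^*)+b\,\dist(x_k,\cX^*)$. Combining gives
\[
f(x_{k+1})-\inf_{\cX}f\;\leq\;2a\,\dist^2(x_k,\cX^*)+2b\,\dist(x_k,\cX^*).
\]
Then sharpness yields the recursion
\[
\dist(x_{k+1},\cX^*)\;\leq\;\frac{2\bigl(b+a\,\dist(x_k,\cX^*)\bigr)}{\mu}\cdot\dist(x_k,\cX^*),
\]
which is exactly the advertised bound.

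The remaining piece is a short induction to keep the contraction factor under control. Under the hypothesis $\dist(x_0,\cX^*)\leq (\mu-2b)/(2a)$, the factor $2(b+a\,\dist(x_0,\cX^*))/\mu\leq 1$, so $\dist(x_1,\cX^*)\leq \dist(x_0,\cX^*)\leq (\mu-2b)/(2a)$. Iterating, the sequence $\{\dist(x_k,\cX^*)\}$ is nonincreasing, so the contraction factor remains $\leq 1$ at every step and actually strictly decreases (geometrically) once the first iterate lies in the interior of the ball. This closes the recursion on the same region, yielding the stated linear rate.

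The only slightly delicate point---and the reason I would state the argument carefully---is the feasibility of $\bar x_k$: one needs $\bar x_k\in\cX$ so that the comparison $f_{x_k}(x_{k+1})+\cdots\leq f_{x_k}(\bar x_k)+\cdots$ is legitimate. This is automatic here because $\cX^*=\argmin_{\cX}f\subseteq\cX$, so $\bar x_k\in\cX$ by construction. Everything else is a routine combination of the approximation bound, sharpness, and an inductive argument ensuring the iterates stay in the basin $\{x:\dist(x,\cX^*)\leq(\mu-2b)/(2a)\}$.
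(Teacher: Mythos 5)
Your proposal is correct and follows essentially the same argument as the paper: compare $x_{k+1}$ against the projection $\bar x_k\in\proj_{\cX^*}(x_k)$ using the minimizer property, absorb the model error on both sides via the modified approximation bound (so the added $a$- and $b$-terms cancel on the left and double on the right), and finish with sharpness. The short induction you add to keep $\dist(x_k,\cX^*)\leq(\mu-2b)/(2a)$ is left implicit in the paper's proof but is the right way to make the recursion self-sustaining.
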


Combining Lemma~\ref{lem:basic_conv_guarantee} with our estimates of the sharpness and approximation accuracy, we deduce the following convergence guarantee for matrix completion.
\begin{corollary}[Prox-linear method for matrix completion]
There are numerical constants $c_0,c,C>0$ such that the following holds. If  $p\geq \frac{c}{\epsilon^2}(\frac{\nu^2 r^2}{d}+ \frac{\log d}{d})$ for some $\epsilon \in (0,1)$, then with probability at least $1-c_0d^{-2}$, the iterates  generated by the modified prox-linear algorithm
\begin{equation}\label{eqn:prox_lin_mat_comp}
X_{k+1}=\argmin_{X \in \cX}~ \left\{f_{X_k}(X)+\sqrt{p(1+\epsilon)}\cdot\|X-X_k\|^2_2+\sqrt{p\epsilon}\|X-X_k\|_2\right\}
\end{equation}
satisfy
$$
\dist(X_{k+1},\mathcal{D}^*)\leq \frac{\sqrt{\epsilon}+\sqrt{1+\epsilon}\cdot \dist(X_k,\mathcal{D}^*)}{C} \cdot \dist(X_k,\mathcal{D}^*)\qquad \forall k\geq 0.
$$
In particular, the iterates converge linearly as long as
$\dist(X_0,\cD^*)<\frac{C-2\sqrt{\epsilon}}{2\sqrt{(1+\epsilon)}}$.
\end{corollary}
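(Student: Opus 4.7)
The plan is to combine the three lemmas already at our disposal (the sharpness estimate of Lemma~\ref{lem:sharp_mat_comp}, the approximation bound of Lemma~\ref{lem:approx_acc_mat_comp}, and the generic convergence guarantee of Lemma~\ref{lem:basic_conv_guarantee}) to read off the advertised linear contraction.

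\textbf{Step 1 (sharpness in the required form).} First I would take square roots in Lemma~\ref{lem:sharp_mat_comp} to rewrite its conclusion as
\[
f(X) \;=\; \|\Pi_\Omega(XX^\top - M_\sharp)\|_F \;\geq\; \sqrt{c_1 p}\cdot \|XX^\top - X_\sharp X_\sharp^\top\|_F,
\]
uniformly over $X\in\cX$ with $\dist(X,\cD^*)\leq c_1$. Chaining this with Proposition~\ref{prop:l2sharpness} and the normalization $\sigma_r(X_\sharp)=1$ yields sharpness
\[
f(X) \;\geq\; \mu\cdot \dist(X,\cD^*), \qquad \mu \;:=\; \sqrt{2(\sqrt{2}-1)\,c_1 p}.
\]
Note $\cD^*\subseteq\argmin_{\cX} f$ with minimum value $0$, where I use that the row norms of $X_\sharp R$ equal those of $X_\sharp$ for any $R\in O(r)$, so incoherence passes to the whole orbit and $\cD^*\subset \cX$.

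\textbf{Step 2 (approximation in Lemma~\ref{lem:basic_conv_guarantee} form).} Multiplying the first inequality of Lemma~\ref{lem:approx_acc_mat_comp} by $\sqrt{p}$ gives exactly
\[
|f(Y) - f_X(Y)| \;\leq\; a\,\|Y-X\|_F^2 + b\,\|Y-X\|_F, \qquad a=\sqrt{p(1+\epsilon)},\ b=\sqrt{p\epsilon},
\]
which is precisely the perturbation used in the modified prox-linear subproblem~\eqref{eqn:prox_lin_mat_comp} and matches the hypothesis of Lemma~\ref{lem:basic_conv_guarantee}. A union bound ensures both the sharpness and approximation statements hold simultaneously with probability at least $1-c_0 d^{-2}$ (each fails with probability $O(d^{-2})$).

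\textbf{Step 3 (invoking the generic lemma).} Lemma~\ref{lem:basic_conv_guarantee} then delivers, for all $k$,
\[
\dist(X_{k+1},\cD^*)\;\leq\; \frac{2\bigl(b + a\,\dist(X_k,\cD^*)\bigr)}{\mu}\cdot \dist(X_k,\cD^*).
\]
Substituting $a$, $b$, $\mu$ from Steps~1--2, the factor of $\sqrt{p}$ cancels and the right-hand side collapses to the form
$\frac{\sqrt{\epsilon} + \sqrt{1+\epsilon}\,\dist(X_k,\cD^*)}{C}\cdot \dist(X_k,\cD^*)$
for the absolute constant $C$ that is a fixed multiple of $\sqrt{c_1(\sqrt{2}-1)}$. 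Rearranging the requirement that this contraction factor lie strictly below~$1$ reproduces the stated initialization condition $\dist(X_0,\cD^*)<\frac{C-2\sqrt{\epsilon}}{2\sqrt{1+\epsilon}}$, at which point the sequence is strictly contractive and converges linearly.

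\textbf{Expected obstacle.} The only non-mechanical bookkeeping is certifying that the iterates remain in the neighborhood $\{X:\dist(X,\cD^*)\leq c_1\}$ where sharpness is known to hold, and in $\cX$ itself. Membership in $\cX$ is free because the modified prox-linear step projects onto $\cX$. The sharpness-neighborhood constraint is automatic once the contraction factor is strictly less than one, because distances to $\cD^*$ are then monotonically decreasing; this may require shrinking $\epsilon$ (equivalently enlarging the constant $c$ in the lower bound on $p$) so that the admissible initialization radius $\frac{C-2\sqrt{\epsilon}}{2\sqrt{1+\epsilon}}$ does not exceed $c_1$. No new probabilistic machinery is needed beyond the two high-probability events already guaranteed by Lemmas~\ref{lem:sharp_mat_comp} and~\ref{lem:approx_acc_mat_comp}.
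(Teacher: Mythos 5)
Your proposal is correct and follows essentially the same route as the paper: the paper's proof likewise invokes Proposition~\ref{prop:l2sharpness} together with Lemmas~\ref{lem:sharp_mat_comp} and~\ref{lem:approx_acc_mat_comp} and then appeals to Lemma~\ref{lem:basic_conv_guarantee} with $a=\sqrt{p(1+\epsilon)}$, $b=\sqrt{p\epsilon}$, and $\mu=\sqrt{2c_1 p(\sqrt{2}-1)}$, exactly as you do. Your closing remark about certifying that the iterates stay in the neighborhood where sharpness holds is a point the paper's one-line proof leaves implicit, and your resolution of it is sound.
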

\begin{proof}
By invoking Proposition~\ref{prop:l2sharpness} and Lemmas~\ref{lem:sharp_mat_comp} and ~\ref{lem:approx_acc_mat_comp} we may appeal to Lemma~\ref{lem:basic_conv_guarantee} with
 $a=\sqrt{p(1+\epsilon)}$, $b=\sqrt{p\epsilon}$, and $\mu=\sqrt{2c_1 p(\sqrt{2}-1)}$. The result follows immediately.
\end{proof}

To summarize, there exist numerical constants $c_0,c_1,c_2,c_3>0$ such that the following is true with probability at least $1-c_0d^{-2}$. In the regime
$$p\geq \frac{c_2}{\epsilon^2}\left(\frac{\nu^2 r^2}{d}+ \frac{\log d}{d}\right) \qquad\textrm { for some } ~\epsilon\in (0,c_1),$$ the prox-linear method will converge at the rapid linear rate,
$$\dist(X_k,\cD^*)\leq \frac{c_2}{2^k},$$ when initialized at $X_0\in\cX$ satisfying $\dist(X_0,\cD^*)<c_2$.

As for the prox-linear method, the results of Section~\ref{sec:conv_guarant} do
not immediately yield convergence guarantees for the Polyak subgradient method.
Nonetheless, it straightforward to show that the standard Polyak subgradient
method still enjoys local linear convergence guarantees. The proof is a straightforward modification of the argument in \cite[Theorem 3.1]{davis2018subgradient}, and appears in the appendix.

\begin{thm}\label{thm:basic_conv_guarantee_sub}
Suppose that Assumption~\ref{ass:bb} holds with the approximation property replaced by
$$|f(y)-f_x(y)|\leq a\|y-x\|^2_2+b\|y-x\|_2\qquad \forall x,y\in \cX,$$
for some real $a,b\geq 0$. Consider the iterates $\{x_k\}$ generated by the Polyak subgradient method in Algorithm~\ref{alg:polyak}.
Then as long as the sharpness constant satisfies $\mu > 2b$ and $x_0$ satisfies $\dist(x_0,\cX^*)\leq \gamma\cdot  \frac{\mu - 2b}{2a}$ for some $\gamma < 1$, the iterates converge linearly
$$\dist^2(x_{k+1},\cX^*)\leq \left(1-\frac{ (1-\gamma)\mu(\mu - 2b)}{L^2}\right) \cdot \dist^2(x_k,\cX^*)\qquad \forall k\geq 0.$$
\end{thm}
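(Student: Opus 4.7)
The plan is to follow the classical Polyak subgradient analysis of~\cite{davis2018subgradient}, carefully tracking how the extra additive term $b\|y-x\|_2$ in the modified approximation property propagates through the estimate. Fix $k\ge 0$, write $d_k := \dist(x_k, \cX^*)$, and pick $x^* \in \cX^*$ achieving this distance, so $\|x_k - x^*\|_2 = d_k$. Since $x^*\in \cX$ and $\proj_{\cX}$ is nonexpansive at points of $\cX$, the Polyak update immediately gives
\begin{align*}
\|x_{k+1} - x^*\|_2^2 \leq d_k^2 - \frac{f(x_k) - \min f}{\|\zeta_k\|_2^2}\Big(2\langle \zeta_k, x_k - x^*\rangle - (f(x_k) - \min f)\Big),
\end{align*}
so the whole proof reduces to lower-bounding the bracketed quantity.

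For the lower bound I would invoke convexity of the model $f_{x_k}$ together with the modified approximation hypothesis. Since $\zeta_k \in \partial f(x_k) = \partial f_{x_k}(x_k)$, convexity of $f_{x_k}$ gives $\langle \zeta_k, x_k - x^*\rangle \geq f_{x_k}(x_k) - f_{x_k}(x^*) = f(x_k) - f_{x_k}(x^*)$, while the approximation bound yields $f_{x_k}(x^*) \leq f(x^*) + a d_k^2 + b d_k$. Combining these with sharpness $f(x_k) - \min f \geq \mu d_k$ produces
\[
2\langle \zeta_k, x_k - x^*\rangle - (f(x_k) - \min f) \geq \mu d_k - 2ad_k^2 - 2b d_k = d_k\bigl(\mu - 2b - 2a d_k\bigr).
\]
Under the hypothesis $d_k \leq \gamma(\mu-2b)/(2a)$, the bracket is at least $(1-\gamma)(\mu - 2b)>0$. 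Substituting this into the recursion, and using both $f(x_k) - \min f \geq \mu d_k$ and $\|\zeta_k\|_2\leq L$ to simplify the prefactor, collapses to the claimed contraction
\[
\dist^2(x_{k+1}, \cX^*) \leq \|x_{k+1}-x^*\|_2^2 \leq \Big(1 - \tfrac{(1-\gamma)\mu(\mu-2b)}{L^2}\Big) d_k^2.
\]

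To close the induction I would note that the contraction factor is strictly less than $1$ (by $\mu > 2b$ and $\gamma<1$), so $d_{k+1} \leq d_k$ and the initialization condition $d_k \leq \gamma(\mu-2b)/(2a)$ propagates automatically; the base case is the hypothesis on $x_0$. Monotonicity of $d_k$ also guarantees that each iterate remains in the region where $L$ bounds the subgradients of $f$, so the use of $L$ at every step is justified. The only real subtlety, rather than a genuine obstacle, is the bookkeeping of the linear term $b\|y-x\|_2$: it converts the effective sharpness into $\mu - 2b$, explaining both the need for $\mu > 2b$ and the exact form of the contraction rate. Everything else is a mechanical adaptation of the standard Polyak argument to the weaker approximation bound supplied by Lemma~\ref{lem:approx_acc_mat_comp}.
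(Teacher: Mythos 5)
Your proposal is correct and follows essentially the same route as the paper's proof: nonexpansiveness of $\proj_{\cX}$, the inequality $\langle \zeta_k, x_k - x^*\rangle \geq f(x_k) - f_{x_k}(x^*) \geq f(x_k)-f(x^*) - a d_k^2 - b d_k$ coming from convexity of the model plus the modified approximation bound, and two applications of sharpness, with the initialization hypothesis turning $\mu - 2b - 2ad_k$ into $(1-\gamma)(\mu-2b)$. The only cosmetic difference is that you factor the update as $d_k^2 - t_k\bigl(2\langle\zeta_k, x_k - x^*\rangle - (f(x_k)-\min f)\bigr)$ before bounding, whereas the paper expands and regroups term by term; your explicit remark that the contraction factor lies in $[0,1]$ (so the distance is monotone and the induction hypothesis propagates) is a point the paper leaves implicit.
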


Finally, combining Theorem~\ref{thm:basic_conv_guarantee_sub} with our estimates of the sharpness and approximation accuracy, we deduce the following convergence guarantee for matrix completion.
\begin{corollary}[Subgradient method for matrix completion]
There are numerical constants $c_0,c,C>0$ such that the following holds. If  $p\geq \frac{c}{\epsilon^2}(\frac{\nu^2 r^2}{d}+ \frac{\log d}{d})$ for some $\epsilon \in (0,1)$, then with probability at least $1- c_0d^{-2}$, the iterates  generated by the iterates $\{X_k\}$ generated by the Polyak Subgradient method in Algorithm~\ref{alg:polyak}
satisfy
$$
\dist(X_{k+1},\cD^*)^2\leq  \left(1-\frac{C(C - 2\sqrt{\varepsilon})}{2\nu r}\right) \cdot \dist^2(X_k,\cD^*)\qquad \forall k\geq 0.
$$
In particular, the iterates converge linearly as long as
$\dist(X_0,\cD^*)<\frac{C-2\sqrt{\epsilon}}{4\sqrt{(1+\epsilon)}}$.
\end{corollary}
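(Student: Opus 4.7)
The plan is to instantiate Theorem~\ref{thm:basic_conv_guarantee_sub} with parameters $(\mu, a, b, L)$ read off from the three preceding results for matrix completion, then set $\gamma = 1/2$ to reproduce exactly the stated initial radius and rate. First I would establish sharpness: combining Lemma~\ref{lem:sharp_mat_comp} with Proposition~\ref{prop:l2sharpness} and the normalization $\sigma_r(X_\sharp) = 1$ yields, on the event of probability $1 - c_1 d^{-2}$,
\[
f(X) = \|\Pi_\Omega(XX^\top - M_\sharp)\|_F \;\geq\; \sqrt{c_1 p}\,\|XX^\top - X_\sharp X_\sharp^\top\|_F \;\geq\; \sqrt{p}\,C\cdot\dist(X,\cD^*),
\]
for all $X \in \cX$ with $\dist(X,\cD^*) \leq c_1$, where $C := \sqrt{2c_1(\sqrt{2}-1)}$. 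This identifies the sharpness constant as $\mu = C\sqrt{p}$.

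Next, Lemma~\ref{lem:approx_acc_mat_comp} produces the modified approximation bound $|f(X) - f_Y(X)| \leq a\|X-Y\|_F^2 + b\|X-Y\|_F$ with $a = \sqrt{p(1+\epsilon)}$ and $b = \sqrt{p\epsilon}$, as well as the subgradient/Lipschitz bound $L = \sqrt{p\nu r}$ on $\cX$. Provided $\epsilon$ lies below a sufficiently small absolute constant, we have $C > 2\sqrt{\epsilon}$, hence $\mu > 2b$, and Theorem~\ref{thm:basic_conv_guarantee_sub} becomes applicable. Choosing $\gamma = 1/2$, the theorem requires $\dist(X_0,\cD^*) \leq \tfrac{1}{2}\cdot\tfrac{\mu - 2b}{2a} = \tfrac{C-2\sqrt{\epsilon}}{4\sqrt{1+\epsilon}}$, which matches the corollary's hypothesis, and it delivers the rate
\[
\dist^2(X_{k+1},\cD^*) \;\leq\; \Bigl(1 - \tfrac{(1-\gamma)\mu(\mu-2b)}{L^2}\Bigr)\,\dist^2(X_k,\cD^*) \;=\; \Bigl(1 - \tfrac{C(C-2\sqrt{\epsilon})}{2\nu r}\Bigr)\,\dist^2(X_k,\cD^*),
\]
where the algebraic simplification uses $\mu(\mu-2b) = pC(C-2\sqrt{\epsilon})$ and $L^2 = p\nu r$, so that all $p$'s cancel.

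The main obstacle I anticipate is a mismatch in scope: Lemma~\ref{lem:sharp_mat_comp} supplies sharpness only on the neighborhood $\{X \in \cX : \dist(X,\cD^*) \leq c_1\}$, whereas Assumption~\ref{ass:bb} as cited in Theorem~\ref{thm:basic_conv_guarantee_sub} is stated globally on $\cX$. I would handle this by a short induction: shrinking the constant $c$ if necessary so that the initialization radius $\tfrac{C-2\sqrt{\epsilon}}{4\sqrt{1+\epsilon}}$ is at most $c_1$, and then observing that the core one-step contraction argument behind Theorem~\ref{thm:basic_conv_guarantee_sub} needs sharpness only at the current iterate $X_k$. Since the established contraction implies $\dist(X_{k+1},\cD^*) \leq \dist(X_k,\cD^*)$, the iterates never leave the neighborhood where sharpness holds, and the recursion closes. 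The probability bound $1 - c_0 d^{-2}$ then follows by a union bound over the events of Lemma~\ref{lem:sharp_mat_comp} and Lemma~\ref{lem:approx_acc_mat_comp}, adjusting $c_0$ accordingly.
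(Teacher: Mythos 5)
Your proposal is correct and follows essentially the same route as the paper: both invoke Theorem~\ref{thm:basic_conv_guarantee_sub} with $\gamma=1/2$, $a=\sqrt{p(1+\epsilon)}$, $b=\sqrt{p\epsilon}$, $L=\sqrt{p\nu r}$, and $\mu=\sqrt{2c_1p(\sqrt{2}-1)}$ obtained from Proposition~\ref{prop:l2sharpness} together with Lemmas~\ref{lem:sharp_mat_comp} and~\ref{lem:approx_acc_mat_comp}. Your additional remark about the sharpness bound holding only on the neighborhood $\dist(X,\cD^*)\leq c_1$, handled by shrinking constants and noting the one-step contraction keeps iterates inside that neighborhood, is a detail the paper leaves implicit but does not change the argument.
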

\begin{proof}
First, observe that we have the bound $L\leq \sqrt{p\nu r}$ by Lemma~\ref{lem:approx_acc_mat_comp}.
By invoking Proposition~\ref{prop:l2sharpness} and Lemmas~\ref{lem:sharp_mat_comp} and~\ref{lem:approx_acc_mat_comp} we may appeal to Theorem~\ref{thm:basic_conv_guarantee_sub} with
 $\gamma = 1/2$, $a=\sqrt{p(1+\epsilon)}$, $b=\sqrt{p\epsilon}$, and $\mu=\sqrt{2c_1 p(\sqrt{2}-1)}$. The result follows immediately.
\end{proof}

To summarize, there exist numerical constants $c_0,c_1,c_2,c_3>0$ such that the following is true with probability at least $1-c_0d^{-2}$. In the regime
$$p\geq \frac{c_2}{\epsilon^2}\left(\frac{\nu^2 r^2}{d}+ \frac{\log d}{d}\right) \qquad\textrm { for some } ~\epsilon\in (0,c_1),$$ the Polyak subgradient method will converge at the  linear rate,
$$\dist(X_k,\cD^*)\leq \left(1-\frac{c_3}{\nu r}\right)^{\frac{k}{2}}c_2,$$ when initialized at $X_0\in\cX$ satisfying $\dist(X_0,\cD^*)<c_2$. Notice that the prox-linear method enjoys a much faster linear rate of convergence than the subgradient method---an observation fully supported by numerical experiments in Section~\ref{sec:num_exp}. The caveat is that the per iteration cost of the prox-linear method is significantly higher than that of the subgradient method.

\section{Robust PCA}\label{sec:robust_pca}

The goal of robust PCA is to decompose a given matrix $W$ into a sum of a low-rank matrix $ M_{\sharp} $ and a sparse matrix $ S_{\sharp} $, where $ M_{\sharp} $ represents the principal components, $ S_{\sharp} $ the corruption, and $ W $ the observed data \cite{chand,rob_cand,yi2016rpca}.
In this section, we explore methods of nonsmooth optimization for recovering
such a decomposition, focusing on two different problem formulations. We only consider the symmetric version of the problem.

\subsection{The Euclidean formulation}
\label{sec:l2_robust_pca}

Setting the stage, we assume that the matrix $W\in\R^{d\times d}$ admits a decomposition $W=M_{\sharp}+S_{\sharp}$, where the matrices $M_{\sharp}$ and $S_\sharp$ satisfy the following   for some parameters $\nu>0$ and $k\in \mathbb{N}$:
\begin{enumerate}
	\item The matrix $M_{\sharp}\in\R^{d\times d}$ has rank $ r $ and can be
	factored as $M_{\sharp}=X_{\sharp}X^{\top}_{\sharp}$ for some matrix
	$X_{\sharp}\in\R^{d\times r}$ satisfying $\|X_{\sharp}\|_{{\rm op}}\leq 1$ and
	$\|X_{\sharp}\|_{2,\infty}\leq \sqrt{\frac{\nu r}{d}}$.\footnote{Recall that
		$\|X\|_{2,\infty} = \max_{i \in [d]} \|X_{i \cdot}\|_2$ is the maximum row
		norm.}
	\item The matrix $S_{\sharp}$ is sparse in the sense that it has at most $k$ nonzero entries per column/row. 
\end{enumerate}
The goal is to recover $ M_{\sharp} $ and $ S_{\sharp} $ given $ W $. The first
formulation we consider is the following:
\begin{equation}\label{eqn:l2_formul}
\min_{X\in \mathcal{X},S\in \cS}~ F\big((X,S)\big)=\|XX^{\top} +S -W\|_F,
\end{equation}
where the constraint sets are defined by
$$\cS :=\left\{S\in \mathbb{R}^{d\times d}: \|Se_i\|_1 \leq \|S_{\sharp}e_i\|_1 \; \forall i\right\}, \qquad\mathcal{X} = \left\{X\in \mathbb{R}^{d\times r}: \|X\|_{2,\infty} \leq \sqrt{\frac{\nu r}{d}}\right\}.$$
Note that the problem formulation requires knowing the $\ell_1$ norms of the rows of $S_\sharp$. The same assumption was also made in~\cite{chen2015fast,ge2017unified}.  While admittedly unrealistic, this formulation provides a nice illustration of the paradigm we advocate here.
The following technical lemma will be useful in proving the regularity conditions needed for rapid convergence. The proof is given in Appendix~\ref{appendix:proof_rpca_cross_term}.
\begin{lem}
	\label{lem:rpca_cross_term}
	For all $X\in \mathcal{X}$ and $S  \in \cS$, the estimate holds:
	\begin{align*}
	| \langle S-S_{\sharp}, XX^\top -X_{\sharp}X_{\sharp}^{\top}\rangle  | &  \leq 10 \sqrt{\frac{\nu r k}{d}}\cdot \|S - S_{\sharp}\|_F\cdot \|X-X_{\sharp}\|_F.
	\end{align*}
\end{lem}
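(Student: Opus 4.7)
My plan is to set $D_S := S - S_\sharp$ and $D_X := X - X_\sharp$, and to expand the matrix difference as $XX^\top - X_\sharp X_\sharp^\top = X D_X^\top + D_X X_\sharp^\top$. Trace cyclicity then gives the identity
\[
\langle D_S,\, XX^\top - X_\sharp X_\sharp^\top\rangle
\;=\; \langle D_X,\, D_S^\top X\rangle + \langle D_X,\, D_S X_\sharp\rangle,
\]
so a single application of Cauchy--Schwarz reduces the lemma to bounding each of $\|D_S^\top X\|_F$ and $\|D_S X_\sharp\|_F$ by a universal constant multiple of $\sqrt{\nu r k/d}\,\|D_S\|_F$.

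For $\|D_S^\top X\|_F$ I will proceed row by row. Since row $i$ of $D_S^\top X$ equals $(D_S e_i)^\top X$, the incoherence $\|X\|_{2,\infty}\leq\sqrt{\nu r/d}$ immediately gives the pointwise estimate
\[
\|(D_S^\top X)_{i\cdot}\|_2 \;\leq\; \sqrt{\nu r/d}\,\|D_S e_i\|_1.
\]
Letting $\Omega$ denote the support of $S_\sharp$, the membership $S\in\cS$ combined with the reverse triangle inequality applied to $\|Se_i\|_1\leq\|S_\sharp e_i\|_1$ produces the column-wise cone inequality $\|(D_S e_i)|_{\Omega^c}\|_1 \leq \|(D_S e_i)|_{\Omega}\|_1$; together with the fact that $\Omega$ contains at most $k$ indices in column $i$, Cauchy--Schwarz on the $k$-sparse support yields $\|D_S e_i\|_1 \leq 2\sqrt{k}\,\|D_S e_i\|_2$. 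Squaring, summing over $i$, and extracting a square root produces $\|D_S^\top X\|_F \leq 2\sqrt{\nu r k/d}\,\|D_S\|_F$.

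The main obstacle will be the second factor $\|D_S X_\sharp\|_F$, since the direct transposition of the argument above needs a row-wise cone inequality for $D_S$, whereas the displayed definition of $\cS$ only provides a column-wise one. The symmetric structure of robust PCA fills this gap: because $W$, $M_\sharp$, and therefore $S_\sharp$ are symmetric, and because replacing $S$ by its symmetrization $(S+S^\top)/2$ never increases the Frobenius objective $\|XX^\top + S - W\|_F$ (the matrix $XX^\top - W$ being symmetric), it is natural to restrict attention to symmetric $S$, on which the given column-wise cone inequality automatically transfers to a row-wise one. With this in hand, I will apply the identical row-by-row estimate to $(D_S X_\sharp)_{i\cdot} = (D_S)_{i\cdot} X_\sharp$---now using the incoherence of $X_\sharp$ in place of that of $X$, and the row sparsity of $S_\sharp$ (at most $k$ entries per row) in place of the column sparsity---to conclude $\|D_S X_\sharp\|_F \leq 2\sqrt{\nu r k/d}\,\|D_S\|_F$. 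Adding the two bounds and absorbing the small constants into the announced factor $10$ completes the proof.
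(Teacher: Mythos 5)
Your overall strategy---expand $XX^\top - X_\sharp X_\sharp^\top$, apply Cauchy--Schwarz, and control the resulting factors through incoherence together with the cone inequality $\|D_S e_i\|_1 \le 2\sqrt{k}\,\|D_S e_i\|_2$ extracted from the $\ell_1$ constraints defining $\cS$---is the same as the paper's, and your treatment of the first factor $\|D_S^\top X\|_F$ is correct (the paper bounds the equivalent quantity via $\|X_\sharp^\top \Delta_S\|_F + \|\Delta_X^\top\Delta_S\|_F$, which is cosmetically different). The genuine problem is exactly where you locate it, in $\|D_S X_\sharp\|_F$, and your proposed repair does not work. Symmetrizing $S$ is not a valid reduction: the $i$-th column of $(S+S^\top)/2$ has $\ell_1$ norm at most $\tfrac12\left(\|Se_i\|_1 + \|S^\top e_i\|_1\right)$, and the second summand is the $\ell_1$ norm of a \emph{row} of $S$, which the constraints defining $\cS$ do not bound. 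So $(S+S^\top)/2$ need not lie in $\cS$, and the row-wise cone inequality you would need for it is precisely the inequality you were missing in the first place. Moreover, the lemma is a pointwise estimate over all of $\cX\times\cS$ that feeds into the sharpness bound of Lemma~\ref{qgup}, so you cannot quietly restrict its scope to a sub-family of $S$ without weakening that downstream result.

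The honest resolution is that symmetry of $S$ and $S_\sharp$ must be taken as part of the model rather than derived. The paper does this implicitly: its first display collapses the two cross terms into $2\langle \Delta_S, \Delta_X X_\sharp^\top\rangle$, an identity valid only when $\Delta_S$ is symmetric, after which only the columns $\Delta_S e_j$ ever appear and the column-wise cone inequality suffices. Under that same symmetry hypothesis your missing row-wise inequality is immediate (rows of $D_S$ are columns of $D_S$), your bound $\|D_SX_\sharp\|_F \le 2\sqrt{\nu r k/d}\,\|D_S\|_F$ goes through, and your total constant $4$ comfortably beats the stated $10$. Without that hypothesis, both your argument and the paper's break at the same spot, since every decomposition of the symmetric matrix $XX^\top-X_\sharp X_\sharp^\top$ produces one cross term pairing $D_S$ with a matrix of the form $D_X A^\top$, whose Cauchy--Schwarz bound unavoidably involves the rows of $D_S$. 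State the symmetry assumption explicitly rather than trying to manufacture it.
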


Equipped with the above lemma, we can estimate the sharpness and approximation
parameters $\mu, \rho$ for the formulation \eqref{eqn:l2_formul}.

\begin{lem}[Regularity constants]\label{qgup}
	For all $X\in \mathcal{X}$ and $S  \in \cS$, the estimates hold:
	\begin{equation}\label{qg}
	\begin{aligned}
	F((X,S))^2 	& \geq \left(\frac{1}{2}\sigma_r^2(X_\sharp) - 10\sqrt{\frac{\nu rk}{ d}}\right) \cdot \left( \dist(X, \cD^\ast(M_\sharp))^2 + \|S - S_\sharp \|_F^2\right)
	\end{aligned}
	\end{equation}
	and
	\begin{equation} \label{up}
	\begin{aligned}
	|F((X,S))-F_Y((X,S)) | \leq \|X-Y\|_F^2 .
	\end{aligned}
	\end{equation}
	Moreover, for any $X_1,X_2\in \cX$ and $S_1,S_2\in \cS$, the Lipschitz bounds holds:
	$$|F((X_1,S_1))-F((X_2,S_2))|\leq 2\sqrt{\nu r}\|X_1-X_2\|_F + \|S_1 - S_2\|_F.$$
\end{lem}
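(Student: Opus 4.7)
The plan is to exploit the decomposition $W = X_\sharp X_\sharp^\top + S_\sharp$ to rewrite
$F((X,S)) = \|XX^\top - X_\sharp X_\sharp^\top + (S-S_\sharp)\|_F$, and to note that the convex model linearizes only the quadratic $X$-part (since $S$ enters affinely), giving $F_Y((X,S)) = \|YY^\top + Y(X-Y)^\top + (X-Y)Y^\top + S - W\|_F$. The approximation bound~\eqref{up} then follows by reverse triangle inequality: the arguments of the two Frobenius norms differ by $(X-Y)(X-Y)^\top$, whose Frobenius norm is at most $\|X-Y\|_F^2$. This is the direct analogue of Proposition~\ref{prop:approx_acc_sym}.

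For the Lipschitz estimate, reverse triangle gives $|F((X_1,S_1))-F((X_2,S_2))| \leq \|X_1X_1^\top - X_2X_2^\top\|_F + \|S_1-S_2\|_F$. Symmetrizing $X_1X_1^\top - X_2X_2^\top = \tfrac{1}{2}[(X_1+X_2)(X_1-X_2)^\top + (X_1-X_2)(X_1+X_2)^\top]$ bounds the first term by $\|X_1+X_2\|_{\op}\|X_1-X_2\|_F$. The incoherence constraint $\|X\|_{2,\infty}\le \sqrt{\nu r/d}$ gives $\|X\|_F \le \sqrt{\nu r}$ and hence $\|X\|_{\op}\le \sqrt{\nu r}$ for every $X \in \cX$, so $\|X_1+X_2\|_{\op}\le 2\sqrt{\nu r}$, finishing the third estimate.

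The main work is in the sharpness estimate~\eqref{qg}. Expanding the square,
\[F((X,S))^2 = \|XX^\top - X_\sharp X_\sharp^\top\|_F^2 + 2\langle XX^\top - X_\sharp X_\sharp^\top,\, S-S_\sharp\rangle + \|S-S_\sharp\|_F^2.\]
Proposition~\ref{prop:l2sharpness} lower bounds the first term by $2(\sqrt{2}-1)\sigma_r^2(X_\sharp)\dist(X,\cD^*(M_\sharp))^2$. For the cross term, Lemma~\ref{lem:rpca_cross_term} supplies a bound by $10\sqrt{\nu rk/d}\,\|S-S_\sharp\|_F\,\|X-X_\sharp\|_F$; since both $\cX$ and $M_\sharp = (X_\sharp R)(X_\sharp R)^\top$ are invariant under the right $O(r)$-action, one may replace $X_\sharp$ by the element of $\cD^*(M_\sharp)$ closest to $X$ and so promote $\|X-X_\sharp\|_F$ to $\dist(X,\cD^*(M_\sharp))$. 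Writing $d:=\dist(X,\cD^*(M_\sharp))$ and $s:=\|S-S_\sharp\|_F$, Young's inequality $2ds \le d^2+s^2$ turns the cross term into a loss of $10\sqrt{\nu rk/d}(d^2+s^2)$, and the desired inequality reduces to
\[2(\sqrt{2}-1)\sigma_r^2(X_\sharp)\,d^2 + s^2 \;\ge\; \tfrac{1}{2}\sigma_r^2(X_\sharp)\,(d^2+s^2),\]
which separates into two elementary bookkeeping facts: $2(\sqrt{2}-1) \ge \tfrac{1}{2}$ for the $d^2$ coefficients, and $\sigma_r^2(X_\sharp) \le \|X_\sharp\|_{\op}^2 \le 1 \le 2$ for the $s^2$ coefficients.

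The only delicate point will be the $O(r)$-invariance argument needed to pass from $\|X-X_\sharp\|_F$ to $\dist(X,\cD^*(M_\sharp))$ in the cross-term bound; once that is in hand, the remaining computations are a routine combination of Young's inequality, the sharpness of the factored Frobenius error (Proposition~\ref{prop:l2sharpness}), and the scalar facts $2(\sqrt{2}-1) \ge \tfrac{1}{2}$ and $\sigma_r(X_\sharp)\le 1$.
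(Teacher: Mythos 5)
Your proof is correct and follows essentially the same route as the paper: the same expansion of $F((X,S))^2$, the same appeal to Proposition~\ref{prop:l2sharpness} and Lemma~\ref{lem:rpca_cross_term} (with the same $O(r)$-invariance observation that lets one take $X_\sharp$ to be the nearest point of $\cD^*(M_\sharp)$), and the same reverse-triangle arguments for \eqref{up} and the Lipschitz bound. The only cosmetic difference is in the cross-term bookkeeping: the paper balances the two Young's-inequality coefficients by optimizing the auxiliary parameter $\varepsilon$, whereas you take $\varepsilon=1$ and verify each coefficient separately using $2(\sqrt{2}-1)\ge \tfrac12$ and $\|X_\sharp\|_{\rm op}\le 1$; both yield the identical constant.
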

\begin{proof}
	Let $X_\sharp \in \proj_{\cD^\ast(M_\sharp)}(X)$. To establish the bound~\eqref{qg}, we observe that
	\begin{equation*}
	\begin{aligned}
	\|XX^\top +S -W\|_F^2 & = \|XX^\top - M_{\sharp}\|_F^2 + 2 \langle S-S_{\sharp},XX^\top-M_{\sharp}\rangle + \|S-S_{\sharp}\|_F^2\\
	&\geq  \frac{1}{2}\sigma_r^2(X_{\sharp})\|X-X_{\sharp}\|_F^2  - 20 \sqrt{\frac{\nu r k}{d}} \|S - S_{\sharp}\|_F\|X-X_{\sharp}\|_F +\|S-S_{\sharp}\|_F^2,
	\end{aligned}
	\end{equation*}
	where the first inequality follows from Proposition~\ref{prop:l2sharpness} and Lemma~\ref{lem:rpca_cross_term}. Now set
	\begin{align*}
	a:= 10 \sqrt{\frac{\nu rk }{d}}, && b := \|X - X_\sharp\|_F,  && c := \|S - S_\sharp\|_F,
	\end{align*}
	and $s := \frac{1}{2}\sigma_r^2(X_\sharp)$.
	With this notation, we apply the Fenchel-Young inequality to show that for
	any $\varepsilon > 0$, we have
	$$
	2abc \leq a\varepsilon b^2+ (a/\varepsilon) c^2.
	$$
	Thus, for any $\varepsilon > 0$, we have
	$$
	\|XX^\top +S -W\|_F^2 \geq sb^2 - 2abc + c^2 \geq (s - a \varepsilon) b^2 + (1
	- a/\varepsilon) c^2.
	$$
	Now, let us choose $\varepsilon > 0$ so that
	$
	s - a\varepsilon = 1 - a/\varepsilon.
	$
	Namely set $\varepsilon = \frac{-(1-s) + \sqrt{ (1-s)^2 + 4a^2}}{2a}.$
	With this choice of $\varepsilon$ and the bound $s - a\varepsilon \geq \frac{1}{2}\sigma_r^2(X_\sharp) - 10\sqrt{\nu rk /d}$, the claimed bound \eqref{qg} follows immediately.
	The bound~\eqref{up} follows from the reverse triangle inequality:
	\begin{align*}
	| F((X,S))-F_Y((X,S)) |
	&\le \|XX^\top - Y Y^\top-  (X-Y) Y^\top- Y^\top(X-Y) \|_F \\
	& = \| X X^\top -X Y^\top - Y  X^\top +Y Y^\top \|_F \\
	& = \| (X-Y)  (X-Y)^\top\|_F \\
	&\le \| X-Y \|_F^2.
	\end{align*}

	Finally observe
	\begin{align*}
	|F( (X_1,S_1))-F((X_2,S_2))|&\leq \|X_1X_1^\top-X_2X_2^\top\|_F+\|S_1-S_2\|_F\\
	&\leq \|X_1 + X_2\|_{\mathrm{op}}\|X_1-X_2\|_F + \|S_1-S_2\|_F\\
	&\leq 2\sqrt{\nu r}\|X_1-X_2\|_F + \|S_1-S_2\|_F,
	\end{align*}
	where we use the bound $\|X_i\|_\op \leq \sqrt{d} \|X_i\|_{2, \infty} \leq \sqrt{\nu r}$ in the final inequality. The proof is complete.
\end{proof}

To summarize, there exist numerical constants $c_0,c_1, c_2>0$ such that the following is true. In the regime
$$
\sqrt{\frac{\nu rk}{d}} \leq c_0\sigma_r^2(X_\sharp),
$$
the Polyak subgradient method will converge at the  linear rate,
$$\dist(X_k,\cD^*(M_{\sharp}))\leq \left(1-\frac{c_1\sigma_r^2(X_\sharp)}{\nu r}\right)^{\frac{k}{2}}\cdot c_2\mu,$$ and the prox-linear method will converge quadratically when initialized at $X_0\in\cX$ satisfying $\dist(X_0,\cD^*(M_{\sharp}))< c_2\sigma_r(X_\sharp)$.

\subsection{The non-Euclidean formulation}
\label{sec:l1_robust_pca}

We next turn to a different formulation for robust PCA that does not require knowledge of  $\ell_1$ row norms of  $S_\sharp$.  In particular, we consider the formulation
\begin{equation}\label{eqn:l1_formul}
\min_{X \in \cX}~ f(X)=\|XX^\top-W\|_1 \quad \text{ where $\cX = \{X \in \RR^{d \times r} \mid \|X\|_{2, \infty} \leq C\|X_\sharp\|_{2, \infty}\}$} ,
\end{equation}
for a constant $C > 1$.
Unlike Section~\ref{sec:l2_robust_pca}, here we consider a randomized model for
the sparse matrix $ S_{\sharp} $.
We assume that there are real $\nu,\tau >0$ such that
\begin{enumerate}
	\item $M_{\sharp}\in\R^{d\times d}$ can be factored as $M_{\sharp}=X_{\sharp}X^{\top}_{\sharp}$ for some matrix $X_{\sharp}\in\R^{d\times r}$ satisfying  $\|X_{\sharp}\|_{2,\infty}\leq \sqrt{\frac{\nu r}{d}}\|X_\sharp\|_{\rm op}$.
	\item We assume the random corruption model
	$$
	(S_\sharp)_{ij} = \delta_{ij} \hat S_{ij}  \qquad \forall i,j$$
	where $\delta_{ij}$ are i.i.d.\ Bernoulli  random variables with $\tau = \mathbb{P}(\delta_{ij} = 1)$ and $\hat S$ is an arbitrary and fixed $d\times d$ symmetric matrix.
\end{enumerate}

In this setting, the approximation function at $X$ is given by
\[f_X(Z) = \|XX-W+X(Z-X)^{\top}+(Z-X)X^\top \|_1.\]
We begin by computing an estimate of the approximation accuracy $|f(Z)-f_X(Z)|$.

\begin{lem}[Approximation accuracy]
	The estimate holds:
	$$|f(Z)-f_X(Z)|\leq \|Z-X\|_{2,1}^2\qquad \textrm{ for all }X,Z\in \R^{d\times r}.$$
\end{lem}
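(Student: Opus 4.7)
The plan is to reduce the problem to bounding the entrywise $\ell_1$ norm of the rank-one-style matrix $(Z-X)(Z-X)^\top$, then unwind that norm row-by-row using Cauchy--Schwarz.

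First I would apply the reverse triangle inequality to the two $\ell_1$ norms defining $f(Z)$ and $f_X(Z)$. Writing $H := Z - X$, the inner arguments differ exactly by
\[
ZZ^\top - \bigl(XX^\top + X(Z-X)^\top + (Z-X)X^\top\bigr) = HH^\top,
\]
which is the identity already used in the proof of Proposition~\ref{prop:approx_acc_sym}, just with a different outer norm. Hence
\[
|f(Z) - f_X(Z)| \leq \|HH^\top\|_1.
\]

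Next, I would expand the entrywise $\ell_1$ norm and bound each entry by Cauchy--Schwarz applied to pairs of rows of $H$. Concretely, if $H_{i\cdot}$ denotes the $i$-th row of $H$, then $(HH^\top)_{ij} = \langle H_{i\cdot}, H_{j\cdot}\rangle$, so
\[
\|HH^\top\|_1 = \sum_{i,j} |\langle H_{i\cdot}, H_{j\cdot}\rangle| \leq \sum_{i,j} \|H_{i\cdot}\|_2 \|H_{j\cdot}\|_2 = \Bigl(\sum_i \|H_{i\cdot}\|_2\Bigr)^{\!2} = \|H\|_{2,1}^2.
\]
Combining the two displays gives the claim $|f(Z) - f_X(Z)| \leq \|Z-X\|_{2,1}^2$.

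There is no real obstacle here; the only thing worth noting is that the natural ``Frobenius-style'' bound $\|HH^\top\|_F \leq \|H\|_F^2$ would give the wrong norm on the right-hand side, and the whole point of the statement is that switching the outer penalty from $\|\cdot\|_F$ to $\|\cdot\|_1$ forces the accompanying error to be measured in the row-wise $\|\cdot\|_{2,1}$ norm rather than $\|\cdot\|_F$. This is exactly what motivates the non-Euclidean prox-linear update alluded to in the introduction.
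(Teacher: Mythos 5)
Your proposal is correct and follows essentially the same route as the paper: reverse triangle inequality to reduce to $\|(Z-X)(Z-X)^\top\|_1$, then Cauchy--Schwarz on each entry $\langle H_{i\cdot},H_{j\cdot}\rangle$ and factoring the double sum into $\bigl(\sum_i\|H_{i\cdot}\|_2\bigr)^2=\|Z-X\|_{2,1}^2$. No gaps.
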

\begin{proof}
	As in the proof of Proposition~\ref{prop:approx_acc_sym}, we compute
	\begin{align*}
	|f(Z)-f_X(Z)|&= \Big| \|ZZ^\top-W\|_1-\|XX-W+X(Z-X)^{\top}+(Z-X)X^\top \|_1 \Big|\\
	&\leq \|(Z-X)(Z-X)^\top\|_1=\sum_{i,j} |e_i^\top(Z-X)(e_j^{\top}(Z-X))^{\top}|\\
	&\leq \sum_{i,j} \|e_i^\top(Z-X)\|_2\cdot \|e_j^{\top}(Z-X)\|_2= \|Z-X\|_{2,1}^2,
	\end{align*}
	thereby completing the argument.
\end{proof}
Notice that the error $|f(Z)-f_X(Z)|$ is bounded in terms of the non-Euclidean norm
$\|Z-X\|_{2,1}$. Thus, although in principle one may apply subgradient methods to the formulation~\eqref{eqn:l1_formul},  their convergence guarantees, which fundamentally relied on the Euclidean norm, would yield potentially overly pessimistic performance predictions. On the other hand, the convergence guarantees for the prox-linear method do not require the norm to be Euclidean. Indeed, the following is true, with a proof that is nearly identical as that of Theorem~\ref{thm:prox_lin_loc}.
\begin{thm}
	\label{thm:noneuclidean_prox_linear}
	Suppose that Assumption~\ref{ass:bb} holds where $\|\cdot\|$ is replaced by an arbitrary norm $\opnorm{\cdot}$. Choose any $\beta \geq \rho$ and set $\gamma:=\rho/\beta$ in Algorithm~\ref{alg:prox_lin}. Then Algorithm~\ref{alg:prox_lin} initialized at any point $x_0$ satisfying $\dist_{\opnorm{\cdot}}(x_0,\cX^*)< \frac{\mu}{\rho}$ converges quadratically:
	$$\dist_{\opnorm{\cdot}}(x_{k+1},\cX^*)\leq
	\tfrac{\rho}{\mu}\cdot\dist^2_{\opnorm{\cdot}}(x_{k},\cX^*)\qquad \forall
	k\geq 0.$$
\end{thm}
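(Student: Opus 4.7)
The plan is to transplant the proof of the Euclidean prox-linear theorem stated earlier in the paper, observing that every step of that argument relies only on (i) optimality in the prox-linear subproblem, (ii) the approximation inequality, and (iii) sharpness -- none of which require an inner product. Writing the prox-linear subproblem with $\opnorm{\cdot}^2$ in place of $\|\cdot\|_2^2$, which is the natural interpretation of the statement ``$\|\cdot\|$ is replaced by an arbitrary norm $\opnorm{\cdot}$'', the identical three-line calculation will produce the claimed quadratic contraction.

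Concretely, fix an iteration index $k$ and pick $x^* \in \cX^*$ attaining $\opnorm{x_k - x^*} = \dist_{\opnorm{\cdot}}(x_k, \cX^*)$. First I would invoke optimality of $x_{k+1}$ versus $x^*$ in the subproblem,
\[
f_{x_k}(x_{k+1}) + \tfrac{\beta}{2}\opnorm{x_{k+1}-x_k}^2 \leq f_{x_k}(x^*) + \tfrac{\beta}{2}\opnorm{x^* - x_k}^2.
\]
Next I would apply the approximation inequality from Assumption~\ref{ass:bb} (now stated with $\opnorm{\cdot}$) twice, on the pairs $(x_{k+1}, x_k)$ and $(x^*, x_k)$, to replace the two $f_{x_k}$ values by $f(x_{k+1})$ and $f(x^*) = \min_{\cX} f$ at the cost of $\pm\tfrac{\rho}{2}\opnorm{\cdot}^2$ terms. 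Since $\beta \geq \rho$, the extra $\tfrac{\beta-\rho}{2}\opnorm{x_{k+1}-x_k}^2$ term that appears on the left may be dropped, leaving
\[
f(x_{k+1}) - \min_{\cX} f \leq \tfrac{\beta+\rho}{2}\,\opnorm{x^* - x_k}^2.
\]
Applying sharpness to the left-hand side and dividing by $\mu$ yields the bound $\dist_{\opnorm{\cdot}}(x_{k+1}, \cX^*) \leq \tfrac{\beta+\rho}{2\mu}\dist^2_{\opnorm{\cdot}}(x_k, \cX^*)$, which collapses to the stated bound $\tfrac{\rho}{\mu}\cdot\dist^2_{\opnorm{\cdot}}(x_k,\cX^*)$ in the canonical choice $\beta = \rho$ (and to the slightly looser $\tfrac{\beta}{\mu}$, matching the Euclidean version, for general $\beta \geq \rho$).

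I do not expect a genuine obstacle; what is worth emphasizing is precisely what makes the Euclidean structure inessential here. The argument never splits the optimality inequality using a three-point identity such as $\|a-b\|^2 + \|b-c\|^2 = \|a-c\|^2 + 2\langle a-b, b-c\rangle$, nor does it invoke strong convexity of $\tfrac{1}{2}\opnorm{\cdot}^2$ with respect to $\opnorm{\cdot}$; both would require an inner product. The only bookkeeping is the monotonicity check that the initialization $\dist_{\opnorm{\cdot}}(x_0, \cX^*) < \mu/\rho$ renders the quadratic map strictly contractive, so the iterates never leave the region where Assumption~\ref{ass:bb} is posited. Because the hypothesis here is global on $\cX$ (rather than local as in Theorem~\ref{thm:prox_lin_loc}), no ball-containment argument is needed.
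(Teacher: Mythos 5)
Your proof is correct and follows the same route the paper takes (the paper defers to the argument of Theorem~\ref{thm:prox_lin_loc}, whose Euclidean strong-convexity term is lower-bounded by zero exactly as you observe): subproblem optimality against $x^*$, two applications of the approximation bound, then sharpness. Your remark about the constant is also apt --- for general $\beta\geq\rho$ the argument yields $\tfrac{\beta+\rho}{2\mu}\leq\tfrac{\beta}{\mu}$, and the stated $\tfrac{\rho}{\mu}$ is obtained at $\beta=\rho$.
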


To apply the above generic convergence guarantees for the prox-linear method,
it remains to show that the objective function $f$ in~\eqref{eqn:l1_formul} is
sharp relative to the norm $\|\cdot\|_{1,2}$. A key step in showing such a
result is to prove that
$$
\|XX^\top - X_{\sharp}X_{\sharp}^\top\|_1 \geq c \cdot  \inf_{R^\top R = I} \|X - X_{\sharp}R\|_{2, 1}
$$
for a quantity $c$ depending only on $X_{\sharp}$. One may prove this
inequality using Proposition~\ref{prop:l2sharpness} together with the
equivalence of the norms $ \| \cdot \|_F $ and $ \| \cdot \|_{1,2} $. Doing so
however leads to a dimension-dependent $c$, resulting in a poor rate of
convergence and region of attraction. We instead seek to directly establish
sharpness relative to the norm $\|\cdot\|_{2,1}$. In the rank one setting, this
can be done using the following theorem.
\begin{thm}[Sharpness (rank one)]\label{sharp_gen_case}
	Consider two vectors $x,\bar x\in \R^d$ satisfying $$\dist_{\|\cdot\|_1}(x,\{\pm\bar x\})\leq (\sqrt{2}-1)\|\bar x\|_1.$$ Then the estimate holds:
	$$\|xx^\top-\bar x\bar x^\top\|_1\geq (\sqrt{2} - 1)\cdot\|\bar x\|_1\cdot \dist_{\|\cdot\|_1}(x,\{\pm\bar x\}).$$
\end{thm}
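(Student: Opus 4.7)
The plan is to reduce the statement to a cross-term inequality and then establish that inequality by sign-partition bookkeeping. First, by replacing $x$ with $-x$ if necessary (which leaves both sides invariant since $xx^\top = (-x)(-x)^\top$), I may assume $\|x - \bar x\|_1 = \dist_{\|\cdot\|_1}(x, \{\pm\bar x\})$. Setting $h := x - \bar x$ and expanding gives
\[
xx^\top - \bar x\bar x^\top \;=\; \bar x\, h^\top + h\, \bar x^\top + hh^\top,
\]
so the triangle inequality together with $\|hh^\top\|_1 = \|h\|_1^2$ reduces the theorem to the cross-term bound
\[
\|\bar x\, h^\top + h\,\bar x^\top\|_1 \;\geq\; \|\bar x\|_1 \|h\|_1.
\]
Once this is available, the chain $\|xx^\top - \bar x\bar x^\top\|_1 \geq \|h\|_1\bigl(\|\bar x\|_1 - \|h\|_1\bigr) \geq (2 - \sqrt{2})\|\bar x\|_1 \|h\|_1$ follows from the standing hypothesis $\|h\|_1 \leq (\sqrt{2}-1)\|\bar x\|_1$, and the constant $2 - \sqrt{2}$ is already stronger than the advertised $\sqrt{2} - 1$.

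The cross-term bound $\|uv^\top + vu^\top\|_1 \geq \|u\|_1\|v\|_1$ is the main obstacle, and the plan is to prove it by a sign-partition argument. I first reduce to the case $u \geq 0$ coordinate-wise by conjugating with $D = \diag(\sign(u_i))$: since $D(uv^\top + vu^\top)D = (Du)(Dv)^\top + (Dv)(Du)^\top$ and $|(DMD)_{ij}| = |M_{ij}|$ entry-wise, the $\ell_1$ norm is preserved. Next I split $[d]$ into $I_+ = \{i : v_i > 0\}$, $I_- = \{i : v_i < 0\}$, and $I_0 = \{i : v_i = 0\}$, and split $\sum_{i,j}|u_i v_j + u_j v_i|$ according to which classes $i$ and $j$ lie in. On $I_+ \times I_+$ and $I_- \times I_-$ the two summands have matching signs, giving the exact contribution $2\alpha\beta + 2\gamma\delta$ with $\alpha := \sum_{I_+} u_i$, $\gamma := \sum_{I_-} u_i$, $\beta := \sum_{I_+} v_i$, $\delta := \sum_{I_-} |v_i|$. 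On the mixed blocks $I_+ \times I_-$ and $I_- \times I_+$ the signs clash and each entry equals $\bigl|\,u_i|v_j| - u_j|v_i|\,\bigr|$; two applications of the triangle inequality (first over the inner index, then over the outer) lower-bound this contribution by $2|\alpha\delta - \gamma\beta|$. The $I_0$ blocks contribute nonnegatively and only strengthen the bound. Assuming WLOG that $\alpha\delta \geq \gamma\beta$, the target inequality reduces to the scalar estimate $\alpha\beta + \gamma\delta + \alpha\delta \geq 3\gamma\beta$, which follows from AM-GM: $\alpha\beta + \gamma\delta \geq 2\sqrt{\alpha\beta\gamma\delta}$, while the ordering assumption yields $\alpha\delta \geq \sqrt{\alpha\beta\gamma\delta}$ and $\gamma\beta \leq \sqrt{\alpha\beta\gamma\delta}$.

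I expect the sign-partition bookkeeping to be the only real technical difficulty; the rest of the argument is a single chain of triangle inequalities. The example $\bar x = (1,1)$, $h = (1,-1)$ saturates the cross-term lemma (both sides equal $4$) and confirms that both the constant $\|\bar x\|_1 \|h\|_1$ and the AM-GM step are sharp. The only subtlety I anticipate in the write-up is handling the $I_0$ class cleanly; since its contributions are nonnegative they may simply be dropped from the lower bound.
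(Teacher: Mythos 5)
Your argument is correct, and it reaches the stated bound by a genuinely different route than the paper. The paper also reduces to lower-bounding the cross term $\|\bar x h^\top + h\bar x^\top\|_1$ with $h = x-\bar x$, but it does so via the dual representation $\|A\|_1 = 2\sup\{\trace(\bar x h^\top V): \|V\|_\infty\le 1,\ V=V^\top\}$ and a three-case analysis, testing against $\sign(\bar x)\sign(\bar x)^\top$, $\sign(h)\sign(h)^\top$, and the symmetrized product $\tfrac12(\sign(\bar x h^\top)+\sign(h\bar x^\top))$, with the threshold $a=\sqrt2-1$ chosen so that $1-a^2=2a$ balances the cases; this yields the cross-term constant $2(\sqrt2-1)\approx 0.83$ and hence the final constant $\sqrt2-1$. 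Your primal sign-partition count instead proves the sharper inequality $\|uv^\top+vu^\top\|_1\ge\|u\|_1\|v\|_1$ (tight, as your example $u=(1,1)$, $v=(1,-1)$ shows), which improves the final constant to $2-\sqrt2$ and in particular implies the theorem as stated. I checked the key scalar step: with $\alpha\delta\ge\gamma\beta$ one needs $\alpha\beta+\gamma\delta+\alpha\delta\ge 3\gamma\beta$, and indeed $\alpha\beta+\gamma\delta\ge 2\sqrt{\alpha\beta\gamma\delta}$ while $\gamma\beta\le\sqrt{\alpha\beta\gamma\delta}\le\alpha\delta$, so the chain closes. So your approach buys a better constant and avoids the somewhat ad hoc choice of dual certificates, at the price of more delicate index bookkeeping.

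One small correction to the write-up you anticipate: the blocks involving $I_0=\{j: v_j=0\}$ cannot literally be dropped. Discarding them proves only $\|uv^\top+vu^\top\|_1\ge(\alpha+\gamma)(\beta+\delta)=\|u_{I_0^c}\|_1\|v\|_1$, which falls short of $\|u\|_1\|v\|_1$ whenever $u$ has mass on $I_0$ (e.g.\ $u=e_2$, $v=e_1$, where the non-$I_0$ blocks contribute nothing). The fix is immediate: those blocks contribute exactly $2\big(\sum_{j\in I_0}u_j\big)\|v\|_1$, which is twice what is needed to account for the missing part of $\|u\|_1\|v\|_1$. With that noted, the proof is complete.
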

The proof of this result appears in Appendix~\ref{appendix:proof_sharpness_r1}. We leave as an intriguing open question to determine if an analogous result holds in the higher rank setting.

\begin{conjecture}[Sharpness (general rank)]\label{conj:sharp_ell1}
	Fix a rank $r$ matrix $X_\sharp \in \RR^{d\times r}$ and set $\cD^*:=\{X\in\cX:XX^\top=X_\sharp X_\sharp^\top\}$. Then there exist constants $c,\gamma>0$ depending only on $X_\sharp$ such that the estimate holds:
	$$\|XX^\top-M\|_1\geq c\cdot\dist_{\|\cdot\|_{2,1}}(X,\cD^*),$$
	for all $X\in\cX$ satisfying $\dist_{\|\cdot\|_{2,1}}(X,\cD^*)\leq \gamma$.
\end{conjecture}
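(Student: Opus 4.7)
The plan is to mimic the linearization strategy behind the rank-one proof of Theorem~\ref{sharp_gen_case}. Let $R^\ast \in \argmin_{R \in O(r)} \|X - X_\sharp R\|_{2,1}$, set $Y := X_\sharp R^\ast$ and $H := X - Y$, so that $\|H\|_{2,1} = \dist_{\|\cdot\|_{2,1}}(X, \cD^*)$. Since $R^\ast$ is orthogonal, $YY^\top = X_\sharp X_\sharp^\top = M$ and
$$XX^\top - M = YH^\top + HY^\top + HH^\top.$$
The quadratic term is controlled by the same computation used in the approximation-accuracy lemma of Section~\ref{sec:l1_robust_pca}, giving $\|HH^\top\|_1 \leq \|H\|_{2,1}^2$. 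By the triangle inequality and shrinking $\gamma$ later, the conjecture therefore reduces to a linearized sharpness estimate
$$\|YH^\top + HY^\top\|_1 \geq c_0 \,\|H\|_{2,1}, \qquad (\ast)$$
for some $c_0 = c_0(X_\sharp) > 0$ and all $H$ arising from the minimization above; taking $\gamma < c_0$ and $c = c_0/2$ then finishes the argument.

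The lever for $(\ast)$ should be the first-order optimality of $R^\ast$. Writing the subdifferential of $R \mapsto \|X - X_\sharp R\|_{2,1}$ and projecting onto the tangent space $R^\ast \mathfrak{so}(r)$ to $O(r)$ at $R^\ast$ yields a dual matrix $U \in \RR^{d\times r}$ whose $i$-th row equals $h_i/\|h_i\|_2$ when $h_i \neq 0$ (and satisfies $\|u_i\|_2 \leq 1$ otherwise) and for which $Y^\top U$ is symmetric. This is the non-Euclidean analogue of the familiar Frobenius condition ``$Y^\top H$ is symmetric'' and should prevent $H$ from lying in the tangent space $\{YA : A^\top = -A\}$ of the orbit at $Y$, which is precisely the kernel of the linear map $H \mapsto YH^\top + HY^\top$.

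To obtain a quantitative lower bound, I would fix once and for all a set $S \subset \{1,\dots,d\}$ of $r$ row indices for which the submatrix $Y_S \in \RR^{r \times r}$ has $\sigma_r(Y_S) \geq \sigma_\ast$ for some $\sigma_\ast = \sigma_\ast(X_\sharp) > 0$; such an $S$ exists because $Y$ has rank $r$. For each row $i$, the goal would be to show
$$\sum_{j \in S} |\langle y_j, h_i\rangle + \langle h_j, y_i\rangle| \geq c_0 \, \|h_i\|_2,$$
and then sum over $i$ to get $(\ast)$. If the ``cross'' contributions $\langle h_j, y_i\rangle$ were absent, one could simply use $\|Y_S h_i\|_1 \geq \|Y_S h_i\|_2 \geq \sigma_\ast \|h_i\|_2$. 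The crux of the argument, and what I expect to be the main obstacle, is quantitatively ruling out cancellation between $\langle y_j, h_i\rangle$ and $\langle h_j, y_i\rangle$ as $j$ ranges over $S$. This is exactly where the symmetry constraint $Y^\top U = U^\top Y$ must enter: it should force the tangential component of $H$ (which is what enables cancellation) to be controlled by its normal component. Making this control step dimension-independent in a way that generalizes the sign-based trichotomy driving the rank-one argument is, in my view, the heart of the conjecture.
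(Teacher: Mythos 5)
This statement is left as an open problem in the paper: only the rank-one case (Theorem~\ref{sharp_gen_case}) is proved, via a sign-based case analysis testing $\bar x(x-\bar x)^\top + (x-\bar x)\bar x^\top$ against symmetric matrices $V$ with $\|V\|_\infty = 1$. There is therefore no paper proof to compare against, and your proposal does not close the gap either.

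The reduction you set up is sound as far as it goes: with $Y = X_\sharp R^\ast$ and $H = X - Y$, the identity $XX^\top - M = YH^\top + HY^\top + HH^\top$ together with $\|HH^\top\|_1 \leq \|H\|_{2,1}^2$ correctly reduces the conjecture to the linearized estimate $(\ast)$, and your derivation of the first-order condition (that $Y^\top U$ is symmetric for a dual matrix $U$ with rows $h_i/\|h_i\|_2$) is correct. But $(\ast)$ is precisely the content of the conjecture, and you leave it unproven. The difficulty is not cosmetic: the linear map $H \mapsto YH^\top + HY^\top$ has kernel $\{YA : A^\top = -A\}$, so the proposed per-row bound $\sum_{j\in S}|\langle y_j,h_i\rangle + \langle h_j,y_i\rangle| \geq c_0\|h_i\|_2$ is simply false for tangential $H$ (take $H = YA$ with $A$ skew; the left side vanishes identically while $\|h_i\|_2$ does not). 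Thus everything hinges on converting the symmetry of $Y^\top U$ into a quantitative statement that the tangential part of $H$ is dominated by its normal part \emph{in the $\|\cdot\|_{2,1}$ geometry}, with constants depending only on $X_\sharp$. You correctly identify this as the heart of the matter, but identifying the obstacle is not the same as overcoming it; in the rank-one proof the analogous control is achieved by an explicit trichotomy on the signs of $(x-\bar x)^\top\sign(\bar x)$ and $\sign(x-\bar x)^\top\bar x$, and no analogue of that trichotomy is offered here. As written, the proposal is a plausible plan with its central estimate missing, so the conjecture remains open.
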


Assuming this conjecture, we can then show that the loss function $f$ is sharp under the randomized corruption model. We first state the following technical lemma, whose proof is deferred to Appendix \ref{appendix:proof_technical_lemma}. In what what follows, given a matrix $X \in \RR^{d \times r}$, the notation $X_i$ always refers to the $i$th row of $X$.

\begin{lem}\label{lem:bound_outlier_term}
	Assume Conjecture~\ref{conj:sharp_ell1}. Then there exist
	constants $c_1, c_2, c_3 > 0$ so that for all $d$ satisfying $d \geq
	\frac{c_1\log d}{\tau}$, we have that with probability $1-d^{-c_2}$, the
	following bound holds:
	\begin{align*}
	\sum_{i, j=1}^d \delta_{ij}|\dotp{X_i,X_j} - \dotp{ (X_\sharp)_i, (X_\sharp)_j}| &\leq \left( \tau +  \frac{c_3C \sqrt{\tau \nu r \log d }}{c} \| X_\sharp\|_{\op}\right)\|XX^\top - X_\sharp  X_\sharp^\top \|_1
	\end{align*}
	for all $X \in \cX$ satisfying
	$\dist_{\|\cdot\|_{2,1}}(X,\cD^*)\leq \gamma$.
\end{lem}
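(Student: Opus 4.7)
The plan is to split each summand into its conditional mean and a centered fluctuation, then control the fluctuation by a Bernstein-type inequality combined with a covering argument, and finally use the sharpness conjecture to convert $\sqrt{\|R\|_1}$ into $\|R\|_1$. Set $R := XX^\top - X_\sharp X_\sharp^\top$ and write
\begin{equation*}
\sum_{i,j=1}^d \delta_{ij}|R_{ij}| \;=\; \tau\,\|R\|_1 \;+\; \sum_{i,j=1}^d (\delta_{ij}-\tau)|R_{ij}|.
\end{equation*}
The first term produces the leading $\tau \|R\|_1$ on the right-hand side of the claimed inequality, so it suffices to uniformly bound the centered fluctuation by $\frac{c_3 C \sqrt{\tau \nu r \log d}}{c}\|X_\sharp\|_{\op}\cdot\|R\|_1$ over $X \in \cX$ with $\dist_{\|\cdot\|_{2,1}}(X, \cD^\ast) \leq \gamma$.

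For a \emph{fixed} $X$, I would invoke the scalar Bernstein inequality on the independent, mean-zero, bounded summands $(\delta_{ij}-\tau)|R_{ij}|$. The entrywise bound $\|R\|_\infty \leq 2 C^2 \nu r \|X_\sharp\|_{\op}^2 / d$ follows from $X, X_\sharp \in \cX$ and the incoherence $\|X_\sharp\|_{2,\infty}\leq\sqrt{\nu r/d}\|X_\sharp\|_{\op}$, while the total variance is at most $\tau \|R\|_F^2 \leq \tau\,\|R\|_\infty \|R\|_1$. Bernstein then yields, with probability at least $1 - d^{-c_2}$,
\begin{equation*}
\Bigl|\sum_{i,j}(\delta_{ij}-\tau)|R_{ij}|\Bigr| \;\lesssim\; \sqrt{\tau\,\|R\|_\infty \|R\|_1 \log d} \;+\; \|R\|_\infty \log d.
\end{equation*}
Substituting the bound on $\|R\|_\infty$, the leading Gaussian term becomes $C \sqrt{\tau \nu r \log d}\,\|X_\sharp\|_{\op} \cdot \sqrt{\|R\|_1 /d}$. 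Conjecture~\ref{conj:sharp_ell1}, in the form $\|R\|_1 \geq c\,\dist_{\|\cdot\|_{2,1}}(X, \cD^\ast)$, then converts the factor $\sqrt{\|R\|_1}$ into $\|R\|_1$ with the accompanying $1/c$, while the hypothesis $d \geq c_1 \log d / \tau$ is precisely what absorbs the subexponential linear tail $\|R\|_\infty \log d$ into the dominant Gaussian term.

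To lift the pointwise bound to a uniform one over $\{X \in \cX : \dist_{\|\cdot\|_{2,1}}(X, \cD^\ast) \leq \gamma\}$, I would perform an $\varepsilon$-net argument in the $\|\cdot\|_{2,1}$ metric, taking $\varepsilon$ of polynomial order in $d^{-1}$. The incoherence constraint $\|X\|_{2,\infty} \leq C\sqrt{\nu r/d}\,\|X_\sharp\|_{\op}$ keeps the effective covering dimension proportional to $\nu r$ rather than the raw ambient dimension $dr$. A union bound of the Bernstein estimate over the net, combined with the Lipschitz continuity of $X \mapsto R(X)$ on $\cX$ to control deviation between neighboring net points, then produces the uniform inequality.

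\textbf{The main obstacle} is calibrating the net so that its log-cardinality is absorbed by the Bernstein slack without inflating the target $\sqrt{\nu r}$ factor. Achieving this requires exploiting the incoherence built into $\cX$ (not just the raw parameter count $dr$) to keep the covering numbers small, and simultaneously choosing $\varepsilon$ fine enough that the nonlinear Lipschitz error on the non-smooth map $X \mapsto \sum_{i,j}(\delta_{ij}-\tau)|R_{ij}|$ is dominated by the stated slack. Once the covering is properly tuned, Bernstein plus Conjecture~\ref{conj:sharp_ell1} deliver the final inequality with the advertised constants.
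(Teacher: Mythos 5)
Your opening decomposition, $\sum_{i,j}\delta_{ij}|R_{ij}| = \tau\|R\|_1 + \sum_{i,j}(\delta_{ij}-\tau)|R_{ij}|$ with $R = XX^\top - X_\sharp X_\sharp^\top$, matches the paper's, but the way you control the fluctuation has a gap that breaks the argument exactly where the lemma is needed. For a fixed $X$, Bernstein gives a deviation of order $\sqrt{\tau\|R\|_\infty\|R\|_1\log d}+\|R\|_\infty\log d$, which is \emph{sublinear} in $\|R\|_1$. To dominate the leading term by the target $\tfrac{c_3C\sqrt{\tau\nu r\log d}}{c}\|X_\sharp\|_{\op}\|R\|_1$, after substituting $\|R\|_\infty\lesssim C^2\nu r\|X_\sharp\|_{\op}^2/d$ you need $\|R\|_1\gtrsim c^2/d$, i.e.\ a \emph{constant} lower bound on $\|R\|_1$. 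Conjecture~\ref{conj:sharp_ell1} only supplies $\|R\|_1\geq c\,\dist_{\|\cdot\|_{2,1}}(X,\cD^*)$, which tends to zero as $X\to\cD^*$ --- precisely the regime in which the lemma is invoked for the local convergence analysis --- so it cannot ``convert $\sqrt{\|R\|_1}$ into $\|R\|_1$'' as you claim. The paper circumvents this by writing $R=\tfrac12(\Delta_+\Delta_-^\top+\Delta_-\Delta_+^\top)$ with $\Delta_\pm = X\pm X_\sharp R$ and bounding the supremum of the process \emph{normalized by} $\|\Delta_-\|_{2,1}$; since the summand is degree-one homogeneous in $\Delta_-$, the resulting bound is exactly proportional to $\dist_{\|\cdot\|_{2,1}}(X,\cD^*)$, and only then is the conjecture used to pass from the distance to $\|R\|_1/c$.

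The uniformization step is also not salvageable as proposed. Incoherence bounds $\|X\|_{2,\infty}$ but does not reduce the metric entropy of $\cX\subset\RR^{d\times r}$ below order $dr\log(1/\varepsilon)$, so a union bound over an $\varepsilon$-net would replace the $\log d$ in your Bernstein slack by a factor of order $dr$, which is incompatible with the advertised $\sqrt{\tau\nu r\log d}$ rate; the ``main obstacle'' you flag is real and is not resolved by the covering numbers you hope for. The paper avoids nets entirely: it symmetrizes, uses a H\"older-type decoupling $\|\Delta_+\Delta_-^\top\|_{2,\infty}/\|\Delta_-\|_{2,1}\cdot \EE\max_j|\sum_i\varepsilon_{ij}\delta_{ij}|$ to reduce the Rademacher complexity to a maximum over only $d$ coordinates (whence the single $\log d$), and then applies Talagrand's functional Bernstein inequality (Theorem~\ref{thm:klein}) to concentrate the supremum around its mean. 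You would need to import this empirical-process machinery, or something equivalent, to close both gaps.
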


We remark that we expect $c$ to scale with $\| X_\sharp\|_{\op}$ in the above bound, yielding a ratio $\| X_\sharp\|_{\op}/c$ dependent on the conditioning of $X_\sharp$. Given the above lemma, sharpness of $f$ quickly follows.

\begin{lem}[Sharpness of Non-Euclidean Robust PCA]\label{lem:low_bound_dirty}
	Assume Conjecture~\ref{conj:sharp_ell1}. Then there exists a
	constants $c_1, c_2, c_3 > 0$ so that for all $d$ satisfying $d \geq
	\frac{c_1\log d}{\tau}$, we have that with probability $1-d^{-c_2}$, the
	following bound holds:
	\begin{align*}
	f(X) - f( X_\sharp) \geq c\cdot \left( 1 - 2\tau- \frac{2c_3C \sqrt{\tau \nu r\log d }}{c} \| X_\sharp\|_{op}\right) \cdot \dist_{\|\cdot\|_{2,1}}(X,\cD^*(M_{\sharp}))
	\end{align*}
	for all $X \in \cX$ satisfying and $\dist_{\|\cdot\|_{2,1}}(X,\cD^*(M_{\sharp}))\leq \gamma$.
\end{lem}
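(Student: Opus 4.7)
The plan is to reduce the claim to a clean chain of three ingredients: a support decomposition of the $\ell_1$-norm, the outlier bound of Lemma~\ref{lem:bound_outlier_term}, and the sharpness Conjecture~\ref{conj:sharp_ell1}. Let $\Omega := \{(i,j) : \delta_{ij} = 1\}$ denote the (random) support of $S_\sharp$, and write $D := XX^\top - M_\sharp$ for brevity. Since $W = M_\sharp + S_\sharp$ and $(S_\sharp)_{ij}=0$ for $(i,j)\notin \Omega$, I would first split
\[
f(X) = \|D - S_\sharp\|_1 = \sum_{(i,j)\notin \Omega} |D_{ij}| \;+\; \sum_{(i,j)\in \Omega} |D_{ij} - (S_\sharp)_{ij}|,
\]
and note that $f(X_\sharp) = \|S_\sharp\|_1 = \sum_{(i,j)\in \Omega} |(S_\sharp)_{ij}|$.

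Next, I would apply the reverse triangle inequality entrywise on $\Omega$ to obtain $|D_{ij} - (S_\sharp)_{ij}| \geq |(S_\sharp)_{ij}| - |D_{ij}|$. Subtracting $f(X_\sharp)$ therefore gives the key pointwise bound
\[
f(X) - f(X_\sharp) \;\geq\; \sum_{(i,j)\notin \Omega} |D_{ij}| - \sum_{(i,j)\in \Omega} |D_{ij}| \;=\; \|D\|_1 - 2\sum_{i,j} \delta_{ij}|D_{ij}|.
\]
This rewrites the gap purely in terms of $\|XX^\top - M_\sharp\|_1$ and the outlier-supported mass $\sum_{ij}\delta_{ij}|D_{ij}|$, which is exactly what Lemma~\ref{lem:bound_outlier_term} controls.

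Applying Lemma~\ref{lem:bound_outlier_term}, on the high-probability event on which it holds, the outlier-supported mass is at most $\bigl(\tau + \tfrac{c_3 C\sqrt{\tau\nu r\log d}}{c}\|X_\sharp\|_{\rm op}\bigr)\|D\|_1$ uniformly over $X\in \cX$ with $\dist_{\|\cdot\|_{2,1}}(X,\cD^*) \leq \gamma$. Substituting this in yields
\[
f(X) - f(X_\sharp) \;\geq\; \Bigl(1 - 2\tau - \tfrac{2 c_3 C\sqrt{\tau\nu r\log d}}{c}\|X_\sharp\|_{\rm op}\Bigr)\|XX^\top - M_\sharp\|_1.
\]
Finally, I would invoke Conjecture~\ref{conj:sharp_ell1}, which under the same condition $\dist_{\|\cdot\|_{2,1}}(X,\cD^*)\leq \gamma$ gives $\|XX^\top - M_\sharp\|_1 \geq c\cdot \dist_{\|\cdot\|_{2,1}}(X,\cD^*(M_\sharp))$. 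Combining with the previous display delivers exactly the claimed inequality. The only nontrivial input is Lemma~\ref{lem:bound_outlier_term} itself (whose proof is deferred to the appendix); once that uniform bound is in hand, the present lemma is essentially a one-line triangle-inequality argument, so I do not anticipate any serious obstacle beyond being careful about the event on which both Lemma~\ref{lem:bound_outlier_term} and the neighborhood restriction from the conjecture simultaneously apply.
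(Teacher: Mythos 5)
Your proposal is correct and follows essentially the same route as the paper's proof: the same support decomposition of $\|XX^\top - W\|_1$, the same entrywise reverse triangle inequality yielding $f(X)-f(X_\sharp)\geq \|XX^\top - M_\sharp\|_1 - 2\sum_{i,j}\delta_{ij}|(XX^\top - M_\sharp)_{ij}|$, followed by Lemma~\ref{lem:bound_outlier_term} and Conjecture~\ref{conj:sharp_ell1}. No gaps.
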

\begin{proof}
	The reverse triangle inequality implies that
	\begin{align*}
	&f(X) - f( X_\sharp) \\
	&= \|X  X^\top - W\|_1- f(X_\sharp)  \\
	&= \|X  X^\top - X_\sharp X_\sharp^\top\|_1 \\
	&\hspace{20pt}+ \sum_{i,j=1}^d\delta_{ij}\left(|\dotp{X_i,X_j} - \dotp{ (X_\sharp)_i, (X_\sharp)_j} - (S_\sharp)_{ij}| - |\dotp{X_i,X_j} - \dotp{ (X_\sharp)_i, (X_\sharp)_j} |  \right) - f(X_\sharp)  \\
	&= \|X  X^\top - X_\sharp X_\sharp^\top\|_1 \\
	&\hspace{20pt}+ \sum_{i,j=1}^d\delta_{ij}\left( |\dotp{X_i,X_j} - \dotp{(X_\sharp)_i,(X_\sharp)_j} - (S_\sharp)_{ij}| - |\dotp{X_i,X_j} - \dotp{(X_\sharp)_i,(X_\sharp)_j} | - |(S_\sharp)_{ij}| \right)\\
	&\ge \|X  X^\top - X_\sharp X_\sharp^\top\|_1 -2\sum_{i,j=1}^d \delta_{ij}
	|\dotp{X_i,X_j} - \dotp{(X_\sharp)_i,(X_\sharp)_j} |.
	\end{align*}
	The result them follows from the the sharpness of the function $\|XX^\top - X_\sharp X_\sharp^\top\|_1$ together with Lemma~\ref{lem:bound_outlier_term}.
\end{proof}

Combining Lemma~\ref{lem:low_bound_dirty} and Theorem~\ref{thm:noneuclidean_prox_linear}, we deduce the following convergence guarantee.
\begin{thm}[Convergence for non-Euclidean Robust PCA]
	Assume Conjecture~\ref{conj:sharp_ell1}. Then there exist  constants $c_1, c_2, c_3 > 0$ so that for all $\tau$ satisfying $1 - 2\tau- 2c_3C \sqrt{\tau \nu r \log d } \| X_\sharp\|_{op}/c > 0$ and $d$ satisfying $d \geq \frac{c_1\log d}{\tau}$, we have that with probability $1-d^{-c_2}$, the iterates generated by the prox-linear algorithm
	\begin{equation}\label{eqn:prox_lin_robust_pca}
	X_{k+1} = \argmin_{x \in \cX} \left\{ f_{X_k}(X) + \frac{1}{2\gamma} \|X - X_k\|_{2, 1}^2 \right\}
	\end{equation}
	satisfy
	$$
	\dist_{\|\cdot\|_{2,1}}(X_{k+1},\mathcal{D}^*(M_{\sharp})) \leq \frac{2}{c\cdot \left( 1 -
		2\tau- \frac{2c_3C \sqrt{\tau \nu r\log d }}{c} \| X_\sharp\|_{op}\right)}
	\cdot \dist_{\|\cdot\|_{2,1}}^2(X_k,\mathcal{D}^*(M_{\sharp})), \qquad \forall k\ge 0.
	$$
	In particular, the iterates converge quadratically as long as the initial iterate $ X_0\in\cX $ satisfies
	$$\dist_{\|\cdot\|_{2,1}}(X_{0},\mathcal{D}^*(M_{\sharp}))<  \min\left\{(1/2)c\cdot \left( 1 - 2\tau- \frac{2c_3C \sqrt{\tau \nu r \log d }}{c} \| X_\sharp\|_{op}\right), \gamma\right\}.$$
\end{thm}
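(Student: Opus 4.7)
The plan is to invoke a localized variant of the non-Euclidean prox-linear convergence guarantee (Theorem~\ref{thm:noneuclidean_prox_linear}) with the ambient norm taken to be $\opnorm{\cdot}=\|\cdot\|_{2,1}$. Concretely, I verify the two hypotheses of Assumption~\ref{ass:bb} in this norm and then read off the quadratic rate.

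The approximation bound has already been established earlier in the section: the estimate $|f(Z)-f_X(Z)|\leq \|Z-X\|_{2,1}^2$ matches the form $\tfrac{\rho}{2}\|Z-X\|_{2,1}^2$ with $\rho=2$. Sharpness is provided by Lemma~\ref{lem:low_bound_dirty} (under Conjecture~\ref{conj:sharp_ell1}): on the event of probability at least $1-d^{-c_2}$ afforded by that lemma, every $X\in\cX$ satisfying $\dist_{\|\cdot\|_{2,1}}(X,\cD^*(M_\sharp))\leq \gamma$ obeys
$$f(X)-f(X_\sharp)\geq \mu\cdot \dist_{\|\cdot\|_{2,1}}(X,\cD^*(M_\sharp)),\qquad \mu:=c\!\left(1-2\tau-\tfrac{2c_3 C\sqrt{\tau\nu r\log d}}{c}\|X_\sharp\|_{\rm op}\right).$$
The hypotheses on $\tau$ and $d$ in the theorem statement are precisely those required to guarantee $\mu>0$ and to trigger Lemma~\ref{lem:low_bound_dirty}.

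Setting $\beta=\rho=2$ in the prox-linear update (which matches \eqref{eqn:prox_lin_robust_pca} up to identification of the regularization parameter) and running the argument behind Theorem~\ref{thm:noneuclidean_prox_linear} with the norm $\|\cdot\|_{2,1}$ yields the quadratic recursion
$$\dist_{\|\cdot\|_{2,1}}(X_{k+1},\cD^*(M_\sharp))\leq \tfrac{\beta}{\mu}\cdot\dist_{\|\cdot\|_{2,1}}^2(X_k,\cD^*(M_\sharp))=\tfrac{2}{\mu}\cdot\dist_{\|\cdot\|_{2,1}}^2(X_k,\cD^*(M_\sharp)),$$
which is exactly the rate in the theorem. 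The initialization condition $\dist_{\|\cdot\|_{2,1}}(X_0,\cD^*(M_\sharp))<\min\{\mu/2,\gamma\}$ is then the natural one: the first term $\mu/\rho=\mu/2$ is the tube radius that drives the quadratic contraction, while the second keeps $X_0$ inside the region where sharpness is valid.

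The main technical obstacle is that sharpness is only available locally on the $\|\cdot\|_{2,1}$-ball of radius $\gamma$ around $\cD^*(M_\sharp)$, so Theorem~\ref{thm:noneuclidean_prox_linear} cannot be applied verbatim; one needs a localized non-Euclidean analogue modeled on Theorem~\ref{thm:prox_lin_loc}. The bookkeeping, however, is routine: a straightforward induction shows that once $\dist_{\|\cdot\|_{2,1}}(X_k,\cD^*(M_\sharp))<\mu/2$, the quadratic contraction gives $\dist_{\|\cdot\|_{2,1}}(X_{k+1},\cD^*(M_\sharp))<\tfrac{1}{2}\dist_{\|\cdot\|_{2,1}}(X_k,\cD^*(M_\sharp))$, so all iterates remain in the sharpness region whenever $X_0$ does and the quadratic rate persists for all $k\geq 0$. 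No new ingredients beyond the approximation and sharpness constants computed above are required.
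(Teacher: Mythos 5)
Your proposal is correct and follows essentially the same route as the paper, which itself gives no more detail than ``combining Lemma~\ref{lem:low_bound_dirty} and Theorem~\ref{thm:noneuclidean_prox_linear}'': you identify $\rho=2$ from the $\|\cdot\|_{2,1}$ approximation bound, $\mu=c\bigl(1-2\tau-\tfrac{2c_3C\sqrt{\tau\nu r\log d}}{c}\|X_\sharp\|_{\rm op}\bigr)$ from the sharpness lemma, and read off the rate $\rho/\mu=2/\mu$ and the initialization radius $\min\{\mu/2,\gamma\}$. Your explicit handling of the localization (the induction showing the iterates contract and hence stay in the region where sharpness holds) is in fact more careful than what the paper writes down.
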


\section{Recovery up to a Tolerance}\label{sec:recovery_tol}

Thus far, we have developed exact recovery guarantees under noiseless or sparsely corrupted measurements. We showed that sharpness together with weak convexity imply rapid local convergence of numerical methods under these settings. In practical scenarios, however, it might be unlikely that any, let alone a constant fraction of measurements, are perfectly observed. Instead, a more realistic model incorporates additive errors that are the sum of a sparse, but otherwise arbitrary vector and a dense vector with relatively small norm. Exact recovery is in general not possible under this noise model. Instead, we should only expect to recover the signal up to an error.

To develop algorithms for this scenario, we need only observe that the previously developed sharpness results all yield a corresponding ``sharpness up to a tolerance" result. Indeed, all problems considered thus far, are convex composite and sharp:
$$
\min_{x \in \cX} f(x) := h(F(x)) \qquad \text{ and } \qquad f(x) - \inf_\cX f \geq \mu \cdot \dist(x, \cX^\ast),
$$
where $h$ is convex and $\eta$-Lipschitz with respect to some norm $\opnorm{\cdot}$,  $F$ is a smooth map, and $\mu > 0$. Now consider a fixed additive error vector $e$, and the perturbed problem
\begin{equation}\label{eqn:perturbed}
\min_{x \in \cX}~ \tilde f(x) := h(F(x) + e).
\end{equation}
The triangle inequality immediately implies that the perturbed problem is sharp up to tolerance $2\eta\opnorm{e}$:
$$\qquad \tilde f(x) - \inf_\cX \tilde f \geq \mu \cdot \dist(x, \cX^\ast) - 2\eta\opnorm{e}\qquad \forall x\in \cX.$$
In particular, any minimizer $x^*$ of $\tilde f$ satisfies
\begin{align}\label{eq:min_tol}
\dist(x^\ast, \cX^\ast) \leq (2\eta/\mu)\opnorm{e},
\end{align}
where as before we set $\cX^*=\argmin_{\cX} f$. In this section, we show that subgradient and prox-linear algorithms applied to the perturbed problem \eqref{eqn:perturbed} converge rapidly up to a tolerance on the order of $\eta\opnorm{e}/\mu$.
To see the generality of the above approach, we note that even the robust recovery problems considered in Section~\ref{subsec:sharpwithnoise}, in which a constant fraction of measurements are already corrupted, may be further corrupted through additive error vector $e$. We will study this problem in detail in Section~\ref{sec:ell1ell2RIP_sparse_dense}.

Throughout the rest of the section, let us define the noise level:
$$
\varepsilon := \eta\opnorm{e}.
$$
Mirroring the discussion in Section~\ref{sec:conv_guarant}, define the annulus:
\begin{align}\label{eq:tube_region_tol}
\widetilde \cT_\gamma := \left\{ z \in \cX : \frac{14\varepsilon}{\mu} < \dist(z, \cX^\ast) <  \frac{\gamma \mu}{4\rho}\right\},
\end{align}
for some $\gamma > 0$. Note that for the annulus $\widetilde \cT_\gamma$ to be nonempty, we must ensure $\epsilon<\frac{\mu^2\gamma}{56\rho}$.
We will see that $\widetilde \cT_\gamma$ serves as a region of rapid convergence for some numerical constant $\gamma$.
As before, we also define subgradient bound and the condition measure:
\begin{equation*}
\tilde L:=\sup\{\|\zeta\|_2:\zeta\in \partial \tilde f(x),x\in \widetilde \cT_1\}\qquad \textrm{and} \qquad \tilde  \tau:=\mu/\tilde L.
\end{equation*}
In all examples considered in the paper, it is possible to show directly that $\tilde L \leq L$ as defined in Assumption~\ref{ass:aa}. A similar result is true in the general case, as well.
Indeed, the following Lemma provides a bound for $\tilde L$ in terms of the subgradients of $f$ on a slight expansion of the tube $\cT_1$ from~\eqref{eq:tube_region}; the proof  appears in the appendix. 
\begin{lem}\label{lem:lip_tol}
	Suppose $\epsilon<\frac{\mu^2}{56\rho}$ so that $\widetilde \cT_1$ is nonempty. Then the following bound holds:
	$$
	\tilde L \leq \sup\left\{\|\zeta\|_2:\zeta\in \partial  f(x), \dist(x, \cX^\ast) \leq \frac{\mu}{\rho}, \dist(x, \cX) \leq 2\sqrt{\frac{\varepsilon}{\rho}}\right\} +  2\sqrt{8\rho\varepsilon}.
	$$
\end{lem}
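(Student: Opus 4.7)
The plan is to show that any subgradient $\tilde \zeta \in \partial \tilde f(\tilde x)$ at a point $\tilde x \in \widetilde \cT_1$ is close in norm to a subgradient of $f$ at a nearby point $\hat x$ satisfying the two distance constraints in the statement. The key mechanism is to trade the $\varepsilon$-size pointwise gap between $\tilde f$ and $f$ for an $O(\sqrt{\varepsilon/\rho})$-size displacement $\hat x - \tilde x$ via a strongly convex regularization in the spirit of the Moreau envelope.

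First, I would exploit the pointwise bound $|\tilde f(y) - f(y)| \le \varepsilon$, which follows immediately from the $\eta$-Lipschitz continuity of $h$ and the definition $\varepsilon = \eta\opnorm{e}$. Combined with the weak convexity inequality $\tilde f(y) \ge \tilde f(\tilde x) + \langle \tilde \zeta, y-\tilde x\rangle - \frac{\rho}{2}\|y-\tilde x\|_2^2$ (which holds since $\tilde f$ inherits $\rho$-weak convexity from $f$), this yields a ``noisy subgradient'' inequality for $f$:
\[
f(y) \ge f(\tilde x) + \langle \tilde\zeta, y-\tilde x\rangle - \frac{\rho}{2}\|y-\tilde x\|_2^2 - 2\varepsilon \qquad \forall y \in \EEE.
\]

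Next, I would introduce the auxiliary function $\phi(y) := f(y) - \langle \tilde \zeta, y-\tilde x\rangle + \rho\|y-\tilde x\|_2^2$, which is $\rho$-strongly convex by weak convexity of $f$, and set $\hat x := \argmin_{y \in \EEE}\phi(y)$ (note the \emph{unconstrained} minimization). First-order optimality immediately produces $\hat\zeta := \tilde\zeta - 2\rho(\hat x-\tilde x)\in \partial f(\hat x)$. To control the displacement, I would combine the strong-convexity bound $\phi(\tilde x) \ge \phi(\hat x) + \frac{\rho}{2}\|\hat x-\tilde x\|_2^2$ with the estimate $\phi(\hat x) \ge \phi(\tilde x) + \frac{\rho}{2}\|\hat x - \tilde x\|_2^2 - 2\varepsilon$ obtained by evaluating the noisy-subgradient inequality at $y = \hat x$; this yields $\|\hat x-\tilde x\|_2 \le \sqrt{2\varepsilon/\rho}$ and hence $\|\tilde\zeta - \hat\zeta\|_2 = 2\rho\|\hat x-\tilde x\|_2 \le 2\sqrt{2\rho\varepsilon} \le 2\sqrt{8\rho\varepsilon}$.

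It remains to verify that $\hat x$ lies in the region indicated in the supremum. Since $\tilde x \in \cX$, we have $\dist(\hat x, \cX) \le \|\hat x - \tilde x\|_2 \le \sqrt{2\varepsilon/\rho} \le 2\sqrt{\varepsilon/\rho}$, and since $\dist(\tilde x, \cX^*) \le \mu/(4\rho)$ from $\tilde x \in \widetilde\cT_1$, the triangle inequality gives $\dist(\hat x, \cX^*) \le \mu/(4\rho) + \sqrt{2\varepsilon/\rho} \le \mu/\rho$ under the standing hypothesis $\varepsilon < \mu^2/(56\rho)$. Taking the supremum over admissible $\tilde x$ and $\tilde \zeta$ then yields the claim. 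The main subtlety is that $\hat x$ must be the \emph{unconstrained} minimizer of $\phi$ rather than its minimizer over $\cX$: imposing $\hat x \in \cX$ would add a (possibly unbounded) normal-cone term to the optimality conditions, breaking the comparison $\hat\zeta = \tilde\zeta - 2\rho(\hat x - \tilde x)$. The choice of regularization coefficient $\rho\|y-\tilde x\|_2^2$ is exactly what balances the weak-convexity slack against the displacement, producing the optimal-order discrepancy $O(\sqrt{\rho\varepsilon})$ while keeping $\hat x$ within $O(\sqrt{\varepsilon/\rho})$ of $\cX$.
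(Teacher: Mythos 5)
Your argument is essentially the paper's own proof: both perturb the subgradient inequality by $O(\varepsilon)$, minimize a $\rho$-strongly convex quadratic penalization of $f$ centered at the base point (your coefficient $\rho$ coincides with the paper's $\tfrac{\rho}{2}+\tfrac{\varepsilon}{\gamma^2}$ after its choice $\gamma=\sqrt{2\varepsilon/\rho}$), and read off a nearby subgradient of $f$ from unconstrained first-order optimality, then check the displaced point stays in the prescribed region. The one caveat is that under Assumption~\ref{ass:bb} alone the $\rho$-weak convexity of $\tilde f$ is not immediate --- the paper instead invokes the approximate subgradient inequality of Lemma~\ref{lem:approximate_subgradient}, which carries a $3\varepsilon$ rather than $2\varepsilon$ slack --- but propagating that larger slack through your displacement bound still lands within the stated constants ($\dist(\hat x,\cX)\le 2\sqrt{\varepsilon/\rho}$ and $\|\tilde\zeta-\hat\zeta\|_2\le 2\sqrt{8\rho\varepsilon}$), so the proof stands.
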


We will now design algorithms whose basin of attraction is the annulus $\widetilde \cT_\gamma$ for some $\gamma$. To that end, the following modified sharpness bound will be useful for us. The reader should be careful to note the appearance of $\inf_{\cX} f$, not $\inf_\cX \tilde f$ in the following bound.
\begin{lem}[Approximate sharpness]\label{lem:sharp_on_tube_2}
	We have the following bound:
	\begin{align}\label{eq:sharp_on_tube_2}
	\tilde f(x) - \inf_\cX f \geq \mu \cdot \dist(x, \cX^\ast) - \varepsilon \qquad \forall x \in \cX.
	\end{align}
\end{lem}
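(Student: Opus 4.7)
The plan is to prove this by a direct two-line comparison between $\tilde f$ and the unperturbed $f$, exploiting the Lipschitz property of $h$ and the sharpness assumption on $f$. Everything needed is already on the table: $h$ is $\eta$-Lipschitz with respect to $\opnorm{\cdot}$, the map $F$ is common to both $f$ and $\tilde f$, and the sharpness inequality $f(x)-\inf_\cX f\ge \mu\cdot\dist(x,\cX^\ast)$ is assumed to hold on all of $\cX$.

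First, I would bound the pointwise gap between $\tilde f$ and $f$. For any $x\in\cX$, Lipschitz continuity of $h$ gives
\[
|\tilde f(x)-f(x)|=|h(F(x)+e)-h(F(x))|\le \eta\,\opnorm{e}=\varepsilon.
\]
In particular, $\tilde f(x)\ge f(x)-\varepsilon$ for every $x\in\cX$.

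Next, I would plug in the sharpness hypothesis on $f$. Combining the previous display with $f(x)\ge \inf_\cX f+\mu\cdot\dist(x,\cX^\ast)$ yields
\[
\tilde f(x)-\inf_\cX f\ \ge\ f(x)-\inf_\cX f-\varepsilon\ \ge\ \mu\cdot\dist(x,\cX^\ast)-\varepsilon,
\]
which is exactly the claimed bound. There is no real obstacle here; the only subtlety worth emphasizing is that the inequality is stated against $\inf_\cX f$ (the infimum of the \emph{unperturbed} objective) rather than $\inf_\cX \tilde f$, which is precisely why the argument goes through without any appeal to the minimizer of $\tilde f$ or to the bound \eqref{eq:min_tol}.
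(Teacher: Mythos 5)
Your proof is correct and follows essentially the same route as the paper: the paper's one-line argument $\tilde f(x)-\inf f\ge f(x)-\inf f-\varepsilon\ge\mu\cdot\dist(x,\cX^\ast)-\varepsilon$ is exactly your two steps, with the first inequality resting on the same Lipschitz bound $|\tilde f(x)-f(x)|\le\eta\opnorm{e}=\varepsilon$ that you spell out explicitly. Your remark about the inequality being stated against $\inf_\cX f$ rather than $\inf_\cX\tilde f$ matches the caution the paper itself gives just before the lemma.
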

\begin{proof}
	For any $x \in \cX$, observe
	$
	\tilde f(x) - \inf f \geq  f(x) - \inf f - \varepsilon \geq \mu \cdot \dist(x, \cX^\ast) - \varepsilon,
	$
	as claimed.
\end{proof}

Next, we show that $\tilde f$ satisfies the following approximate subgradient inequality.
\begin{lem}[Approximate subgradient inequality]\label{lem:approximate_subgradient}
	The following bound holds:
	\begin{align*}
	f(y)\geq \tilde f(x)+\dotp{\zeta, y - x} -\frac{\rho}{2}\|x - y\|^2-3\varepsilon   \qquad \forall x, y \text{ and } \zeta \in \partial \tilde f(x).
	\end{align*}
\end{lem}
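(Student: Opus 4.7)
The plan is to route the argument through the convex model $\tilde f_x(y) := h(F(x)+\nabla F(x)(y-x)+e)$ of the perturbed problem. I will combine three ingredients: convexity of $\tilde f_x$ in $y$, the approximation accuracy $|f(y)-f_x(y)|\leq \tfrac{\rho}{2}\|y-x\|^2$ from Assumption~\ref{ass:bb}, and the $\eta$-Lipschitz continuity of $h$. The bookkeeping is entirely linear; the only real question is which constant in front of $\varepsilon$ falls out.

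First I would observe that $\tilde f_x$ is convex, being the composition of the convex function $h$ with an affine map, and that $\tilde f_x(x)=h(F(x)+e)=\tilde f(x)$. The convex chain rule applied to both $\tilde f$ and $\tilde f_x$ at $x$ gives $\partial \tilde f_x(x) = \nabla F(x)^*\partial h(F(x)+e) = \partial \tilde f(x)$. Consequently any $\zeta \in \partial \tilde f(x)$ is a subgradient of the convex function $\tilde f_x$ at $x$, and convexity yields the exact linear lower bound
\[ \tilde f_x(y) \;\geq\; \tilde f(x) + \dotp{\zeta, y-x} \qquad \text{for all } y. \]
Next, $\eta$-Lipschitz continuity of $h$ applied to the arguments $F(x)+\nabla F(x)(y-x)$ and $F(x)+\nabla F(x)(y-x)+e$ gives $|f_x(y)-\tilde f_x(y)| \leq \eta\opnorm{e} = \varepsilon$, so $f_x(y) \geq \tilde f_x(y) - \varepsilon$. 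Finally, Assumption~\ref{ass:bb} supplies $f(y) \geq f_x(y) - \tfrac{\rho}{2}\|y-x\|^2$. Chaining the three inequalities produces
\[ f(y) \;\geq\; \tilde f(x) + \dotp{\zeta, y-x} - \tfrac{\rho}{2}\|y-x\|^2 - \varepsilon, \]
which in particular implies the stated bound, since $\varepsilon \leq 3\varepsilon$.

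I do not anticipate any genuine obstacle; the point of the lemma is conceptual rather than technical. The one subtlety worth flagging is that $\tilde f$ is not obviously $\rho$-weakly convex from the given approximation bound on $f$ alone, so I deliberately avoid invoking weak convexity of $\tilde f$ and work directly through the convex model $\tilde f_x$, which is transparently convex. One can alternatively prove the bound by first establishing an approximation inequality for $\tilde f$ at the cost of $2\varepsilon$ slack (comparing both $\tilde f$ with $f$ and $\tilde f_x$ with $f_x$ via Lipschitz continuity, and then invoking Assumption~\ref{ass:bb}), and paying a third $\varepsilon$ to pass from $\tilde f$ back to $f$ at the end; this more symmetric route produces the $3\varepsilon$ constant directly and is likely the form the authors record.
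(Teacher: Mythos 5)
Your proof is correct and follows essentially the same route as the paper's: identify $\partial \tilde f(x)$ with $\partial \tilde f_x(x)$ via the chain rule, apply the convex subgradient inequality to $\tilde f_x$, and pass back to $f$ using the $\eta$-Lipschitz continuity of $h$ and the approximation accuracy. Your bookkeeping is in fact slightly tighter — by exploiting $\tilde f_x(x)=\tilde f(x)$ exactly you lose only a single $\varepsilon$, whereas the paper's chain passes through $f_x(x)=f(x)$ and back, accumulating the stated $3\varepsilon$; either way the claimed bound follows.
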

\begin{proof}
	First notice that $|f_x(y) - \tilde f_x(y)| \leq \varepsilon$ for all $x, y$.  Furthermore, we have $\partial \tilde f(x) = \nabla F(x)^* \partial h(F(x) + e) = \partial \tilde f_x(x)$. Therefore, it follows that for any $\zeta \in \partial \tilde f_x(x)$  we have
	\begin{align*}
	\dotp{\zeta, y - x} &\leq \tilde f_x(y) - \tilde f_x(x) \\
	&\leq f_x(y) - f_x(x) + 2\eta\opnorm{e}\\
	& \leq f(y) - f(x) + \frac{\rho}{2} \|x - y\|^2 + 2\varepsilon \\
	& \leq  f(y) - \tilde f(x) + \frac{\rho}{2} \|x - y\|^2 + 3\varepsilon,
	\end{align*}
	as desired.
\end{proof}

Now consider the following modified Polyak method. It is important to note that the stepsize assumes knowledge of $\min_{\cX} f$ rather than $\min_{\cX} \tilde f$. This distinction is important because it often happens that $\min_{\cX} f = 0$, whereas $\min_{\cX} \tilde f$ is in general unknown; for example, consider any noiseless problem analyzed thus far. We note that the standard Polyak subgradient method may also be applied to $\tilde f$ without any changes and has similar theoretical guarantees. The proof appears in the appendix.

\bigskip
\begin{algorithm}[H]
	\KwData{$x_0 \in \RR^d$}

	{\bf Step $k$:} ($k\geq 0$)\\
	$\qquad$ Choose $\zeta_k \in \partial \tilde f(x_k)$. {\bf If} $\zeta_k=0$, then exit algorithm.\\
	$\qquad$ Set $\displaystyle x_{k+1}=\proj_{\cX}\left(x_{k} - \frac{\tilde f(x_k)-\min_{\mathcal{X}} f}{\|\zeta_k\|^2}\zeta_k\right)$.
	\caption{Modified Polyak Subgradient Method}
	\label{alg:polyak_tol}
\end{algorithm}


\begin{thm}[Polyak subgradient method]\label{thm:qlinear_tol}
	Suppose that Assumption~\ref{ass:aa} holds and suppose that $\varepsilon \leq \mu^2/14\rho$.
	Then  Algorithm~\ref{alg:polyak_tol} initialized at any point $x_0\in \widetilde \cT_1$ produces iterates that converge $Q$-linearly to $\cX^*$ up to tolerance $14\varepsilon/\mu$, that is
	\begin{equation*}
	\dist^2(x_{k+1},\cX^*) \leq \left(1-\frac{13\tilde \tau^2}{56}\right)\dist^2(x_{k},\cX^*)\qquad \forall k\geq 0 \text{ with $\dist(x_k, \cX^\ast) \geq 14\varepsilon/\mu$}.
	\end{equation*}
\end{thm}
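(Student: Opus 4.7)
The plan is to mimic the standard Polyak subgradient analysis but control the additional error terms using the two noise-aware lemmas (approximate sharpness and approximate subgradient inequality) already established. Throughout, I fix $k$ with $\dist(x_k,\cX^*)\geq 14\varepsilon/\mu$, set $d_k:=\dist(x_k,\cX^*)$, pick $x^*\in\proj_{\cX^*}(x_k)$, and write $D_k:=\tilde f(x_k)-\min_\cX f$ and $\alpha_k:=D_k/\|\zeta_k\|^2$. The inductive hypothesis $x_k\in\widetilde\cT_1$ (which holds at $k=0$ by assumption) will be propagated automatically by the contraction below, since monotonicity $d_{k+1}\le d_k\le \mu/(4\rho)$ keeps successors either still in $\widetilde\cT_1$ or already inside the tolerance annulus.

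First I would use non-expansiveness of $\proj_\cX$ at the fixed point $x^*\in\cX$ to write
\[
\|x_{k+1}-x^*\|^2\le \|x_k-x^*-\alpha_k\zeta_k\|^2 = d_k^2 - 2\alpha_k\langle \zeta_k,x_k-x^*\rangle + \alpha_k^2\|\zeta_k\|^2.
\]
Applying Lemma~\ref{lem:approximate_subgradient} with $y=x^*$ and noting $f(x^*)=\min_\cX f$ yields
\[
\langle \zeta_k,x_k-x^*\rangle \geq D_k - \tfrac{\rho}{2}d_k^2 - 3\varepsilon.
\]
Substituting and simplifying the resulting quadratic in $\alpha_k$ gives the key inequality
\[
\|x_{k+1}-x^*\|^2\le d_k^2 - \frac{D_k\bigl(D_k-\rho d_k^2 - 6\varepsilon\bigr)}{\|\zeta_k\|^2}.
\]

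Next I would lower bound the two factors in the numerator using Lemma~\ref{lem:sharp_on_tube_2} and the two defining inequalities for $\widetilde\cT_1$. Approximate sharpness gives $D_k\geq \mu d_k-\varepsilon$, and the hypothesis $d_k\geq 14\varepsilon/\mu$ converts this to $D_k\geq \tfrac{13}{14}\mu d_k$. For the second factor, the bound $d_k\leq \mu/(4\rho)$ from $x_k\in\widetilde\cT_1$ gives $\rho d_k^2\leq \tfrac{\mu d_k}{4}$, while $d_k\geq 14\varepsilon/\mu$ gives $6\varepsilon\leq \tfrac{6\mu d_k}{14}=\tfrac{12\mu d_k}{28}$. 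Combining,
\[
D_k-\rho d_k^2-6\varepsilon \;\geq\; \tfrac{13}{14}\mu d_k - \tfrac{\mu d_k}{4}-\tfrac{12\mu d_k}{28} \;=\; \tfrac{\mu d_k}{4}.
\]
Hence $D_k(D_k-\rho d_k^2-6\varepsilon)\geq \tfrac{13\mu^2 d_k^2}{56}$.

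Finally, since $x_k\in\widetilde\cT_1$, the definition of $\tilde L$ gives $\|\zeta_k\|\leq\tilde L$, so $\mu^2/\|\zeta_k\|^2\geq \tilde\tau^2$. Plugging into the key inequality yields
\[
\|x_{k+1}-x^*\|^2 \le d_k^2 - \tfrac{13}{56}\tilde\tau^2 d_k^2 = \left(1-\tfrac{13\tilde\tau^2}{56}\right)d_k^2,
\]
which implies the stated bound because $\dist^2(x_{k+1},\cX^*)\le\|x_{k+1}-x^*\|^2$. The monotonicity $d_{k+1}\le d_k\le \mu/(4\rho)$ then closes the induction keeping future iterates in $\widetilde\cT_1$ (or in the tolerance ball). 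The main bookkeeping obstacle is tuning the inequalities so that the constants combine to exactly $13/56$; the threshold $14\varepsilon/\mu$ is precisely what is needed to make both $D_k\geq\tfrac{13}{14}\mu d_k$ and $D_k-\rho d_k^2-6\varepsilon\geq \tfrac{\mu d_k}{4}$ hold simultaneously with the tube radius $\mu/(4\rho)$ coming from $\widetilde\cT_1$.
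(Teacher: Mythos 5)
Your proposal is correct and follows essentially the same route as the paper's proof: nonexpansiveness of the projection, the approximate subgradient inequality (Lemma~\ref{lem:approximate_subgradient}) at a nearest point $x^*$, the approximate sharpness bound $D_k\geq \mu d_k-\varepsilon$, and the two tube inequalities $d_k\leq \mu/(4\rho)$ and $d_k\geq 14\varepsilon/\mu$, combined to yield exactly the $13/56$ constant. The only cosmetic difference is that you bound $D_k-\rho d_k^2-6\varepsilon$ directly by $\tfrac{1}{4}\mu d_k$ in one step, whereas the paper first converts $-D_k+6\varepsilon\leq -\mu d_k+7\varepsilon$ and then factors; the arithmetic is identical.
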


Next we provide theoretical guarantees for Algorithm~\ref{thm:geometric}, where one does not know the optimal value $\min_\cX f$. The proof of this result is a straightforward modification of \cite[Theorem 6.1]{davis2018subgradient} based on the Lemmas~\ref{lem:sharp_on_tube_2} and~\ref{lem:approximate_subgradient}, and therefore we omit it.

\begin{thm}[Geometrically decaying subgradient method] \label{thm:geometric_tol} Suppose that Assumption~\ref{ass:aa} holds, fix a real number  $\gamma \in (0,1)$,  and suppose  $\tilde \tau  \le \frac{14}{11}\sqrt{ \frac{1}{2-\gamma} }$. Suppose also $\epsilon<\frac{\mu^2\gamma}{56\rho}$ so that $\widetilde \cT_\gamma$ is nonempty.
	Set $\lambda:=\frac{\gamma \mu^2}{4\rho \tilde L} \textrm{ and } q:=\sqrt{1-(1-\gamma) \tilde\tau^2} $ in Algorithm~\ref{alg:geometrically_step}.
	Then the iterates $x_k$ generated by Algorithm~\ref{alg:geometrically_step} on the perturbed problem \eqref{eqn:perturbed}, initialized at a point $x_0 \in \tilde{\mathcal{T}}_{\gamma}$, satisfy:
	\begin{equation*} 
	\dist^2(x_k;\cX^*) \leq \frac{\gamma^2 \mu^2}{16\rho^2}
	\left(1-(1-\gamma)\tilde \tau^2\right)^{k}\qquad \forall k\geq 0 \text{ with $\dist(x_k, \cX^\ast) \geq 14\varepsilon/\mu$}.
	\end{equation*}
\end{thm}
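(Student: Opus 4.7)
The plan is to mimic the argument for Theorem~\ref{thm:geometric} (i.e., \cite[Theorem 6.1]{davis2018subgradient}), replacing the exact sharpness and subgradient inequalities by their $\varepsilon$-approximate counterparts furnished by Lemmas~\ref{lem:approximate_subgradient} and~\ref{lem:sharp_on_tube_2}. Fix an iteration index $k$ for which $D_k := \dist(x_k,\cX^\ast) \geq 14\varepsilon/\mu$, pick any $x^\ast \in \proj_{\cX^\ast}(x_k)$ and $\zeta_k \in \partial \tilde f(x_k)$. Applying Lemma~\ref{lem:approximate_subgradient} with $y = x^\ast$ and using the approximate sharpness bound $\tilde f(x_k) - f(x^\ast) \geq \mu D_k - \varepsilon$ from Lemma~\ref{lem:sharp_on_tube_2} yields the cornerstone inequality
\[
\langle \zeta_k, x_k - x^\ast \rangle \;\geq\; \mu D_k - 4\varepsilon - \tfrac{\rho}{2} D_k^2.
\]

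Next, the nonexpansiveness of $\proj_\cX$ applied to the update $x_{k+1}=\proj_\cX\!\left(x_k - \alpha_k\zeta_k/\|\zeta_k\|_2\right)$ with $\alpha_k = \lambda q^k$ produces the distance recursion
\[
D_{k+1}^2 \;\leq\; D_k^2 \;-\; \frac{2\alpha_k}{\|\zeta_k\|_2}\!\left(\mu D_k - 4\varepsilon - \tfrac{\rho}{2}D_k^2\right) \;+\; \alpha_k^2.
\]
After upper-bounding $\|\zeta_k\|_2 \leq \tilde L$ and absorbing $4\varepsilon \leq \tfrac{2}{7}\mu D_k$ via the threshold $D_k \geq 14\varepsilon/\mu$, this recursion takes the same shape as in the noise-free proof, but with the effective sharpness parameter reduced from $\mu$ to $\tfrac{5}{7}\mu$. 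I would then proceed by induction on $k$ to show that $D_k \leq \tfrac{\gamma\mu}{4\rho} q^k$, which is equivalent to the desired conclusion. The base case is immediate from $x_0 \in \widetilde\cT_\gamma$, and in the inductive step the hypothesis controls the quadratic term $\tfrac{\rho}{2}D_k^2 \leq \tfrac{\gamma\mu}{8} D_k$ and collapses the one-step inequality into the scalar contraction $D_{k+1}^2 \leq q^2 D_k^2$, using the algebraic identity $1 - q^2 = (1-\gamma)\tilde\tau^2$ and the specific choice $\lambda = \tfrac{\gamma\mu^2}{4\rho \tilde L}$.

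The main obstacle is the bookkeeping that produces the exact hypothesis $\tilde\tau \leq \tfrac{14}{11}\sqrt{1/(2-\gamma)}$. Unlike the noise-free setting, where the contraction closes precisely when $\tau \leq \sqrt{1/(2-\gamma)}$, here the factor $\tfrac{5}{7}$ introduced by absorbing $4\varepsilon$ propagates through the quadratic inequality that governs the one-step contraction; the factor of four appearing in both $\lambda$ and in the tube radius $\gamma\mu/(4\rho)$ is precisely the compensating adjustment that allows the saturated boundary case $D_k = \tfrac{\gamma\mu}{4\rho}q^k$ to still satisfy $D_{k+1}^2 \leq q^2 D_k^2$. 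The assumption $\varepsilon < \tfrac{\mu^2\gamma}{56\rho}$ plays no active role in the recursion beyond guaranteeing that $\widetilde\cT_\gamma$ is nonempty so that initialization is meaningful. Once the saturated one-step contraction is verified, straightforward induction delivers the stated bound for every $k$ such that $D_k \geq 14\varepsilon/\mu$, matching the conditional conclusion.
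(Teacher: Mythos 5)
The paper does not actually supply a proof of this theorem: it states only that the result follows by ``a straightforward modification'' of the noiseless geometric-decay argument using Lemmas~\ref{lem:sharp_on_tube_2} and~\ref{lem:approximate_subgradient}. Your proposal follows exactly that recipe, and your cornerstone inequality $\langle \zeta_k, x_k - x^\ast\rangle \geq \mu D_k - 4\varepsilon - \tfrac{\rho}{2}D_k^2$ and the ensuing distance recursion are both correct. So the approach matches the one the paper points to.

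The gap is in the inductive step, which you assert but do not verify, and which does not close under the bookkeeping you describe. The induction target $D_k \leq R_k := \tfrac{\gamma\mu}{4\rho}q^k$ must be checked at both endpoints of the interval $[\,14\varepsilon/\mu,\, R_k\,]$, since the one-step bound is a convex quadratic in $D_k$. At the saturated endpoint $D_k = R_k$, absorbing $4\varepsilon \leq \tfrac{2}{7}\mu D_k$ and $\tfrac{\rho}{2}D_k^2 \leq \tfrac{\gamma}{8}\mu D_k$ (the latter from $D_k \leq \tfrac{\gamma\mu}{4\rho}$) and using $\alpha_k = \tilde\tau R_k$ gives
\[
D_{k+1}^2 \;\leq\; R_k^2\Bigl(1 - 2\tilde\tau^2\bigl(\tfrac{5}{7}-\tfrac{\gamma}{8}\bigr) + \tilde\tau^2\Bigr) \;=\; R_k^2\Bigl(1 - \tfrac{12-7\gamma}{28}\,\tilde\tau^2\Bigr),
\]
and the required inequality $\tfrac{12-7\gamma}{28} \geq 1-\gamma$ holds only when $\gamma \geq \tfrac{16}{21}$; for, say, $\gamma = 1/2$ the step fails. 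So the factor of $4$ in $\lambda$ and in the tube radius is \emph{not} by itself ``precisely the compensating adjustment'' — some different split of the error terms (or a different inductive statement) is needed, and that is exactly the part you leave unexamined. Relatedly, your argument never actually invokes the hypothesis $\tilde\tau \leq \tfrac{14}{11}\sqrt{1/(2-\gamma)}$: in the noiseless proof the analogous hypothesis is what controls the \emph{lower} endpoint of the convex quadratic (where $D_{k+1}^2 \leq \alpha_k^2 = \tilde\tau^2 R_k^2$ must be at most $q^2R_k^2$), and since the present hypothesis is \emph{weaker} than the noiseless $(2-\gamma)\tilde\tau^2\leq 1$, that endpoint also requires a genuine computation exploiting $D_k \geq 14\varepsilon/\mu$, which you do not carry out. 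Finally, note that the one-step conclusion of the induction is the R-linear statement $D_{k+1}\leq R_{k+1}$, not the Q-linear contraction $D_{k+1}^2\leq q^2D_k^2$ that you claim the inequality ``collapses'' to; the latter is false near the lower endpoint even in the noiseless case.
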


Finally, we analyze the prox-linear algorithm applied to the problem $\min_\cX \tilde f$. In contrast to the Polyak method, one does not need to know the optimal value $\min_\cX f$. The proof appears in the appendix.
\begin{thm}[Prox-linear algorithm]\label{thm:prox_linear_tol}
	Suppose Assumptions~\ref{ass:bb} holds. Choose any $\beta \geq \rho$ in Algorithm~\ref{alg:prox_lin} applied to the perturbed problem \eqref{eqn:perturbed} and set $\gamma:=\rho/\beta$. Suppose moreover $\epsilon<\frac{\mu^2\gamma}{56\rho}$ so that $\tilde{\mathcal T}_{\gamma}$ is nonempty. Then Algorithm~\ref{alg:prox_lin} initialized at any point $x_0 \in \widetilde \cT_{\gamma}$ converges quadratically up to tolerance $14\varepsilon/\mu$:
	$$\dist(x_{k+1},\cX^*)\leq  \tfrac{7\beta}{6\mu}\cdot\dist^2(x_{k},\cX^*)\qquad \forall k\geq 0 \text{ with $\dist(x_{k+1}, \cX^\ast) \geq 14\varepsilon/\mu$}.$$
\end{thm}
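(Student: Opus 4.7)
\textbf{Proof proposal for Theorem~\ref{thm:prox_linear_tol}.} The plan is to run the standard prox-linear contraction argument on the perturbed objective $\tilde f$ and pay for the perturbation by a constant multiple of $\varepsilon$, which gets absorbed as long as we are outside the inner ball of radius $14\varepsilon/\mu$. The key observation is that the approximation bound from Assumption~\ref{ass:bb} is preserved under the additive shift: for any $x,y\in\cX$,
\[
|\tilde f(y)-\tilde f_x(y)|=|h(F(y)+e)-h(F(x)+e+\nabla F(x)(y-x))|\le \tfrac{\rho}{2}\|y-x\|_2^2,
\]
which follows because the only part of $h\circ(\cdot+e)$ that is being linearized is $F$, exactly as in the unperturbed case. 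In particular, the prox-linear subproblem $\tilde f_{x_k}(\cdot)+\tfrac{\beta}{2}\|\cdot-x_k\|_2^2$ is $\beta$-strongly convex.

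Fix $k$ and let $x^\ast\in\proj_{\cX^\ast}(x_k)$ where $\cX^\ast=\argmin_\cX f$. First I would apply strong convexity of the subproblem at its minimizer $x_{k+1}$ against $x^\ast$, which yields
\[
\tilde f_{x_k}(x_{k+1})+\tfrac{\beta}{2}\|x_{k+1}-x_k\|_2^2\le \tilde f_{x_k}(x^\ast)+\tfrac{\beta}{2}\|x^\ast-x_k\|_2^2.
\]
Then I would sandwich $\tilde f_{x_k}$ between $\tilde f$ and itself using the approximation bound on each side:
\[
\tilde f(x_{k+1})-\tfrac{\rho}{2}\|x_{k+1}-x_k\|_2^2\le \tilde f_{x_k}(x_{k+1}),\qquad \tilde f_{x_k}(x^\ast)\le \tilde f(x^\ast)+\tfrac{\rho}{2}\|x^\ast-x_k\|_2^2.
\]
Combining and using $\beta\ge\rho$ to discard the nonpositive term $\tfrac{\rho-\beta}{2}\|x_{k+1}-x_k\|_2^2$, I obtain
\[
\tilde f(x_{k+1})-\tilde f(x^\ast)\le \tfrac{\beta+\rho}{2}\|x^\ast-x_k\|_2^2\le \beta\,\dist^2(x_k,\cX^\ast).
\]

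Next I would invoke the approximate sharpness from Lemma~\ref{lem:sharp_on_tube_2}, namely $\tilde f(x_{k+1})-\inf_\cX f\ge \mu\,\dist(x_{k+1},\cX^\ast)-\varepsilon$, together with the elementary bound $\tilde f(x^\ast)-\inf_\cX f=\tilde f(x^\ast)-f(x^\ast)\le \varepsilon$. Subtracting gives
\[
\mu\,\dist(x_{k+1},\cX^\ast)\le 2\varepsilon+\beta\,\dist^2(x_k,\cX^\ast).
\]
Finally, when $\dist(x_{k+1},\cX^\ast)\ge 14\varepsilon/\mu$, the noise term satisfies $2\varepsilon/\mu\le \tfrac{1}{7}\dist(x_{k+1},\cX^\ast)$, so rearranging gives
\[
\tfrac{6}{7}\dist(x_{k+1},\cX^\ast)\le \tfrac{\beta}{\mu}\,\dist^2(x_k,\cX^\ast),
\]
which is the advertised quadratic rate with constant $7\beta/(6\mu)$.

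The one subtle point, which I expect to be the main (but minor) obstacle, is checking that the iterates remain inside $\widetilde{\cT}_\gamma$ so the above bound can be applied at every step until the tolerance is reached. For this I would argue inductively: if $\dist(x_k,\cX^\ast)<\gamma\mu/(4\rho)=\mu/(4\beta)$, then the quadratic bound gives
\[
\dist(x_{k+1},\cX^\ast)\le \tfrac{7\beta}{6\mu}\dist^2(x_k,\cX^\ast)<\tfrac{7\beta}{6\mu}\cdot\tfrac{\mu}{4\beta}\,\dist(x_k,\cX^\ast)=\tfrac{7}{24}\dist(x_k,\cX^\ast),
\]
which is strictly smaller than $\dist(x_k,\cX^\ast)$ and hence remains below $\gamma\mu/(4\rho)$. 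Thus once $x_0\in\widetilde{\cT}_\gamma$, every subsequent iterate either lies in $\widetilde{\cT}_\gamma$ or has already entered the inner ball $\dist(\cdot,\cX^\ast)\le 14\varepsilon/\mu$, at which point the claimed statement is vacuous. This closes the induction and completes the proof.
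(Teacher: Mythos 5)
Your proposal is correct and lands on exactly the same key inequality as the paper, namely $\mu\,\dist(x_{k+1},\cX^\ast)\le \beta\,\dist^2(x_k,\cX^\ast)+2\varepsilon$, followed by the same rearrangement using $\dist(x_{k+1},\cX^\ast)\ge 14\varepsilon/\mu$ to absorb $2\varepsilon\le\frac{\mu}{7}\dist(x_{k+1},\cX^\ast)$. The only substantive difference is bookkeeping: the paper starts from sharpness of $f$ at $x_{k+1}$, uses the approximation property of $f$ (as literally stated in Assumption~\ref{ass:bb}), and pays $\varepsilon$ twice when converting between $f_x$ and $\tilde f_x$ via the Lipschitzness of $h$; you instead assert that the approximation property transfers verbatim to $\tilde f$ and pay the $2\varepsilon$ in the sharpness step via Lemma~\ref{lem:sharp_on_tube_2} and $\tilde f(x^\ast)-\inf_\cX f\le\varepsilon$. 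One caveat: your transfer claim $|\tilde f(y)-\tilde f_x(y)|\le\frac{\rho}{2}\|y-x\|_2^2$ is not a formal consequence of Assumption~\ref{ass:bb} alone (which bounds the model error of $f$, not of $h(F(\cdot)+e)$); it holds in every concrete instance in the paper because the approximation bounds there are really bounds on $\opnorm{F(y)-F(x)-\nabla F(x)(y-x)}$ times the Lipschitz constant of $h$, but if you only invoke the abstract assumption plus Lipschitzness you would pick up an extra $2\varepsilon$ and the constant would degrade from $7\beta/(6\mu)$ to $7\beta/(5\mu)$. Finally, your induction on remaining in $\widetilde\cT_\gamma$ is unnecessary here: under the global Assumption~\ref{ass:bb} the one-step inequality holds for arbitrary $x_k\in\cX$, and the theorem only asserts the contraction at steps where $\dist(x_{k+1},\cX^\ast)\ge 14\varepsilon/\mu$ — though the check is harmless and correctly executed.
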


\subsection{Example: sparse outliers and dense noise under $\ell_1/\ell_2$ RIP}\label{sec:ell1ell2RIP_sparse_dense}

To further illustrate the ideas of this section, we now generalize the results of Section~\ref{subsec:sharpwithnoise}, in particular Assumption~\ref{assump:outlier}, to the following observation model.
\begin{assumption}[$\cI$-outlier bounds]\label{assump:outlier_tol}
	There exists vectors $e, \Delta \in \RR^m$, a set $\cI \subset\{1, \ldots, m\}$, and a constant $\ir > 0$ such that the following hold.
	\begin{enumerate}[label = $\mathrm{(C\arabic*)}$]
		\item $b = \cA(M_{\sharp}) + \Delta + e$.
		\item Equality holds $\Delta_i = 0 $ for all $i\notin \cI$.
		\item \label{item:assump:rip_outliers2}  For all matrices $W$ of rank at most $2r$, we have
		\begin{equation*} 
		\ir \|W\|_F \leq  \frac{1}{m}\|\cA_{\mathcal{I}^c}(W)\|_1 -  \frac{1}{m}\|\cA_{\cI}(W)\|_1.
		\end{equation*}
	\end{enumerate}
\end{assumption}

Given these assumptions we follow the notation of the previous section and let
$$
f(X) := \frac{1}{m}\| \cA(XX^\top - M_\sharp) - \Delta\|_1 \qquad \text{and} \qquad \tilde f(X) = \frac{1}{m}\|\cA(XX^\top - M_\sharp) - \Delta - e\|_1.
$$
Then we have the following proposition:
\begin{proposition}\label{prop:appl_inexact_rip}
	Suppose Assumption~\ref{assump:RIP} and~\ref{assump:outlier_tol} are valid. Then the following hold:
	\begin{enumerate}
		\item {\bf (Sharpness)} We have
		\begin{align*}
		f(X) - f(X_{\sharp})  \geq \mu\cdot  \dist \big(X, \cD^\ast(M_{\sharp})\big) \qquad \text{for all $X\in\R^{d\times r}$ and $\mu:= \ir\sqrt{2(\sqrt{2}-1)}\sigma_r(X_{\sharp})$},
		\end{align*}
		\item {\bf (Weak Convexity)} The function $f$ is $\rho := 2\kappa_2$-weakly convex.
		\item {\bf (Minimizers)} All minimizers of $\tilde f$ satisfy
		$$\dist(X^\ast, \cX^\ast) \leq \frac{2\frac{1}{m}\|e\|_1}{\ir\sqrt{2(\sqrt{2}-1)}\sigma_r(X_{\sharp})} \qquad \forall X^\ast \in \argmin_{\cX} \tilde f .$$
		\item {\bf (Lipschitz Bound)} We have the bound
		$$
		\tilde L \leq 2\ur\cdot \left(\frac{\ir\sqrt{2(\sqrt{2}-1)}\sigma_r(X_{\sharp})}{8\kappa_2} + \sigma_1(X_\sharp)\right).
		$$
	\end{enumerate}
\end{proposition}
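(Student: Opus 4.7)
The plan is to verify each of the four claims by combining ingredients already developed in the paper: the sharpness bound of Proposition~\ref{prop:l2sharpness}, the approximation and Lipschitz estimates of Proposition~\ref{prop:approx_acc_sym}, and the generic perturbation inequality~\eqref{eq:min_tol}. Parts~1--3 are essentially direct applications; Part~4 is the subtlest and requires a careful Lipschitz calculation that works with the tight radius of the tube $\widetilde \cT_1$ rather than the coarser bound furnished by Lemma~\ref{lem:lip_tol}.

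For Part~1, I would note that the unperturbed objective $f(X) = \tfrac{1}{m}\|\cA(XX^\top - M_\sharp) - \Delta\|_1$ does not involve the dense noise $e$, so the argument of Proposition~\ref{prop:noisy_sharp} transfers verbatim. Since $\Delta$ is supported on $\cI$, one has $m f(X_\sharp) = \|\Delta\|_1$, and the reverse triangle inequality produces $m\bigl(f(X) - f(X_\sharp)\bigr) \geq \|\cA_{\cI^c}(XX^\top - M_\sharp)\|_1 - \|\cA_{\cI}(XX^\top - M_\sharp)\|_1$; Assumption~\ref{item:assump:rip_outliers2} and Proposition~\ref{prop:l2sharpness} then deliver the sharpness constant $\mu = \kappa_3\sqrt{2(\sqrt{2}-1)}\sigma_r(X_\sharp)$. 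Part~2 follows because $f$ has the shape $\opnorm{\cA(XX^\top) - b'}$ with $b' := \cA(M_\sharp) + \Delta$, so Proposition~\ref{prop:approx_acc_sym} gives $|f(Y) - f_X(Y)| \leq \kappa_2 \|Y-X\|_F^2$, and the discussion following Assumption~\ref{ass:bb} upgrades this to $2\kappa_2$-weak convexity. For Part~3, I would write $\tilde f(X) = h(F(X) - e)$ with $h = \tfrac{1}{m}\|\cdot\|_1$ and $F(X) := \cA(XX^\top - M_\sharp) - \Delta$; since $h$ is $1$-Lipschitz with respect to $\opnorm{\cdot}$, the effective $\eta$ in the generic framework equals one, and \eqref{eq:min_tol} immediately yields the claimed tolerance $2m^{-1}\|e\|_1/\mu$.

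Part~4 is the main obstacle, because a mechanical appeal to Lemma~\ref{lem:lip_tol} would produce a bound governed by the looser radius $\mu/\rho$, not the tight $\mu/(4\rho)$ required by the statement. The remedy is to observe that $\tilde f$ and $f$ coincide up to a constant shift inside the $\ell_1$ norm, so the Lipschitz estimate of Proposition~\ref{prop:approx_acc_sym} extends verbatim to $\tilde f$: we have $|\tilde f(X) - \tilde f(Z)| \leq \kappa_2 \|X+Z\|_{\op} \|X-Z\|_F$. Setting $Z = X + td$ with $t\to 0^+$ and invoking Lemma~\ref{lem:Lipschitz_subgradient} yields the pointwise subgradient bound $\|\partial \tilde f(X)\|_2 \leq 2\kappa_2 \|X\|_{\op}$. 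For any $X\in\widetilde \cT_1$, the tube definition enforces $\dist(X,\cX^\ast) < \mu/(4\rho) = \mu/(8\kappa_2)$, and since every $X^\ast \in \cX^\ast$ satisfies $X^\ast (X^\ast)^\top = M_\sharp$ and hence $\|X^\ast\|_{\op} = \sigma_1(X_\sharp)$, the triangle inequality gives $\|X\|_{\op} \leq \mu/(8\kappa_2) + \sigma_1(X_\sharp)$; plugging this into the subgradient bound produces exactly $\tilde L \leq 2\kappa_2\bigl(\mu/(8\kappa_2) + \sigma_1(X_\sharp)\bigr)$, as claimed.
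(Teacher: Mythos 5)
Your proposal is correct and follows essentially the same route as the paper: sharpness via Proposition~\ref{prop:noisy_sharp}, weak convexity via Proposition~\ref{prop:approx_acc_sym}, the minimizer bound via~\eqref{eq:min_tol}, and the Lipschitz bound by applying the Lipschitz estimate of Proposition~\ref{prop:approx_acc_sym} to $\tilde f$ and bounding $\|X\|_{\rm op}$ over the tube by the triangle inequality. Your explicit observation that one must use the tube radius $\mu/(8\kappa_2)$ rather than the coarser bound from Lemma~\ref{lem:lip_tol} is exactly the point the paper's proof exploits implicitly.
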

\begin{proof}
	Sharpness follows from Proposition~\ref{prop:noisy_sharp}, while weak convexity follows from Proposition~\ref{prop:approx_acc_sym}. The minimizer bound follows from~\eqref{eq:min_tol}. Finally, due to Lemma~\ref{lem:Lipschitz_subgradient}, the argument given in Proposition~\eqref{prop:approx_acc_sym}, but applied instead to $\tilde f$, guarantees that
	$$
	\tilde L \leq 2\kappa_2\cdot \sup\left\{ \|X\|_{op} \colon \dist(X, \cD^\ast(M_{\sharp})) \leq \frac{\ir\sqrt{2(\sqrt{2}-1)}\sigma_r(X_{\sharp})}{8\kappa_2}\right\}.
	$$
	In turn the supremum may be bounded as follows: Let $X_\star = X_\sharp R$ denote the closest point to $X$ in $\cD^\ast(M)$. Then
	\begin{align*}
	\|X\|_\op \leq \|X - X_\sharp R\|_\op + \|X_\sharp R\|_\op \leq \frac{\ir\sqrt{2(\sqrt{2}-1)}\sigma_r(X_{\sharp})}{8\kappa_2} + \sigma_1(X_\sharp),
	\end{align*}
	as desired.
\end{proof}

In particular, combining Proposition~\ref{prop:appl_inexact_rip} with the previous results in this section, we deduce the following. As long as the noise satisfies
$$\frac{1}{m}\|e\|_1\leq \frac{c_0\ir^2\sigma_r(M_{\sharp})}{\ur}$$
for a sufficiently small constant $c_0>0$,  the subgradient and prox-linear methods converge rapidly to within tolerance
$$\delta\approx \frac{\frac{1}{m}\|e\|_1}{\ir\sigma_r(X_{\sharp})},$$
when initialized at a matrix $X_0$ satisfying
$$\frac{\dist(X_0,\cD^\ast(M_\sharp))}{\sqrt{\sigma_r(M_{\sharp})}}\leq c_1\cdot\frac{\ir}{\ur},$$
for some small constant $c_1$. The formal statement is summarized in the following corollary.

\begin{corollary}[Convergence guarantees  under RIP with sparse outliers and dense noise (symmetric)]\label{cor:generic_conv_iso_sym_tol}
	Suppose Assumptions~\ref{assump:RIP} is and \ref{assump:outlier_tol} are valid with $\opnorm{\cdot}=\frac{1}{m}\|\cdot\|_1$ and define the condition number $\chi=\sigma_1(M_{\sharp})/\sigma_r(M_{\sharp})$.
	Then there exists numerical constants $c_0, c_1, c_2, c_3, c_4, c_5, c_6 > 0$ such that the following hold.
	Suppose the noise level satisfies
	$$\frac{1}{m}\|e\|_1\leq \frac{2(\sqrt{2}-1)c_0\ir^2\sigma_r(M_{\sharp})}{28\ur}$$
	and define the tolerance
	$$
	\delta = \frac{\frac{14}{m} \|e\|_1}{\ir\sqrt{2(\sqrt{2}-1)\sigma_r(M_{\sharp})}}.
	$$ Then as long as the matrix $X_0$ satisfies
	$$\frac{\dist(X_0,\cD^\ast(M_\sharp))}{\sqrt{\sigma_r(M_{\sharp})}}\leq c_1\cdot\frac{\ir}{\ur},$$
	the following are true.
	\begin{enumerate}
		\item {\bf (Polyak subgradient)} Algorithm~\ref{alg:polyak} initialized at $X_0$  produces iterates that
		converge linearly to $\cD^\ast(M_\sharp)$, that is
		\begin{equation*}
		\frac{\dist^2(X_k,\cD^\ast(M_\sharp))}{\sigma_r(M_{\sharp})}\leq \left(1-\frac{c_2}{1+\frac{c_3\ur^2\chi}{\ir^2}}\right)^{k}\cdot \frac{c_4\ir^2}{\ur^2}\qquad \forall k\geq 0 \text{ with $\dist(X_k, \cX^\ast) \geq \delta$}.
		\end{equation*}
		\item {\bf (geometric subgradient)}
		Algorithm~\ref{alg:geometrically_step} with
		$\lambda=\frac{c_5\ir^2\sqrt{\sigma_r(M_{\sharp})}}{\ur (\ir+2\ur\sqrt{\chi})}$, $q=\sqrt{1-\frac{c_2}{1+c_3\ur^2\chi/\ir^2}}$
		and initialized at $X_0$ converges linearly:
		\begin{equation*}
		\frac{\dist^2(X_k,\cD^\ast(M_\sharp))}{\sigma_r(M_{\sharp})}\leq \left(1-\frac{c_2}{1+\frac{c_3\ur^2\chi}{\ir^2}}\right)^{k}\cdot \frac{c_4\ir^2}{\ur^2}\qquad \forall k\geq 0 \text{ with $\dist(X_k, \cX^\ast) \geq \delta$}.
		\end{equation*}
		\item {\bf (prox-linear)}  Algorithm~\ref{alg:prox_lin} with $\beta = \rho$ and initialized at $X_0$ converges quadratically:
		$$\frac{\dist(X_k,\cD^\ast(M_\sharp)))}{\sqrt{\sigma_r(M_{\sharp})}}\leq 2^{-2^{k}}\cdot \frac{c_6\ir}{\ur}\qquad \forall k\geq 0 \text{ with $\dist(X_k, \cX^\ast) \geq \delta$}.$$
	\end{enumerate}
\end{corollary}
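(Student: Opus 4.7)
The plan is to invoke Proposition~\ref{prop:appl_inexact_rip}, which supplies sharpness, weak convexity, and a Lipschitz bound for the nominal objective $f(X)=\tfrac{1}{m}\|\mathcal{A}(XX^\top-M_\sharp)-\Delta\|_1$, and then to appeal to the tolerance-version convergence Theorems~\ref{thm:qlinear_tol}, \ref{thm:geometric_tol}, and \ref{thm:prox_linear_tol} applied to the perturbed objective $\tilde f(X)=\tfrac{1}{m}\|\mathcal{A}(XX^\top-M_\sharp)-\Delta-e\|_1$. Because the outer penalty $h=\tfrac{1}{m}\|\cdot\|_1$ is $1$-Lipschitz with respect to itself, the noise level entering Section~\ref{sec:recovery_tol}'s framework is simply $\varepsilon=\tfrac{1}{m}\|e\|_1$. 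From Proposition~\ref{prop:appl_inexact_rip} I would read off $\mu:=\kappa_3\sqrt{2(\sqrt{2}-1)\sigma_r(M_\sharp)}$ and $\rho:=2\kappa_2$, and rewrite the Lipschitz bound
$$\tilde L \le 2\kappa_2\left(\tfrac{\kappa_3\sqrt{2(\sqrt{2}-1)\sigma_r(M_\sharp)}}{16\kappa_2} + \sqrt{\sigma_1(M_\sharp)}\right)$$
using $\sigma_1(M_\sharp)=\chi\,\sigma_r(M_\sharp)$ to see that $\tilde\tau^2 = \mu^2/\tilde L^2$ has order $1/(1+\kappa_2^2\chi/\kappa_3^2)$; this is precisely the source of the contraction factor appearing in parts~(1) and~(2) of the statement.

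Next I would verify that the stated noise bound ensures $\varepsilon\le\tfrac{\mu^2\gamma}{56\rho}$ for the $\gamma\in(0,1]$ used by each theorem, so that the annulus $\widetilde{\mathcal T}_\gamma$ from~\eqref{eq:tube_region_tol} is nonempty with inner radius $\delta = 14\varepsilon/\mu$. A direct substitution of the above $\mu,\rho$ shows that $\tfrac{1}{m}\|e\|_1\le\tfrac{2(\sqrt{2}-1)c_0\kappa_3^2\sigma_r(M_\sharp)}{28\kappa_2}$ is equivalent, for a suitably small absolute $c_0$, to exactly this annulus condition. Similarly the initialization bound $\dist(X_0,\cD^*(M_\sharp))/\sqrt{\sigma_r(M_\sharp)} \le c_1\kappa_3/\kappa_2$ is equivalent up to numerical constants to $\dist(X_0,\cD^*(M_\sharp))\le \gamma\mu/(4\rho)$, so $X_0\in\widetilde{\mathcal T}_\gamma$ whenever $\dist(X_0,\cD^*(M_\sharp))>\delta$.

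With these reductions in hand, the three claims follow by citing the corresponding theorems: Theorem~\ref{thm:qlinear_tol} produces the Polyak contraction $1-\tfrac{13}{56}\tilde\tau^2$; Theorem~\ref{thm:geometric_tol} produces the geometric-stepsize contraction $1-(1-\gamma)\tilde\tau^2$ with the announced $\lambda$ and $q$; and Theorem~\ref{thm:prox_linear_tol}, invoked with $\beta=\rho$, yields $\dist(X_{k+1},\cD^*)\le\tfrac{7\rho}{6\mu}\dist^2(X_k,\cD^*)$, which telescopes to the claimed $2^{-2^k}$-type bound once $\dist(X_0,\cD^*)$ is a sufficiently small constant multiple of $\mu/\rho$. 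The only real obstacle is cosmetic bookkeeping: absorbing the $\sqrt{2(\sqrt{2}-1)}$ factors, the precise form of $\tilde L$, and the choice $\beta=\rho=2\kappa_2$ into the clean absolute constants $c_0,\ldots,c_6$ of the statement. No genuinely new analytical ingredient beyond Proposition~\ref{prop:appl_inexact_rip} and the generic tolerance theorems of Section~\ref{sec:recovery_tol} is required, and the present corollary is the noise-tolerant counterpart of Corollary~\ref{cor:generic_conv_iso_sym} obtained by replacing each \emph{exact} bound with its \emph{up-to-$\delta$} version.
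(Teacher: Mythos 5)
Your proposal is correct and follows essentially the same route the paper intends: the paper gives no separate proof of this corollary, stating only that it follows by "combining Proposition~\ref{prop:appl_inexact_rip} with the previous results in this section," which is exactly your plan of reading off $\mu=\ir\sqrt{2(\sqrt{2}-1)\sigma_r(M_\sharp)}$, $\rho=2\ur$, and the bound on $\tilde L$ from Proposition~\ref{prop:appl_inexact_rip}, checking that the stated noise and initialization bounds place $X_0$ in the annulus $\widetilde{\mathcal T}_\gamma$, and then citing Theorems~\ref{thm:qlinear_tol}, \ref{thm:geometric_tol}, and \ref{thm:prox_linear_tol}. The minor constant discrepancy in your transcription of $\tilde L$ (a $16\ur$ where the proposition has $8\ur$) is immaterial since everything is absorbed into the unspecified numerical constants.
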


\section{Numerical Experiments}\label{sec:num_exp}

In this section, we demonstrate the theory and algorithms developed in the previous sections on a number of low-rank matrix recovery problems, namely  quadratic and bilinear sensing, low rank
matrix completion, and robust PCA.

\subsection{Robustness to outliers} \label{subsec:phase-transitions}
In our first set of experiments, we empirically test the robustness of our optimization methods to outlying measurements. We generate \textit{phase transition plots}, where each
pixel corresponds to the empirical probability of successful recovery over $50$
test runs using randomly generated problem instances. Brighter pixels represent
higher recovery rates.
All generated instances obey the following:
\begin{enumerate}
	\item The initial estimate is specified reasonably close to
	the ground truth. In particular, given a target symmetric positive semidefinite matrix $X_{\sharp}$,
	we set
	\[
	X_0 := X_{\sharp} + \delta \cdot \norm{X_{\sharp}}_F \cdot \Delta,
	\quad\text{where~~} \Delta = \frac{G}{\norm{G}_F}, \;
	G_{ij} \sim_{\mathrm{i.i.d.}} N(0, I).
	\]
	Here, $\delta$ is a scalar that controls the quality of initialization
	and $\Delta$ is a random unit ``direction''. The asymmetric setting is completely analogous.
	\item When using the subgradient method with geometrically decreasing
	step-size, we set $\lambda = 1.0, \; q = 0.98$.
	\item For the quadratic sensing, bilinear sensing, and matrix completion problems, we mark a test run as a success when the normalized distance $\|M -M_{\sharp}\|_F/\|M_{\sharp}\|_F$ is less than $10^{-5}$. Here we set $M=XX^\top$ in the symmetric setting and $M=XY$ in the asymmetric setting. For the robust PCA problem, we stop when $\|M -M_{\sharp}\|_1/\|M_{\sharp}\|_1< 10^{-5}$.
\end{enumerate}
Moreover, we set the seed of the random number generator at the beginning of each batch of experiments to enable reproducibility.

\paragraph{Quadratic and Bilinear sensing.}
Figures~\ref{fig:bilin-phase-tr} and \ref{fig:symquad-phase-tr} depict the
phase transition plots for bilinear \eqref{eq:bilinear_measurements} and symmetrized quadratic \eqref{eq:quadratic_measurements_II} sensing
formulations using Gaussian measurement vectors. In the experiments, we corrupt a fraction of
measurements with additive Gaussian noise of unit entrywise variance.
Empirically, we observe that increasing the variance of the additive
noise does not affect recovery rates.
Both problems exhibit a sharp
phase transition at very similar scales. Moreover, increasing the rank of the
generating signal does not seem to dramatically affect the recovery rate for
either problem. Under additive noise, we can recover the true signal (up to
natural ambiguity) even if we corrupt as much as half of the measurements.
\begin{figure}[h!]
	\centering
	\includegraphics[width=0.7\linewidth]{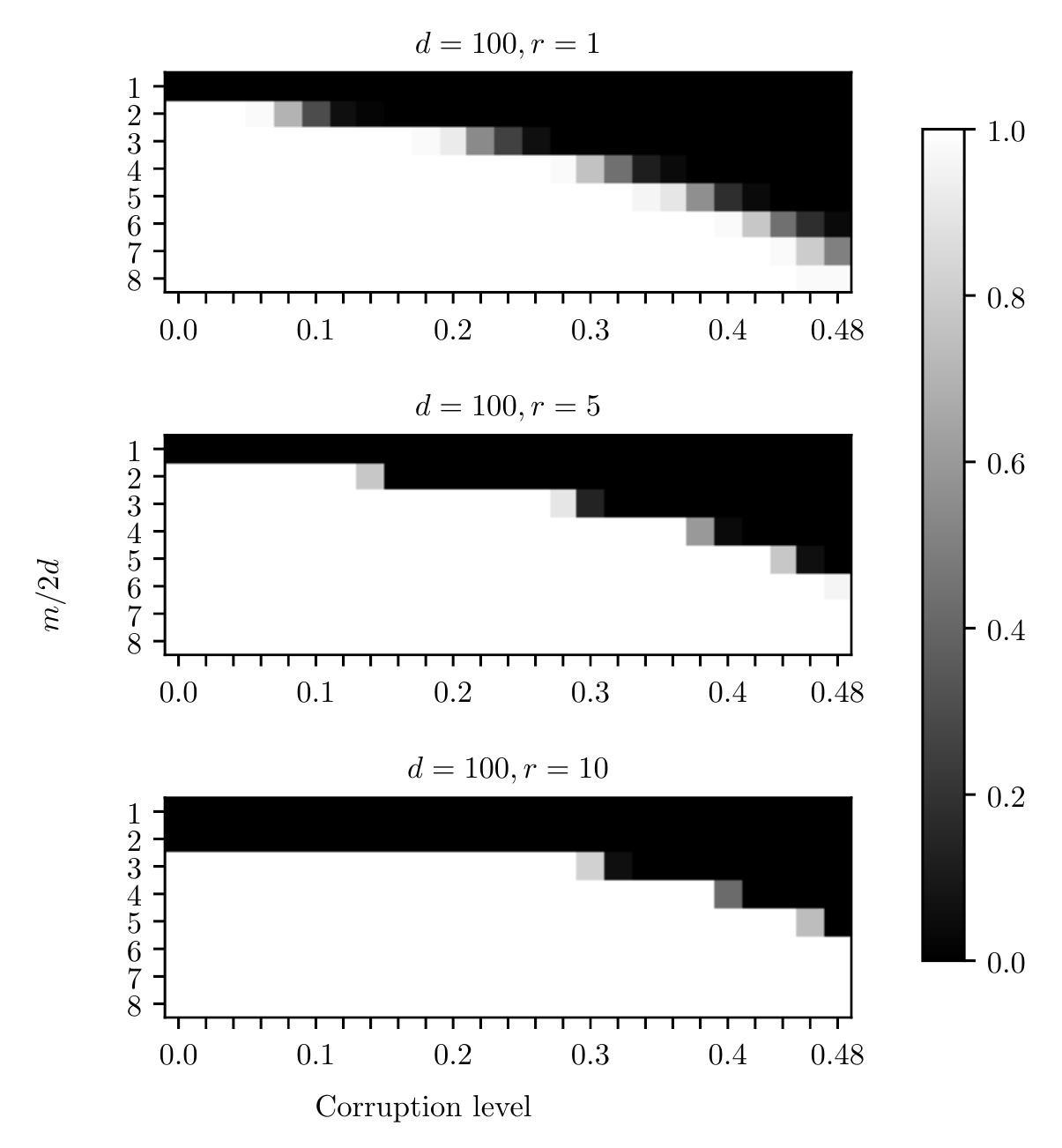}
	\caption{Bilinear sensing with $d_1 = d_2 = d = 100$ using
		Algorithm~\ref{alg:geometrically_step}.}
	\label{fig:bilin-phase-tr}
\end{figure}

\begin{figure}[h!]
	\centering
	\includegraphics[width=0.7\linewidth]{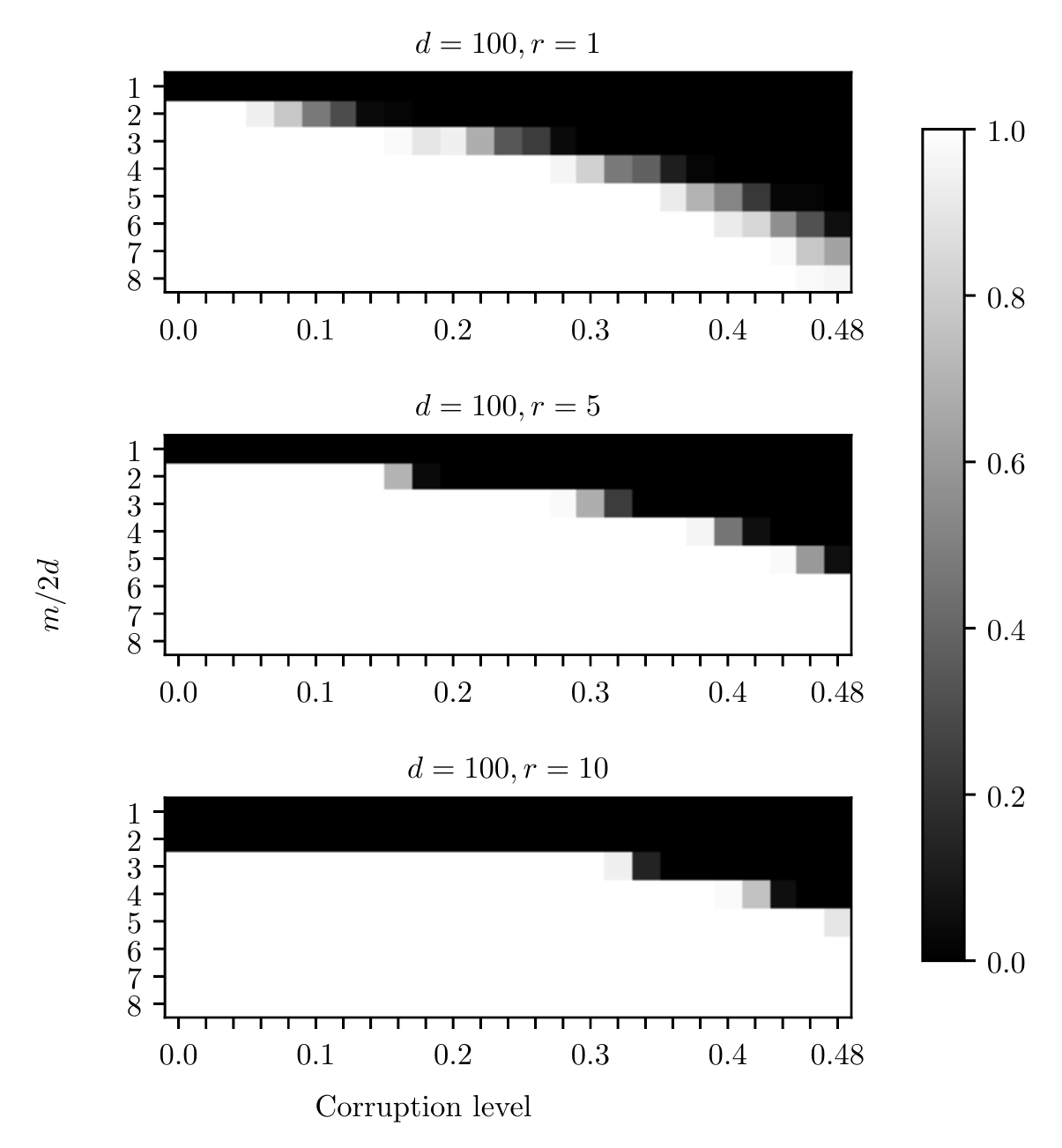}
	\caption{Quadratic sensing with symmetrized measurements using
		Algorithm~\ref{alg:geometrically_step}.}
	\label{fig:symquad-phase-tr}
\end{figure}

\paragraph{Robust PCA.}
We generate robust PCA instances for $d = 80$ and $r \in \{1, 2, 4, 8, 16\}$. The
corruption matrix $S_{\sharp}$ follows the assumptions in
Section~\ref{sec:l1_robust_pca}, where for simplicity we set $\hat{S}_{ij} \sim
\mathsf{N}(0, \sigma^2)$. We observed that increasing or decreasing the variance
$\sigma^2$ did not affect the probability of successful recovery, so our
experiments use $\sigma = 1$. We use the subgradient method, Algorithm~\ref{alg:prox_lin}, and the prox-linear algorithm \eqref{eqn:prox_lin_robust_pca}. Notice that we have not presented any guarantees for the subgradient method on this problem, in contrast to the prox-linear method. The subproblems for the prox-linear method are solved by ADMM with
graph splitting as in~\cite{PariBoyd14}. We set tolerance $\epsilon_k =
\frac{10^{-4}}{2k}$ for the proximal subproblems, which we continue solve for
at most $500$ iterations. We choose $\gamma = 10$ in all subproblems.
The phase transition plots are shown in Figure~\ref{fig:rpca-subgrad-phase-tr}. It appears that the
prox-linear method is more robust to additive sparse corruption, since the
empirical recovery rate for the subgradient method decays faster as the rank increases.

\begin{figure}[h!]
	\centering
	\includegraphics[width=0.6\linewidth]{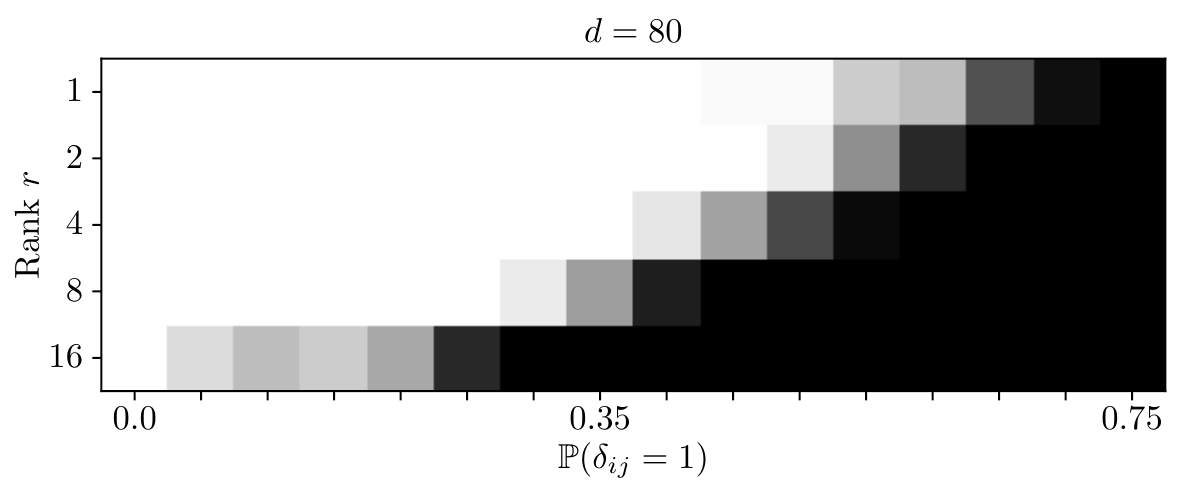}
	\includegraphics[width=0.6\linewidth]{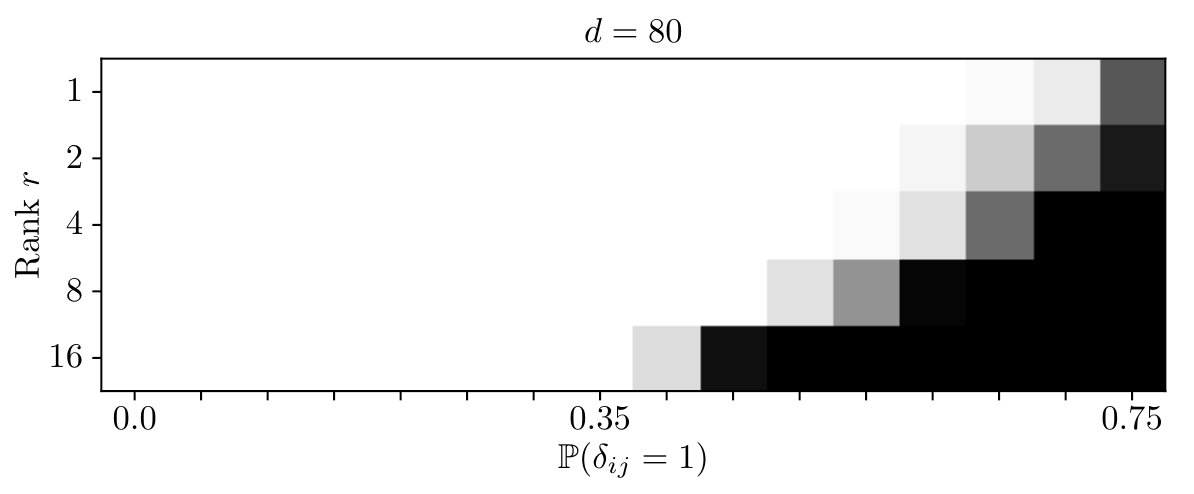}
	\caption{Robust PCA using the subgradient method, Algorithm~\ref{alg:geometrically_step}, (top) and the modified prox-linear method~\eqref{eqn:prox_lin_robust_pca} (bottom).}
	\label{fig:rpca-subgrad-phase-tr}
\end{figure}

\paragraph{Matrix completion.}
We next perform experiments on the low-rank matrix completion problem that test successful recovery against the sampling frequency. We generate random instances of the problem, where we let the probability of observing an entry, $\PP(\delta_{ij} = 1)$, range in
$[0.02, 0.6]$ with increments of $0.02$.
Figure~\ref{fig:matcomp-proxlin-phase-tr} depicts the empirical recovery rate using the Polyak subgradient method and the modified prox-linear algorithm \eqref{eqn:prox_lin_mat_comp}. Similarly to the quadratic/bilinear sensing problems, low-rank matrix completion exhibits
a sharp phase transition. As predicted in
Section~\ref{sec:mat_comp}, the ratio $\frac{r^2}{d}$ appears to be driving the
required observation probability for successful recovery. Finally, we
empirically observe that the prox-linear method can ``tolerate'' slightly
smaller sampling frequencies.

\begin{figure}[h!]
	\centering
	\includegraphics[width=0.7\linewidth]{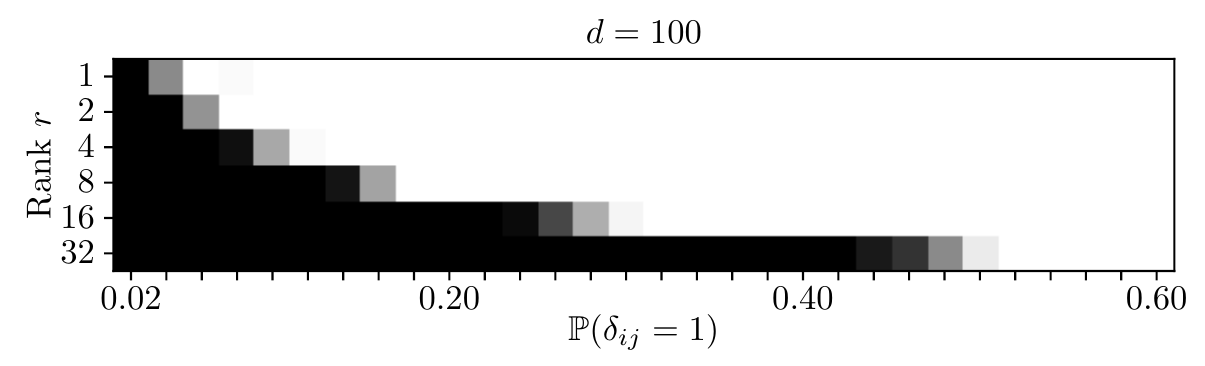}
	\includegraphics[width=0.7\linewidth]{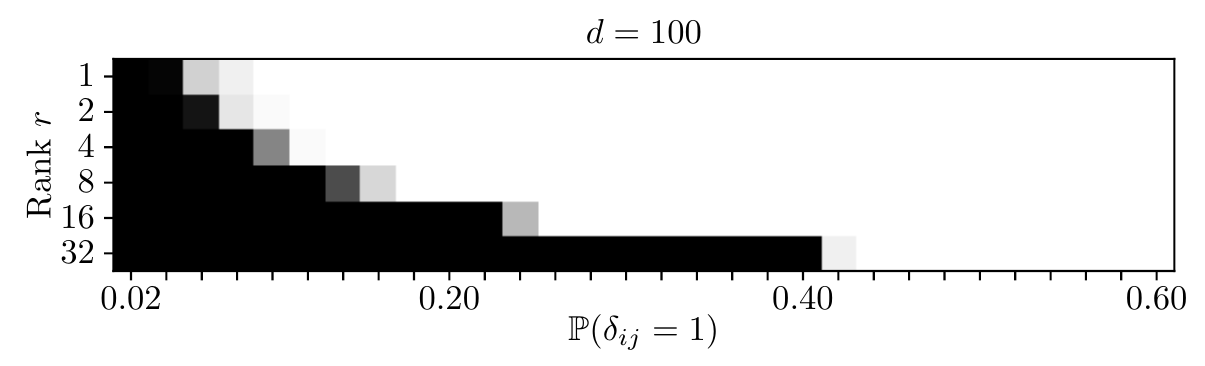}
	\caption{Low-rank matrix completion using the subgradient method, Algorithm~\ref{alg:polyak} (top), and the modified prox-linear method~\eqref{eqn:prox_lin_mat_comp} (bottom).}
	\label{fig:matcomp-proxlin-phase-tr}
\end{figure}

\subsection{Convergence behavior} \label{subsec:convergence-behavior}
We empirically validate the rapid convergence guarantees of the subgradient and prox-linear methods, given a proper initialization.
Moreover, we compare the subgradient method with gradient descent,
i.e. gradient descent applied to a smooth formulation of each problem, using
the same initial estimate in the noiseless setting. In all the cases below, the
step sizes for the gradient method were tuned for best performance. Moreover, we
noticed that the gradient descent method, equipped with the Polyak step size $\eta := \tau \frac{\nabla
	f}{\norm{\nabla f}^2}$ performed at least as well as gradient descent with constant step size. That being said, we were unable to locate any
theoretical guarantees in the literature for gradient descent with the Polyak step-size for the problems we consider here.

\paragraph{Quadratic and Bilinear sensing.}
For the quadratic and bilinear sensing problems, we apply gradient descent on the smooth formulations
\[
\frac{1}{m} \norm{\cA(XX^\top) - b}_2^2 \quad \text{ and } \quad
\frac{1}{m} \norm{\cA(XY) - b}_2^2.
\]
In Figure~\ref{fig:mtxsense-conv}, we plot the performance of
Algorithm~\ref{alg:geometrically_step} for matrix sensing problems with
different rank / corruption level; remarkably, the level of noise does not
significantly affect the rate of convergence. Additionally, the convergence
behavior is almost identical for the two problems for similar rank/noise
configurations.
Figure~\ref{fig:mtxsense-compgd} depicts the behavior of
Algorithm~\ref{alg:polyak} versus  gradient descent with empirically
tuned step sizes. The subgradient method significantly
outperforms gradient descent. For completeness, we also depict the convergence
rate of Algorithm~\ref{alg:prox_lin} for both problems in
Figure~\ref{fig:mtxsense-proxlin}, where we solve the proximal subproblems
approximately.

\begin{figure}[h]
	\begin{minipage}{0.45 \textwidth}
		\includegraphics[width=\linewidth]{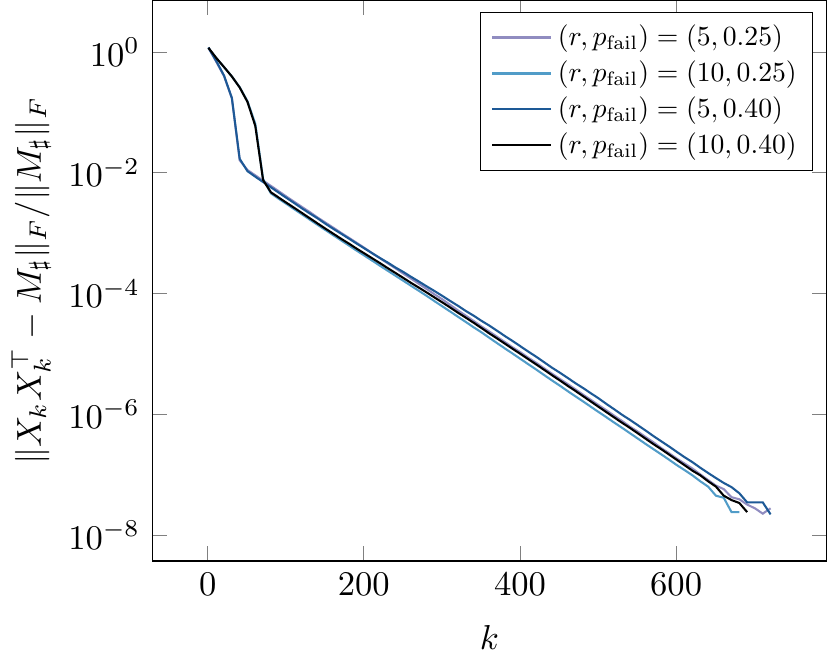}
	\end{minipage}
	\quad
	\begin{minipage}{0.45 \textwidth}
		\includegraphics[width=\linewidth]{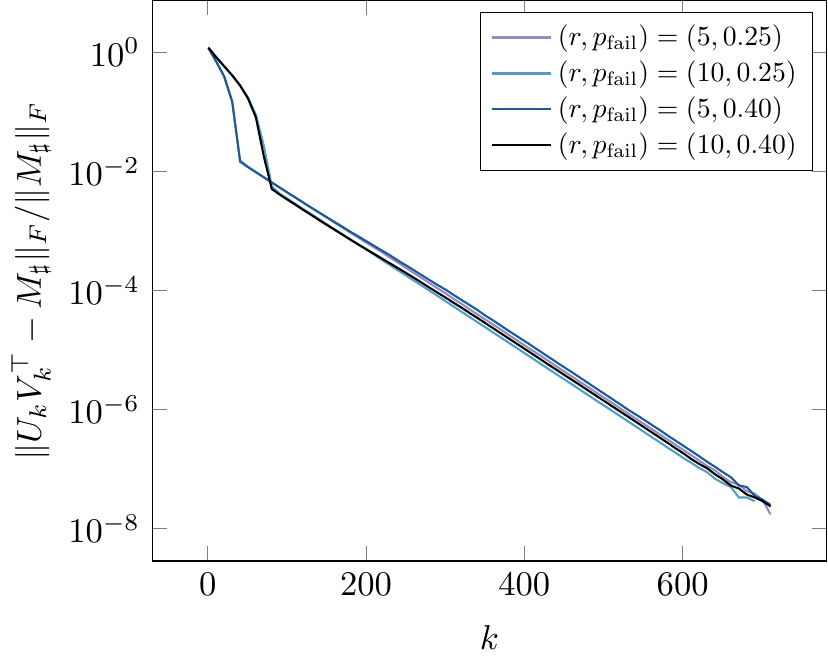}
	\end{minipage}
	\caption{Quadratic (left) and bilinear (right) matrix sensing with $d = 200, m = 8 \cdot rd$, using the subgradient method,
		Algorithm~\ref{alg:geometrically_step}.}
	\label{fig:mtxsense-conv}
\end{figure}

\begin{figure}[h]
	\begin{minipage}{0.45 \textwidth}
		\includegraphics[width=\linewidth]{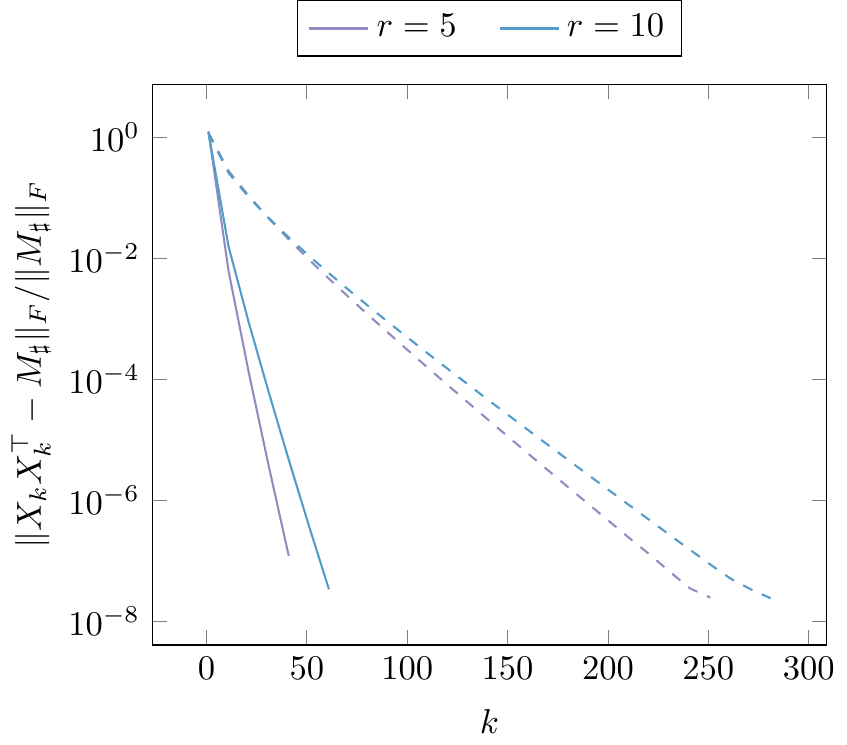}
	\end{minipage}
	\quad
	\begin{minipage}{0.45 \textwidth}
		\includegraphics[width=\linewidth]{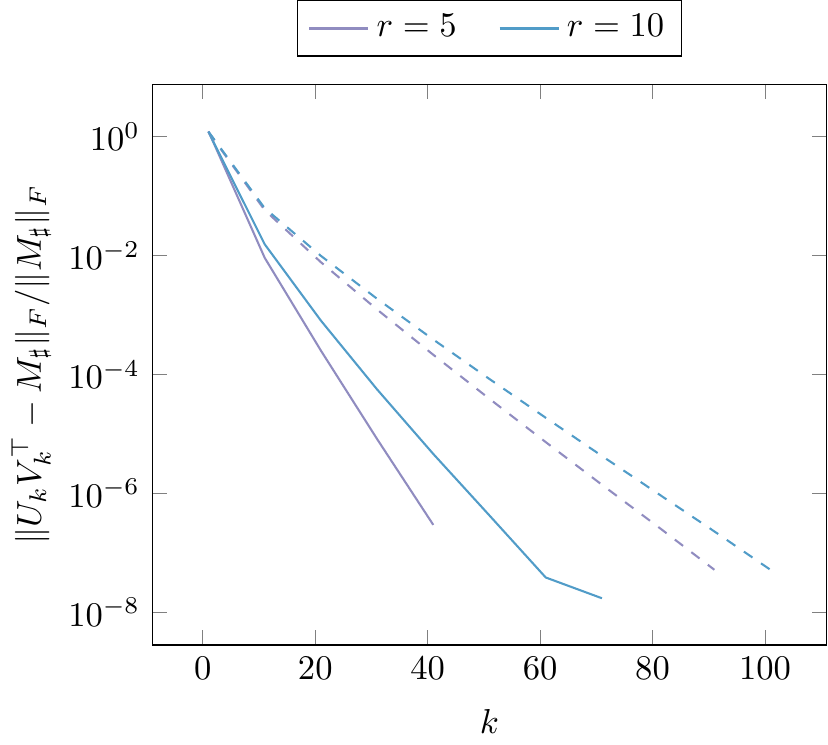}
	\end{minipage}
	\caption{
		Algorithm~\ref{alg:polyak} (solid lines) against gradient descent
		(dashed lines) with step size $\eta$. Left: quadratic sensing, $\eta =
		10^{-4}$. Right: bilinear sensing, $\eta = 10^{-3}$.}
	\label{fig:mtxsense-compgd}
\end{figure}

\begin{figure}[h]
	\begin{minipage}{0.45 \textwidth}
		\includegraphics[width=\linewidth]{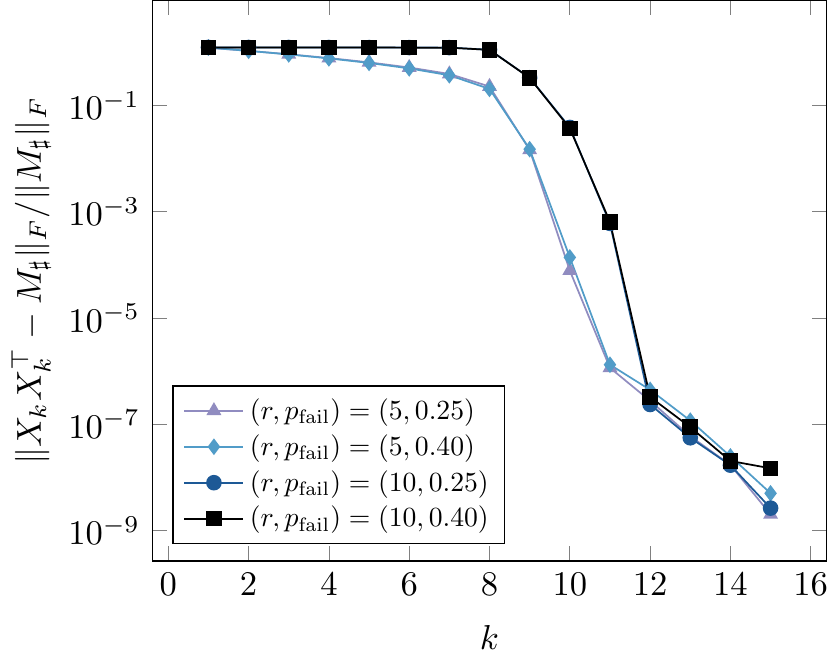}
	\end{minipage}
	\quad
	\begin{minipage}{0.45 \textwidth}
		\includegraphics[width=\linewidth]{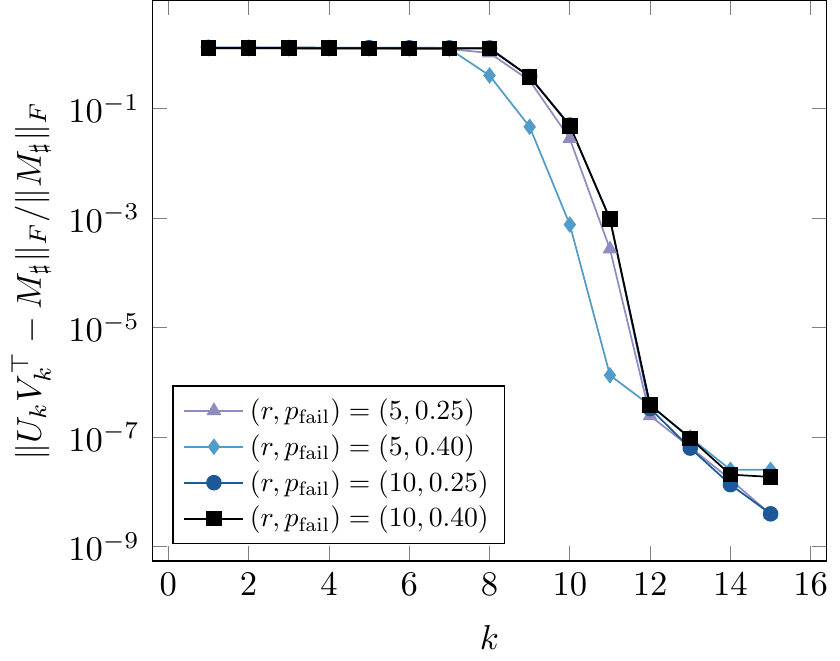}
	\end{minipage}
	\caption{
		Quadratic (left) and bilinear (right) matrix sensing with $d = 100,
		m = 8 \cdot rd$, using the prox-linear method,
		Algorithm~\ref{alg:prox_lin}.}
	\label{fig:mtxsense-proxlin}
\end{figure}

\paragraph{Matrix completion.}
In our comparison with smooth methods, we apply gradient descent on the following minimization
problem:
\begin{equation}
\min_{X \in \RR^{d \times r} : \norm{X}_{2, \infty} \leq C}
\norm{\Pi_{\Omega}(XX^\top) - \Pi_{\Omega}(M)}_F^2.
\label{eq:matcomp_gd}
\end{equation}
Figure~\ref{fig:matcomp-polyak-conv} depicts the convergence behavior of
Algorithm~\ref{alg:polyak} (solid lines) versus  gradient descent applied
to Problem~\eqref{eq:matcomp_gd} with a tuned step size $\eta = 0.004$ (dashed lines),
initialized under the same conditions for low-rank matrix completion instances.
As the theory suggests, higher
sampling frequency implies better convergence rates.  The subgradient method outperforms  gradient descent in all regimes.

\begin{figure}[ht]
	\begin{minipage}{0.45 \textwidth}
		\includegraphics[width=\linewidth]
		{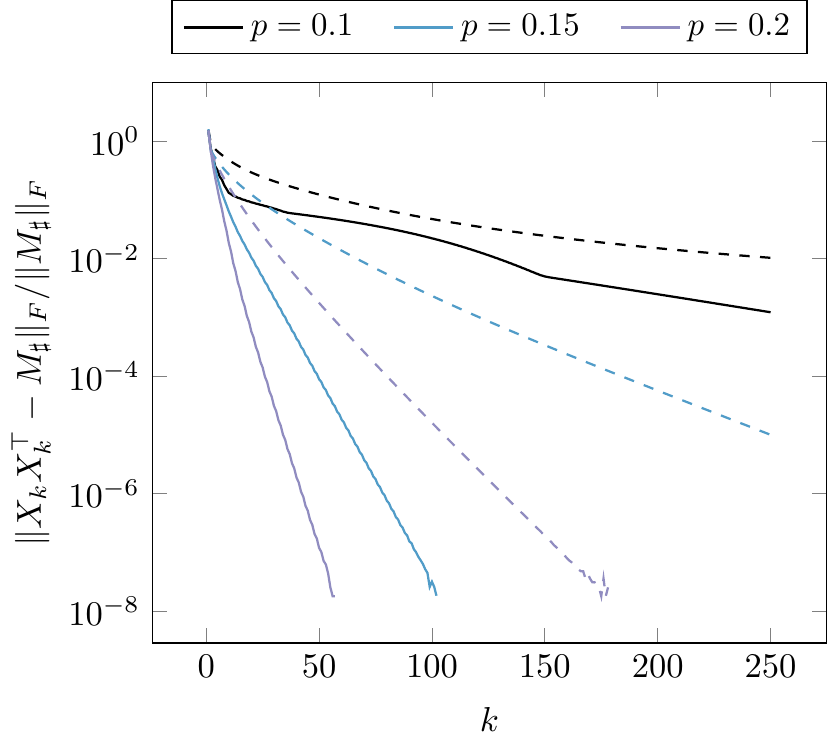}
	\end{minipage}
	\quad
	\begin{minipage}{0.45 \textwidth}
		\includegraphics[width=\linewidth]
		{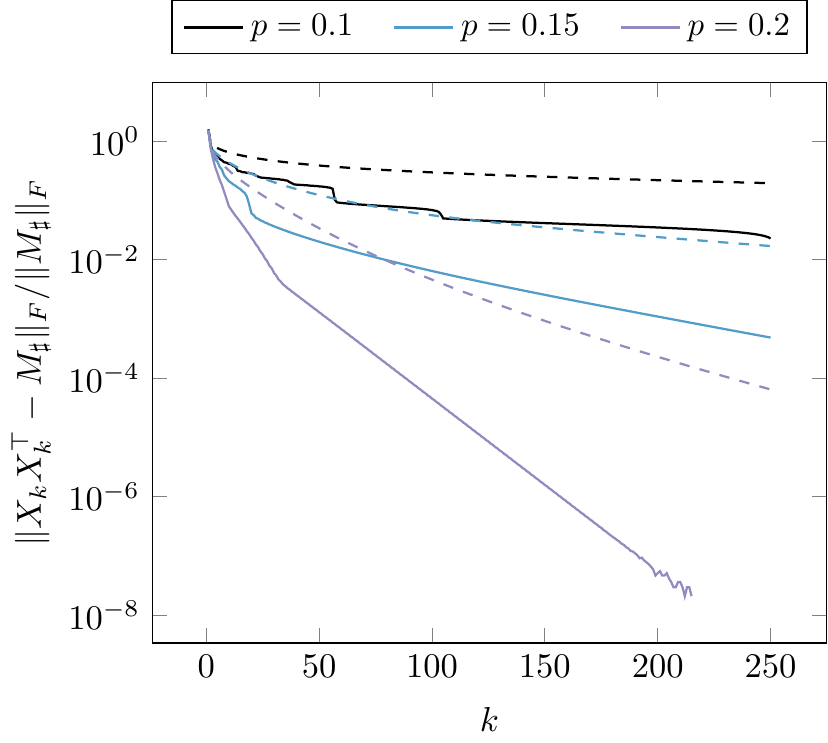}
	\end{minipage}
	\caption{Low rank matrix completion with $d = 100$. Left: $r = 4$, right:
		$r = 8$. Solid lines use Algorithm~\ref{alg:polyak}, dashed lines use
		gradient descent with step $\eta = 0.004$.}
	\label{fig:matcomp-polyak-conv}
\end{figure}

Figure~\ref{fig:matcomp-proxlin-conv} depicts the performance of
the modified prox-linear method \eqref{eqn:prox_lin_mat_comp} in the same setting as
Figure~\ref{fig:matcomp-polyak-conv}. In most cases, the prox-linear algorithm
converges within just $15$ iterations, at what appears to be a rapid linear rate of convergence. Each convex
subproblem is solved using a variant of the
graph-splitting ADMM algorithm~\cite{PariBoyd14}.

\begin{figure}[ht]
	\begin{minipage}{0.45 \textwidth}
		\includegraphics[width=\linewidth]
		{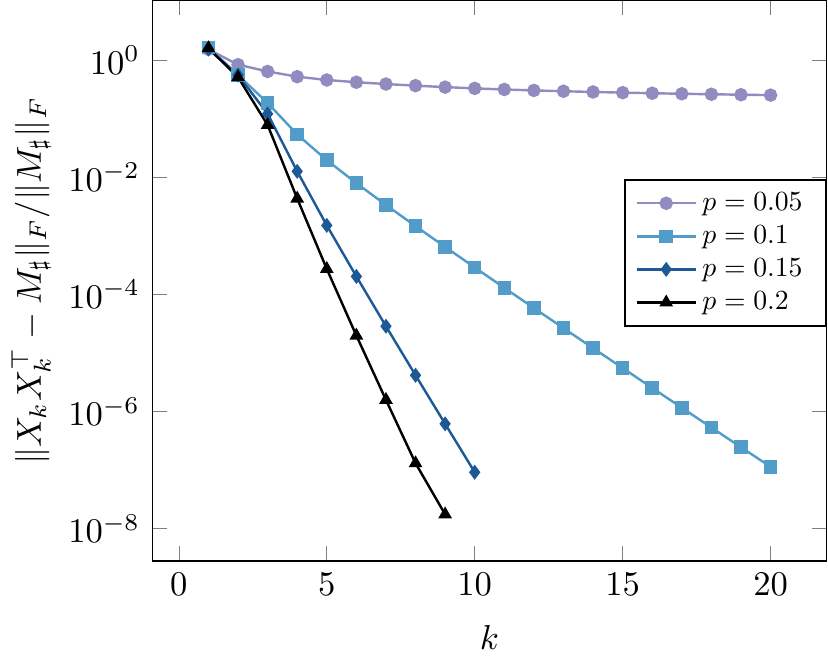}
	\end{minipage}
	\qquad
	\begin{minipage}{0.45 \textwidth}
		\includegraphics[width=\linewidth]
		{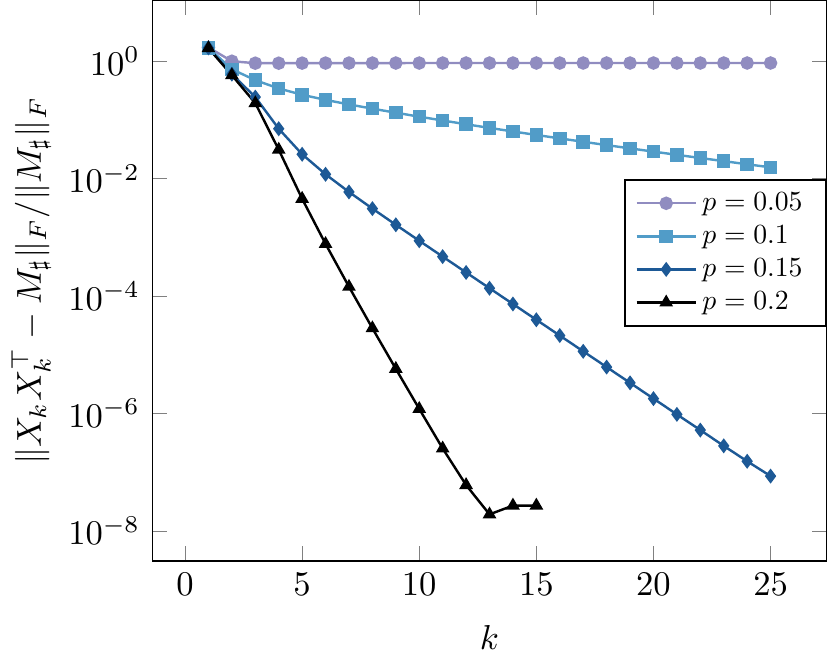}
	\end{minipage}
	\caption{Low rank matrix completion with $d =
		100$ using the modified prox-linear Algorithm \eqref{eqn:prox_lin_mat_comp}. Left: $r = 4$, right: $r = 8$.}
	\label{fig:matcomp-proxlin-conv}
\end{figure}

\paragraph{Robust PCA.}
For the  robust PCA problem, we consider different rank/corruption level configurations to
better understand how they affect convergence for the subgradient and
prox-linear methods, using the non-Euclidean formulation of
Section~\ref{sec:l1_robust_pca}. We depict all configurations in the same plot
for a fixed optimization algorithm to better demonstrate the effect of each
parameter, as shown in Figure~\ref{fig:rpca-conv}. The parameters of the
prox-linear method are chosen in the same way reported in
Section~\ref{subsec:phase-transitions}.  In particular, our numerical experiments appear to support our sharpness Conjecture~\ref{conj:sharp_ell1}  for the robust PCA problem.

\begin{figure}[ht]
	\begin{minipage}{0.45 \textwidth}
		\includegraphics[width=\linewidth]
		{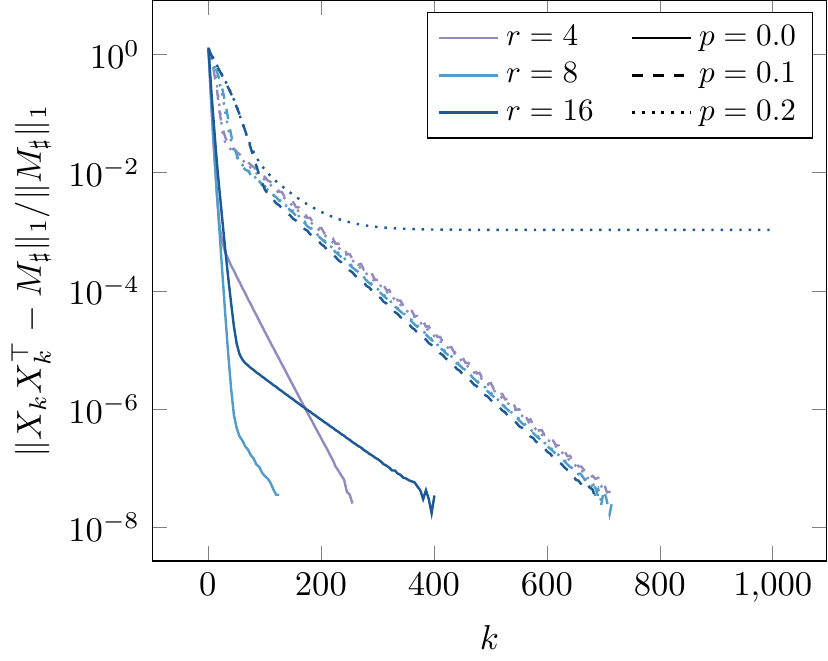}
	\end{minipage}
	\qquad
	\begin{minipage}{0.45 \textwidth}
		\includegraphics[width=\linewidth]
		{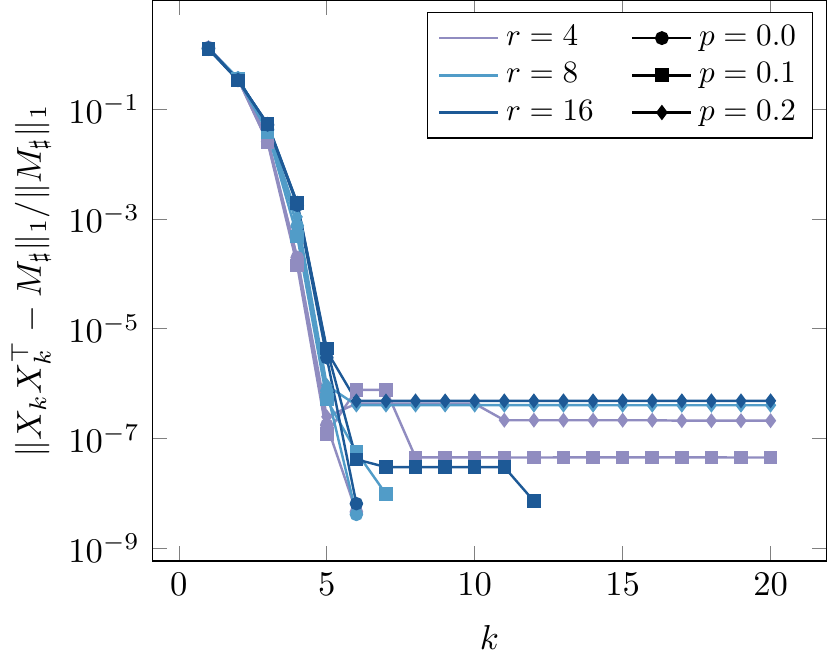}
	\end{minipage}
	\caption{$\ell_1$-robust PCA with $d = 100$ and $p :=
		\mathbb{P}(\delta_{ij} = 1)$. Left: Algorithm~\ref{alg:geometrically_step},
		right:
		Algorithm~\eqref{eqn:prox_lin_mat_comp}.}
	\label{fig:rpca-conv}
\end{figure}

\subsubsection{Recovery up to tolerance}
In this last section, we test the performance of the prox-linear method and the modified Polyak
subgradient method (Algorithm~\ref{alg:polyak_tol}) for the quadratic sensing
and matrix completion problems, under a dense noise model of Section~\ref{sec:recovery_tol}.
In the former setting, we set $p_{\mathrm{fail}} = 0.25$, so 1/4th of our
measurements is corrupted with large magnitude noise. For matrix completion,
we observe $p = 25\% $ of the entries. In both settings, we add Gaussian noise
$e$ which is rescaled to satisfy
\(
\norm{e}_F = \delta \sigma_r(X_{\sharp}),
\)
and test $\delta := 10^{-k}\sigma_r(X_{\sharp}), \; k \in \{1, \dots, 4\}$.
The relevant plots can be found in Figures~\ref{fig:dense-subgrad}
and~\ref{fig:dense-proxlin}. The numerical experiments fully support the developed theory, with the iterates converging rapidly up to the tolerance that is proportional to the noise level.
Incidentally, we observe that the modified prox-linear method~\eqref{eqn:prox_lin_mat_comp} is more robust to
additive noise for the matrix completion problem, with
Algorithm~\ref{alg:polyak_tol} exhibiting heavy fluctuations and failing to
converge for the highest level of dense noise.

\begin{figure}[ht]
	\begin{minipage}{0.45 \textwidth}
		\includegraphics[width=\linewidth]{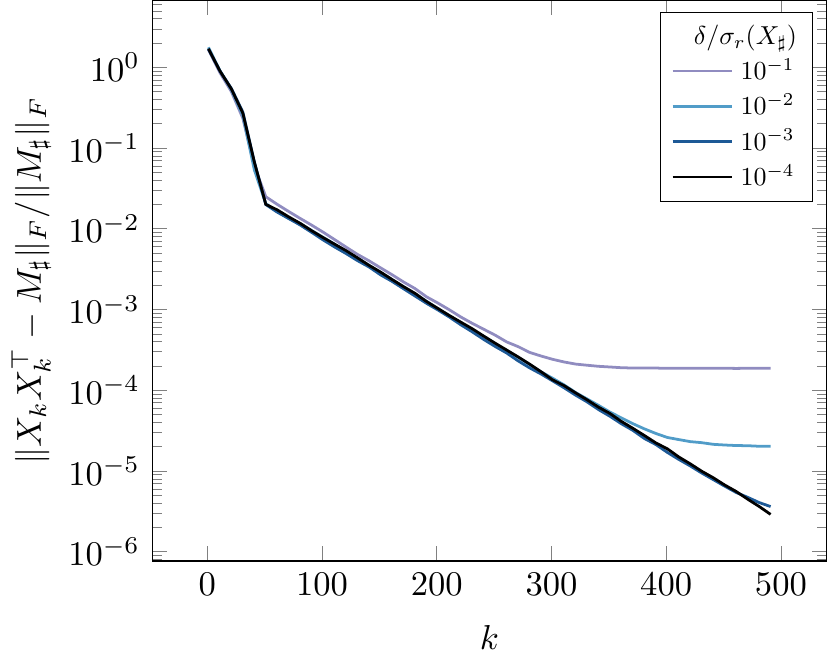}
	\end{minipage}
	\quad
	\begin{minipage}{0.45 \textwidth}
		\includegraphics[width=\linewidth]{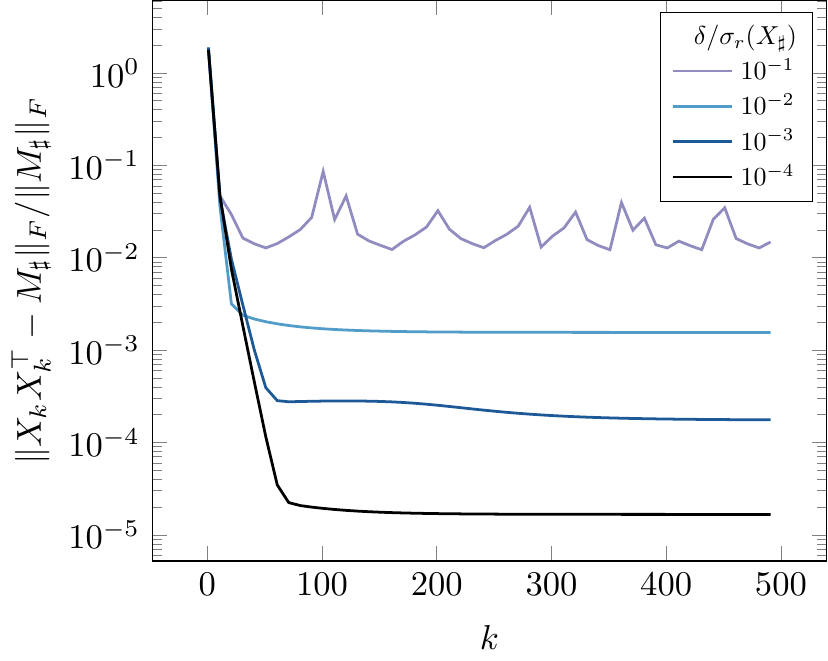}
	\end{minipage}
	\caption{Quadratic sensing with $r = 5$ (left) and matrix completion with
		$r = 8$ (right), $d = 100$, using Algorithm~\ref{alg:polyak_tol}.}
	\label{fig:dense-subgrad}
\end{figure}

\begin{figure}[ht]
	\begin{minipage}{0.45 \textwidth}
		\includegraphics[width=\linewidth]{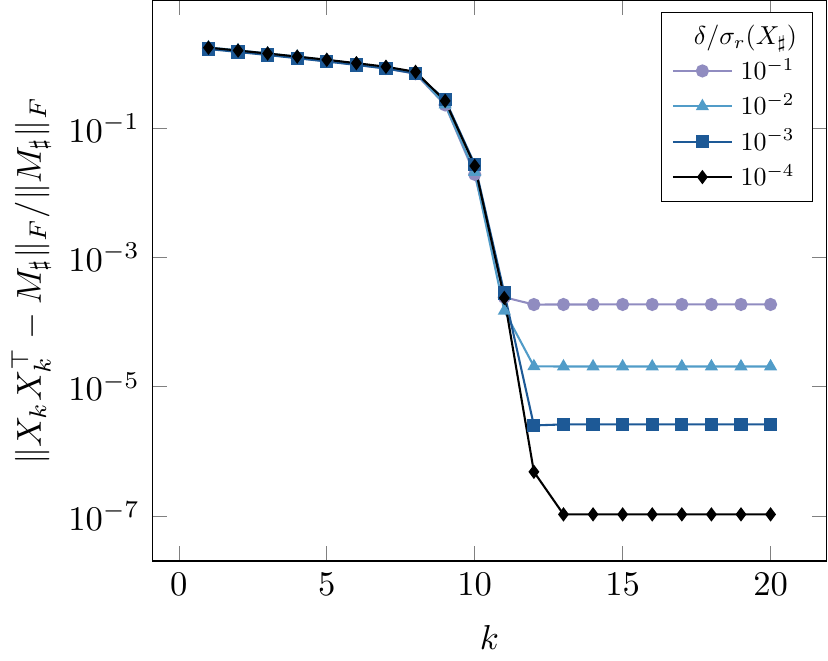}
	\end{minipage}
	\quad
	\begin{minipage}{0.45 \textwidth}
		\includegraphics[width=\linewidth]{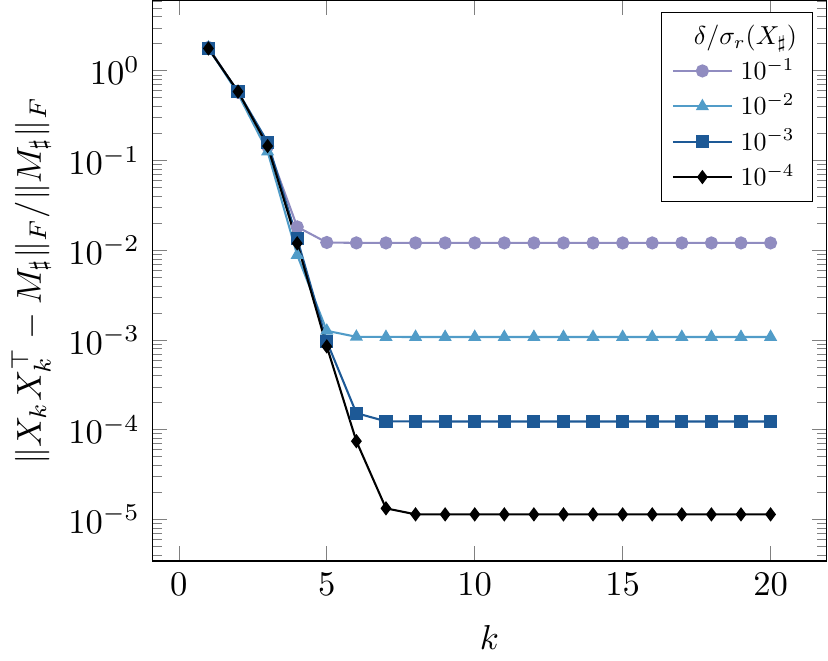}
	\end{minipage}
	\caption{Quadratic sensing with $r = 5$ (left) and matrix completion with
		$r = 8$ (right), $d = 100$, using Algorithm~\eqref{eqn:prox_lin_mat_comp}.}
	\label{fig:dense-proxlin}
\end{figure}

\bibliographystyle{plain}
\bibliography{bibliography}

\def\cfac#1{\ifmmode\setbox7\hbox{$\accent"5E#1$}\else
  \setbox7\hbox{\accent"5E#1}\penalty 10000\relax\fi\raise 1\ht7
  \hbox{\lower1.15ex\hbox to 1\wd7{\hss\accent"13\hss}}\penalty 10000
  \hskip-1\wd7\penalty 10000\box7}
\begin{thebibliography}{10}

\bibitem{ahmed2014blind}
A.~Ahmed, B.~Recht, and J.~Romberg.
\newblock Blind deconvolution using convex programming.
\newblock {\em IEEE Transactions on Information Theory}, 60(3):1711--1732,
  2014.

\bibitem{semiconcave}
P.~Albano and P.~Cannarsa.
\newblock Singularities of semiconcave functions in {B}anach spaces.
\newblock In {\em Stochastic analysis, control, optimization and applications},
  Systems Control Found. Appl., pages 171--190. Birkh\"{a}user Boston, Boston,
  MA, 1999.

\bibitem{bhojanapalli2016global}
Srinadh Bhojanapalli, Behnam Neyshabur, and Nati Srebro.
\newblock Global optimality of local search for low rank matrix recovery.
\newblock In {\em Advances in Neural Information Processing Systems}, pages
  3873--3881, 2016.

\bibitem{cov_lift}
J.M. Borwein and A.S. Lewis.
\newblock {\em Convex analysis and nonlinear optimization}.
\newblock CMS Books in Mathematics/Ouvrages de Math\'ematiques de la SMC,~3.
  Springer-Verlag, New York, 2000.
\newblock Theory and examples.

\bibitem{BouLugMas13}
S.~Boucheron, G.~Lugosi, and P.~Massart.
\newblock {\em Concentration inequalities: A nonasymptotic theory of
  independence}.
\newblock Oxford university press, 2013.

\bibitem{burke_desc}
J.V. Burke.
\newblock Descent methods for composite nondifferentiable optimization
  problems.
\newblock {\em Math. Programming}, 33(3):260--279, 1985.

\bibitem{burke_gauss}
J.V. Burke and M.C. Ferris.
\newblock A {G}auss-{N}ewton method for convex composite optimization.
\newblock {\em Math. Programming}, 71(2, Ser. A):179--194, 1995.

\bibitem{ROP_matrix_recovery_rank_one}
T.T. Cai and A.~Zhang.
\newblock R{OP}: matrix recovery via rank-one projections.
\newblock {\em Ann. Statist.}, 43(1):102--138, 2015.

\bibitem{rec_exa}
E.~J. Cand{{\`e}}s and B.~Recht.
\newblock Exact matrix completion via convex optimization.
\newblock {\em Found. Comput. Math.}, 9(6):717--772, 2009.

\bibitem{phase}
E.J. Cand{\`e}s, Y.C. Eldar, T.~Strohmer, and V.~Voroninski.
\newblock Phase retrieval via matrix completion.
\newblock {\em SIAM J. Imaging Sci.}, 6(1):199--225, 2013.

\bibitem{rob_cand}
E.J. Cand{\`e}s, X.~Li, Y.~Ma, and J.~Wright.
\newblock Robust principal component analysis?
\newblock {\em J. ACM}, 58(3):Art. 11, 37, 2011.

\bibitem{wirt_flow}
E.J. Cand\`es, X.~Li, and M.~Soltanolkotabi.
\newblock Phase retrieval via {W}irtinger flow: theory and algorithms.
\newblock {\em IEEE Trans. Inform. Theory}, 61(4):1985--2007, 2015.

\bibitem{candes2011tight}
E.J. Candes and Y.~Plan.
\newblock Tight oracle inequalities for low-rank matrix recovery from a minimal
  number of noisy random measurements.
\newblock {\em IEEE Transactions on Information Theory}, 57(4):2342--2359,
  2011.

\bibitem{candes2013phaselift}
E.J. Candes, T.~Strohmer, and V.~Voroninski.
\newblock Phaselift: Exact and stable signal recovery from magnitude
  measurements via convex programming.
\newblock {\em Communications on Pure and Applied Mathematics},
  66(8):1241--1274, 2013.

\bibitem{chand}
V.~Chandrasekaran, S.~Sanghavi, P.A. Parrilo, and A.S. Willsky.
\newblock Rank-sparsity incoherence for matrix decomposition.
\newblock {\em SIAM J. Optim.}, 21(2):572--596, 2011.

\bibitem{charisopoulos2019composite}
V.~Charisopoulos, D.~Davis, M.~D{\'\i}az, and D.~Drusvyatskiy.
\newblock Composite optimization for robust blind deconvolution.
\newblock {\em arXiv:1901.01624}, 2019.

\bibitem{rand_quad}
Y.~Chen and E.J. Cand\`es.
\newblock Solving random quadratic systems of equations is nearly as easy as
  solving linear systems.
\newblock {\em Comm. Pure Appl. Math.}, 70(5):822--883, 2017.

\bibitem{MR3367819}
Y.~Chen, Y.~Chi, and A.J. Goldsmith.
\newblock Exact and stable covariance estimation from quadratic sampling via
  convex programming.
\newblock {\em IEEE Trans. Inform. Theory}, 61(7):4034--4059, 2015.

\bibitem{chen2015fast}
Y.~Chen and M.J. Wainwright.
\newblock Fast low-rank estimation by projected gradient descent: General
  statistical and algorithmic guarantees.
\newblock {\em arXiv:1509.03025}, 2015.

\bibitem{chi2018nonconvex}
Y.~Chi, Y.M. Lu, and Y.~Chen.
\newblock Nonconvex optimization meets low-rank matrix factorization: An
  overview.
\newblock {\em arXiv:1809.09573}, 2018.

\bibitem{davenport2016overview}
M.A. {Davenport} and J.~{Romberg}.
\newblock An overview of low-rank matrix recovery from incomplete observations.
\newblock {\em IEEE Journal of Selected Topics in Signal Processing},
  10(4):608--622, June 2016.

\bibitem{davis2019stochastic}
D.~Davis and D.~Drusvyatskiy.
\newblock Stochastic model-based minimization of weakly convex functions.
\newblock {\em SIAM Journal on Optimization}, 29(1):207--239, 2019.

\bibitem{davis2018subgradient}
D.~Davis, D.~Drusvyatskiy, K.J. MacPhee, and C.~Paquette.
\newblock Subgradient methods for sharp weakly convex functions.
\newblock {\em J. Optim. Theory Appl.}, 179(3):962--982, 2018.

\bibitem{davis2017nonsmooth}
D.~Davis, D.~Drusvyatskiy, and C.~Paquette.
\newblock The nonsmooth landscape of phase retrieval.
\newblock {\em To appear in IMA J. Numer. Anal., arXiv:1711.03247}, 2017.

\bibitem{drusvyatskiy2018error}
D.~Drusvyatskiy and A.S. Lewis.
\newblock Error bounds, quadratic growth, and linear convergence of proximal
  methods.
\newblock {\em Math. Oper. Res.}, 43(3):919--948, 2018.

\bibitem{eff_paquette}
D.~Drusvyatskiy and C.~Paquette.
\newblock Efficiency of minimizing compositions of convex functions and smooth
  maps.
\newblock {\em Math. Prog.}, pages 1--56, 2018.

\bibitem{duchi_ruan_PR}
J.C. Duchi and F.~Ruan.
\newblock Solving (most) of a set of quadratic equalities: composite
  optimization for robust phase retrieval.
\newblock {\em IMA J. Information and Inference, doi:10.1093/imaiai/iay015},
  2018.

\bibitem{duc_ruan_stoch_comp}
J.C. Duchi and F.~Ruan.
\newblock Stochastic methods for composite and weakly convex optimization
  problems.
\newblock {\em SIAM J. Optim.}, 28(4):3229--3259, 2018.

\bibitem{eM}
Y.C. Eldar and S.~Mendelson.
\newblock Phase retrieval: stability and recovery guarantees.
\newblock {\em Appl. Comput. Harmon. Anal.}, 36(3):473--494, 2014.

\bibitem{faz}
M.~Fazel.
\newblock {\em Matrix Rank Minimization with Applications}.
\newblock PhD thesis, Stanford University, 2002.

\bibitem{fletcher_model}
R.~Fletcher.
\newblock A model algorithm for composite nondifferentiable optimization
  problems.
\newblock {\em Math. Programming Stud.}, (17):67--76, 1982.
\newblock Nondifferential and variational techniques in optimization
  (Lexington, Ky., 1980).

\bibitem{ge2017unified}
R.~Ge, C.~Jin, and Y.~Zheng.
\newblock No spurious local minima in nonconvex low rank problems: A unified
  geometric analysis.
\newblock In {\em Proceedings of the 34th International Conference on Machine
  Learning-Volume 70}, pages 1233--1242. JMLR. org, 2017.

\bibitem{mat_comp_min}
Rong Ge, Jason~D Lee, and Tengyu Ma.
\newblock Matrix completion has no spurious local minimum.
\newblock In D.~D. Lee, M.~Sugiyama, U.~V. Luxburg, I.~Guyon, and R.~Garnett,
  editors, {\em Advances in Neural Information Processing Systems 29}, pages
  2973--2981. Curran Associates, Inc., 2016.

\bibitem{goffin}
J.L. Goffin.
\newblock On convergence rates of subgradient optimization methods.
\newblock {\em Math. Programming}, 13(3):329--347, 1977.

\bibitem{goldstein2018phasemax}
T.~{Goldstein} and C.~{Studer}.
\newblock Phasemax: Convex phase retrieval via basis pursuit.
\newblock {\em IEEE Transactions on Information Theory}, 64(4):2675--2689,
  April 2018.

\bibitem{klein2005concentration}
T.~Klein and E.~Rio.
\newblock Concentration around the mean for maxima of empirical processes.
\newblock {\em The Annals of Probability}, 33(3):1060--1077, 2005.

\bibitem{prox}
A.S. Lewis and S.J. Wright.
\newblock A proximal method for composite minimization.
\newblock {\em Math. Program.}, 158(1-2, Ser. A):501--546, 2016.

\bibitem{li2016rapid}
X.~Li, S.~Ling, T.~Strohmer, and K.~Wei.
\newblock Rapid, robust, and reliable blind deconvolution via nonconvex
  optimization.
\newblock {\em arXiv:1606.04933}, 2016.

\bibitem{li2018nonconvex_robust}
X.~Li, Z.~Zhu, A.M.-C. So, and R.~Vidal.
\newblock Nonconvex robust low-rank matrix recovery.
\newblock {\em arXiv:1809.09237}, 2018.

\bibitem{li2018nonconvex}
Y.~Li, C.~Ma, Y.~Chen, and Y.~Chi.
\newblock Nonconvex matrix factorization from rank-one measurements.
\newblock {\em arXiv:1802.06286}, 2018.

\bibitem{MR3424852}
S.~Ling and T.~Strohmer.
\newblock Self-calibration and biconvex compressive sensing.
\newblock {\em Inverse Problems}, 31(11):115002, 31, 2015.

\bibitem{ma2017implicit}
C.~Ma, K.~Wang, Y.~Chi, and Y.~Chen.
\newblock Implicit regularization in nonconvex statistical estimation: Gradient
  descent converges linearly for phase retrieval and matrix completion.
\newblock In Jennifer Dy and Andreas Krause, editors, {\em Proceedings of the
  35th International Conference on Machine Learning}, volume~80 of {\em
  Proceedings of Machine Learning Research}, pages 3345--3354,
  Stockholmsmässan, Stockholm Sweden, 10--15 Jul 2018. PMLR.

\bibitem{smallball2}
S.~Mendelson.
\newblock A remark on the diameter of random sections of convex bodies.
\newblock In {\em Geometric aspects of functional analysis}, volume 2116 of
  {\em Lecture Notes in Math.}, pages 395--404. Springer, Cham, 2014.

\bibitem{smallball1}
S.~Mendelson.
\newblock Learning without concentration.
\newblock {\em J. ACM}, 62(3):Art. 21, 25, 2015.

\bibitem{Mord_1}
B.~S. Mordukhovich.
\newblock {\em Variational Analysis and Generalized Differentiation I: Basic
  Theory}.
\newblock Grundlehren der mathematischen Wissenschaften, Vol 330, Springer,
  Berlin, 2006.

\bibitem{MR3025133}
S.N. Negahban, P.~Ravikumar, M.J. Wainwright, and B.~Yu.
\newblock A unified framework for high-dimensional analysis of {$M$}-estimators
  with decomposable regularizers.
\newblock {\em Statist. Sci.}, 27(4):538--557, 2012.

\bibitem{Nurminskii1973}
E.A. Nurminskii.
\newblock The quasigradient method for the solving of the nonlinear programming
  problems.
\newblock {\em Cybernetics}, 9(1):145--150, Jan 1973.

\bibitem{PariBoyd14}
N.~Parikh and S.~Boyd.
\newblock Block splitting for distributed optimization.
\newblock {\em Mathematical Programming Computation}, 6(1):77--102, 2014.

\bibitem{prox_reg}
R.A. Poliquin and R.T. Rockafellar.
\newblock Prox-regular functions in variational analysis.
\newblock {\em Trans. Amer. Math. Soc.}, 348:1805--1838, 1996.

\bibitem{guaran}
B.~Recht, M.~Fazel, and P.A. Parrilo.
\newblock Guaranteed minimum-rank solutions of linear matrix equations via
  nuclear norm minimization.
\newblock {\em SIAM Rev.}, 52(3):471--501, 2010.

\bibitem{fav_C2}
R.T. Rockafellar.
\newblock Favorable classes of {L}ipschitz-continuous functions in subgradient
  optimization.
\newblock In {\em Progress in nondifferentiable optimization}, volume~8 of {\em
  IIASA Collaborative Proc. Ser. CP-82}, pages 125--143. Int. Inst. Appl. Sys.
  Anal., Laxenburg, 1982.

\bibitem{RW98}
R.T. Rockafellar and R.J-B. Wets.
\newblock {\em Variational {A}nalysis}.
\newblock Grundlehren der mathematischen Wissenschaften, Vol 317, Springer,
  Berlin, 1998.

\bibitem{paraconvex}
S.~Rolewicz.
\newblock On paraconvex multifunctions.
\newblock In {\em Third {S}ymposium on {O}perations {R}esearch ({U}niv.
  {M}annheim, {M}annheim, 1978), {S}ection {I}}, volume~31 of {\em Operations
  Res. Verfahren}, pages 539--546. Hain, K\"{o}nigstein/Ts., 1979.

\bibitem{rudelson2014small}
M.~Rudelson and R.~Vershynin.
\newblock Small ball probabilities for linear images of high-dimensional
  distributions.
\newblock {\em International Mathematics Research Notices},
  2015(19):9594--9617, 2014.

\bibitem{7078985}
Y.~Shechtman, Y.C. Eldar, O.~Cohen, H.N. Chapman, J.~Miao, and M.~Segev.
\newblock Phase retrieval with application to optical imaging: A contemporary
  overview.
\newblock {\em IEEE Signal Processing Magazine}, 32(3):87--109, May 2015.

\bibitem{MR3565131}
R.~Sun and Z.-Q. Luo.
\newblock Guaranteed matrix completion via non-convex factorization.
\newblock {\em IEEE Trans. Inform. Theory}, 62(11):6535--6579, 2016.

\bibitem{proc_flow}
S.~Tu, R.~Boczar, M.~Simchowitz, M.~Soltanolkotabi, and B.~Recht.
\newblock Low-rank solutions of linear matrix equations via {P}rocrustes flow.
\newblock In {\em Proceedings of the 33rd International Conference on
  International Conference on Machine Learning - Volume 48}, ICML'16, pages
  964--973. JMLR.org, 2016.

\bibitem{vershynin2016high}
Roman Vershynin.
\newblock {\em High-Dimensional Probability: An Introduction with Applications
  in Data Science}.
\newblock Cambridge University Press, 2018.

\bibitem{yi2016rpca}
X.~Yi, D.~Park, Y.~Chen, and C.~Caramanis.
\newblock Fast algorithms for robust pca via gradient descent.
\newblock In {\em Advances in neural information processing systems}, pages
  4152--4160, 2016.

\end{thebibliography}

\addtocontents{toc}{\protect\setcounter{tocdepth}{0}}
\appendix

\section{Proofs in Section~\ref{sec:conv_guarant}}\label{sec:app_convloc}

In this section, we prove rapid local convergence guarantees for the subgradient and prox-linear algorithms under regularity conditions that hold only locally around a particular solution. We will use the Euclidean norm throughout this section; therefore to simplify the notation, we will drop the subscript two. Thus $\|\cdot\|$ denotes the $\ell_2$ on a Euclidean space $\mathbf{E}$ throughout.

We will need the following quantitative version of Lemma~\ref{lem:no_extr_stat}.

\begin{lem}\label{lem:lower_bound_subgrad} Suppose Assumption~\ref{ass:target_weak_conv_loc} holds and let $\gamma \in (0,2)$ be arbitrary. Then for any point $x\in B_{\epsilon/2}(\bar x)\cap \mathcal{T}_{\gamma}\backslash \cX^\ast$, the estimate holds:
	$$\dist\left(0,\partial f(x)\right)\geq \left(1-\tfrac{\gamma}{2}\right)\mu.$$
\end{lem}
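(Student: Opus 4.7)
The plan is to combine the local sharpness and local weak convexity inequalities at the pair $(x, x^*)$, where $x^* = \proj_{\cX^*}(x)$ is the nearest point in the solution set. The key observation is that weak convexity gives a quadratic lower model at $x$ in terms of any subgradient $\zeta \in \partial f(x)$, while sharpness gives a matching linear upper bound on $f(x) - f(x^*)$. Chaining these two inequalities and dividing by $\|x-x^*\|$ will produce a lower bound on $\|\zeta\|$, and the tube assumption on $x$ will let us absorb the quadratic error into a fraction $\gamma/2$ of $\mu$.

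First I would fix $\zeta \in \partial f(x)$ and argue that local weak convexity is applicable at the pair $(x, x^*)$. The point $x \in B_{\epsilon/2}(\bar x)$ lies in $\cX$ by assumption. For the projection $x^*$, the crucial observation is that since $\bar x \in \cX^*$, we automatically have
\[
\dist(x,\cX^*) \leq \|x - \bar x\| \leq \epsilon/2,
\]
so $\|x^* - \bar x\| \leq \|x^* - x\| + \|x - \bar x\| \leq \epsilon$, placing $x^*$ in $\cX \cap B_\epsilon(\bar x)$. Hence the local weak convexity inequality from Assumption~\ref{ass:target_weak_conv_loc} yields
\[
f(x^*) \geq f(x) + \langle \zeta, x^* - x\rangle - \tfrac{\rho}{2}\|x^* - x\|^2.
\]

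Next I would invoke local sharpness at $x$: since $x \in \cX \cap B_\epsilon(\bar x)$ and $f(x^*) = \inf_\cX f$, we have $f(x) - f(x^*) \geq \mu\|x - x^*\|$. Rearranging the weak convexity bound into $f(x) - f(x^*) \leq \langle \zeta, x-x^*\rangle + \tfrac{\rho}{2}\|x-x^*\|^2$ and then applying Cauchy--Schwarz produces
\[
\mu\|x-x^*\| \leq \|\zeta\|\cdot \|x-x^*\| + \tfrac{\rho}{2}\|x-x^*\|^2.
\]
Since $x \notin \cX^*$ we may divide by $\|x-x^*\| > 0$, and then the tube membership $\|x-x^*\| = \dist(x,\cX^*) \leq \gamma\mu/\rho$ converts the quadratic term into $\gamma\mu/2$, giving $\|\zeta\| \geq (1-\gamma/2)\mu$. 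Taking the infimum over $\zeta \in \partial f(x)$ yields the claimed bound.

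The only non-routine issue is the scoping check for weak convexity: one needs the projection $x^*$ itself (not just $x$) to lie in the ball $B_\epsilon(\bar x)$ where the local inequalities are valid. This is where the hypothesis $x \in B_{\epsilon/2}(\bar x)$ (rather than $B_\epsilon(\bar x)$) is essential, since it forces $\dist(x,\cX^*) \leq \epsilon/2$ via the inclusion $\bar x \in \cX^*$. Once this scoping is secured, the remainder is a direct combination of the two defining inequalities, with no dependence on the particular structure of $\cX$ or $\partial f$.
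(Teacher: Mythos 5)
Your proof is correct and is essentially the same argument as the paper's: project onto $\cX^*$, verify that the projection lands in $\cX\cap B_\epsilon(\bar x)$ (which is exactly why the hypothesis is $x\in B_{\epsilon/2}(\bar x)$), chain local sharpness with the local weak-convexity inequality, and divide by $\dist(x,\cX^*)>0$ using the tube bound. The scoping check you highlight is the same one the paper relies on, just spelled out in more detail.
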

\begin{proof}
	Consider any point $x\in B_{\epsilon/2}(\bar x)$ satisfying $\dist(x,\cX^*)\leq \gamma \frac{\mu}{\rho}$. Let $x^*\in \proj_{\cX^*}(x)$ be arbitrary and note $x^*\in B_{\epsilon}(\bar x)$. Thus for any $\zeta\in  \partial f(x)$ we deduce
	$$\mu\cdot \dist(x,\cX^*)\leq f(x)-f(x^*)\leq \langle \zeta,x-x^*\rangle+\frac{\rho}{2}\|x-x^*\|^2\leq \|\zeta\|\dist(x,\cX^*)+\frac{\rho}{2}\dist^2(x,\cX^*).$$
	Therefore we deduce the lower bound on the subgradients
	$\|\zeta\|\geq \mu -\frac{\rho}{2}\cdot\dist(x,\cX^*)\geq\left(1-\tfrac{\gamma}{2}\right)\mu,$
	as claimed.
\end{proof}

\subsection{Proof of Theorem~\ref{thm:polyak_local}}
Let $k$  be the first index (possibly infinite) such that $x_k\notin B_{\epsilon/2}(\bar x)$.
We claim that \eqref{eqn:subgrad_lin_loca} holds for all $i<k$. We show this by induction. To this end, suppose \eqref{eqn:subgrad_lin_loca} holds for all indices up to $i-1$. In particular, we  deduce $\dist(x_{i},\cX^*)\leq \dist(x_{0},\cX^*)\leq \frac{\mu}{2\rho}$. Let $x^*\in \proj_{\cX^*}(x_i)$ and note $x^*\in B_{\epsilon}(\bar x)$, since
$$\|x^*-\bar x\|\leq \|x^*-x_i\|+\|x_i-\bar x\|\leq 2\|x_i-\bar x\|\leq \epsilon.$$ Thus we deduce
\begin{align}
\|x_{i+1} -  x^*\|^2&=\left\|\proj_{\cX}\left(x_{i}-\tfrac{f(x_i)-\min_{\mathcal{X}} f}{\|\zeta_i\|^2} \zeta_i\right)-\proj_\cX(x^*)\right\|^2\notag\\
&\leq \left\|(x_{i} - x^*)- \tfrac{f(x_i)-\min_{\mathcal{X}} f}{\|\zeta_i\|^2} \zeta_i\right\|^2 \label{eqn1:weird_subgrad}\\
&= \|x_{i} -  x^*\|^2 + \frac{2(f(x_i) - \min_{\cX} f)}{\|\zeta_i\|^2}\cdot\dotp{\zeta_i, x^* - x_{i}} + \frac{(f(x_i) - f( x^*))^2}{\|\zeta_i\|^2} \notag\\
&\leq \|x_{i} -  x^*\|^2 + \frac{2(f(x_i) -  \min f)}{\|\zeta_i\|^2}\left( f( x^*) - f(x_i) + \frac{\rho}{2}\|x_i -  x^*\|^2 \right)\notag \\
&\qquad\qquad+ \frac{(f(x_i) - f(x^*))^2}{\|\zeta_i\|^2} \label{eqn2:weird_subgrad}\\
&= \|x_{i} -  x^*\|^2 + \frac{f(x_i) - \min f}{\|\zeta_i\|^2}\left(\rho\|x_i -  x^*\|^2 -  (f(x_i) - f( x^*))  \right)\notag\\
&\leq \|x_{i} -  x^*\|^2 + \frac{f(x_i) -  \min f}{\|\zeta_i\|^2}\left(\rho\|x_i -  x^*\|^2 -  \mu\|x_i -  x^*\|  \right)\label{eqn3:weird_subgrad} \\
&= \|x_{i} -  x^*\|^2 + \frac{\rho (f(x_i) -  \min f)}{\|\zeta_i\|^2}\left( \|x_i -  x^*\| -\frac{\mu}{\rho}\right)\|x_i -  x^*\|\notag\\
&\leq \|x_{i} -  x^*\|^2 - \frac{\mu(f(x_i) -  \min f)}{2\|\zeta_i\|^2}\cdot \|x_i -  x^*\|\label{eqn35:weird_subgrad}\\
&\leq \left(1-\frac{ \mu^2}{2\|\zeta_i\|^2}\right)\|x_i- x^*\|^2\label{eqn4:weird_subgrad}.
\end{align}
Here, the estimate \eqref{eqn1:weird_subgrad} follows from the fact that the
projection $\proj_Q(\cdot)$ is nonexpansive, \eqref{eqn2:weird_subgrad} uses local weak convexity, \eqref{eqn35:weird_subgrad} follow from the estimate $\dist(x_i,\cX^*)\leq \frac{\mu}{2\rho}$, while \eqref{eqn3:weird_subgrad} and \eqref{eqn4:weird_subgrad} use local sharpness.
We therefore deduce
\begin{equation}\label{eqn:intermediate_descent}
\dist^2(x_{i+1};\cX^*)\leq \|x_{i+1} - x^*\|^2\leq \left(1-\frac{\mu^2}{2L^2}\right)\dist^2(x_i,\cX^*).
\end{equation}
Thus \eqref{eqn:subgrad_lin_loca} holds for all indices up to $k-1$. We next show that $k$ is infinite.
To this end, observe
\begin{align}
\|x_k-x_0\|\leq \sum_{i=0}^{k-1} \|x_{i+1}-x_i\|&= \sum_{i=0}^{k-1} \left\|\proj_\cX\left(x_i-\tfrac{f(x_i)-\min_{\cX} f}{\|\zeta_i\|^2}\zeta_i\right)-\proj_\cX(x_i)\right\|\notag\\
&\leq   \sum_{i=0}^{k-1} \frac{f(x_i)-\min_{\cX} f}{\|\zeta_i\|}\quad \notag \\
&\leq   \sum_{i=0}^{k-1} \left\langle \tfrac{\zeta_i}{\|\zeta_i\|},x_i-\proj_{\cX^*}
(x_i) \right\rangle+\frac{\rho}{2\|\zeta_i\|}\|x_i-\proj_{\cX^*}(x_i)\|^2\notag\\
&\leq   \sum_{i=0}^{k-1} \dist(x_i,\cX^*)+\frac{2\rho}{3\mu}\dist^2(x_i,\cX^*)\label{eqn:more0}\\
&\leq \frac{4}{3}\cdot \sum_{i=0}^{k-1} \dist(x_i,\cX^*)\label{eqn:more1}\\
&\leq \frac{4}{3}\cdot \dist(x_0,\cX^*) \cdot \sum_{i=0}^{k-1} \left(1-\frac{\mu^2}{2L^2}\right)^{\frac{i}{2}}\label{eqn:more2}\\
&\leq\frac{16L^2}{3\mu^2}\cdot\dist(x_0,\cX^*)\leq \frac{\epsilon}{4},\notag
\end{align}
where \eqref{eqn:more0} follows by Lemma~\ref{lem:lower_bound_subgrad} with $\gamma = 1/2$, the bound in \eqref{eqn:more1} follows by \eqref{eqn:intermediate_descent} and the assumption on $\dist(x_0, \cX^*),$ finally \eqref{eqn:more2} holds thanks to \eqref{eqn:intermediate_descent}. Thus applying the triangle inequality we get the contradiction $\|x_k-\bar x\|\leq \epsilon/2$. Consequently all the iterates $x_k$ for $k=0,1,\ldots, \infty$ lie in $B_{\epsilon/2}(\bar x)$ and satisfy
\eqref{eqn:subgrad_lin_loca}.

Finally, let $x_{\infty}$ be any limit point of the sequence $\{x_i\}$. We then successively compute
\begin{align*}
\|x_k-x_\infty\|\leq \sum_{i=k}^{\infty} \|x_{i+1}-x_i\|&\leq  \sum_{i=k}^{\infty} \frac{f(x_i)-\min f}{\|\zeta_i\|}\\
&\leq \frac{4L}{3\mu}\cdot \sum_{i=k}^{\infty} \dist(x_i,\cX^*)\\
&\leq \frac{4L}{3\mu}\cdot \dist(x_0,\cX^*) \cdot \sum_{i=k}^{\infty} \left(1-\frac{\mu^2}{2L^2}\right)^{\frac{i}{2}}\\
&\leq \frac{16L^3}{3\mu^3}\cdot \dist(x_0,\cX^*)\cdot \left(1-\frac{\mu^2}{2L^2}\right)^{\frac{k}{2}}.
\end{align*}
This completes the proof.

\subsection{Proof of Theorem~\ref{thm:geo_desc_loc}}
Fix an arbitrary index $k$ and observe
$$\|x_{k+1}-x_k\|=\left\|\proj_Q(x_k)-\proj_Q\left(x_k-\alpha_k\frac{\xi_k}{\|\xi_k\|}\right)\right\|\leq \alpha_k=\lambda\cdot q^k.$$  Hence,  we conclude the uniform bound on the iterates:
$$\|x_{k}-x_0\|\leq \sum_{i=0}^{\infty}\|x_{i+1}-x_i\|\leq \tfrac{\lambda}{1-q}$$ and the R-linear rate of convergence
$$\|x_{k}-x_{\infty}\|\leq \sum_{i=k}^{\infty}\|x_{i+1}-x_i\|\leq \tfrac{\lambda}{1-q}q^k,$$ where $x_{\infty}$ is any limit point of the iterate sequence.

Let us now show that the iterates do not escape $B_{\epsilon/2}(\bar{x})$. To this end, observe $$\|x_k-\bar x\|\leq \|x_k-x_0\|+\|x_0-\bar x\|\leq \tfrac{\lambda}{1-q}+\tfrac{\epsilon}{4}.$$
We must therefore verify the estimate $\tfrac{\lambda}{1-q}\leq \tfrac{\epsilon}{4}$, or equivalently $\gamma\leq \frac{\epsilon\rho L(1-\gamma)\tau^2}{4\mu^2(1+\sqrt{1-(1-\gamma) \tau^2})}.$
Clearly, it suffices to verify
$\gamma\leq \frac{\epsilon\rho (1-\gamma)}{4L},$ which holds by the definition of $\gamma$. Thus all the iterates $x_k$ lie in $B_{\epsilon/2}(\bar x)$. Moreover $\tau\leq \sqrt{\frac{1}{2}} \leq \sqrt{\frac{1}{2-\gamma}}$, the rest of the proof is identical to that in \cite[Theorem 5.1]{davis2018subgradient}.

\subsection{Proof of Theorem~\ref{thm:prox_lin_loc}}
Fix any index $i$ such that $x_i\in B_{\epsilon}(\bar x)$ and let $x\in \cX$ be arbitrary. Since the function $z\mapsto f_{x_i}(z)+\frac{\beta}{2}\|z-x_i\|^2$ is $\beta$-strongly convex and $x_{i+1}$ is its minimizer, we deduce
\begin{equation}\label{eqn:descent_est_prox_linloc}
\left(f_{x_i}(x_{i+1})+\frac{\beta}{2}\|x_{i+1}-x_i\|^2\right)+\frac{\beta}{2}\|x_{i+1}-x\|^2\leq f_{x_i}(x)+\frac{\beta}{2}\|x-x_i\|^2.
\end{equation}
Setting $x=x_i$ and appealing to approximation accuracy, we obtain the descent guarantee
\begin{equation}\label{eqn:descent_loc}	\|x_{i+1}-x_i\|^2\leq \frac{2}{\beta}(f(x_i)-f(x_{i+1})).
\end{equation}
In particular, the function values are decreasing along the iterate sequence.
Next choosing any $x^*\in\proj_{\cX^*}(x_i)$ and setting $x=x^*$ in \eqref{eqn:descent_est_prox_linloc} yields
$$\left(f_{x_i}(x_{i+1})+\frac{\beta}{2}\|x_{i+1}-x_i\|^2\right)+\frac{\beta}{2}\|x_{i+1}-x^*\|^2\leq f_{x_i}(x^*)+\frac{\beta}{2}\|x^*-x_i\|^2.$$
Appealing to approximation accuracy and lower-bounding $\frac{\beta}{2}\|x_{i+1}-x^*\|^2$ by zero, we conclude
\begin{equation}\label{eqn:key_guarant_stayclose}
f(x_{i+1})\leq f(x^*)+\beta\|x^*-x_i\|^2.
\end{equation}
Using sharpness we deduce the contraction guarantee
\begin{equation}\label{eqn:function_down}
\begin{aligned}
f(x_{i+1})-f(x^*)&\leq \beta\cdot \dist^2(x_i,\cX^*)\\
&\leq \frac{\beta}{\mu^2}(f(x_i)-f(x^*))^2\\
&\leq \frac{\beta(f(x_i)-f(x^*))}{\mu^2}\cdot(f(x_i)-f(x^*))\leq \frac{1}{2}\cdot(f(x_i)-f(x^*)),
\end{aligned}
\end{equation}
where the last inequality uses the assumption $f(x_0)-\min_{\cX} f\leq \frac{\mu^2}{2\beta}$.
Let $k>0$ be the first index satisfying $x_{k}\notin B_{\epsilon}(\bar x)$. We then deduce
\begin{align}
\|x_{k}-x_0\|\leq \sum_{i=0}^{k-1} \|x_{i+1}-x_i\|&\leq \sqrt{\frac{2}{\beta}}\cdot \sum_{i=0}^{k-1} \sqrt{f(x_i)-f(x_{i+1})}\label{eqn:use_desc_loc}\\
&\leq	  \sqrt{\frac{2}{\beta}}\cdot \sum_{i=0}^{k-1} \sqrt{f(x_i)- f(x^*)}\notag\\
&\leq \sqrt{\frac{2}{\beta}}\cdot \sqrt{f(x_0)-f(x^*)} \cdot \sum_{i=0}^{k-1} \left(\frac{1}{2}\right)^{\frac{i}{2}}\label{eqn:geo_descent}\\
&\leq \frac{1}{\sqrt{2}-1}\sqrt{\frac{f(x_0)-f(x^*)}{\beta}}\leq \epsilon/2,\notag
\end{align}
where \eqref{eqn:use_desc_loc} follows from \eqref{eqn:descent_loc}	and \eqref{eqn:geo_descent} follows from \eqref{eqn:function_down}.	Thus we conclude $\|x_k-\bar x\|\leq \epsilon$, which is a contradiction. Therefore all the iterates $x_k$, for $k=0,1,\ldots, \infty$, lie in $B_{\epsilon}(\bar x)$.
Combing this with \eqref{eqn:key_guarant_stayclose} and sharpness yields the claimed quadratic converge guarantee
$$\mu\cdot\dist(x_{k+1},\cX^*)\leq f(x_{k+1})-f(\bar x)\leq \beta\cdot \dist^2(x_k,\cX).$$
Finally, let $x_{\infty}$ be any limit point of the sequence $\{x_i\}$. We then deduce
\begin{align}
\|x_{k}-x_\infty\|\leq \sum_{i=k}^{\infty} \|x_{i+1}-x_i\|&\leq \sqrt{\frac{2}{\beta}}\cdot \sum_{i=k}^{\infty} \sqrt{f(x_i)-f(x_{i+1})}\notag\\
&\leq	  \sqrt{\frac{2}{\beta}}\cdot \sum_{i=k}^{\infty} \sqrt{f(x_i)-\min_{\cX} f}\notag\\
&\leq \frac{\mu\sqrt{2}}{\beta}\cdot  \sum_{i=k}^{\infty} \left(\frac{\beta}{\mu^2}(f(x_0)-\min f)\right)^{2^{i-1}}\label{eqn:equad_conv_func}\\
&\leq \frac{\mu\sqrt{2}}{\beta}\cdot  \sum_{i=k}^{\infty} \left(\frac{1}{2}\right)^{2^{i-1}}\notag\\
&\leq \frac{\mu\sqrt{2}}{\beta} \sum_{j=0}^{\infty} \left(\frac{1}{2}\right)^{2^{k-1}+j}\leq \frac{2\sqrt{2}\mu}{\beta}\cdot \left(\frac{1}{2}\right)^{2^{k-1}},\notag
\end{align}
where \eqref{eqn:equad_conv_func} follows from \eqref{eqn:function_down}.
The theorem is proved.

\section{Proofs in Section~\ref{sec:all_examples}}
\label{appendix:RIP}

\subsection{Proof of Lemma~\ref{lem:small_ball}} \label{proof:lem_small_ball}
In order to prove that the assumption in each case, we will prove a stronger  ``small-ball condition'' \cite{smallball1,smallball2}, which immediately implies the claimed lower bounds on the expectation by Markov's inequality. More precisely, we will show that there exist numerical constants $\mu_0,p_0>0$ such that
\begin{enumerate}
	\item {\bf (Matrix Sensing)} \begin{equation*} \inf_{\substack{M: \; \rank M \leq 2r \\ \|M\|_F = 1}} \PP (|\dotp{\lM,M}| \geq \muo) \geq \po, \end{equation*}
	\item {\bf (Quadratic Sensing I)}\begin{equation*} \label{assump:covariance_estimation12} \inf_{\substack{M\in \cS^d: \; \rank M \leq 2r \\ \|M\|_F = 1}} \PP (|\lv^\top M \lv| \geq \muo) \geq \po, \end{equation*}
	\item {\bf (Quadratic Sensing II)}
	\begin{equation*}
	\inf_{\substack{M\in\cS^d: \; \rank M \leq 2r \\ \|M\|_F = 1}} \PP \big( |\lv^\top M \lv - \tilde \lv^\top M \tilde \lv| \geq \muo \big) \geq \po,
	\end{equation*}
	\item {\bf (Bilinear Sensing)}
	\begin{equation*} \label{assump:covariance_estimation32} \inf_{\substack{M: \; \rank M \leq 2r \\ \|M\|_F = 1}} \PP (|\lv^\top  M \rv | \geq \muo) \geq \po. \end{equation*}
\end{enumerate}

These conditions immediately imply Assumptions~\ref{assumption:matrix_sensing}-\ref{assumption:bilinear}. Indeed, by Markov's inequality, in the case of matrix sensing we deduce
\[\EE  |\dotp{\lM,M}| \geq \muo \PP \left( |\dotp{\lM, M} |> \muo \right) \geq \muo \po.\]
The same reasoning applies to all the other problems.

\paragraph{Matrix sensing.}
Consider any matrix $M$ with $\|M\|_F =1.$ Then, since
$g := \dotp{\lM, M}$ follows a standard normal distribution, we may set $\muo$ to be the median of $|g|$ and $\po = 1/2$ to obtain
\[\inf_{\substack{M: \; \rank M \leq 2r \\ \|M\|_F = 1}} \PP (|\dotp{\lM,M}| \geq \muo) =  \PP (|g| \geq \muo) \geq \po.\]

\paragraph{Quadratic Sensing I.}
Fix a matrix $M$ with $\rank M \leq 2r$ and $\|M\|_F=1$. Let $M = UDU^\top$ be an eigenvalue decomposition of $M$. Using the rotational invariance of the Gaussian distribution, we deduce
\[\lv^\top M \lv \eqd \lv^\top D \lv = \sum_{k=1}^{2r} \lambda_k \lv_k^2, \]
where $\eqd$ denotes equality in distribution. Next, let $z$ be a standard normal variable. We will now invoke Proposition~\ref{prop:small_ball}. Let $C>0$ be the numerical constant appearing in the proposition. Notice that the function $\phi\colon\RR_+ \rightarrow \RR$ given by
\[\phi(t) = \sup_{u \in \RR} \PP(|z^2 - u| \leq t) \]
is continuous and strictly increasing, and it satisfies $\phi(0)= 0$ and $\lim_{t \rightarrow \infty} \phi(t) = 1.$ Hence we may set $\muo = \phi^{-1}(\min\{1/2C,1/2\})$. Proposition~\ref{prop:small_ball} then yields
\[\PP (|\lv^\top M \lv| \leq \muo) = \PP\left( \left|\sum_{k=1}^{2r} \lambda_k \lv_k^2 \right| \leq \muo \right) \leq \sup_{u \in \RR} \PP\left( \left|\sum_{k=1}^{2r} \lambda_k \lv_k^2 - u\right| \leq \muo \right) \leq C \phi(\muo) \leq \frac{1}{2}.\]
By taking the supremum of both sides of the inequality we conclude that Assumption~\ref{assumption:cov_est_naive} holds with $\muo$ and $\po = 1/2.$

\paragraph{Quadratic sensing II.}
Let $M = UDU^\top$ be an eigenvalue decomposition of $M$. Using the rotational invariance of the Gaussian distribution, we deduce
\[\lv^\top M \lv - \tilde \lv^\top M \tilde \lv    \eqd \lv^\top D \lv - \tilde \lv^\top D \tilde \lv = \sum_{k=1}^{2r} \lambda_k \left(\lv_k^2 - \tilde \lv_k^2\right) \eqd 2 \sum_{k=1}^{2r} \lambda_k \lv_k \tilde \lv_k, \]
where the last relation follows since $\left(\lv_k - \tilde \lv_k\right),\left(\lv_k + \tilde \lv_k\right)$ are independent standard normal random variables with mean zero and variance two.
We will now invoke Proposition~\ref{prop:small_ball}. Let $C>0$ be the numerical constant appearing in the proposition. Let $z$ and $ \tilde{z} $ be independent standard normal variables. Notice that the function $\phi:\RR_+ \rightarrow \RR$ given by
\[\phi(t) = \sup_{u \in \RR} \PP(|2 z\tilde{z} - u| \leq t) \]
is continuous, strictly increasing, satisfies $\phi(0)= 0$ and approaches one at infinity. Defining $\muo = \phi^{-1}(\min\{1/2C,1/2\})$ and applying Proposition~\ref{prop:small_ball}, we get
\[
\PP\left( \left|2\sum_{k=1}^{2r} \sigma_k \lv_k \tilde \lv_k \right|
\leq \muo \right) \leq \sup_{u \in \RR} \PP\left( \left|2\sum_{k=1}^{2r} \sigma_k \lv_k \tilde \lv_k - u\right| \leq \muo \right)
\leq C \phi(\muo) \leq \frac{1}{2}.
\]
By taking the supremum of both sides of the inequality we conclude that Assumption~\ref{assumption:cov_est_sym} holds with $\muo$ and $\po = 1/2.$

We omit the details for the bilinear case, which follow by similar arguments.

\subsection{Proof of Theorem~\ref{thm:main_rip_theorem}}\label{sec:proof_main_rip_theo}

The proofs in this section rely on the following proposition, which shows that that pointwise concentration imply uniform concentration. We defer the proof to Appendix \ref{sec:proof_epsilon_covering}.
\begin{proposition}\label{prop:epsilon_covering}
	Let $\cA: \RR^{d_1 \times d_2} \rightarrow \RR^m$ be a random linear mapping with property that for any fixed matrix $M \in \RR^{d_1 \times d_2}$ of rank at most $2r$ with norm $\|M\|_F =1$ and any fixed subset of indices $\cI \subseteq \{1, \dots, m\}$ satisfying $|\cI| < m/2$, the following hold:
	\begin{enumerate}[label = $\mathrm{(\arabic*)}$]
		\item \label{assumption:same_expected} The measurements $\cA(M)_1, \dots, \cA(M)_m$ are i.i.d.
		\item RIP holds in expected value:
		\begin{equation}\label{RIP_expected_val} \alpha \leq \EE| \cA(M)_i | \leq \beta(r) \qquad \text{for all } i \in \{1, \dots, m\} \end{equation}
		where $\alpha > 0$ is a universal constant and $\beta$ is a positive-valued function that could potentially depend on the rank of $M$.
		\item \label{assumption:deviations} There exist a universal constant $K>0$ and a positive-valued function $c(m,r)$ such that for any $t \in [0, K]$ the deviation bound
		\begin{equation}\label{ineq:deviation}
		\frac{1}{m}\left| \|\cA_{\cI^c} (M)\|_1 - \|\cA_{\cI} (M)\|_1  - \EE \big[\|\cA_{\cI^c} (M)\|_1 - \|\cA_{\cI} (M)\|_1\big] \right| \leq t
		\end{equation} holds with probability at least $1-2\exp(-t^2c(m,r)).$
	\end{enumerate}
	Then, there exist universal constants $c_1, \dots, c_6 > 0$ depending only on $\alpha$ and $K$ such that if $\cI \subseteq \{1, \dots, m\}$ is a fixed subset of indices satisfying $|\cI| < m/2$ and $$c(m,r) \geq \frac{c_1}{(1-2|\cI|/m)^2}r(d_1+d_2 + 1) \ln \left(c_2 + \frac{c_2\beta(r)}{1- 2|\cI|/m} \right)$$
	then with probability at least $1-4\exp\left(-c_3(1-2|\cI|/m)^2 c(m,r)\right)$ every matrix $M \in \RR^{d_1 \times d_2}$of rank at most $2r$ satisfies
	\begin{equation}\label{eq:rip_proof_bound}
	c_4 \|M\|_F \leq \frac{1}{m}\|\cA(M)\|_1 \leq c_5 \beta(r) \| M\|_F,
	\end{equation}
	and
	\begin{equation}\label{eq:rip_proof_outlier_bound}
	c_6 \left(1 - \frac{2|\cI|}{m} \right) \|M\|_F \leq \frac{1}{m}\left(\|\cA_{\cI^c}(M)\|_1 - \| \cA_{\cI} M\|_1\right).
	\end{equation}

\end{proposition}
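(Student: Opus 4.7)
The plan is an $\varepsilon$-net argument on the set of rank-$2r$ matrices on the Frobenius unit sphere, combined with a peeling step based on rank-splitting to promote uniform control from the net to the entire set.

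\textbf{Covering and pointwise concentration.} Let $\mathcal{M} := \{M \in \RR^{d_1 \times d_2} : \rank M \leq 2r, \|M\|_F = 1\}$. The standard SVD-based covering argument yields an $\varepsilon$-net $\cN_\varepsilon \subset \mathcal{M}$ in Frobenius norm with $|\cN_\varepsilon| \leq (9/\varepsilon)^{2r(d_1+d_2+1)}$. For each $M$ in this net, I would apply the deviation bound~\eqref{ineq:deviation} twice: once with the prescribed outlier set $\cI$ and once with $\cI = \emptyset$ (which is legal since $0 < m/2$). By assumption~\ref{assumption:same_expected} and~\eqref{RIP_expected_val}, one has
\[
\EE\bigl[\|\cA_{\cI^c}(M)\|_1 - \|\cA_\cI(M)\|_1\bigr] = (m - 2|\cI|)\,\EE|\cA(M)_1| \in \bigl[(m - 2|\cI|)\alpha,\ (m-2|\cI|)\beta(r)\bigr].
\]
A union bound over $\cN_\varepsilon$ and over these two events per point shows that, with probability at least $1 - 4|\cN_\varepsilon|\exp(-t^2 c(m,r))$, every $M \in \cN_\varepsilon$ satisfies both $\alpha - t \leq \tfrac{1}{m}\|\cA(M)\|_1 \leq \beta(r) + t$ and $\tfrac{1}{m}\bigl(\|\cA_{\cI^c}(M)\|_1 - \|\cA_\cI(M)\|_1\bigr) \geq (1-2|\cI|/m)\alpha - t$.

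\textbf{Extension by rank splitting.} For arbitrary $M \in \mathcal{M}$, pick $M' \in \cN_\varepsilon$ with $\|M - M'\|_F \leq \varepsilon$. The residual $M - M'$ has rank at most $4r$; splitting its SVD into the top $2r$ and bottom $2r$ singular triples writes $M - M' = N_1 + N_2$, with each $N_i$ of rank at most $2r$ and $\|N_i\|_F \leq \varepsilon$ by orthogonality. Setting $B := \sup_{M \in \mathcal{M}} \tfrac{1}{m}\|\cA(M)\|_1$ and applying the triangle inequality in both the full $\ell_1$ sum and the signed outlier sum, the event above implies
\begin{align*}
\tfrac{1}{m}\|\cA(M)\|_1 &\le (\beta(r) + t) + 2\varepsilon B,\\
\tfrac{1}{m}\|\cA(M)\|_1 &\ge (\alpha - t) - 2\varepsilon B,\\
\tfrac{1}{m}\bigl(\|\cA_{\cI^c}(M)\|_1 - \|\cA_\cI(M)\|_1\bigr) &\ge (1-2|\cI|/m)\alpha - t - 2\varepsilon B.
\end{align*}
Taking the supremum in the first line gives $B \leq (\beta(r)+t)/(1-2\varepsilon)$, so $B$ is controlled as soon as $\varepsilon < 1/2$.

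\textbf{Balancing and conclusion.} I would set $t = \min\{\alpha(1-2|\cI|/m)/4,\,K\}$ (the clip to $K$ only loses a multiplicative constant) and $\varepsilon = \alpha(1-2|\cI|/m)/(C \beta(r))$ for a sufficiently large absolute constant $C$. Then $B \le 3\beta(r)$, and substituting back yields, after homogeneity in $\|M\|_F$, the bounds $\tfrac{1}{m}\|\cA(M)\|_1 \in [(\alpha/2)\|M\|_F,\ 3\beta(r)\|M\|_F]$ and $\tfrac{1}{m}(\|\cA_{\cI^c}(M)\|_1 - \|\cA_\cI(M)\|_1) \ge (\alpha/2)(1-2|\cI|/m)\|M\|_F$, matching~\eqref{eq:rip_proof_bound}--\eqref{eq:rip_proof_outlier_bound} with constants $c_4,c_5,c_6$ depending only on $\alpha$. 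With these choices $t^2 \asymp \alpha^2(1-2|\cI|/m)^2$ and $\log(9/\varepsilon) \asymp \log(\beta(r)/(\alpha(1-2|\cI|/m)))$, and the union-bound failure probability $4|\cN_\varepsilon|\exp(-t^2 c(m,r))$ is bounded by $4\exp(-c_3(1-2|\cI|/m)^2 c(m,r))$ precisely when $c(m,r)$ satisfies the stated lower bound, with $c_1 \asymp 1/\alpha^2$ and $c_2$ absorbing the constants from the covering and from $C$.

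\textbf{Main obstacle.} The argument is essentially a standard low-rank net computation; the two delicate points are the rank-splitting reduction, which keeps $\tfrac{1}{m}\|\cA(M-M')\|_1 \le 2\varepsilon B$ (rather than $4\varepsilon B$, which would tighten the allowable $\varepsilon$ and worsen the sample size), and the bookkeeping needed to fold both sources of dependence, namely $\beta(r)$ and $1/(1-2|\cI|/m)$, into the single logarithmic factor $\ln(c_2 + c_2\beta(r)/(1-2|\cI|/m))$ appearing in the statement.
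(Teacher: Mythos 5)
Your proposal is correct and follows essentially the same route as the paper's proof: an $\varepsilon$-net over unit-Frobenius-norm rank-$2r$ matrices, pointwise concentration applied both to the given $\cI$ and to $\cI=\emptyset$, a self-bounding (bootstrap) estimate on the supremum $B=\sup_M \tfrac{1}{m}\|\cA(M)\|_1$ via rank-splitting of the residual $M-M'$ into two rank-$\le 2r$ pieces, and the same balancing $t\asymp\alpha(1-2|\cI|/m)$, $\varepsilon\asymp t/\beta(r)$ to absorb the entropy term into the stated lower bound on $c(m,r)$. The only differences are cosmetic constants (the paper uses an $\epsilon/\sqrt2$-net so the residual contributes $\epsilon B$ rather than your $2\varepsilon B$, and it fixes $\epsilon=t/4\bar\sigma$ after bounding $B\le 2\beta(r)+2t$), so nothing further is needed.
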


Due to scale invariance of the above result, we need only verify its assumptions in the case that $\|M\|_F = 1$. We implicitly use this observation below.

\subsubsection{Part~\ref{thm:main_rip_theorem:item:matrix_sensing} of Theorem~\ref{thm:main_rip_theorem} (Matrix sensing)}\label{sec:RIP_proof_matrix_sen}
\begin{lemma}
	The random variable $|\dotp{\lM, M}|$ is sub-gaussian with parameter $C\eta.$ Consequently,
	\begin{equation} \label{ineq:expected_mat_sen}\co \leq \EE |\dotp{\lM, M} | \lesssim \eta. \end{equation}
	Moreover, there exists a universal constant $c> 0$ such that for any $t \in [0, \infty)$ the deviation bound
	\begin{equation}\label{ineq:deviation_mat_sen}
	\frac{1}{m}\left| \|\cA_{\cI^c} (M)\|_1 - \|\cA_{\cI} (M)\|_1  - \EE \big[\|\cA_{\cI^c} (M)\|_1 - \|\cA_{\cI} (M)\|_1\big] \right| \leq t
	\end{equation}
	holds with probability at least $1-2\exp\left(- \frac{ct^2}{\eta^2}m\right).$
\end{lemma}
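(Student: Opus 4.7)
The plan is to split the argument into three essentially routine stages, each of which reduces to a standard property of sub-gaussian random variables. Throughout, we may assume $\|M\|_F = 1$ by scale invariance.

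First, I would establish that $\dotp{\lM, M}$ itself is sub-gaussian. By hypothesis, $\vect(\lM)$ is an $\eta$-sub-Gaussian random vector in $\RR^{d_1 d_2}$. Using the identity $\dotp{\lM, M} = \dotp{\vect(\lM), \vect(M)}$ and the normalization $\|\vect(M)\|_2 = \|M\|_F = 1$, the definition of $\eta$-sub-gaussianity for vectors directly yields $\|\dotp{\lM, M}\|_{\psi_2} \leq C\eta$ for a universal constant $C$. Since $\EE \exp(X^2/t^2) = \EE\exp(|X|^2/t^2)$, passing to the absolute value preserves the sub-gaussian norm, so $\||\dotp{\lM, M}|\|_{\psi_2} \leq C\eta$ as well.

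The two-sided bound on the expectation follows immediately. The lower bound $\co \leq \EE|\dotp{\lM,M}|$ is exactly the content of~\eqref{assump:matrix_sensing} in Assumption~\ref{assumption:matrix_sensing}. The upper bound $\EE|\dotp{\lM,M}| \lesssim \eta$ uses the elementary fact that for any real random variable $X$ with $\|X\|_{\psi_2} \leq \tau$, one has $\EE|X| \leq C'\tau$, applied with $\tau = C\eta$.

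Finally, for the concentration inequality, I would introduce deterministic signs $\epsilon_i := 1$ if $i \in \cI^c$ and $\epsilon_i := -1$ if $i \in \cI$, so that
\[
\|\cA_{\cI^c}(M)\|_1 - \|\cA_{\cI}(M)\|_1 \;=\; \sum_{i=1}^m \epsilon_i \,|\dotp{\lM_i, M}|.
\]
Since the $\lM_i$ are i.i.d., the summands $Z_i := \epsilon_i |\dotp{\lM_i, M}|$ are independent with $\|Z_i\|_{\psi_2} \leq C\eta$ by the first stage. A standard Hoeffding-type concentration inequality for centered sums of independent sub-gaussian random variables (see, e.g., Vershynin's high-dimensional probability text) then gives
\[
\PP\!\left( \left| \tfrac{1}{m}\sum_{i=1}^m (Z_i - \EE Z_i) \right| > t \right) \;\leq\; 2\exp\!\left( -\tfrac{c\, t^2 \, m}{\eta^2} \right),
\]
which is precisely the asserted tail bound~\eqref{ineq:deviation_mat_sen}. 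There is no genuine obstacle in this lemma: each step is a direct invocation of standard sub-gaussian calculus, and the only care required is in tracking the universal constants absorbed into the symbol $C$.
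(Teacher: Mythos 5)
Your proposal is correct and follows essentially the same route as the paper: both reduce the expectation bounds to the sub-gaussianity of $\dotp{\lM,M}$ (lower bound from Assumption~\ref{assumption:matrix_sensing}, upper bound from $\EE|X|\lesssim\|X\|_{\psi_2}$), and both prove the deviation bound by attaching signs according to membership in $\cI$ versus $\cI^c$, centering, and invoking Hoeffding's inequality for sums of independent sub-gaussian variables. Your explicit verification of $\|\dotp{\lM,M}\|_{\psi_2}\lesssim\eta$ via vectorization is a detail the paper leaves implicit, but the argument is the same.
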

\begin{proof}
	Assumption~\ref{assumption:matrix_sensing} immediately implies the lower bound in \eqref{ineq:expected_mat_sen}. To prove the upper bound, first note that by assumption we have
	\[\| \dotp{\lM, M} \|_{\psi_2} \lesssim \eta. \]
	This bound has two consequences, first $\dotp{\lM, M}$ is a sub-gaussian random variable with parameter $\eta$ and second $\EE |\dotp{\lM,M}| \lesssim \eta$ \cite[Proposition 2.5.2]{vershynin2016high}. Thus, we have proved \eqref{ineq:expected_mat_sen}.

	To prove the deviation bound~\eqref{ineq:deviation_mat_sen} we introduce the random variables
	\[Y_i  = \begin{cases}
	|\dotp{\lM_i, M}|  - \EE |\dotp{\lM_i, M}| & \text{if } i \notin \cI \text{, and }\\
	- \left( |\dotp{\lM_i, M}|  - \EE |\dotp{\lM_i, M}|  \right) & \text{otherwise.}
	\end{cases}\]
	Since $|\dotp{\lM_i, M}|$ is sub-gaussian, we have $\|Y_i\|_{\psi_2} \lesssim \eta$ for all $i,$ see \cite[Lemma 2.6.8]{vershynin2016high}. Hence, Hoeffding's inequality for sub-gaussian random variables \cite[Theorem 2.6.2]{vershynin2016high} gives the desired upper bound on $\PP \left( \frac{1}{m} \abs{\sum_{i=1}^m Y_i } \geq t \right).$
\end{proof}
Applying Proposition~\ref{prop:epsilon_covering} with $\beta(r) \asymp \eta$ and $c(m,r) \asymp m/\eta^2$ now yields the result.
\qed

\subsubsection{Part~\ref{thm:main_rip_theorem:item:quadratic_sensing_I} of Theorem~\ref{thm:main_rip_theorem} (Quadratic sensing I)}\label{sec:RIP_proof_cov_est}
\begin{lemma}
	The random variable $|\lv^\top M \lv|$ is sub-exponential with parameter $\sqrt{2r} \eta^2.$ Consequently,
	\begin{equation} \label{ineq:expected_cov}\co \leq \EE |\lv^\top M \lv| \lesssim \sqrt{2r}\eta^2. \end{equation}
	Moreover, there exists a universal constant $c> 0$ such that for any $t \in [0, \sqrt{2r} \eta]$ the deviation bound
	\begin{equation}\label{ineq:deviation_cov}
	\frac{1}{m}\left| \|\cA_{\cI^c} (M)\|_1 - \|\cA_{\cI} (M)\|_1  - \EE \big[\|\cA_{\cI^c} (M)\|_1 - \|\cA_{\cI} (M)\|_1\big] \right| \leq t
	\end{equation}
	holds with probability at least $1-2\exp\left(- \frac{ct^2}{\eta^4}m/r\right).$
\end{lemma}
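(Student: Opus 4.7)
The plan is to combine a Hanson--Wright-type bound on the quadratic form $\lv^\top M \lv$ with Bernstein's inequality for sums of sub-exponential random variables, closely paralleling the matrix sensing argument. Write $\Sigma := \EE[\lv \lv^\top]$. The $\eta$-sub-gaussianity of $\lv$ yields $\|\Sigma\|_{\op} \lesssim \eta^2$, so since $M$ has rank at most $2r$ and $\|M\|_F = 1$,
\[
|\EE\, \lv^\top M \lv| = |\tr(M \Sigma)| \le \|M\|_{*}\cdot \|\Sigma\|_{\op} \le \sqrt{2r}\,\|M\|_F\,\|\Sigma\|_{\op} \lesssim \sqrt{2r}\,\eta^2.
\]
The Hanson--Wright inequality for $\eta$-sub-gaussian vectors then gives
\[
\bigl\|\lv^\top M \lv - \EE\,\lv^\top M \lv\bigr\|_{\psi_1} \lesssim \eta^2\|M\|_F = \eta^2,
\]
and combining these two estimates via the triangle inequality for $\|\cdot\|_{\psi_1}$ yields $\bigl\||\lv^\top M \lv|\bigr\|_{\psi_1} \lesssim \sqrt{2r}\,\eta^2$, proving the sub-exponential claim. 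The lower bound $\co \le \EE|\lv^\top M \lv|$ is precisely Assumption~\ref{assumption:cov_est_naive}, while the matching upper bound $\EE|\lv^\top M \lv| \lesssim \sqrt{2r}\,\eta^2$ follows from the standard inequality $\EE|X| \lesssim \|X\|_{\psi_1}$.

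For the concentration inequality I would mimic the matrix-sensing proof by introducing the signed i.i.d.\ random variables
\[
Y_i := \begin{cases} |\lv_i^\top M \lv_i| - \EE|\lv^\top M \lv|, & i \notin \cI,\\ -\bigl(|\lv_i^\top M \lv_i| - \EE|\lv^\top M \lv|\bigr), & i \in \cI,\end{cases}
\]
so that $\tfrac{1}{m}\sum_{i=1}^m Y_i$ is exactly the quantity inside the absolute value of~\eqref{ineq:deviation_cov}. Each $Y_i$ is centered and sub-exponential with $\|Y_i\|_{\psi_1} \lesssim \sqrt{2r}\,\eta^2$, so Bernstein's inequality for sub-exponential sums yields
\[
\PP\!\left(\frac{1}{m}\left|\sum_{i=1}^m Y_i\right| \ge t\right) \le 2\exp\!\left(-c\,m\,\min\!\left\{\frac{t^2}{r\eta^4},\;\frac{t}{\sqrt{r}\,\eta^2}\right\}\right).
\]
Restricting $t \in [0,\sqrt{2r}\,\eta]$ places us in the quadratic regime of Bernstein's inequality (after absorbing the constant inside the $\lesssim$ in the sub-exponential norm), which produces the claimed bound $1 - 2\exp\!\bigl(-c t^2 m/(\eta^4 r)\bigr)$.

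The main technical subtlety is the precise version of Hanson--Wright being invoked: the textbook statement assumes \emph{independent} sub-gaussian entries, whereas the paper's notion of $\eta$-sub-gaussianity controls only one-dimensional marginals. One may either strengthen the assumption (which is consistent with the Gaussian examples of Section~\ref{sec:all_examples}) or appeal to a general sub-gaussian chaos bound obtained via decoupling and generic chaining. Note that the extra $\sqrt{2r}$ factor relative to matrix sensing arises entirely from the fact that $|\EE \lv^\top M \lv| = |\tr(M\Sigma)|$ does not vanish and scales with the nuclear norm of $M$, which for rank-$2r$ matrices with $\|M\|_F = 1$ can be as large as $\sqrt{2r}$.
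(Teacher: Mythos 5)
Your overall architecture matches the paper's: lower bound from Assumption~\ref{assumption:cov_est_naive}, a $\psi_1$-norm bound on the quadratic form, then the signed variables $Y_i$ and Bernstein's inequality, with $t\leq\sqrt{2r}\,\eta$ keeping you in the quadratic regime. The Bernstein half of your argument is exactly the paper's. The difference, and the genuine gap, is in how you certify sub-exponentiality of $\lv^\top M\lv$. You invoke Hanson--Wright, but the paper's notion of an $\eta$-sub-gaussian vector controls only the one-dimensional marginals $\dotp{u,\lv}$; Hanson--Wright (and the decoupling/chaining generalizations you mention as a fallback) require independent coordinates or at least a convex concentration property, neither of which is assumed here. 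So as written, that step does not close under the paper's hypotheses, and "strengthen the assumption" would change the theorem being proved.

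The paper's own argument sidesteps this entirely and is worth internalizing: write the symmetric rank-$\le 2r$ matrix as $M=\sum_{k=1}^{2r}\sigma_k u_ku_k^\top$, so that $\lv^\top M\lv=\sum_k\sigma_k\dotp{\lv,u_k}^2$, and then estimate
\[
\Bigl\|\sum_k\sigma_k\dotp{\lv,u_k}^2\Bigr\|_{\psi_1}\le\sum_k|\sigma_k|\,\bigl\|\dotp{\lv,u_k}\bigr\|_{\psi_2}^2\le\eta^2\|\sigma\|_1\le\sqrt{2r}\,\eta^2,
\]
using only the triangle inequality for $\|\cdot\|_{\psi_1}$, the product rule $\|XY\|_{\psi_1}\le\|X\|_{\psi_2}\|Y\|_{\psi_2}$, and $\|\sigma\|_1\le\sqrt{2r}\|\sigma\|_2$. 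This needs nothing beyond marginal sub-gaussianity. Note also that the two routes locate the $\sqrt{2r}$ differently: in the paper it is an honest loss in the $\psi_1$ norm of the whole (uncentered) quadratic form, whereas in your account the fluctuation would be only $O(\eta^2)$ and the $\sqrt{2r}$ would come purely from $\tr(M\Sigma)$ --- which, if it could be justified, would yield a deviation bound with $m$ in place of $m/r$ in the exponent. That the paper does not obtain this sharper rate is a symptom of the weaker assumption it works under, not an oversight.
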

\begin{proof}
	Assumption~\ref{assumption:cov_est_naive} gives the lower bound in \eqref{ineq:expected_cov}. To prove the upper bound, first note that $M = \sum_{k=1}^{2r} \sigma_k u_k u_k^\top$ where $\sigma_k$ and $u_k$ are the $k$th singular values and vectors of $M$, respectively. Hence
	\begin{align*}
	\| \lv^\top M \lv\|_{\psi_1} & = \left\|\lv^\top \left( \sum_{k=1}^{2r} \sigma_k u_k u_k^\top\right) \lv \right\|_{\psi_1}  = \left\|  \sum_{k=1}^{2r} \sigma_k \dotp{\lv, u_k}^2 \right\|_{\psi_1} \\
	& \hspace{2cm} \leq  \sum_{k=1}^{2r} \sigma_k  \left\|  \dotp{\lv, u_k}^2 \right\|_{\psi_1}  \leq  \sum_{k=1}^{2r}\sigma_k  \left\|  \dotp{\lv, u_k} \right\|_{\psi_2}^2 = \eta^2 \sum_{k=1}^{2r} \sigma_k  \leq \sqrt{2r} \eta^2,
	\end{align*}
	where the first inequality follows since $\| \cdot \|_{\psi_1}$ is a norm, the second one follows since $\|XY\|_{\psi_1} \leq \|X\|_{\psi_2}\|Y\|_{\psi_2}$ \cite[Lemma 2.7.7]{vershynin2016high}, and the third inequality holds since $\|\sigma\|_1 \leq \sqrt{2r}\|\sigma\|_2$. This bound has two consequences, first $\lv^\top M \lv$ is a sub-exponential random variable with parameter $\sqrt{r} \eta^2$ and second $\EE \lv^\top M \lv \leq \sqrt{2r} \eta^2$ \cite[Exercise 2.7.2]{vershynin2016high}. Thus, we have proved \eqref{ineq:expected_cov}.

	To prove the deviation bound~\eqref{ineq:deviation_cov} we introduce the random variables
	\[Y_i  = \begin{cases}
	\lv_i^\top M \lv_i - \EE \lv_i^\top M \lv_i & \text{if } i \notin \cI \text{, and }\\
	- \left(  \lv_i^\top M \lv_i - \EE \lv_i^\top M \lv_i \right) & \text{otherwise.}
	\end{cases}\]
	Since $\lv^\top M \lv$ is sub-exponential, we have $\|Y_i\|_{\psi_1} \lesssim \sqrt{r} \eta^2$ for all $i,$ see \cite[Exercise 2.7.10]{vershynin2016high}. Hence, Bernstein inequality for sub-exponential random variables \cite[Theorem 2.8.2]{vershynin2016high} gives the desired upper bound on $\PP \left( \frac{1}{m} \abs{\sum_{i=1}^m Y_i } \geq t \right).$
\end{proof}
Applying Proposition~\ref{prop:epsilon_covering} with with $\beta(r) \asymp \sqrt{r}\eta^2$ and $c(m,r) \asymp m/{\eta^4}r$ now yields the result.
\qed

\subsubsection{Part~\ref{thm:main_rip_theorem:item:quadratic_sensing_II} of Theorem~\ref{thm:main_rip_theorem} (Quadratic sensing II)}\label{sec:RIP_proof_cov_est_sym}
\begin{lemma}\label{lem:exp_rip_cov_sym}
	The random variable $|\lv^\top M \lv - \tilde \lv^\top M \tilde \lv|$ is sub-exponential with parameter $C\eta^2.$ Consequently,
	\begin{equation} \label{ineq:expected_cov_sym}\co \leq \EE |\lv^\top M \lv - \tilde \lv^\top M \tilde \lv| \lesssim \eta^2. \end{equation}
	Moreover, there exists a universal constant $c> 0$ such that for any $t \in [0,  \eta^2]$ the deviation bound
	\begin{equation}\label{ineq:deviation_cov_sym}
	\frac{1}{m}\left| \|\cA_{\cI^c} (M)\|_1 - \|\cA_{\cI} (M)\|_1  - \EE \big[\|\cA_{\cI^c} (M)\|_1 - \|\cA_{\cI} (M)\|_1\big] \right| \leq t
	\end{equation}
	holds with probability at least $1-2\exp\left(- \frac{ct^2}{\eta^4}m\right).$
\end{lemma}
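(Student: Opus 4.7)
The heart of the argument is to establish the sub-exponential norm bound $\|Z\|_{\psi_1} \leq C\eta^2$, where $Z := \lv^\top M \lv - \tilde\lv^\top M \tilde\lv$; once this is in hand, the expectation bound \eqref{ineq:expected_cov_sym} and the deviation bound \eqref{ineq:deviation_cov_sym} follow by routine steps. Throughout I assume $\|M\|_F = 1$ by homogeneity. The key observation is that, although each individual measurement $\lv^\top M \lv$ is only sub-exponential with parameter $\sqrt{2r}\,\eta^2$ (as already seen in Quadratic Sensing I), the \emph{symmetrized} quantity $Z$ loses the $\sqrt{r}$ factor.

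First I would note that $\EE Z = 0$, since $\lv$ and $\tilde\lv$ are i.i.d. Next I would view $Z$ as a \emph{single} quadratic form in the stacked vector $X := (\lv;\tilde\lv) \in \RR^{2d}$, by writing
\[Z \;=\; X^\top B X, \qquad B \;:=\; \begin{pmatrix} M & 0 \\ 0 & -M \end{pmatrix}.\]
A direct computation gives $\|B\|_F = \sqrt{2}\,\|M\|_F = \sqrt{2}$ and $\|B\|_{\op} = \|M\|_{\op} \leq 1$, neither of which depends on the rank $r$. This is where the improvement over Part~\ref{thm:main_rip_theorem:item:quadratic_sensing_I} comes from: in the nonsymmetrized case one is forced to control $\|\sigma(M)\|_1 \leq \sqrt{2r}$ by the triangle inequality over the eigendecomposition, whereas here the centered quadratic form $X^\top B X$ is controlled by $\|B\|_F = \sqrt{2}\,\|M\|_F$.

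To make this precise I would invoke the Hanson--Wright inequality for quadratic forms of sub-Gaussian vectors with independent coordinates (the components of $X$ are independent sub-Gaussians with parameter $\eta$, by the independence of $\lv$ and $\tilde\lv$ together with the canonical i.i.d.\ coordinatewise assumption realized in Lemma~\ref{lem:small_ball}):
\[\PP\bigl(|Z - \EE Z| \geq t\bigr) \;\leq\; 2\exp\!\left( -c\min\!\left(\frac{t^2}{\eta^4\|B\|_F^2},\; \frac{t}{\eta^2\|B\|_{\op}}\right)\right).\]
A standard moment-to-tail translation then yields $\|Z\|_{\psi_1} \lesssim \eta^2$, which in particular gives the upper bound $\EE|Z| \lesssim \eta^2$ in \eqref{ineq:expected_cov_sym}; the lower bound $\co \leq \EE|Z|$ is exactly Assumption~\ref{assumption:cov_est_sym}.

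Finally, the deviation bound \eqref{ineq:deviation_cov_sym} will follow exactly as in the proof of Part~\ref{thm:main_rip_theorem:item:quadratic_sensing_I}: introduce the signed, centered variables $Y_i := s_i\,(|\cA(M)_i| - \EE|\cA(M)_i|)$, where $s_i = +1$ for $i\notin\cI$ and $s_i = -1$ for $i\in\cI$; note that $\|Y_i\|_{\psi_1}\lesssim \eta^2$ by the centering/contraction lemma for sub-exponentials; and apply Bernstein's inequality for i.i.d.\ sub-exponential sums, which in the regime $t \in [0,\eta^2]$ yields a deviation probability of the form $2\exp(-ct^2 m/\eta^4)$, matching the claim with $c(m,r)\asymp m/\eta^4$. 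The main obstacle is the Hanson--Wright step: the inequality in its usual form requires the coordinates of $X$ to be independent sub-Gaussians, which is genuinely stronger than the directional $\eta$-sub-Gaussian condition of Section~\ref{sec:prelim}; I would either restrict attention to measurement vectors with independent sub-Gaussian coordinates (which covers the Gaussian setting of Lemma~\ref{lem:small_ball}) or, with more work, invoke a convex-concentration variant of Hanson--Wright to extend beyond the independent-entry regime.
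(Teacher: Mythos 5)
Your proposal is correct, but it proves the key sub-exponential bound by a genuinely different route than the paper. The paper writes $M=\sum_{k=1}^{2r}\sigma_k u_ku_k^\top$, expresses $Z=\sum_k \sigma_k\bigl(\dotp{u_k,\lv}^2-\dotp{u_k,\tilde\lv}^2\bigr)$, and then bounds the moment generating function directly: each centered summand $\dotp{u_k,\lv}^2-\dotp{u_k,\tilde\lv}^2$ has a quadratic MGF bound $\exp(c\eta^4\lambda^2\sigma_k^2)$ near the origin, and factoring the MGF over $k$ and using $\sum_k\sigma_k^2=1$ gives $\|Z\|_{\psi_1}\lesssim\eta^2$ via the MGF characterization of the $\psi_1$-norm. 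You instead stack $X=(\lv;\tilde\lv)$, write $Z=X^\top BX$ with $B=\mathrm{diag}(M,-M)$, and invoke Hanson--Wright with $\|B\|_F=\sqrt{2}$, $\|B\|_{\op}\leq 1$. Both arguments exploit exactly the same cancellation --- the symmetrization makes the relevant quantity the $\ell_2$-norm of the spectrum rather than the $\ell_1$-norm, which is where the $\sqrt{r}$ of Quadratic Sensing I disappears --- and your $\|B\|_F$ versus trace-norm framing makes that mechanism especially transparent. Your approach is more modular (an off-the-shelf inequality plus a tail-to-$\psi_1$ translation), while the paper's is self-contained. The independence caveat you raise is real: Hanson--Wright in its standard form needs independent coordinates of $X$, which is stronger than the directional $\eta$-sub-Gaussian hypothesis. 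But note that the paper's own proof incurs essentially the same debt, since factoring $\EE\prod_k(\cdots)$ into $\prod_k\EE(\cdots)$ requires the projections $\dotp{u_k,\lv}$ to be independent across $k$, which holds for isotropic Gaussians (the setting of Lemma~\ref{lem:small_ball}) but not for a general sub-Gaussian vector. So your hedge does not put you at a disadvantage relative to the paper. The expectation lower bound and the Bernstein-based deviation bound in your proposal coincide with the paper's treatment.
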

\begin{proof}
	Assumption \ref{assumption:cov_est_sym} implies the lower bound in \eqref{ineq:expected_cov_sym}. To prove the upper bound, we will show that $\||\lv^\top M \lv - \tilde \lv^\top M \tilde \lv^\top|\|_{\psi_1} \leq \eta^2$. By definition of the Orlicz norm $\||X|\|_{\psi_1} = \|X\|_{\psi_1}$ for any random variable $X,$ hence without loss of generality we may remove the absolute value. Recall that $M = \sum_{k=1}^{2r} \sigma_k u_k u_k^\top$ where $\sigma_k$ and $u_k$ are the $k$th singular values and vectors of $M$, respectively. Hence, the random variable of interest can be rewritten as
	\begin{equation} \label{eq:cov_est_sym_decomp}\lv^\top M \lv - \tilde \lv^\top M \tilde \lv^\top \eqd \sum_{k=1}^{2r} \sigma_k\left(\dotp{u_k, \lv}^2 -  \dotp{u_k, \tilde \lv}^2  \right).\end{equation} By assumption the random variables $\dotp{u_k, \lv}$ are $\eta$-sub-gaussian, this implies that $\dotp{u_k,\lv}^2$ are $\eta^2$-sub-exponential, since $\|\dotp{u_k, \lv}^2\|_{\psi_1} \leq \|\dotp{u_k, \lv}\|_{\psi_2}^2$.

	Recall the following characterization of the Orlicz norm for mean-zero random variables
	\begin{equation}\label{eq:equivalence_psi1_norm}\|X\|_{\psi_1} \leq Q \iff \EE \exp(\lambda X) \leq \exp(\tilde Q^2\lambda^2) \;\; \text{for all }\lambda \text{ satisfying } |\lambda| \leq 1/\tilde Q^2\end{equation}
	where the $Q \asymp \tilde Q,$ see \cite[Proposition 2.7.1]{vershynin2016high}. To prove that the random variable \eqref{eq:cov_est_sym_decomp} is sub-exponential we will exploit this characterization. Since each inner product squared $\dotp{u_k,\lv}^2$ is sub-exponential, the equivalence implies the existence of a constant $c>0$ for which the uniform bound \begin{equation}\label{oh_god_so_many_labels}
	\EE \exp(\lambda \dotp{u_k,\lv}^2) \leq \exp\left(c\eta^4 \lambda^2\right) \qquad \text{for all } k\in[2r]\text{ and }|\lambda|\leq 1/c\eta^4
	\end{equation}  holds. Let $\lambda$ be an arbitrary scalar with $|\lambda|\leq 1/c\eta^4$, then by expanding the moment generating function of \eqref{eq:cov_est_sym_decomp} we get
	\begin{align*}
	\EE \exp\left(\lambda \sum_{k=1}^{2r} \sigma_k \left(\dotp{u_k, \lv}^2 -  \dotp{u_k, \tilde \lv}^2  \right)  \right) &= \EE\prod_{k=1}^{2r}  \exp\left(\lambda  \sigma_k \dotp{u_k, \lv}^2\right) \exp\left( - \lambda \sigma_k \dotp{u_k, \tilde \lv}^2  \right)   \\
	&= \prod_{k=1}^{2r}  \EE\exp\left(\lambda  \sigma_k \dotp{u_k, \lv}^2\right) \EE \exp\left( - \lambda \sigma_k \dotp{u_k, \tilde \lv}^2  \right)  \\
	& \leq \prod_{k=1}^{2r}  \exp\left((c\eta)^2\lambda^2  \sigma_k^2\right) \exp\left(  c\eta^4\lambda^2  \sigma_k^2 \right)  \\
	& =   \exp\left(2c\eta^4\lambda^2 \sum_{k=1}^{2r}  \sigma_k^2\right) = \exp\left(2c\eta^4\lambda^2\right).
	\end{align*}
	where the inequality follows by \eqref{oh_god_so_many_labels} and the last relation follows since $\sigma$ is unit norm.
	Combining this with \eqref{eq:equivalence_psi1_norm} gives
	\[\||\lv^\top M \lv - \tilde \lv^\top M \tilde \lv^\top|\|_{\psi_1}\lesssim \eta^2.\]
	This bound has two consequences, first $|\lv^\top M \lv - \tilde \lv^\top M \tilde \lv^\top|$ is a sub-exponential random variable with parameter $C\eta^2$ and second $\EE |\lv^\top M \lv - \tilde \lv^\top M \tilde \lv^\top| \leq C \eta^2$ \cite[Exercise 2.7.2]{vershynin2016high}. Thus, we have proved \eqref{ineq:expected_cov_sym}.

	To prove the deviation bound~\eqref{ineq:deviation_cov_sym} we introduce the random variables
	\[Y_i  = \begin{cases}
	\cA(M)_i  - \EE \cA(M)_i & \text{if } i \notin \cI \text{, and }\\
	- \left(  \cA(M)_i  - \EE \cA(M)_i  \right) & \text{otherwise.}
	\end{cases}\]
	The sub-exponentiality of $\cA(M)_i$ implies $\|Y_i\|_{\psi_1} \lesssim \eta^2$ for all $i,$ see \cite[Exercise 2.7.10]{vershynin2016high}. Hence, Bernstein inequality for sub-exponential random variables \cite[Theorem 2.8.2]{vershynin2016high} gives the desired upper bound on $\PP \left( \frac{1}{m} \abs{\sum_{i=1}^m Y_i } \geq t \right).$
\end{proof}
Applying Proposition~\ref{prop:epsilon_covering} with $\beta(r) \asymp \eta^2$ and $c(m,r) \asymp m/{\eta^4}$ now yields the result.
\qed
\subsubsection{Part~\ref{thm:main_rip_theorem:item:bilinear_sensing} of Theorem~\ref{thm:main_rip_theorem} (Bilinear sensing)}\label{sec:RIP_proof_bilinear}

\begin{lemma}\label{lem:exp_rip_bilinear}
	The random variable $|\lv^\top M \rv|$ is sub-exponential with parameter $C\eta^2.$ Consequently,
	\begin{equation} \label{ineq:expected_bilinear}\co \leq \EE |\lv^\top M \rv| \lesssim \eta^2. \end{equation}
	Moreover, there exists a universal constant $c> 0$ such that for any $t \in [0,  \eta^2]$ the deviation bound
	\begin{equation}\label{ineq:deviation_bilinear}
	\frac{1}{m}\left| \|\cA_{\cI^c} (M)\|_1 - \|\cA_{\cI} (M)\|_1  - \EE \big[\|\cA_{\cI^c} (M)\|_1 - \|\cA_{\cI} (M)\|_1\big] \right| \leq t
	\end{equation}
	holds with probability at least $1-2\exp\left(- \frac{ct^2}{\eta^4}m\right).$
\end{lemma}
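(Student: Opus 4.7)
The plan mirrors the strategy of Lemma~\ref{lem:exp_rip_cov_sym}, with one extra conditioning step to exploit the independence of $\lv$ and $\rv$. Once the sub-exponential norm bound $\||\lv^\top M \rv|\|_{\psi_1} \lesssim \eta^2$ is established, the remaining conclusions follow essentially verbatim from the argument used for Quadratic Sensing~II: the lower estimate in \eqref{ineq:expected_bilinear} is Assumption~\ref{assumption:bilinear}, the upper estimate is a standard consequence of sub-exponentiality \cite[Exercise 2.7.2]{vershynin2016high}, and the deviation bound \eqref{ineq:deviation_bilinear} is obtained by defining signed centered measurements $Y_i$ as in Lemma~\ref{lem:exp_rip_cov_sym} and applying the Bernstein inequality for sub-exponential random variables \cite[Theorem 2.8.2]{vershynin2016high}.

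The heart of the proof is thus establishing the $\psi_1$ bound. I would first invoke a compact SVD $M = \sum_{k=1}^{2r}\sigma_k u_k v_k^\top$ so that
\[
\lv^\top M \rv \;=\; \dotp{M^\top \lv, \rv} \;=\; \sum_{k=1}^{2r}\sigma_k\dotp{u_k,\lv}\dotp{v_k,\rv}.
\]
Since $\rv$ is $\eta$-sub-gaussian and independent of $\lv$, conditionally on $\lv$ the random variable $\dotp{M^\top \lv, \rv}$ is a linear functional of the sub-gaussian vector $\rv$, hence $\eta\|M^\top\lv\|_2$-sub-gaussian. The standard MGF bound \cite[Proposition 2.5.2]{vershynin2016high} gives, for every $\lambda\in\RR$,
\[
\EE\bigl[\exp(\lambda\,\lv^\top M \rv)\,\big|\,\lv\bigr] \;\leq\; \exp\!\bigl(c\lambda^2\eta^2\,\lv^\top(MM^\top)\lv\bigr).
\]
Taking expectation over $\lv$ reduces matters to controlling the moment generating function of the quadratic form $\lv^\top(MM^\top)\lv$. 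Because $\lv$ is $\eta$-sub-gaussian, the Hanson--Wright inequality supplies $\|\lv^\top(MM^\top)\lv\|_{\psi_1}\lesssim \eta^2\|MM^\top\|_F\leq \eta^2\|M\|_F^2=\eta^2$, so for all $|\lambda|$ below a universal multiple of $1/\eta^2$ one obtains
\[
\EE\exp(\lambda\,\lv^\top M \rv) \;\leq\; \EE\exp\!\bigl(c\lambda^2\eta^2\,\lv^\top(MM^\top)\lv\bigr) \;\leq\; \exp(C\lambda^2\eta^4).
\]
The MGF characterization \eqref{eq:equivalence_psi1_norm} of the $\psi_1$-norm used in Lemma~\ref{lem:exp_rip_cov_sym} then delivers $\|\lv^\top M \rv\|_{\psi_1}\lesssim \eta^2$, and passing to absolute values is free since $\||X|\|_{\psi_1}=\|X\|_{\psi_1}$.

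The main obstacle is the uniform quadratic-form bound on $\lv^\top(MM^\top)\lv$, which must survive the outer expectation over $\lv$. Appealing to Hanson--Wright handles this crisply but imports external machinery; if one prefers a self-contained route paralleling the term-by-term factorization of Lemma~\ref{lem:exp_rip_cov_sym}, one can instead combine the product rule $\|\dotp{u_k,\lv}\dotp{v_k,\rv}\|_{\psi_1}\leq \|\dotp{u_k,\lv}\|_{\psi_2}\|\dotp{v_k,\rv}\|_{\psi_2}\leq \eta^2$ with a two-stage MGF factorization in $\rv$ and then $\lv$, exploiting the independence of the two vectors to collapse cross terms and the unit-norm constraint $\sum_k\sigma_k^2=1$ to absorb the $\sigma_k$-weights. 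Either route yields the desired universal bound, and the lemma follows.
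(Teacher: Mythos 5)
Your proof is correct, and the key step is carried out by a genuinely different route than the paper's. The paper establishes $\|\lv^\top M\rv\|_{\psi_1}\lesssim\eta^2$ by expanding along the SVD and factorizing the moment generating function as a product over singular directions, $\EE\exp(\lambda\sum_k\sigma_k\dotp{\lv,u_k}\dotp{v_k,\rv})=\prod_k\EE\exp(\lambda\sigma_k\dotp{\lv,u_k}\dotp{v_k,\rv})$, then bounding each factor via the sub-exponentiality of the products $\dotp{\lv,u_k}\dotp{v_k,\rv}$ and summing $\sum_k\sigma_k^2=1$ in the exponent. You instead condition on $\lv$, use the sub-gaussianity of $\rv$ to get $\EE[\exp(\lambda\,\lv^\top M\rv)\mid\lv]\leq\exp(c\lambda^2\eta^2\,\lv^\top MM^\top\lv)$, and then control the MGF of the quadratic form $\lv^\top MM^\top\lv$. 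This buys you something real: the paper's product factorization requires the terms $\dotp{\lv,u_k}\dotp{v_k,\rv}$ to be independent across $k$, which holds for Gaussian designs (orthogonal projections of a standard Gaussian are independent) but is not guaranteed by sub-gaussianity alone; your conditioning argument only uses the independence of $\lv$ and $\rv$, which is part of the assumption, and is therefore more robust. Two small remarks: you do not actually need Hanson--Wright for the quadratic form, since $\lv^\top MM^\top\lv=\sum_k\sigma_k^2\dotp{u_k,\lv}^2$ with $\|\dotp{u_k,\lv}^2\|_{\psi_1}\leq\|\dotp{u_k,\lv}\|_{\psi_2}^2\leq\eta^2$ and $\sum_k\sigma_k^2=1$ gives $\|\lv^\top MM^\top\lv\|_{\psi_1}\leq\eta^2$ directly by the triangle inequality (Hanson--Wright controls only the centered quadratic form, so you would separately need the mean bound $\EE\,\lv^\top MM^\top\lv\lesssim\eta^2$ anyway); and your conditional MGF bound, like the paper's use of the characterization \eqref{eq:equivalence_psi1_norm}, implicitly assumes the relevant linear functionals are centered --- a caveat shared with the paper's own argument, so it does not count against you. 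The remainder (lower bound from Assumption~\ref{assumption:bilinear}, expectation bound from sub-exponentiality, Bernstein for the deviation inequality) matches the paper exactly.
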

\begin{proof}
	As before the lower bound in \eqref{ineq:expected_bilinear} is implied by Assumption~\ref{assumption:bilinear}. To prove the upper bound, we will show that $\||\lv^\top M \rv|\|_{\psi_1} \leq \eta^2$. By definition of the Orlicz norm $\||X|\|_{\psi_1} = \|X\|_{\psi_1}$ for any random variable $X,$ hence we may remove the absolute value. Recall that $M = \sum_{k=1}^{2r} \sigma_k u_k v_k^\top$ where $\sigma_k$ and $(u_k, v_k)$ are the $k$th singular values and vectors of $M$, respectively. Hence, the random variable of interest can be rewritten as
	\begin{equation} \label{eq:bilinear_decomp}\lv^\top M \rv \eqd \sum_{k=1}^{2r} \sigma_k \dotp{\lv,u_k} \dotp{v_k, \rv}.\end{equation} By assumption the random variables $\dotp{\lv, u_k}$ and $\dotp{v_k,\rv}$ are $\eta$-sub-gaussian, this implies that $\dotp{\lv,u_k}\dotp{v_k,\rv}$ are $\eta^2$-sub-exponential.

	To prove that the random variable \eqref{eq:bilinear_decomp} is sub-exponential we will again use \eqref{eq:equivalence_psi1_norm}. Since each random variable $\dotp{\lv,u_k}\dotp{v_k,\rv}$ is sub-exponential, the equivalence implies the existence of a constant $c>0$ for which the uniform bound \begin{equation}\label{oh_god_so_many_labels2}
	\EE \exp(\lambda \dotp{\lv,u_k}\dotp{v_k,\rv}) \leq \exp\left(c\eta^4 \lambda^2\right) \qquad \text{for all } k\in[2r]\text{ and }|\lambda|\leq 1/c\eta^4
	\end{equation}  holds. Let $\lambda$ be an arbitrary scalar with $|\lambda|\leq 1/c\eta^4$, then by expanding the moment generating function of \eqref{eq:bilinear_decomp} we get
	\begin{align*}
	\EE \exp\left(\lambda \sum_{k=1}^{2r} \sigma_k \dotp{\lv,u_k}\dotp{v_k,\rv}  \right)
	&= \prod_{k=1}^{2r}  \EE\exp\left(\lambda  \sigma_k \dotp{\lv,u_k}\dotp{v_k,\rv} \right)   \\
	& \leq   \exp\left(2c\eta^4\lambda^2 \sum_{k=1}^r  \sigma_k^2\right) = \exp\left(2c\eta^4\lambda^2\right).
	\end{align*}
	where the inequality follows by \eqref{oh_god_so_many_labels2} and the last relation follows since $\sigma$ is unitary. Combining this with \eqref{eq:equivalence_psi1_norm} gives
	\[\||\lv^\top M \rv|\|_{\psi_1}\lesssim \eta^2.\] Thus, we have proved \eqref{ineq:expected_bilinear}.

	Once again, to show the deviation bound~\eqref{ineq:deviation_bilinear} we introduce the random variables
	\[Y_i  = \begin{cases}
	|\lv_i^\top M \rv_i|  - \EE |\lv_i^\top M \rv_i| & \text{if } i \notin \cI \text{, and }\\
	- \left( |\lv_i^\top M \rv_i| - \EE |\lv_i^\top M \rv_i| \right) & \text{otherwise.}
	\end{cases}\]
	and apply Bernstein's inequality for sub-exponential random variables \cite[Theorem 2.8.2]{vershynin2016high} to get the stated upper bound on $\PP \left( \frac{1}{m} \abs{\sum_{i=1}^m Y_i } \geq t \right).$
\end{proof}
Applying Proposition~\ref{prop:epsilon_covering} with $\beta(r) \asymp \eta^2$ and $c(m,r) \asymp m/{\eta^4}$ now yields the result.
\qed

\subsection{Proof of Proposition~\ref{prop:epsilon_covering}}\label{sec:proof_epsilon_covering}

Choose $\epsilon \in (0,\sqrt{2})$ and let $\cN$ be the ($\epsilon/\sqrt{2}$)-net guaranteed
by Lemma~\ref{lemma:eps_net}. Pick some $t \in (0,K]$ so that \eqref{ineq:deviation} can hold, we will fix the value of this parameter later in the proof. Let $\cE$ denote the event that the following two estimates hold for all matrices in $M\in \cN$:
\begin{align}
\frac{1}{m}\Big|\|\cA_{ \cI^c }(M)\|_1 - \|\cA_{ \cI }(M)\|_1 - \EE\left[\|\cA_{ \cI^c }(M)\|_1 - \|\cA_{ \cI }(M)\|_1\right]\Big| &\leq t,\label{ineq:concentration_ind1_first}\\
\frac{1}{m}\Big|\|\cA(M)\|_1  - \EE\left[\|\cA(M)\|_1 \right]\Big| &\leq t. \label{ineq:concentration_ind_first}
\end{align}
Throughout the proof, we will assume that the event $\cE$ holds.
We will estimate the probability of $\cE$ at the end of the proof. Meanwhile, seeking to establish RIP,  define the quantity
$$c_2 := \sup_{M \in S_{2r}} \frac{1}{m}\|\cA(M)\|_1.$$
We aim first to provide a high probability bound on  $c_2$.

Let $M \in S_{2r}$ be arbitrary and let $M_\star$ be the closest point to $M$ in $\cN$. Then we have
\begin{align}
\frac{1}{m}\|\cA(M)\|_1 & \leq \frac{1}{m}\|\cA(M_\star)\|_1 + \frac{1}{m}\|\cA(M- M_\star)\|_1\notag\\
&\leq \frac{1}{m}\EE\|\cA(M_\star)\|_1 + t+ \frac{1}{m}\|\cA(M- M_\star)\|_1  \label{eqn:est_firstusesec}\\
&\leq \frac{1}{m}\EE\|\cA(M)\|_1 + t+ \frac{1}{m}\left(\EE\|\cA(M - M_\star)\|_1+ \|\cA(M- M_\star)\|_1 \right),\label{ineq:wtf_triangle}
\end{align}
where \eqref{eqn:est_firstusesec} follows from \eqref{ineq:concentration_ind_first} and \eqref{ineq:wtf_triangle} follows from the triangle inequality. To simplify the third term in \eqref{ineq:wtf_triangle},  using SVD, we deduce that there exist two orthogonal matrices $M_1, M_2$ of rank at most $2r$ satisfying $M - M_\star = M_1+M_2.$ With this decomposition in hand, we compute
\begin{align}
\frac{1}{m}\|\cA (M - M_\star)\|_1 &\leq \frac{1}{m}\|\cA(M_1)\|_1 + \frac{1}{m}\|\cA(M_2)\|_1\notag
\\& \leq c_2 (\|M_1\|_F+\|M_2\|_F) \leq \sqrt{2}c_2 \|M-M_\star \|_F \leq c_2 \epsilon,\label{eqn:c2bound}
\end{align}
where the second inequality follows from the definition of $c_2$ and the estimate $\|M_1\|_F + \|M_2\|_F \leq \sqrt{2} \|(M_1, M_2)\|_F = \sqrt{2} \|M_1 + M_2\|_F.$
Thus, we arrive at the bound
\begin{equation}\label{ineq:RIP_upper}\frac{1}{m}\|\cA(M)\|_1 \leq \frac{1}{m}\EE\|\cA(M)\|_1 + t+ 2c_2 \epsilon.\end{equation}
As $M$ was arbitrary, we may take the supremum of both sides of the inequality, yielding
$c_2\leq \frac{1}{m}\sup_{M \in S_{2r}}\EE\|\cA(M)\|_1 + t+ 2c_2 \epsilon$. Rearranging yields the bound
$$
c_2 \leq \dfrac{\frac{1}{m}\sup_{M \in S_{2r}}\EE\|\cA(M)\|_1 + t}{1-2\epsilon}.
$$ Assuming that $\epsilon \leq 1/4$, we further deduce that
\begin{equation} \label{sigma_bounded}
c_2 \leq \bar \sigma := \frac{2}{m}\sup_{M \in S_{2r}}\EE\|\cA(M)\|_1 + 2t \leq 2 \beta(r) + 2t,
\end{equation}
establishing that the random variable $c_2$ is bounded by $\bar \sigma$ in the event $\cE$.

Now let $\hat \cI$ denote either $\hat \cI=\emptyset$ or $\hat \cI=\cI$. We now provide a uniform lower bound on $\frac{1}{m}\|\cA_{\hat \cI^c }(M)\|_1 - \frac{1}{m}\|\cA_{\hat \cI }(M)\|_1$. Indeed,
\begin{align}
&\frac{1}{m}\|\cA_{\hat \cI^c }(M)\|_1 - \frac{1}{m}\|\cA_{\hat \cI }(M)\|_1\notag \\
&=\frac{1}{m}\|\cA_{\hat \cI^c }(M_{\star})+\cA_{\hat \cI^c }(M-M_{\star})\|_1 - \frac{1}{m}\|\cA_{\hat \cI }(M_{\star})+\cA_{\hat \cI }(M-M_\star)\|_1\notag \\
&\geq \frac{1}{m}\|\cA_{\hat \cI^c }(M_\star)\|_1 - \frac{1}{m}\|\cA_{\hat \cI }(M_\star)\|_1 -  \frac{1}{m}\|\cA(M- M_\star)\|_1 \label{eqn:doubletriangle}\\
& \geq  \frac{1}{m}\EE\left[\|\cA_{\hat \cI^c }(M_\star)\|_1 - \|\cA_{\hat \cI }(M_\star)\|_1\right] - t -  \frac{1}{m}\|\cA(M- M_\star)\|_1 \label{eqn:allple45}\\
& \geq \frac{1}{m}\EE\left[\|\cA_{\hat \cI^c }(M)\|_1 - \|\cA_{\hat \cI }(M)\|_1\right] - t -\frac{1}{m} \left(\EE \|\cA(M- M_\star)\|_1 + \|\cA(M- M_\star)\|_1\right) \label{eqn:moredoubletriangle}\\
& \geq \frac{1}{m}\EE\left[|\|\cA_{\hat \cI^c }(M)\|_1 - \|\cA_{\hat \cI }(M)\|_1\right] - t  - 2\bar \sigma \epsilon,\label{eqn:finalestthankgod}
\end{align}
where \eqref{eqn:doubletriangle} uses the forward and reverse triangle inequalities,
\eqref{eqn:allple45} follows from~\eqref{ineq:concentration_ind1_first}, the estimate \eqref{eqn:moredoubletriangle} follows from the forward and reverse triangle inequalities, and \eqref{eqn:finalestthankgod} follows from \eqref{eqn:c2bound} and \eqref{sigma_bounded}. Switching the roles of $\cI$ and $\cI^c$ in the above sequence of inequalities, and choosing $\epsilon = t/4\bar \sigma$, we deduce
\[\frac{1}{m}\sup_{M \in S_{2r}}\Big|\|\cA_{\hat \cI^c }(M)\|_1 - \|\cA_{\hat \cI }(M)\|_1 - \EE\left[\|\cA_{\hat \cI^c }(M)\|_1 - \|\cA_{\hat \cI }(M)\|_1\right]\Big| \leq  \frac{3t}{2}.\]
In particular, setting $\hat \cI=\emptyset$, we deduce
\[\frac{1}{m}\sup_{M \in S_{2r}}\Big|\|\cA(M)\|_1 - \EE\left[\|\cA(M)\|_1 \right]\Big| \leq  \frac{3t}{2}\]
and therefore using \eqref{RIP_expected_val}, we conclude the RIP property
\begin{equation}\label{eqn:target_eqn1proof}
\alpha-\frac{3t}{2}\leq  \frac{1}{m}\|\cA(M)\|_1\lesssim \beta(r)+\frac{3t}{2},\qquad \forall X\in S_{2r}.
\end{equation}
Next, let $\hat \cI = \cI$ and note that
$$
\frac{1}{m}\EE\left[\|\cA_{\hat \cI^c }(M)\|_1 - \|\cA_{\hat \cI }(M)\|_1\right] = \frac{|\cI^c| - |\cI|}{m}\cdot\EE |\cA(M)_i | \geq \left(1- \frac{2|\cI|}{m}\right) \alpha,
$$
where the equality follows by assumption \ref{assumption:same_expected}. Therefore every $M\in S_{2r}$ satisfies
\begin{equation}\label{eqn:target_eqn2proof}
\frac{1}{m}\left[\|\cA_{\hat \cI^c }(M)\|_1 - \|\cA_{\hat \cI}(M)\|_1\right]\geq \left(1- \frac{2|\cI|}{m}\right) \alpha-\frac{3t}{2}.
\end{equation}
Setting $t=\frac{2}{3}\min\{\alpha, \alpha(1-2|\cI|/m)/2\} =  \frac{1}{3}\alpha(1-2|\cI|/m)$ in \eqref{eqn:target_eqn1proof} and \eqref{eqn:target_eqn2proof}, we deduce the claimed estimates \eqref{eq:rip_proof_bound} and \eqref{eq:rip_proof_outlier_bound}.
Finally, let us estimate the probability of $\cE$. Using the union bound and Lemma~\ref{lemma:eps_net} yields
\begin{align*}
\PP(\cE^c) & \leq \sum_{M \in \cN} \PP \big\{ \text{\eqref{ineq:concentration_ind1_first} or  \eqref{ineq:concentration_ind_first} fails at }M\big\} \\
&\leq 4|\cN|\exp\left(-t^2 c(m,r)\right)\\
& \leq 4\left(\frac{9}{\epsilon}\right)^{2(d_1+d_2+1)r} \exp\left(-t^2 c(m,r)\right) \\
&=4  \exp\left(2(d_1+d_2+1)r\ln(9/\epsilon)- t^2c(m,r)\right)\end{align*}
where $c(m,r)$ is the function guaranteed by assumption~\ref{assumption:deviations}.

By \eqref{RIP_expected_val} we get $1/\epsilon = 4\bar \sigma/t \lesssim 2 + \beta(r)/(1 - 2|\cI|/m)$. Then we deduce
$$\PP(\cE^c)\leq 4  \exp\left(c_1(d_2+d_2+1)r\ln\left(c_2+\frac{c_2\beta(r)}{1-2|\cI|/m}\right)-\frac{\alpha^2}{9}(1-\frac{2|\cI|}{m})^2c({m},{r})\right).$$
Hence as long as $c(m,r)\geq \frac{9c_1(d_1+d_2+1)r^2\ln\left(c_2+\frac{c_2\beta(r)}{1-2|\cI|/m}\right)}{\alpha^2 \left(1-\frac{2|\cI|}{m}\right)^2}$, we can be sure $$\PP(\cE^c)\leq 4  \exp\left(-\frac{\alpha^2}{18}\left(1-\frac{2|\cI|}{m}\right)^2c({m},{r})\right).$$ Proving the desired result.
\qed

\section{Proof in Section~\ref{sec:mat_comp}}
\subsection{Proof of Lemma~\ref{lem:basic_conv_guarantee}}

Define $P(x,y)=a\|y-x\|^2_2+b\|y-x\|_2$. Fix an iteration $k$ and choose $x^*\in\proj_{\cX^*}(x_k)$. Then the estimate holds:
\begin{align*}
f(x_{k+1})\leq f_{x_k}(x_{k+1})+ P(x_{k+1},x_k)
&\leq f_{x_k}(x^*)+ P(x^*,x_k)\leq f(x^*)+2P(x^*,x_k).
\end{align*}
Rearranging and using the sharpness and approximation accuracy assumptions, we deduce
$$\mu\cdot\dist(x_{k+1},\cX^*)\leq 2(a\cdot\dist^2(x,\cX^*)+b\cdot\dist(x,\cX^*))=2(b+a\dist(x,\cX^*))\dist(x,\cX^*).$$
The result follows.

\subsection{Proof of Theorem~\ref{thm:basic_conv_guarantee_sub}}
First notice that for any $y$, we have $\partial f(y) = \partial
f_y(y)$. Therefore, since $f_y$ is a convex function, we have that for all $x,
y \in \cX$ and $v \in \partial f(y)$, the bound
\begin{align}\label{eq:mc_weak_convexity}
f(y) + \dotp{v , x - y } = f_y(y) + \dotp{v ,x -y } \leq f_y(x)  \leq f(x) + a\|x - y\|_F^2 + b\|x - y\|_F.
\end{align}
Consequently, given that $\dist(x_i,\cX^*)\leq \gamma\cdot  \frac{\mu -
	2b}{2a}$, we have
\begin{align}
\|x_{i+1} -  x^*\|^2&=\left\|\proj_{\cX}\left(x_{i}-\tfrac{f(x_i)-\min_{\mathcal{X}} f}{\|\zeta_i\|^2} \zeta_i\right)-\proj_\cX(x^*)\right\|^2\notag\\
&\leq \left\|(x_{i} - x^*)- \tfrac{f(x_i)-\min_{\mathcal{X}} f}{\|\zeta_i\|^2} \zeta_i\right\|^2 \label{eqn1:weird_subgrad_mc}\\
&= \|x_{i} -  x^*\|^2 + \frac{2(f(x_i) - \min_{\cX} f)}{\|\zeta_i\|^2}\cdot\dotp{\zeta_i, x^* - x_{i}} + \frac{(f(x_i) - f( x^*))^2}{\|\zeta_i\|^2} \notag\\
&\leq \|x_{i} -  x^*\|^2 + \frac{2(f(x_i) -  \min f)}{\|\zeta_i\|^2}\left( f( x^*) - f(x_i) + a\|x_i -  x^*\|^2 + b \|x_i - x^\ast\| \right)\notag \\
&\qquad\qquad+ \frac{(f(x_i) - f(x^*))^2}{\|\zeta_i\|^2} \label{eqn2:weird_subgrad_mc}\\
&= \|x_{i} -  x^*\|^2 + \frac{f(x_i) - \min f}{\|\zeta_i\|^2}\left(2a\|x_i -  x^*\|^2 + 2b \|x_i - x^\ast\| -  (f(x_i) - f( x^*))  \right)\notag\\
&\leq \|x_{i} -  x^*\|^2 + \frac{f(x_i) -  \min f}{\|\zeta_i\|^2}\left(a\|x_i -  x^*\|^2 -  (\mu-2b)\|x_i -  x^*\|  \right)\label{eqn3:weird_subgrad_mc} \\
&= \|x_{i} -  x^*\|^2 + \frac{2a (f(x_i) -  \min f)}{\|\zeta_i\|^2}\left( \|x_i -  x^*\| -\frac{\mu- 2b}{2a}\right)\|x_i -  x^*\|\notag\\
&\leq \|x_{i} -  x^*\|^2 - \frac{(1-\gamma)(\mu - 2b)(f(x_i) -  \min f)}{\|\zeta_i\|^2}\cdot \|x_i -  x^*\|\label{eqn35:weird_subgrad_mc}\\
&\leq \left(1-\frac{ (1-\gamma)\mu(\mu - 2b)}{\|\zeta_i\|^2}\right)\|x_i- x^*\|^2\label{eqn4:weird_subgrad_mc}.
\end{align}
Here, the estimate \eqref{eqn1:weird_subgrad_mc} follows from the fact that
the projection $\proj_\cX(\cdot)$ is nonexpansive,
\eqref{eqn2:weird_subgrad_mc} uses the bound
in~\eqref{eq:mc_weak_convexity}, \eqref{eqn35:weird_subgrad_mc} follow from
the estimate $\dist(x_i,\cX^*)\leq \gamma \cdot \frac{\mu - 2b}{2a}$, while
\eqref{eqn3:weird_subgrad_mc} and \eqref{eqn4:weird_subgrad_mc} use local
sharpness. The result then follows by the upper bound $\|\zeta_i\| \leq L$.
\section{Proofs in Section~\ref{sec:robust_pca}}

\subsection{Proof of Lemma~\ref{lem:rpca_cross_term}}
\label{appendix:proof_rpca_cross_term}

The inequality can be established using an argument similar to that for bounding the $ T_3 $ term in \cite[Section 6.6]{chen2015fast}. We provide the proof below for completeness. Define the shorthand $ \Delta_S := S-S_{\sharp}$ and $ \Delta_X = X- X_{\sharp} $, and let $ e_j \in \mathbb{R}^d$ denote the $ j $-th standard basis vector of $ \mathbb{R}^d $. Simple algebra gives
\begin{align*}
| \langle S-S_{\sharp}, XX^\top -X_{\sharp} X_{\sharp}^\top\rangle |
&= | 2 \langle \Delta_S, \Delta_X  X_{\sharp}^\top \rangle + \langle \Delta_S, \Delta_X \Delta_X^\top \rangle | \\
&\le \Big( 2 \| X^{\top}_{\sharp} \Delta_S \|_F  + \| \Delta_X^\top \Delta_S \|_F \Big) \cdot \| \Delta_X\|_F.
\end{align*}
We claim that $ \| \Delta_S e_j \|_1 \le 2\sqrt{k}  \| \Delta_S e_j \|_2$ for each $ j\in [d] $. To see this, fix any $ j\in [d] $ and let $ v := Se_j $, $ v^* := S_\sharp e_j $, and $ T := \text{support}(v^*). $ We have
\begin{align*}
\| v^*_T \|_1  =  \| v^* \|_1
& \ge \| v \|_1  && S \in \mathcal{S}\\
& = \| v_T \|_1 + \| v_{T^c} \|_1 && \text{decomposability of $ \ell_1 $ norm}\\
& = \| v^*_T + (v - v^*)_T \|_1 + \| (v - v^*)_{T^c} \|_1 &&\\
& \ge \| v^*_T \|_1 - \|  (v - v^*)_T \|_1  +  \| (v - v^*)_{T^c} \|_1. && \text{reverse triangle inequality}
\end{align*}
Rearranging terms gives $ \| (v - v^*)_{T^c} \|_1 \le \|  (v - v^*)_T \|_1  $, whence
\begin{align*}
\| v - v^* \|_1 = \|  (v - v^*)_T \|_1 + \| (v- v^*)_{T^c} \|_1 &\leq 2 \|  (v - v^*)_T \|_1 \\
&\leq 2\sqrt{k} \|  (v- v^*)_T \|_2
\le 2\sqrt{k} \|  v - v^* \|_2,
\end{align*}
where step the second inequality holds because $ |T| \le k $ by assumption.
The claim follows from noting that $ v-v^* = \Delta_S e_j  $.

Using the claim, we get that
\begin{align*}
\| X^{\top}_{\sharp} \Delta_S \|_F
= \sqrt{\sum_{j\in[d]} \| X^{\top}_{\sharp} \Delta_S e_j \|_2^2 }
& \le \sqrt{\sum_{j\in[d]} \| X_{\sharp} \|_{2,\infty}^2 \| \Delta_S e_j \|_1^2 } \\
& \le \| X_{\sharp} \|_{2,\infty} \sqrt{\sum_{j\in[d]} 4k \| \Delta_S e_j \|_2^2 }
\le 2 \sqrt{\frac{\nu r k}{d}} \| \Delta_S \|_F.
\end{align*}
Using a similar argument and the fact that $ \| \Delta_X \|_{2,\infty} \le \| X\|_{2,\infty} + \| X_{\sharp}\|_{2,\infty} \le 3\sqrt{\frac{\nu r}{d}} $, we obtain
\begin{align*}
\| \Delta_X^{\top} \Delta_S \|_F \le 6 \sqrt{\frac{\nu r k}{d}} \| \Delta_S \|_F.
\end{align*}
Putting everything together, we have
\begin{align*}
| \langle S-S^*, XX^\top -X_{\sharp}X_{\sharp}^\top\rangle | \le \left( 2 \cdot 2 \sqrt{\frac{\nu r k}{d}} \| \Delta_S \|_F + 6 \sqrt{\frac{\nu r k}{d}} \| \Delta_S \|_F \right)  \cdot \| \Delta_X \|_F.
\end{align*}
The claim follows.

\subsection{Proof of Theorem~\ref{sharp_gen_case}}
\label{appendix:proof_sharpness_r1}

Without loss of generality, suppose that $x$ is closer to $\bar x$ than to $-\bar x$. Consider the following expression:
\begin{align*}
&\| \bar x(x - \bar x)^\top + (x - \bar x) \bar x^\top \|_1\\
&= \sup_{\|V\|_\infty = 1, V^\top = V} \trace((\bar x(x - \bar x)^\top + (x - \bar x) \bar x^\top)V)\\
&= \sup_{\|V\|_\infty = 1, V^\top = V} \trace(\bar xx^\top V + x \bar x^\top V - 2\bar x\bar x^\top V)\\
&= \sup_{\|V\|_\infty = 1, V^\top = V} \trace(x^\top V\bar x +  \bar x^\top Vx - 2\bar x^\top V\bar x)\\
&= 2\sup_{\|V\|_\infty = 1, V^\top = V} \trace(x^\top V\bar x  - \bar x^\top V\bar x)\\
&= 2\sup_{\|V\|_\infty = 1, V^\top = V} \trace((x- \bar x)^\top V\bar x)\\
&= 2\sup_{\|V\|_\infty = 1, V^\top = V} \trace(\bar x (x - \bar x )^\top V).
\end{align*}
We now produce a few different lower bounds by testing against different $V$. In what follows, we set $a = \sqrt{2} - 1$, i.e., the positive solution of the equation $1-a^2 = 2a$.
\paragraph{Case 1:} Suppose that
$$
|(x - \bar x )^\top \sign(\bar x)| \geq a\|x - \bar x\|_1.
$$
Then set $\bar V = \sign((x - \bar x )^\top \sign(\bar x)) \cdot  \sign(\bar x)\sign(\bar x)^\top$, to get
\begin{align*}
\| \bar x(x - \bar x)^\top + (x - \bar x) \bar x^\top \|_1 &\geq 2\trace(\bar x (x - \bar x )^\top \bar V)\\
&= 2\sign((x - \bar x )^\top \sign(\bar x))\cdot\trace( (x - \bar x )^\top\sign( \bar x)\sign(\bar x)^\top \bar x) \\
&= 2\|\bar x\|_1\sign((x - \bar x )^\top \sign(\bar x))\cdot (x - \bar x )^\top\sign( \bar x) \\
&\geq 2a\|\bar x\|_1 \|x - \bar x\|_1
\end{align*}

\paragraph{Case 2:} Suppose that
$$
|\sign(x - \bar x )^\top \bar x| \geq a \|\bar x\|_1.
$$
Then set $\bar V = \sign(\sign(x - \bar x )^\top \bar x) \cdot  \sign( x - \bar x)\sign( x - \bar x)^\top$, to get
\begin{align*}
\| \bar x(x - \bar x)^\top + (x - \bar x) \bar x^\top \|_1 &\geq 2\trace(\bar x (x - \bar x )^\top \bar V)\\
&= 2\sign(\sign(x - \bar x )^\top \bar x) \cdot\trace( (x - \bar x )^\top \sign( x - \bar x)\sign( x - \bar x)^\top \bar x) \\
&= 2\| x - \bar x\|_1\sign(\sign(x - \bar x )^\top \bar x)\cdot \sign( x - \bar x)^\top \bar x \\
&\geq 2a\|\bar x\|_1 \|x - \bar x\|_1
\end{align*}

\paragraph{Case 3:} Suppose that
$$
|(x - \bar x )^\top \sign(\bar x)| \leq a\|x - \bar x\|_1 \qquad  \text{and} \qquad |\sign(x - \bar x )^\top \bar x| \leq a \|\bar x\|_1
$$
Define  $\bar V = \frac{1}{2}(\sign(\bar x(x - \bar x)^\top ) + \sign((x - \bar x) \bar x^\top))$. Observe that
\begin{align*}
\trace( \bar x(x - \bar x )^\top \sign(\bar x(x - \bar x)^\top ))&=  (x - \bar x )^\top \sign(\bar x) \sign(x - \bar x)^\top\bar x \\
&\geq-  a^2\|\bar x\|_1 \|x - \bar x\|_1
\end{align*}
and
\begin{align*}
\trace( \bar x(x - \bar x )^\top\sign((x - \bar x) \bar x^\top)) &= \trace( \bar x(x - \bar x )^\top\sign(x - \bar x) \sign(\bar x^\top))\\
&= \|\bar x\|_1\|x - \bar x\|_1.
\end{align*}
Putting these two bounds together, we find that
\begin{align*}
\| \bar x(x - \bar x)^\top + (x - \bar x) \bar x^\top \|_1 &\geq  2\trace(\bar x (x - \bar x )^\top \bar V) = (1-a^2)\|\bar x\|_1\|x - \bar x\|_1.
\end{align*}
Altogether, we find that
\begin{align*}
F(x) &= \|xx^\top - \bar x \bar x^\top\|_1\\
&= \| \bar x (x - \bar x)^\top + (x - \bar x) \bar x^\top + (x - \bar x)(x - \bar x)^\top\|_1 \\
&\geq \| \bar x (x - \bar x)^\top + (x - \bar x) \bar x^\top \|_1 - \|(x - \bar x)(x - \bar x)^\top\|_1\\
&\geq 2a\|\bar x\|_1\|x - \bar x\|_1 - \|(x - \bar x)\|_1^2\\
&=   2a\|\bar x\|_1\left( 1 - \frac{\|x - \bar x\|_1}{2a\|\bar x\|_1}\right)\|x - \bar x\|_1,
\end{align*}
as desired.

\subsection{Proof of Lemma~\ref{lem:bound_outlier_term}}\label{appendix:proof_technical_lemma}
We start by stating a claim we will use to prove the lemma. Let us introduce some notation.
Consider the set
$$
S = \left\{(\Delta_+, \Delta_-) \in \RR^{d \times r } \times \RR^{d \times r}\mid \|\Delta_+\|_{2, \infty} \leq (1+C)\sqrt{\frac{\nu r}{ d}}  \|X_\sharp\|_{op}, \|\Delta_-\|_{2, 1} \neq 0\right\}. 
$$
Define the random variable
\begin{align*}
Z &= \sup_{(\Delta_+, \Delta_-) \in S}  \bigg|  \frac{1}{\|\Delta_-\|_{2, 1}}\sum_{i,j=1}^d \delta_{ij} |\dotp{\Delta_{-,i},\Delta_{+,j}} + \dotp{\Delta_{+,i},\Delta_{-,j}}| \\
&\hspace{60pt}- \EE  \frac{1}{\|\Delta_-\|_{2, 1}}\sum_{i,j=1}^d \delta_{ij} |\dotp{\Delta_{-,i},\Delta_{+,j}} + \dotp{\Delta_{+,i},\Delta_{-,j}}|\bigg|.
\end{align*}
\begin{claim}
	There exist constants $ c_2, c_3 > 0$ such that with probability at least  $1-\exp(-c_2 \log d)$
	\[Z \leq c_3 C\sqrt{ \tau \nu r\log d } \norm{X_\sharp}_{op}.\]
\end{claim}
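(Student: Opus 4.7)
The plan is to combine a Bernstein-type pointwise deviation estimate with a uniform-over-$S$ argument. By homogeneity in $\Delta_-$, it suffices to control the supremum over the slice $\{(\Delta_+,\Delta_-)\in S:\|\Delta_-\|_{2,1}=1\}$.

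For a fixed pair in this slice, write $a_{ij}:=\langle\Delta_{-,i},\Delta_{+,j}\rangle+\langle\Delta_{+,i},\Delta_{-,j}\rangle$, so that $W:=\sum_{i,j}\delta_{ij}|a_{ij}|$ is a sum of $d^2$ independent bounded random variables. Cauchy--Schwarz combined with $\|\Delta_+\|_{2,\infty}\leq(1+C)\sqrt{\nu r/d}\,\|X_\sharp\|_{\op}$ gives
\[
|a_{ij}|\;\lesssim\;C\sqrt{\nu r/d}\,\|X_\sharp\|_{\op}\bigl(\|\Delta_{-,i}\|_2+\|\Delta_{-,j}\|_2\bigr);
\]
coupled with $\|\Delta_-\|_{2,1}=1$, this yields estimates of the correct order on both $\max_{ij}|a_{ij}|$ and $\sum_{ij}a_{ij}^2$. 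Bernstein's inequality applied to the sum $W-\EE W=\sum_{ij}(\delta_{ij}-\tau)|a_{ij}|$ then produces
\[
\bigl|W-\EE W\bigr|\;\lesssim\;C\sqrt{t\,\tau\,\nu r}\,\|X_\sharp\|_{\op}\qquad\text{with probability}\qquad 1-e^{-ct},
\]
and taking $t\asymp\log d$, together with the hypothesis $d\geq c_1\log d/\tau$ which forces the sub-Gaussian term to dominate the linear Bernstein tail, reproduces the claimed bound at a single pair.

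The main obstacle is upgrading this pointwise estimate to the supremum $Z$. A naive $\epsilon$-net/union-bound strategy over the two factors $\{\Delta_+\}$ and $\{\Delta_-\}$ would cost a log-cardinality of order $dr\log(d)$, inflating the rate by a spurious factor of $\sqrt{dr}$. The cleaner route is to apply a Talagrand--Bousquet type concentration inequality for suprema of empirical processes: such an inequality concentrates $Z$ around its expectation $\EE Z$ at essentially the Bernstein rate above with only logarithmic overhead, while $\EE Z$ is controlled by standard symmetrization followed by Talagrand's contraction principle (which removes the absolute value at the cost of a factor of $2$) and a bound on the resulting Rademacher bilinear process using the rank-$2r$ structure of $\Delta_\pm$ together with Chevet-type estimates. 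The delicate point is not to lose the $\sqrt{\tau}$ scaling: the variance proxy in the empirical-process concentration must be computed using the $\tau$-Bernoulli structure of $\delta_{ij}$ rather than treating it as a bounded random variable with unit variance.
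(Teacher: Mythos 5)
Your skeleton is the same as the paper's: a pointwise Bernstein bound cannot be unioned over a net of $S$ without losing a dimensional factor, so instead you concentrate $Z$ around $\EE Z$ with a functional Bernstein inequality (the paper uses the Klein--Rio form, Theorem~\ref{thm:klein}), computing the envelope $b$ and the variance proxy $\sigma^2$ from the Bernoulli-$\tau$ structure of $\delta_{ij}$ exactly as you insist one must, and then bound $\EE Z$ by symmetrization. The only real divergence is in the last step. The paper does \emph{not} need contraction or a Chevet-type bilinear estimate: after symmetrization it uses the crude envelope $|\langle\Delta_{-,i},\Delta_{+,j}\rangle|\le\|\Delta_{-,i}\|_2\,\|\Delta_+\|_{2,\infty}$, so that the normalization $\|\Delta_-\|_{2,1}=1$ turns the sum over $i$ into a convex combination and the whole supremum collapses to $\|\Delta_+\|_{2,\infty}\cdot\EE\max_{j\in[d]}\bigl|\sum_{i}\varepsilon_{ij}\delta_{ij}\bigr|\lesssim\sqrt{\nu r/d}\,\|X_\sharp\|_{\op}(\sqrt{\tau d\log d}+\log d)$, i.e.\ a union bound over only $d$ columns of scalar Bernstein sums. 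This is where your plan is riskiest: a Chevet-type bound for the masked bilinear process requires taking the supremum over $\Delta_+$ at some point, and the obvious way of doing so (bounding $\sup_{\Delta_+}\|(M\Delta_+)_i\|_2$ by $\|\Delta_+\|_{2,\infty}\sum_j|M_{ij}|\approx\tau d\cdot\|\Delta_+\|_{2,\infty}$) overshoots the target by a factor of order $\sqrt{\tau d/\log d}$. You would need to exploit the $\ell_{2,1}/\ell_{2,\infty}$ duality in the same way the paper does, at which point the Chevet machinery is superfluous. So: the architecture is right and the two subtle points (no naive net; $\tau$-aware variance proxy) are correctly identified, but the expectation bound should be done by the elementary weighted-envelope reduction rather than by contraction plus Chevet, which as stated does not obviously deliver the $\sqrt{\tau}$ scaling.
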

Before proving this claim, let us show how it implies the theorem. Let
$$
R \in \argmin_{\hat R^\top \hat R = I} \|X - X_\sharp \hat R\|_{2, 1}.
$$
Set $\Delta_- = X -  X_\sharp R$ and $\Delta_+ = X + X_\sharp R$. Notice that
$$
\|\Delta_+\|_{2, \infty} \leq \|X\|_{2, \infty} + \|X_\sharp\|_{2, \infty} \leq (1 + C)  \|X_\sharp\|_{2, \infty} \leq \sqrt{\frac{\nu r}{ d}} (1+C) \|X_\sharp\|_{op}.
$$
Therefore, because $(\Delta_+, \Delta_-) \in S$ and
$$
\frac{1}{\|\Delta_-\|_{2, 1}}\sum_{i,j=1}^d \delta_{ij} |\dotp{X_i,X_j} - \dotp{ (X_\sharp)_i, (X_\sharp)_j}| = \frac{1}{\|\Delta_-\|_{2, 1}}\sum_{i,j=1}^d \delta_{ij} |\dotp{\Delta_{-,i},\Delta_{+,j}} + \dotp{\Delta_{+,i},\Delta_{-,j}}|,
$$
we have that
\begin{align*}
\sum_{i,j=1}^d \delta_{ij} |\dotp{X_i,X_j} - \dotp{(X_\sharp)_i,(X_\sharp)_j}| &\leq \tau \|XX^\top - X_\sharp X_\sharp^\top \|_1 +c_3C \sqrt{\tau\nu r\log d } \| X_\sharp\|_{op} \|X - X_\sharp R \|_{2, 1}\\
&\leq  \left( \tau +  \frac{c_3C \sqrt{\tau \nu r\log d }}{c} \| X_\sharp\|_{op}\right)\|XX^\top - X_\sharp X_\sharp^\top \|_1 ,
\end{align*}
where the last line follows by Conjecture~\ref{conj:sharp_ell1}. This proves the desired result.

\begin{proof}[Proof of the Claim]
	Our goal is to show that the random variable $Z$ is highly concentrated around its mean.
	We may apply the standard symmetrization inequality~\cite[Lemma 11.4]{BouLugMas13} to bound the expectation $\EE Z$ as follows:
	\begin{align*}
	\EE Z &\leq 2\EE \sup_{(\Delta_+, \Delta_-) \in S}  \left| \frac{1}{\|\Delta_-\|_{2, 1}}\sum_{i,j=1}^d \varepsilon_{ij}\delta_{ij} |\dotp{\Delta_{-,i},\Delta_{+,j}} + \dotp{\Delta_{+,i},\Delta_{-,j}}| \right|\\
	&\leq \underbrace{2\EE \sup_{(\Delta_+, \Delta_-) \in S}  \left|\frac{1}{\|\Delta_-\|_{2, 1}}\sum_{i,j=1}^d \varepsilon_{ij}\delta_{ij}| \dotp{\Delta_{-,i},\Delta_{+,j}}|  \right|}_{T_1} + \underbrace{2\EE \sup_{(\Delta_+, \Delta_-) \in S}  \left|\frac{1}{\|\Delta_-\|_{2, 1}}\sum_{i,j=1}^d \varepsilon_{ij}\delta_{ij}  |\dotp{\Delta_{+,i},\Delta_{-,j}} |\right|}_{T_2}.
	\end{align*}
	Observing that $T_1$ and $T_2$ can both be bounded by
	\begin{align*}
	\max\{T_1, T_2\} &\leq 2 \sup_{(\Delta_+, \Delta_-) \in S} \frac{1}{\|\Delta_-\|_{2, 1}}\|\Delta_+\Delta_-^\top\|_{2,\infty} \EE\max_{j} \left|\sum_{i=1}^d \varepsilon_{ij}\delta_{ij}  \right|\\
	&\leq 2 \sup_{(\Delta_+, \Delta_-) \in S} \|\Delta_+\|_{2, \infty} \EE\max_{j} \left|\sum_{i=1}^d \varepsilon_{ij}\delta_{ij}  \right|\\
	&\leq 2(1+C) \sqrt{\frac{\nu r}{d}}\|X_\sharp\|_{op}  \EE\max_{j} \left|\sum_{i=1}^d \varepsilon_{ij}\delta_{ij}  \right| \\
	& \lesssim C \sqrt{\frac{\nu r}{d}}\|X_\sharp\|_{op}  (\sqrt{\tau d \log d} + \log d),
	\end{align*}
	where the final inequality follows from Bernstein's inequality and a union bound, we find that
	$$
	\EE Z \lesssim  C\sqrt{\frac{\nu r}{d}}\|X_\sharp\|_{op}  (\sqrt{\tau d \log d} + \log d).
	$$

	To prove that $Z$ is well concentrated around $ \EE Z$, we apply Theorem~\ref{thm:klein}.
	To apply this theorem, we set $\cS = S$ and define the collection $(Z_{ij,s})_{ij, s\in \cS}$, where $s = (\Delta_+, \Delta_-)$ by
	\begin{align*}
	Z_{ij,s} &=  \frac{1}{\|\Delta_-\|_{2, 1}}\delta_{ij} |\dotp{\Delta_{-,i},\Delta_{+,j}} + \dotp{\Delta_{+,i},\Delta_{-,j}}| - \EE  \frac{1}{\|\Delta_-\|_{2, 1}}\delta_{ij} |\dotp{\Delta_{-,i},\Delta_{+,j}} + \dotp{\Delta_{+,i},\Delta_{-,j}}|\\
	&= \frac{(\delta_{ij} - \tau)}{\|\Delta_-\|_{2, 1}}  |\dotp{\Delta_{-,i},\Delta_{+,j}} + \dotp{\Delta_{+,i},\Delta_{-,j}}|.
	\end{align*}
	We also bound
	\begin{align*}
	b &= \sup_{ij, s \in \cS} |Z_{ij,s}| \leq \sup_{ij, (\Delta_+, \Delta_-) \in S}  \left|  \frac{(\delta_{ij} - \tau)}{\|\Delta_-\|_{2, 1}} (\|\Delta_{-,i}\|_F\|\Delta_{+,j}\|_F + \|\Delta_{+,i}\|_F\|\Delta_{-,j}\|_F) \right|\\
	&\leq  (1+C)\sqrt{\frac{\nu r}{d}}\|X_\sharp\|_{op} \sup_{ij,(\Delta_+, \Delta_-) \in S}  \left|  \frac{1}{\|\Delta_-\|_{2, 1}} (\|\Delta_{-,i}\|_F + \|\Delta_{-,j}\|_F) \right| \leq 2C\sqrt{\frac{\nu r}{d}}\|X_\sharp\|_{op}
	\end{align*}
	and
	\begin{align*}
	\sigma^2 &= \sup_{(\Delta_+, \Delta_-) \in S} \EE  \frac{1}{\|\Delta_-\|_{2, 1}^2} \sum_{ij =1}^d(\delta_{ij} - \tau)^2  |\dotp{\Delta_{-,i},\Delta_{+,j}} + \dotp{\Delta_{+,i},\Delta_{-,j}}|^2\\
	&\leq \tau\sup_{(\Delta_+, \Delta_-) \in S}   \frac{1}{\|\Delta_-\|_{2, 1}^2} \sum_{ij =1}^d  (\|\Delta_{-,i}\|_F\|\Delta_{+,j}\|_F + \|\Delta_{+,i}\|_F\|\Delta_{-,j}\|_F)^2\\
	&\leq \tau \sup_{(\Delta_+, \Delta_-) \in S}   \frac{4}{\|\Delta_-\|_{2, 1}^2} \sum_{ij =1}^d  \|\Delta_{-,i}\|_F^2\|\Delta_{+,j}\|_F^2\\
	&\leq  \tau\frac{4(1+C)^2\nu r}{d}\|X_\sharp\|_{op}^2\sup_{(\Delta_+, \Delta_-) \in S}   \frac{2}{\|\Delta_-\|_{2, 1}^2} \sum_{ij =1}^d  \|\Delta_{-,i}\|_F^2\\
	&\leq  \tau\frac{4(1+C)^2\nu r}{d}\|X_\sharp\|_{op}^2\sup_{(\Delta_+, \Delta_-) \in S}   \frac{2d\|\Delta_-\|_F^2}{\|\Delta_-\|_{2, 1}^2} \\
	&\leq  16\tau C^2\nu r\|X_\sharp\|_{op}^2.
	\end{align*}
	Therefore, due to Theorem~\ref{thm:klein} there exists a constant $c_1, c_2, c_3 > 0$ so that with $t = c_2 \log d$, we have that with probability $1-e^{-c_2\log d}$, the bound
	\begin{align*}
	Z &\leq \EE Z + \sqrt{8\left(2b\EE Z+\sigma{}^{2}\right)t}+8bt\\
	&\leq  c_1C\sqrt{\frac{\nu r}{d}}\|X_\sharp\|_{op}  (\sqrt{\tau d \log d} + \log d) \\
	&\hspace{20pt} + \sqrt{8c_2\left(\frac{c_1^2 C^2\nu r}{d}\|X_\sharp\|_{op}^2  (\sqrt{\tau d \log d} + \log d)+16\tau C^2 \nu r\|X_\sharp\|_{op}^2\right)\log d } + 16 c_2C \sqrt{\frac{\nu r}{d}}\|X_\sharp\|_{op}\log(d)\\
	&\leq C \sqrt{\nu r\log d } \|X_\sharp\|_{op} \left( c_1\sqrt{\tau} + c_1\sqrt{\frac{\log d }{d}} + \sqrt{8c_2}\sqrt{ c_1^2\sqrt{\frac{\tau \log d}{d}} + c_1^2\frac{\log d}{d} + 16\tau } + 16c_2 \sqrt{\frac{\log d}{d}}\right) \\
	&\leq c_3C \sqrt{\tau\nu r \log d } \| X_\sharp\|_{op}.
	\end{align*}
	where the last line follows since by assumption $\log d / d \lesssim \tau.$
\end{proof}

\section{Proofs in Section~\ref{sec:recovery_tol}}
\subsection{Proof of Lemma~\ref{lem:lip_tol}}
The proof follows the same strategy as \cite[Theorem 6.1]{davis2017nonsmooth}.
Fix $x \in \widetilde \cT_1$ and let $\zeta \in \partial \tilde f(x)$. Then for all $y$, we have, from Lemma~\ref{lem:approximate_subgradient}, that
$$
f(y) \geq \tilde f(x) + \dotp{\zeta, y - x} - \frac{\rho}{2} \|x - y\|^2_2 - 3\varepsilon.
$$
Therefore, the function
$$
g(y) := f(y) - \dotp{\zeta, y - x} + \frac{\rho}{2} \|x - y\|^2_2 + 3\varepsilon
$$
satisfies
$$
g(x) - \inf g \leq f(x) - \tilde f(x) + 3\varepsilon \leq 4\varepsilon.
$$
Now, for some $\gamma > 0$ to be determined momentarily, define
$$
\hat x = \argmin \left\{ g(x) + \frac{ \varepsilon}{\gamma^2}\|x - y\|^2_2 \right\}.
$$
First order optimality conditions and the sum rule immediately imply that
\begin{align*}
\frac{2\varepsilon}{\gamma^2} (x -\hat x) \in \partial g(\hat x) = \partial f(\hat x) - \zeta + \rho(\hat x - x).
\end{align*}
Thus,
$$
\dist(\zeta, \partial f(\hat x)) \leq \left(\frac{2\varepsilon}{\gamma^2} + \rho\right)\|x -\hat x\|_2.
$$
Now we estimate $\|x - \hat x\|_2$. Indeed, from the definition of $\hat x$ we have
$$
\frac{\varepsilon}{\gamma^2}\| \hat x - x\|^2 \leq g(x) - g(\hat x) \leq g(x) - \inf g \leq 4\varepsilon.
$$
Consequently, we have $\|x - \hat x\| \leq 2\gamma$. Thus, setting $\gamma = \sqrt{2\varepsilon/\rho}$ and recalling that $\varepsilon \leq \mu^2/56\rho$ we find that
$$
\dist(\hat x, \cX^\ast) \leq \|x - \hat x\| + \dist(x, \cX^\ast) \leq 2\sqrt{\frac{2\varepsilon}{ \rho}} + \frac{\mu}{4\rho} \leq  \frac{\mu}{\rho}.
$$
Likewise, we have
$$
\dist(\hat x, \cX) \leq \|x - \hat x\| \leq 2\sqrt{\frac{2\varepsilon}{ \rho}} .
$$
Therefore, setting $L = \sup\left\{\|\zeta\|_2:\zeta\in \partial  f(x), \dist(x, \cX^\ast) \leq \frac{\mu}{\rho}, \dist(x, \cX) \leq 2\sqrt{\frac{\varepsilon}{\rho}}\right\}$, we find that
\begin{align*}
\|\zeta\| \leq L + \dist(\zeta, \partial f(\hat x)) \leq L + \frac{4\varepsilon}{\gamma} + 2\rho\gamma = L + 2\sqrt{8\rho\varepsilon},
\end{align*}
as desired.

\subsection{Proof of Theorem~\ref{thm:qlinear_tol}}
Let $i \geq 0$, suppose $x_i \in \widetilde \cT_1$, and let $x^\ast \in \proj_{\cX^\ast}(x_i)$. Notice that Lemma~\ref{lem:sharp_on_tube_2} implies $\tilde f(x_i)-\min_{\cX}f>0$. We successively compute
\begin{align}
\|x_{i+1} -  x^*\|^2&=\left\|\proj_{\cX}\left(x_{i}-\tfrac{\tilde f(x_i)-\min_{\mathcal{X}} f}{\|\zeta_i\|^2} \zeta_i\right)-\proj_\cX(x^*)\right\|^2\notag\\
&\leq \left\|(x_{i} - x^*)- \tfrac{\tilde f(x_i)-\min_{\mathcal{X}} f}{\|\zeta_i\|^2} \zeta_i\right\|^2 \label{eqn1:weird_subgrad_tol}\\
&= \|x_{i} -  x^*\|^2 + \frac{2(\tilde f(x_i) - \min_{\cX} f)}{\|\zeta_i\|^2}\cdot\dotp{\zeta_i, x^* - x_{i}} + \frac{(\tilde f(x_i) - \min_\cX f)^2}{\|\zeta_i\|^2} \notag\\
&\leq \|x_{i} -  x^*\|^2 + \frac{2(\tilde f(x_i) -  \min_\cX f)}{\|\zeta_i\|^2}\left( \min_\cX f - \tilde f(x_i) + \frac{\rho}{2}\|x_i -  x^*\|^2 + 3\varepsilon \right)\notag \\
&\qquad\qquad+ \frac{(\tilde f(x_i) - \min_\cX f)^2}{\|\zeta_i\|^2} \label{eqn2:weird_subgrad_tol}\\
&= \|x_{i} -  x^*\|^2 + \frac{\tilde f(x_i) - \min_\cX f}{\|\zeta_i\|^2}\left(\rho\|x_i -  x^*\|^2 -  (\tilde f(x_i) - \min_\cX f) + 6\varepsilon  \right)\notag\\
&\leq \|x_{i} -  x^*\|^2 + \frac{\tilde f(x_i) -  \min_\cX f}{\|\zeta_i\|^2}\left(\rho\|x_i -  x^*\|^2 -  \mu\|x_i -  x^*\|  + 7\varepsilon  \right)\label{eqn3:weird_subgrad_tol} \\
&\leq \|x_{i} -  x^*\|^2 + \frac{\rho (\tilde f(x_i) -  \min_\cX f)}{\|\zeta_i\|^2}\left( \|x_i -  x^*\| -\frac{\mu}{2\rho}\right)\|x_i -  x^*\|\label{eqn:tube_lower}\\
&\leq \|x_{i} -  x^*\|^2 - \frac{\mu(\tilde f(x_i) -  \min_\cX f)}{4\|\zeta_i\|^2}\cdot \|x_i -  x^*\|\label{eqn35:weird_subgrad_tol}\\
&\leq \|x_{i} -  x^*\|^2 - \frac{\mu(\mu \|x_i - x^\ast\| - \varepsilon)}{4\|\zeta_i\|^2}\cdot \|x_i -  x^*\|\label{eqn4:weird_subgrad_tol}\\
&\leq \left(1-\frac{13\mu^2}{56\|\zeta_i\|^2}\right)\|x_i- x^*\|^2\notag.
\end{align}
Here, the estimate \eqref{eqn1:weird_subgrad_tol} follows from the fact that the projection $\proj_Q(\cdot)$ is nonexpansive, \eqref{eqn2:weird_subgrad_tol} uses Lemma~\ref{lem:approximate_subgradient}, the estimate \eqref{eqn:tube_lower} follows from the assumption $\epsilon<\frac{\mu}{14}\|x_k-x^*\|$, the estimate \eqref{eqn35:weird_subgrad_tol} follows from the estimate $\|x_i-x^*\|\leq \frac{\mu}{4\rho}$, while \eqref{eqn3:weird_subgrad_tol} and \eqref{eqn4:weird_subgrad_tol} use Lemma~\ref{lem:sharp_on_tube_2}.
We therefore deduce
\begin{equation*} 
\dist^2(x_{i+1};\cX^*)\leq \|x_{i+1} - x^*\|^2\leq \left(1-\frac{13\mu^2}{56L^2}\right)\dist^2(x_i,\cX^*).
\end{equation*}
Consequently either we have $\dist(x_{i+1}, \cX^\ast) < \frac{14\varepsilon}{\mu}$ or $x_{i+1} \in \widetilde \cT_1$. Therefore, by induction, the proof is complete.

\subsection{Proof of Theorem~\ref{thm:prox_linear_tol}}
Let $i \geq 0$, suppose $x_i \in \cT_\gamma$, and let $x^\ast \in \proj_{\cX^\ast}(x_i)$. Then
\begin{align*}
\mu \dist (x_{i+1}, \cX^\ast) \leq  f(x_{i+1}) - \inf_{\cX} f &\leq f_x(x_{i+1}) - \inf_\cX f + \frac{\rho}{2}\|x_{i+1} - x_i\|^2 \\
&\leq \tilde f_x(x_{i+1}) - \inf_\cX f + \frac{\rho}{2}\|x_{i+1} - x_i\|^2  + \varepsilon \\
&\leq  \tilde f_x(x^\ast) - \inf_{\cX} f + \frac{\beta}{2}\|x_i - x^\ast \|^2 + \varepsilon \\
&\leq   f_x(x^\ast) - \inf_{\cX} f + \frac{\beta}{2}\|x_i - x^\ast \|^2 + 2\varepsilon \\
&\leq    f(x^\ast) - \inf_{\cX} f + \beta \|x_i - x^\ast \|^2 + 2\varepsilon \\
&=   \beta\dist^2(x_i, \cX^\ast) + 2\varepsilon .
\end{align*}
Rearranging yields the result.

\section{Auxiliary lemmas}
\begin{lem}[Lemma 3.1 in \cite{candes2011tight}]
	\label{lemma:eps_net}
	Let $S_r := \left\{X \in \RR^{d_1 \times d_2} \mid \rank(X) \leq r,
	\norm{X}_F = 1\right\}$. There exists an $\epsilon$-net $\mathcal{N}$ (with
	respect to
	$\|\cdot\|_F$) of $S_r$ obeying \[|\mathcal{N}| \leq
	\left(\frac{9}{\epsilon}\right)^{(d_1+d_2+1)r}.\]
\end{lem}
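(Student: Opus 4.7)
The plan is to follow the standard volumetric argument based on the singular value decomposition. Any $X\in S_r$ can be factored as $X=U\Sigma V^\top$ with $U\in\R^{d_1\times r}$, $V\in\R^{d_2\times r}$ having orthonormal columns, and $\Sigma\in\R^{r\times r}$ a nonnegative diagonal matrix with $\|\Sigma\|_F=\|X\|_F=1$. I will construct $\epsilon$-nets for each of these three factors separately and then combine them via a product construction. Since the Frobenius unit ball in $\R^n$ admits an $\delta$-net of cardinality at most $(1+2/\delta)^n\leq (3/\delta)^n$ (see, e.g., the standard covering argument in \cite[Corollary 4.2.13]{vershynin2016high}), we get:
\begin{itemize}
\item a $(\epsilon/3)$-net $\cN_\Sigma$ of the unit sphere of diagonal matrices, of size at most $(9/\epsilon)^r$;
\item a $(\epsilon/3)$-net $\cN_U$, in operator norm, for the set of $d_1\times r$ matrices $U$ with $\|U\|_{\rm op}\leq 1$, of size at most $(9/\epsilon)^{d_1 r}$;
\item and analogously a $(\epsilon/3)$-net $\cN_V$ of size at most $(9/\epsilon)^{d_2 r}$.
\end{itemize}
The operator-norm nets exist since $\|U\|_{\rm op}\leq \|U\|_F$ implies that such $U$ lies in the Frobenius unit ball, and an $(\epsilon/3)$-net in Frobenius norm is a fortiori one in operator norm.

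The candidate net will then be $\cN:=\{\bar U\bar\Sigma\bar V^\top:\bar U\in\cN_U,\bar\Sigma\in\cN_\Sigma,\bar V\in\cN_V\}$, whose cardinality is bounded by the product $(9/\epsilon)^{(d_1+d_2+1)r}$ as required. Given any $X=U\Sigma V^\top\in S_r$, choose $\bar U\in\cN_U$, $\bar\Sigma\in\cN_\Sigma$, $\bar V\in\cN_V$ at distance at most $\epsilon/3$ from $U,\Sigma,V$, and estimate
\begin{align*}
\|X-\bar U\bar\Sigma \bar V^\top\|_F
&\leq \|(U-\bar U)\Sigma V^\top\|_F+\|\bar U(\Sigma-\bar\Sigma)V^\top\|_F+\|\bar U\bar\Sigma(V-\bar V)^\top\|_F\\
&\leq \|U-\bar U\|_{\rm op}\|\Sigma\|_F+\|\Sigma-\bar\Sigma\|_F+\|\bar\Sigma\|_F\|V-\bar V\|_{\rm op}\\
&\leq \tfrac{\epsilon}{3}+\tfrac{\epsilon}{3}+\tfrac{\epsilon}{3}=\epsilon,
\end{align*}
using that $\|\Sigma\|_F=1$, $\|\bar U\|_{\rm op}\leq 1$ and $\|\bar\Sigma\|_F\leq 1$ (since $\bar\Sigma$ lies on the unit sphere). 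Every element of $\cN$ has rank at most $r$ and Frobenius norm at most $1$, so we may rescale to land in $S_r$ if desired, or simply replace each $\bar U\bar\Sigma\bar V^\top$ with its projection onto $S_r$; either modification only strengthens the approximation bound and so does not inflate $\epsilon$.

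The potential obstacles are purely bookkeeping. Two mild points require care: first, one must verify the $(3/\delta)^n$ volumetric bound is used over the right ambient dimension (the diagonal of $\Sigma$ is $r$-dimensional, and the Stiefel-like sets $\{U:\|U\|_{\rm op}\leq 1\}$ sit in $d_1r$ and $d_2r$ dimensional Euclidean spaces); second, one must decompose the telescoping error carefully so that at each step the bound $\|\cdot\|_{\rm op}\leq 1$ on two of the three factors absorbs the third. Neither step presents a genuine difficulty, so the entire argument is essentially a direct SVD-based counting estimate.
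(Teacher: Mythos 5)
The paper does not actually prove this lemma; it is quoted directly from Cand\`es--Plan, and your argument follows the same SVD-plus-product-net architecture as that original proof, with the three-term telescoping estimate and the inequality $\|AB\|_F\le\|A\|_{\rm op}\|B\|_F$ all used correctly.

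There is, however, a genuine gap in how you count $|\cN_U|$ (and symmetrically $|\cN_V|$). You justify the bound $(9/\epsilon)^{d_1 r}$ by asserting that $\|U\|_{\rm op}\le\|U\|_F$ places the set $\{U:\|U\|_{\rm op}\le 1\}$ inside the Frobenius unit ball, and then covering that ball. The inequality runs the wrong way for this purpose: it shows the Frobenius ball sits inside the operator-norm ball, not conversely. In fact $\{U\in\R^{d_1\times r}:\|U\|_{\rm op}\le 1\}$ contains matrices of Frobenius norm as large as $\sqrt{r}$ --- precisely the matrices with orthonormal columns that arise as SVD factors, which satisfy $\|U\|_F=\sqrt{r}$, not $\le 1$. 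Covering this larger set by Frobenius balls of radius $\epsilon/3$ costs an extra factor of order $r^{d_1 r/2}$, which would destroy the stated cardinality bound. The repair is standard and one line: the volumetric estimate $(1+2/\delta)^n\le(3/\delta)^n$ holds for the unit ball of \emph{any} norm on an $n$-dimensional space when covered in that same norm, so cover $\{U:\|U\|_{\rm op}\le 1\}$ directly in the operator norm; your triangle-inequality argument only ever uses $\|U-\bar U\|_{\rm op}\le\epsilon/3$, so nothing else changes. (Cand\`es--Plan instead cover the orthonormal factors in the maximum column $\ell_2$-norm and bound $\|(U-\bar U)\Sigma\|_F$ by that norm times $\|\Sigma\|_F$; either route gives $(9/\epsilon)^{d_1 r}$.) A secondary inaccuracy: replacing each net point by its nearest point of $S_r$ does not ``strengthen the approximation bound'' --- by the triangle inequality it can double the covering radius, producing only a $2\epsilon$-net; since the lemma does not require $\cN\subset S_r$, that remark should simply be deleted.
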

\begin{proposition}[Corollary 1.4 in \cite{rudelson2014small}]
	\label{prop:small_ball}
	Consider $X_1, \dots, X_d$ real-valued random variables and let $\sigma \in \SS^{d-1}$ be a unit vector. Let $t, p > 0$ such that
	\[\sup_{u \in \RR} \PP\left( |X_i - u| \leq t \right) \leq p \qquad \text{for all }i = 1, \dots, d.\]
	Then the following holds
	\[\sup_{u \in \RR}  \PP \left( \left|\sum_k \sigma_k X_k - u\right| \leq t \right)\leq Cp,\]
	where $C > 0$ is a universal constant.
\end{proposition}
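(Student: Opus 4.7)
The plan is to apply the classical Fourier-analytic approach based on Esseen's concentration inequality, together with the (implicit) independence of the $X_k$. Setting $Y := \sum_k \sigma_k X_k$, Esseen's inequality gives the reduction
$$\sup_u \PP(|Y - u| \leq t) \,\lesssim\, t \int_{|\xi| \leq 1/t} |\phi_Y(\xi)| \, d\xi,$$
where $\phi_Z$ denotes the characteristic function of $Z$. Independence factors the right-hand side as $\phi_Y(\xi) = \prod_k \phi_{X_k}(\sigma_k \xi)$, so the claim reduces to a uniform control on this product of characteristic functions over $|\xi| \leq 1/t$.

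The next step is to convert the small-ball hypothesis on each $X_k$ into a pointwise bound on $|\phi_{X_k}|$. The natural tool is symmetrization: if $X_k'$ is an independent copy of $X_k$ and $Z_k := X_k - X_k'$, then $Z_k$ is symmetric, inherits $\sup_u \PP(|Z_k - u| \leq t) \leq p$, and satisfies $|\phi_{X_k}(\xi)|^2 = \phi_{Z_k}(\xi) = 1 - 2\EE \sin^2(\xi Z_k / 2)$. Splitting the expectation on $\{|Z_k| \leq t\}$ (which has probability at most $p$) and its complement (which contributes positively to $\EE \sin^2$) yields a bound of the form
$$|\phi_{X_k}(\xi)|^2 \leq 1 - c(1-p)\min\!\big(1, (\sigma_k \xi t)^2\big)$$
for some universal $c>0$. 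Using the unit-norm constraint $\sum_k \sigma_k^2 = 1$, the product is then dominated by a Gaussian envelope $|\phi_Y(\xi)|^2 \leq \exp(-c(1-p)\xi^2 t^2)$ on the relevant range, whose integral over $|\xi| \leq 1/t$ is of the correct order.

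The main obstacle is extracting a genuinely linear-in-$p$ dependence in the final constant. A naive symmetrization delivers a factor of order $1/\sqrt{1-p}$, which is strong when $p$ is small but does not close the estimate when $p$ is close to $1$. A workaround is to dichotomize: the regime $p \geq 1/2$ is trivial as long as $C \geq 2$, while for $p \leq 1/2$ the Gaussian estimate above delivers a bound of order $p$. Alternatively, since the proposition is quoted verbatim from Rudelson--Vershynin, one may bypass the delicate bookkeeping entirely by appealing to the sharper concentration-function machinery developed there, from which the statement follows as a direct corollary.
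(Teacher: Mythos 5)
The paper itself offers no proof of this proposition: it is imported verbatim as Corollary~1.4 of \cite{rudelson2014small} and listed among the auxiliary lemmas, so your argument has to stand on its own. Your opening moves are the standard and correct ones --- Esseen's inequality, the factorization $\phi_Y(\xi)=\prod_k\phi_{X_k}(\sigma_k\xi)$ via the (implicit) independence of the $X_k$, and the symmetrization $Z_k=X_k-X_k'$ with $|\phi_{X_k}(\xi)|^2=\phi_{Z_k}(\xi)=1-2\,\EE\sin^2(\xi Z_k/2)$ and $\PP(|Z_k|\le t)\le p$. All of those identities are fine.

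The gap is the claimed pointwise bound $|\phi_{X_k}(\xi)|^2\le 1-c(1-p)\min\bigl(1,(\sigma_k\xi t)^2\bigr)$, which is false: on the event $\{|Z_k|>t\}$ you cannot bound $\sin^2(\sigma_k\xi Z_k/2)$ from below, because $\sin^2$ vanishes whenever $\sigma_k\xi Z_k/2$ hits a multiple of $\pi$. Concretely, let $X_k$ be uniform on the arithmetic progression $\{0,a,2a,\dots,(N-1)a\}$ with $a=2\pi t/\sigma_k\ge 2\pi t>2t$, so that $\sup_u\PP(|X_k-u|\le t)=1/N=p$ is arbitrarily small. Then $Z_k\in a\ZZ$ almost surely, and at $\xi=2\pi/(\sigma_k a)=1/t$, which lies in the Esseen integration range, one has $\sin^2(\sigma_k\xi Z_k/2)=\sin^2(\pi Z_k/a)\equiv 0$ and hence $|\phi_{X_k}(\sigma_k\xi)|=1$. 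The Gaussian envelope $|\phi_Y(\xi)|^2\le\exp(-c(1-p)\xi^2t^2)$ therefore fails for lattice-type distributions, and one must instead show that the set of frequencies at which the product of characteristic functions is close to $1$ has small measure --- the recurrences of $|\phi_{X_k}|$ to $1$ are precisely what makes the cited result nontrivial and what the Hal\'asz-type machinery in \cite{rudelson2014small} is built to handle. The dichotomy you discuss ($p\ge 1/2$ versus $p\le 1/2$) addresses only the bookkeeping of the constant, not this obstruction; and your closing fallback of ``appealing to the machinery developed there'' is exactly what the paper does by citing the result, but it is not a proof.
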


\begin{thm}[Talagrand's Functional Bernstein for non-identically distributed variables {\cite[Theorem 1.1(c)]{klein2005concentration}}]\label{thm:klein}
	Let $\mathcal{S}$ be a countable index set. Let $Z_{1},\ldots,Z_{n}$
	be independent vector-valued random variables of the form $Z_{i}=(Z_{i,s})_{s\in\mathcal{S}}$.
	Let $Z:=\sup_{s\in\mathcal{S}}\sum_{i=1}^{n}Z_{i,s}$. Assume that
	for all $i\in[n]$ and $s\in\mathcal{S}$, $\EE Z_{i,s}=0$ and $\left|Z_{i,s}\right|\le b$.
	Let
	\[
	\sigma{}^{2}=\sup_{s\in\mathcal{S}}\sum_{i=1}^{n}\EE Z_{i,s}^{2}.
	\]
	Then for each $t>0$,
	we have the tail bound
	\begin{align*}
	P\left(Z-\EE Z\ge\sqrt{8\left(2b\EE Z+\sigma{}^{2}\right)t}+8bt\right) & \le e^{-t}.
	\end{align*}
\end{thm}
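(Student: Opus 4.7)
The final statement is Klein's functional Bernstein inequality, which the paper cites from \cite{klein2005concentration} rather than reproves. To outline a proof from scratch, my plan is to follow the entropy method of Ledoux and Massart, now standard for deriving concentration of suprema of empirical processes. The target is a Chernoff-style bound on $\psi(\lambda) := \log \EE e^{\lambda (Z - \EE Z)}$ of Bernstein shape $\psi(\lambda) \le \frac{\lambda^2 v}{2(1 - b\lambda)}$ for $\lambda \in [0, 1/b)$, where $v := \sigma^2 + 2b\EE Z$; once this is in hand, optimizing the Chernoff inequality $P(Z - \EE Z \ge u) \le \exp(-\lambda u + \psi(\lambda))$ over $\lambda$ and inverting in $u$ produces precisely the $\sqrt{8vt} + 8bt$ form in the statement.

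The first main step is to view $Z$ as a function $f(X_1,\dots,X_n)$ of independent coordinates and apply tensorization of entropy:
\[
\mathrm{Ent}(e^{\lambda Z}) \le \sum_{i=1}^n \EE\bigl[\mathrm{Ent}_i(e^{\lambda Z})\bigr],
\]
where $\mathrm{Ent}_i$ is the conditional entropy given all variables other than $X_i$. The second step is a modified logarithmic Sobolev inequality for bounded variables: letting $X_i'$ be an independent copy of $X_i$ and $Z^{(i)}$ the value of $f$ after this swap, one has $\mathrm{Ent}_i(e^{\lambda Z}) \le \EE_i[\phi(-\lambda(Z - Z^{(i)}))e^{\lambda Z}]$ with $\phi(x) = e^x - x - 1$. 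Because $Z$ is a supremum and the $Z_{i,s}$ are bounded and centered, one can control $(Z - Z^{(i)})_+$ by the coordinate-wise oscillations, which inherits the bound $b$, and $\sum_i \EE_i (Z - Z^{(i)})_+^2$ by the uniform variance $\sigma^2$. Combining these gives, after some algebra, the differential inequality $\lambda \psi'(\lambda) - \psi(\lambda) \le \lambda^2 v \, g(b\lambda)$ for a subquadratic $g$; integrating via Herbst's trick closes the loop.

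The hard part will be extracting the $2b \EE Z$ contribution to $v$ with the sharp constant $8$. A direct Efron--Stein or symmetric application of the log-Sobolev step yields only the ``variance'' term $\sigma^2$ and loses the sub-Poissonian $b\EE Z$ piece that is essential in the regime where $\EE Z$ dominates. Klein's refinement exploits the one-sided nature of the supremum: rather than bounding $\mathrm{Ent}_i$ by the full oscillation, one splits into increments where $(Z-Z^{(i)})_+$ is controlled in expectation by the contribution of coordinate $i$ to $\EE Z$ itself, which produces the $2b\EE Z$ term after summing in $i$ and reabsorbing. I would follow the Bousquet-style sharpening of this argument to nail down the constants, and only at the end convert the moment-generating bound into the $\sqrt{8(2b\EE Z + \sigma^2)t} + 8bt$ deviation through the standard Bernstein lemma.
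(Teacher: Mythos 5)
The paper offers no proof of this statement: it is imported verbatim from Klein and Rio \cite{klein2005concentration} and used as a black box in the proof of Lemma~\ref{lem:bound_outlier_term}, so there is no in-paper argument to compare yours against. On its own merits, your outline follows the standard entropy-method route to such bounds (tensorization of entropy, a modified logarithmic Sobolev inequality, control of the coordinate oscillations by $b$ and of their summed conditional second moments by $\sigma^2+2b\EE Z$, then Herbst integration and Chernoff inversion), and your target log-MGF bound $\psi(\lambda)\le \lambda^2 v/(2(1-b\lambda))$ with $v=\sigma^2+2b\EE Z$ would indeed imply the stated tail, even in the stronger form $\sqrt{2vt}+bt$. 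So the strategy is sound and the stated constants leave you plenty of room.

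Two caveats. First, what you have written is a plan rather than a proof: the entire quantitative content --- deriving the differential inequality for $\psi$, integrating it, and in particular extracting the $2b\EE Z$ contribution --- is deferred to ``following the Bousquet-style sharpening,'' which is precisely the step that cannot be waved at; as it stands the argument is not self-contained. Second, a technical point in your step two: for suprema of centered sums the effective choice is not the resampled quantity $Z^{(i)}$ but the drop-one quantity $Z_i:=\sup_{s}\sum_{j\neq i}Z_{j,s}$. With $\hat s$ a maximizer for $Z$ one gets $0\le Z-Z_i\le Z_{i,\hat s}\le b$ and, crucially, $\sum_{i}(Z-Z_i)\le \sum_i Z_{i,\hat s}=Z$, which is the self-bounding property that produces the $b\,\EE Z$ term after taking expectations; the symmetrized swap version you wrote down obscures this one-sidedness and, applied naively, yields only the $\sigma^2$ term. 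If you replace the swap by the drop-one decomposition and then carry out the Herbst integration explicitly, the proof closes; until then there is a genuine gap at exactly the point you yourself flag as ``the hard part.''
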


\end{document}